\newcommand{\de}{\partial}
\newcommand{\take}{\,\backslash\,}
\newcommand{\wt}{\widetilde}
\newcommand{\Ideal}[1]{\left\langle#1\right\rangle}
\newcommand{\R}{\mathbb{R}}
\DeclareMathOperator{\range}{range}
\DeclareMathOperator{\End}{End}
\DeclareMathOperator{\Dom}{Dom}
\DeclareMathOperator{\vol}{vol}
\newcommand{\ol}{\overline}
\newcommand{\wh}{\widehat}
\newcommand{\Cl}{\mathbb{C}\ell}
\newcommand{\C}{\mathbb{C}}
\newcommand{\D}{\mathcal{D}}
\newcommand{\A}{\mathcal{A}}
\newcommand{\h}{\mathcal{H}}
\newcommand{\J}{\mathcal{J}}
\newcommand{\F}{\mathcal{F}}
\newcommand{\Z}{\mathbb{Z}}
\DeclareMathOperator{\Ext}{Ext}
\newtheorem{thm}{Theorem}[section]
\newtheorem{lemma}[thm]{Lemma}
\newtheorem{prop}[thm]{Proposition}
\newtheorem{cor}[thm]{Corollary}
\theoremstyle{definition}
\newtheorem{defn}[thm]{Definition}
\newtheorem*{notn}{Notation}
\newtheorem{eg}[thm]{Example}
\newtheorem{rem}[thm]{Remark}
\newtheorem{question}{Question}
\DeclareFontFamily{OT1}{pzc}{}
\DeclareFontShape{OT1}{pzc}{m}{it}{<-> s * [1.1] pzcmi7t}{}
\DeclareMathAlphabet{\mathpzc}{OT1}{pzc}{m}{it}
\newcommand{\Lip}{\textnormal{Lip}}
\newcommand{\K}{\mathbb{K}}
\newcommand{\B}{\mathbb{B}}
\newcommand{\Dsla}{\slashed{\mathcal{D}}}
\newtheorem{ass}{Assumption}
\newtheorem*{thm*}{Theorem} 
\author{Iain Forsyth$^{**}$, Magnus Goffeng$^*$, Bram Mesland$^{**}$, Adam Rennie$^\dag$ \\[4pt]
${}^*$Department of Mathematical Sciences,\\ Chalmers University of Technology and the University of Gothenburg,\\ SE-412 96 Gothenburg, Sweden\\[3pt]
${}^{**}$Institut f\"ur Analysis, Leibniz Universit\"at Hannover, Welfengarten 1,\\ 30167 Hannover, Germany
\\[3pt]
${}^\dag$	School of Mathematics and Applied Statistics,
University of Wollongong,\\
Northfields Ave 2522, Australia}
\title{
\huge\textsc{Boundaries, spectral triples and $K$-homology}}
\begin{document}
\maketitle

\begin{abstract}  
\parindent=0.0pt
\parskip=2pt
\noindent This paper extends the notion of a spectral triple 
to a relative spectral triple, an unbounded
analogue of a relative Fredholm module for an ideal $J\triangleleft A$. 
Examples include manifolds with boundary,
manifolds with conical singularities, dimension drop algebras, 
$\theta$-deformations and Cuntz-Pimsner algebras of vector bundles.

\noindent The bounded transform of a relative spectral triple 
is a relative Fredholm module, 
making the image of a relative spectral triple under the boundary mapping
in $K$-homology easy to compute. 
We introduce an additional operator called a Clifford normal
with which a relative spectral triple can be doubled into a spectral triple. 

The Clifford normal also provides a boundary Hilbert space, 
a representation of the quotient algebra, a boundary Dirac operator and
an analogue of the Calderon projection. In the examples this data does assemble
to give a boundary spectral triple, though we can not prove this in general.

When we do obtain a boundary spectral triple, we provide sufficient conditions 
for the boundary triple to represent the $K$-homological boundary.
Thus we abstract the proof of Baum-Douglas-Taylor's 
``boundary of Dirac is Dirac on the boundary'' 
theorem into the realm of 
non-commutative geometry.
\end{abstract} 

\parindent=0.0pt
\parskip=4pt

\section{Introduction}
\label{sec:intro}

This paper puts to the test the folklore idea that manifolds with boundary can be
modelled in non-commutative geometry using symmetric (Dirac-type) operators. 
Numerous constructions work as expected from the classical case, but some
results one would expect to be true turn out to require substantial additional data or hypotheses.

The motivations for such an investigation come from at least three sources.
Firstly, in recent years, the importance of secondary 
invariants in geometry has been realised 
(see for instance \cites{hilsumskand,lotthighereta,PSrhoInd,wahlsurgery,xieyu}), 
and the need to consider Dirac-type operators 
on manifolds with boundary in their study \cite{DG3}.
Secondly, having a relative spectral triple representing a $K$-homology class $x$
should facilitate the computation of the image $\partial x$ of $x$ under the
$K$-homology boundary map $\partial$: a difficult problem in general, \cite{BDT}. 
Finally, internal to non-commutative geometry there is the 
more philosophical question of what 
should be the non-commutative analogue of a manifold with boundary.

Historically, attempts to incorporate manifolds with boundary 
into non-commutative geometry have either imposed self-adjoint 
boundary conditions on a Dirac operator (like APS), which
generically collapse the boundary to a point, for example \cites{battistiseiler,levyiochum},
or they have made the Dirac operator self-adjoint by pushing the boundary
to infinity. In fact APS boundary conditions are closely related to the latter framework. 
More general constructions are possible, for instance \cite{kaadrecent}.

The main idea in this paper is to loosen the 
self-adjointness condition appearing in a spectral triple, 
requiring only a symmetric operator with additional
analytic properties relative to an ideal. Sections \ref{sec:rel-mods} and \ref{sect:normal} and the appendices are based on results from the Ph.D thesis of the first named author, \cite{ForsythThesis}.

\subsection{The main results}
\label{subsec:main}

Our approach is inspired by the work of Baum-Douglas-Taylor \cite{BDT},  
and the main players in this paper are relative spectral triples and Kasparov modules,
introduced in Section \ref{sec:rel-mods}. 
We work with a $\Z/2$-graded $C^*$-algebra $A$ and a closed graded 
$*$-ideal $J\triangleleft A$. Loosely speaking, a relative spectral triple for $J\lhd A$ 
is a triple $(\J\lhd \A,\h,\D)$ satisfying the usual axioms of a spectral triple for the 
dense $*$-subalgebra $\A\subseteq A$ 
save the fact that the odd operator $\D$ is symmetric and 
$j\Dom(\D^*)\subseteq \Dom(\D)$ for all $j\in \J$, 
where $\J$ is a dense $\ast$-subalgebra of $J$ which is an ideal in $\A$. 
For the precise definition, see Definition \ref{defn:alternate} 
on page \pageref{defn:alternate}, and we follow the definition with numerous examples.
The first important result is that relative spectral triples give relative Fredholm modules,
and so relative $K$-homology classes.

\begin{thm*}[Bounded transforms of relative spectral triples]
Let $(\J\lhd \A,\h,\D)$ be a relative spectral triple for $J\lhd A$ and define the odd operator 
$F_\D:=\D(1+\D^*\D)^{-1/2}$. For any $a\in A$ and $j\in J$, the operators 
$[F_\D,\rho(a)]_\pm$, $\rho(j)(F_\D-F^*_\D)$ and $\rho(j)(1-F^2_\D)$ are 
compact for all $a\in A$, $j\in J$. In particular, $(\h,F_\D)$ is a relative Fredholm 
module for $J\triangleleft A$, as defined in Definition \ref{relativefrehdolm}.
\end{thm*}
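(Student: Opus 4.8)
The plan is to follow the classical route for passing from an unbounded to a bounded representative — realise $F_\D$ through the integral formula, differentiate in the algebra variable, and estimate — while keeping careful track of the failure of self-adjointness via the two operators $\D^*\D$ and $\D\D^*$. First I would record the reductions. We may assume $\D$ closed (its closure is symmetric and inherits the relative axioms). Then $\D^*\D$ and $\D\D^*$ are non-negative self-adjoint, $F_\D$ is a contraction, $F_\D^*=(1+\D^*\D)^{-1/2}\D^*=\D^*(1+\D\D^*)^{-1/2}$, $\D(1+\D^*\D)^{-1/2}=(1+\D\D^*)^{-1/2}\D$ on $\Dom\D$, and $F_\D^*F_\D=1-(1+\D^*\D)^{-1}$, $F_\DF_\D^*=1-(1+\D\D^*)^{-1}$; moreover $[\D^*,\rho(a)]_\pm$ is bounded for $a\in\A$, being (up to grading signs) $[\D,\rho(a^*)]_\pm^{\,*}$. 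Since $\|F_\D\|\le 1$, the maps $a\mapsto[F_\D,\rho(a)]_\pm$, $j\mapsto\rho(j)(F_\D-F_\D^*)$ and $j\mapsto\rho(j)(1-F_\D^2)$ are norm continuous, so (as $\K$ is closed and $\A,\J$ are dense) it suffices to treat $a\in\A$, $j\in\J$. Finally $1-F_\D^2=(1+\D^*\D)^{-1}+(F_\D^*-F_\D)F_\D$, and $(1+\D^*\D)^{-1}$, hence $\rho(j)(1+\D^*\D)^{-1}$, is compact by the compact-resolvent axiom, so the third claim reduces to the second. Two claims remain: $[F_\D,\rho(a)]_\pm\in\K$ for $a\in\A$, and $\rho(j)(F_\D-F_\D^*)\in\K$ for $j\in\J$.

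For the commutators I would use $F_\D=\tfrac1\pi\int_0^\infty\lambda^{-1/2}\D(1+\lambda+\D^*\D)^{-1}\,d\lambda$, which gives
\[
[F_\D,\rho(a)]_\pm=\D\,[(1+\D^*\D)^{-1/2},\rho(a)]+[\D,\rho(a)]_\pm(1+\D^*\D)^{-1/2}.
\]
The second term is compact. For the first I would expand $[(1+\lambda+\D^*\D)^{-1},\rho(a)]=-(1+\lambda+\D^*\D)^{-1}\big([\D^*,\rho(a)]_\pm\D+\D^*[\D,\rho(a)]_\pm\big)(1+\lambda+\D^*\D)^{-1}$ and use $\D(1+\lambda+\D^*\D)^{-1}\D^*=\D\D^*(1+\lambda+\D\D^*)^{-1}$ together with the bounds $\|\D(1+\lambda+\D^*\D)^{-1}\|\le(1+\lambda)^{-1/2}$, $\|\D\D^*(1+\lambda+\D\D^*)^{-1}\|\le1$: the integrand becomes a finite sum of operators of norm $O((1+\lambda)^{-1})$, each carrying a factor $(1+\lambda+\D^*\D)^{-1}$, which is compact because $(1+\D^*\D)^{-1}$ is. Hence $\int_0^\infty\lambda^{-1/2}(\cdots)\,d\lambda$ converges in operator norm to a compact operator, proving $[F_\D,\rho(a)]_\pm\in\K$; this is essentially the classical Baaj--Julg estimate.

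For the $J$-relative term I would first exploit $\rho(\J)\Dom\D^*\subseteq\Dom\D$ together with $\rho(\A)\Dom\D\subseteq\Dom\D$: since $(1+\D\D^*)^{-1/2}\h\subseteq\Dom\D^*$, the operator $\rho(j)(1+\D\D^*)^{-1/2}$ maps $\h$ into $\Dom\D$; a closed-graph argument shows $\D\rho(j)(1+\D\D^*)^{-1/2}$ is bounded, so $\rho(j)(1+\D\D^*)^{-1/2}$ is bounded from $\h$ to $(\Dom\D,\text{graph norm})$, and composing with the inclusion $(\Dom\D,\text{graph norm})\hookrightarrow\h$ — compact because $(1+\D^*\D)^{-1/2}$ is — shows $\rho(j)(1+\D\D^*)^{-1/2}$, $\rho(j)(1+\D\D^*)^{-1}$ and their adjoints are compact. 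Next, computing on the cores $\Dom\D$ and $\Dom\D^*$ (where $\rho(j)$ lands in $\Dom\D$, on which $\D$ and $\D^*$ agree) gives, up to grading signs,
\[
\rho(j)(F_\D-F_\D^*)=\D\rho(j)\big[(1+\D^*\D)^{-1/2}-(1+\D\D^*)^{-1/2}\big]-[\D,\rho(j)](1+\D^*\D)^{-1/2}+[\D^*,\rho(j)](1+\D\D^*)^{-1/2};
\]
the middle summand is compact. For the remaining two I would reinstate the integral formulas for both $(1+\D^*\D)^{-1/2}$ and $(1+\D\D^*)^{-1/2}$, commute $\rho(j)$ inward using $\D^*(1+\lambda+\D\D^*)^{-1}=(1+\lambda+\D^*\D)^{-1}\D^*$ and the compactness of $\rho(j)(1+\lambda+\D\D^*)^{-1}$ and $(1+\lambda+\D^*\D)^{-1}$ just established, and organise each integrand as a finite sum in which the (generally non-compact) resolvent of $\D\D^*$ always appears either multiplied by a $\rho(j)$ or traded for the compact resolvent of $\D^*\D$; the estimates of the previous step then make the $\lambda$-integrals norm convergent, yielding $\rho(j)(F_\D-F_\D^*)\in\K$.

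The commutator estimates are the standard ones, so the hard part is this last step. For self-adjoint $\D$ one has $F_\D-F_\D^*=0$, whereas here $F_\D-F_\D^*$ is a genuine bounded operator measuring the defect of self-adjointness, and the whole content of the statement is that smearing it by $\rho(j)$, $j\in\J$, produces a compact operator. The delicate point — and the one I expect to be the real obstacle — is to arrange the term-by-term analysis so that \emph{every} occurrence of $(1+\D\D^*)^{-1}$ is either absorbed into a factor $\rho(j)$, which by $\rho(\J)\Dom\D^*\subseteq\Dom\D$ and the compactness of $\Dom\D\hookrightarrow\h$ is exactly what turns it compact, or converted through $\D^*(1+\lambda+\D\D^*)^{-1}=(1+\lambda+\D^*\D)^{-1}\D^*$ into the compact resolvent of $\D^*\D$, all while retaining enough decay in $\lambda$ for the Bochner integrals to converge in norm. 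This bookkeeping, rather than any individual estimate, is the crux.
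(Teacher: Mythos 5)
Your overall architecture is the one the paper uses: the integral formula for $(1+\D^*\D)^{-1/2}$, the splitting of $[F_\D,\rho(a)]_\pm$ into $[\D,\rho(a)]_\pm(1+\D^*\D)^{-1/2}$ plus $\D[(1+\D^*\D)^{-1/2},\rho(a)]$, the use of $\rho(\J)\Dom(\D^*)\subseteq\Dom(\D)$ for the relative terms, and the reduction of $\rho(j)(1-F_\D^2)$ to $\rho(j)(F_\D-F_\D^*)$ (that reduction is fine, since $\rho(j)(1+\D^*\D)^{-1}$ is compact by density from $\A$); your final integrands coincide with those in Lemmas \ref{lem:compactcommutator} and \ref{lem:compactcommutatoralt}. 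The genuine gap is that you repeatedly use that $(1+\D^*\D)^{-1}$ — equivalently the inclusion of $\Dom(\D)$ with its graph norm into $\h$ — is compact. That is not an axiom: Definition \ref{defn:alternate} only gives the \emph{locally} compact resolvent, $\rho(a)(1+\D^*\D)^{-1/2}\in\K(\h)$ for $a\in\A$, and in the paper's basic example (an open, non-compact submanifold of a complete manifold) the bare resolvent is not compact. This false premise carries the weight at three places: the claim that $[\D,\rho(a)]_\pm(1+\D^*\D)^{-1/2}$ is compact; the claim that the commutator integrand is compact because it ends in a factor $(1+\lambda+\D^*\D)^{-1}$ with no algebra element adjacent to it; and, in the relative part, the claim that $[\D,\rho(j)]_\pm(1+\D^*\D)^{-1/2}$ is compact together with your derivation of $\rho(j)(1+\D\D^*)^{-1/2}\in\K(\h)$ from a ``compact inclusion'' (the correct route, Remark \ref{rem:j-compact}, writes $j=j_1j_2$ with $j_i\in\J$ so that the map $\Dom(\D)\to\h$ which must be compact is multiplication by $j_2$, and then uses density in $J$).

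This is exactly why Conditions 4.\ and 4'.\ appear in Definition \ref{defn:alternate} and Remark \ref{rem:alternatecondition}, and your argument never invokes either. The paper's proof of the commutator compactness either works with products $ab$, $a,b\in\A$, so that every resolvent is flanked by an algebra element — at the cost of the term $\D^*(1+\lambda+\D\D^*)^{-1}a\,\D^*\D(1+\lambda+\D^*\D)^{-1}b$, whose compactness is precisely where Condition 4.\ (compactness of $(1-P_{\ker(\D^*)})(1+\D\D^*)^{-1/2}a$) enters — or else uses the sequence $\phi_k$ of Condition 4'.\ to approximate $[\D,a]_\pm$ by $\phi_k[\D,a]_\pm$ and $[\D,a]_\pm\phi_k$ so that a compact factor $\phi_k(1+\lambda+\D^*\D)^{-1}$ appears. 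Remark \ref{rem:alternatecondition} states explicitly that without 4.\ or 4'.\ one would need $a(1+\D\D^*)^{-1/2}$ compact for all $a\in\A$, which is not generally the case. So as written your proposal proves the theorem only under the extra hypothesis that the resolvent of $\D$ is globally compact (e.g.\ the unital, compact-manifold situation), not for a general relative spectral triple. A secondary issue: your starting identity $[(1+\lambda+\D^*\D)^{-1},\rho(a)]=-(1+\lambda+\D^*\D)^{-1}\bigl([\D^*,\rho(a)]_\pm\D+\D^*[\D,\rho(a)]_\pm\bigr)(1+\lambda+\D^*\D)^{-1}$ is not well defined, because $[\D,\rho(a)]_\pm(1+\lambda+\D^*\D)^{-1}\h$ need not lie in $\Dom(\D^*)$; the domain-correct statement (Lemma \ref{lem:commutator}, part 3) replaces $(1+\lambda+\D^*\D)^{-1}\D^*$ by its bounded closure $\D^*(1+\lambda+\D\D^*)^{-1}$, and the paper stresses, citing \cite{FMR}, that carelessness at exactly this point can make the bounded transform fail to define a relative Fredholm module. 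Your final integrand is the correct one, so that step is repairable; the compactness gap above is not, without bringing in Condition 4.\ or 4'.
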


This theorem can be found as Theorem \ref{thm:alternate} below. 
While it is the expected result, the proof is significantly 
more subtle than the case when $\D$ is self-adjoint, so we relegate the main proofs of Section \ref{sect:boundedtransform} to Appendix 
\ref{sec:technicalproof}.  In addition, the proof is given for the more general
`relative Kasparov modules' which we describe in Appendix \ref{subsec:Khom}.

Relative Fredholm modules contain more information than the cycle information
for the ideal $J$. In particular, if $A\to A/J$ is semisplit, 
an even relative Fredholm module contains further information 
with which one can compute the image in $K^1(A/J)$ under the boundary map
(see Proposition \ref{eq:bdrymap}). More precisely, we prove in 
Subsection \ref{subsec:bdry-map}, 
that $\de[(\J\lhd \A,\h,\D)]\in K^1(A/J)$ coincides with the 
$K$-homology class that the extension 
$\ker((\D^*)^+)$ defines under the injection $ K^1(A/J)\to \Ext(A/J)$.

Classically, the geometry of
the boundary is given, along with a compatibility with the total geometry.
Our aim is to incorporate additional information into the relative spectral triple
in order to be able to construct all the elements of the boundary geometry.

In Section \ref{sect:normal} we attempt to encode this
information  in a ``Clifford normal", an additional
operator which plays a subtle analytic role 
(cf. Equation \eqref{ndomdsforman} below).
 A bounded odd operator $n$ is called a Clifford normal 
 for a relative spectral triple $(\J\lhd\A,\h,\D)$ 
if it preserves $\Dom(\D)$ and a core for $\Dom(\D^*)$ 
plus additional technical properties guaranteeing 
that $n$ behaves like the Clifford multiplication 
of a vector field having length $1$ and being normal 
near the boundary of a 
manifold. 

An important point, which we expand on below, is that 
the Clifford normal is a bounded operator 
on $\h$ but an unbounded operator on $\Dom(\D^*)$. This
means that in general we must control the analytic behaviour of
$n$. This is done in the  assumptions in Definition \ref{defn:normal}  
on page \pageref{defn:normal}, and these must be checked in examples.
The existence of a Clifford normal allows for a novel 
construction of a double, analogous 
to the doubling of a manifold.

\begin{thm*}[Doubling a relative spectral triple with a Clifford normal]
Let $(\J\lhd\A,\h,\D,n)$ be an even relative spectral triple with Clifford normal for $J\lhd A$. 
Define $\wt{\A}:=\{(a,b)\in \A\oplus \A: a-b\in \J\}$ 
and $\wt{\D}$ as the restriction of $\D^*\oplus \D^*$ to 
$$
\Dom(\wt{\D})=\{(\xi,\eta)\in \Dom(n)\oplus\Dom(n): \eta-n\gamma\xi\in \Dom(\D)\}.
$$
Then $(\wt{\A},\h\oplus\h^{\text{op}},\wt{\D})$ is a spectral triple.
\end{thm*}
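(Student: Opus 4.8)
The plan is to show that $\wt{\D}$ is self-adjoint, that $(1+\wt{\D}^2)^{-1/2}$ is compact modulo the appropriate commutator conditions, and that $\wt{\A}$ preserves $\Dom(\wt{\D})$ with bounded commutators; the heart of the matter is self-adjointness, because the boundedness and compactness statements should follow from the relative spectral triple axioms for $(\J\lhd\A,\h,\D)$ applied on each summand together with the defining properties of the Clifford normal $n$. First I would compute the adjoint $\wt{\D}^*$ directly. Since $\wt{\D}\subseteq \D^*\oplus\D^*$, we have $\wt{\D}^*\supseteq (\D^*\oplus\D^*)^* = \D\oplus\D$, but more is true: writing the graph inner product pairing $\langle\wt{\D}(\xi,\eta),(\xi',\eta')\rangle - \langle(\xi,\eta),\wt{\D}(\xi',\eta')\rangle$ and using the boundary pairing associated to $\D$ (i.e. the obstruction to $\D^*$ being symmetric, which is controlled by $n$ via the relation \eqref{ndomdsforman}), I expect the domain condition $\eta-n\gamma\xi\in\Dom(\D)$ to be \emph{exactly} the condition that makes the boundary term vanish when pairing against another element of $\Dom(\wt{\D})$. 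The key algebraic identity needed is that $n$ is an odd symmetry (or at least odd with $n^2$, $n^*$ controlled) intertwining the boundary pairings of the two copies of $\D^*$ with a sign, so that the "doubled" boundary condition is self-dual. This is the standard mechanism by which doubling a manifold with boundary produces a closed manifold, now transported to the operator-theoretic setting.

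Concretely, I would proceed as follows. Step one: verify $\wt{\D}$ is symmetric, i.e. $\Dom(\wt{\D})\subseteq\Dom(\wt{\D}^*)$ and the two agree as operators on $\Dom(\wt{\D})$; this uses that for $(\xi,\eta),(\xi',\eta')\in\Dom(\wt{\D})$ the cross boundary terms from $\D^*$ on $\h$ and $\D^*$ on $\h^{\text{op}}$ cancel because $\eta-n\gamma\xi$ and $\eta'-n\gamma\xi'$ lie in $\Dom(\D)$ and $\D$ has no boundary term. Step two: show $\Dom(\wt{\D}^*)\subseteq\Dom(\wt{\D})$. Given $(\zeta,\omega)\in\Dom(\wt{\D}^*)$, pairing against all $(\xi,\eta)\in\Dom(\wt{\D})$ forces first $\zeta,\omega\in\Dom(\D^*\oplus\D^*)$ (by testing against $\Dom(\D)\oplus\Dom(\D)\subseteq\Dom(\wt{\D})$), hence $\zeta,\omega\in\Dom(n)$ once we know $\Dom(\D^*)$ is contained in (or related to) $\Dom(n)$ — this is part of the Clifford normal hypotheses. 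Then testing against the remaining directions in $\Dom(\wt{\D})$ (those with nonzero boundary data) and using the nondegeneracy of the boundary pairing together with the invertibility properties of $n$ near the boundary forces the linear constraint $\omega-n\gamma\zeta\in\Dom(\D)$. Step three: having self-adjointness, check compact resolvent: $(1+\wt{\D}^2)^{-1/2}\rho(\wt a)$ is compact for $\wt a\in\wt{\A}$; here I would compare $\wt{\D}$ with $\D^*\oplus\D^*$ on its natural domain and use that the relative spectral triple data already gives the needed compactness on each factor after localising with $\J$, the point being that the difference $(a,b)$ with $a-b\in\J$ lets the ideal absorb the "off-diagonal" or "near-boundary" contributions. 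Step four: check $[\wt{\D},\rho(\wt a)]$ is bounded and $\rho(\wt a)$ preserves $\Dom(\wt{\D})$; this reduces to $a,b$ preserving $\Dom(\D^*)$ and $\Dom(n)$ (from the relative triple axioms $j\Dom(\D^*)\subseteq\Dom(\D)$ and the Clifford normal axioms), together with $(a-b)\gamma n\xi$ interacting correctly with the constraint, which uses $a-b\in\J$ and $\J\Dom(\D^*)\subseteq\Dom(\D)$.

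I expect the main obstacle to be the surjectivity direction of self-adjointness (step two): extracting, from the single scalar pairing condition defining $\Dom(\wt{\D}^*)$, the two \emph{separate} conclusions that each component lies in $\Dom(n)$ and that the precise algebraic boundary constraint holds. This is where the technical assumptions bundled into Definition \ref{defn:normal} — controlling $n$ as an unbounded operator on $\Dom(\D^*)$, the relation \eqref{ndomdsforman}, and the behaviour of $n$ on a core for $\Dom(\D^*)$ — must all be invoked, and one has to be careful that the core is large enough to detect the constraint while small enough that $n$ acts on it. A secondary subtlety is the grading bookkeeping: $\wt{\D}$ must be odd for the graded Hilbert space $\h\oplus\h^{\text{op}}$, which requires tracking how $\gamma$ and the opposite grading on $\h^{\text{op}}$ interact with $n$ in the domain condition, so that $\wt{\D}$ anticommutes with $\gamma\oplus(-\gamma)$ (or whatever grading $\h\oplus\h^{\text{op}}$ carries). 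Once self-adjointness is in hand, the remaining axioms are routine given the hypotheses, so I would present those compactly.
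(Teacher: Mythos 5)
Your overall architecture for self-adjointness is the same as the paper's (Proposition \ref{prop:doubleadjoint}): prove symmetry by a direct computation using the symmetry of $[\D^*,n]$ and Condition 5) of Definition \ref{defn:normal}, then obtain $\Dom(\wt{\D}^*)\subseteq\Dom(\wt{\D})$ by testing the adjoint relation and invoking the non-degeneracy hypothesis. However, your step two contains a concrete error: you propose to deduce first that the components lie in $\Dom(n)$ ``once we know $\Dom(\D^*)$ is contained in (or related to) $\Dom(n)$ --- this is part of the Clifford normal hypotheses.'' No such inclusion is assumed, and it is false in the motivating example: $\Dom(n)=H^1(\ol{M},S)$ is a proper core inside $\Dom(\D_{\textnormal{max}})$, cf.\ Equation \eqref{ndomdsforman} and Subsection \ref{subsec:manifold-bad}; Condition 1) only says $\Dom(n)$ is a core. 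The correct order of deductions is the reverse of yours: since $\wt{\D}$ extends $\D\oplus\D$, any $(\eta,\zeta)\in\Dom(\wt{\D}^*)$ automatically lies in $\Dom(\D^*)\oplus\Dom(\D^*)$ with $\wt{\D}^*(\eta,\zeta)=(\D^*\eta,\D^*\zeta)$; rearranging the adjoint identity over all $\xi\in\Dom(n)$ produces exactly the hypothesis of Condition 7) (with $w=\gamma\zeta$, $z=\eta$), which yields the constraint $\gamma\zeta+n\eta\in\Dom(\D)$ \emph{first}; only then does one conclude $\eta,\zeta\in\Dom(n)$, using that $n\gamma\eta$ differs from $\zeta\in\Dom(\D^*)$ by an element of $\Dom(\D)$ together with Conditions 1) and 5). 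So the membership in $\Dom(n)$ is a consequence of the boundary constraint, not a precondition extracted beforehand.

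The second genuine gap is the compact resolvent. Your plan to ``compare $\wt{\D}$ with $\D^*\oplus\D^*$ \dots after localising with $\J$'' does not work as stated: $a(1+\D\D^*)^{-1/2}$ is compact only for $a\in J$ (Remark \ref{rem:j-compact}), and the diagonal elements $(a,a)\in\wt{\A}$ have no ideal factor to absorb anything, so a naive comparison fails. The paper instead proves a general statement about closed extensions $T\subset T_e\subset T^*$ (Proposition \ref{prop:extresolvent}): given the compactness of $a(1+T^*T)^{-1/2}$ and of $a(1-P_{\ker(T^*)})(1+TT^*)^{-1/2}$ --- i.e.\ Conditions 3.\ and 4.\ of Definition \ref{defn:alternate}, which is precisely where Condition 4.\ is needed --- compactness of $a(1+T_e^*T_e)^{-1/2}$ is equivalent to compactness of $aP_{\ker(T_e)}$. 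Applying this with $T=\D\oplus\D$ and $T_e=\wt{\D}$ reduces everything to the identification $\ker(\wt{\D})=\ker(\D)\oplus\ker(\D)$ (Lemma \ref{lem:doublekernel}), whose proof uses the positive definiteness of the boundary inner product $\Ideal{\cdot,\cdot}_n$, i.e.\ Condition 6). Your proposal has no substitute for this kernel computation, and without it the resolvent step does not close. A smaller imprecision in your step four: domain preservation by $\wt{\A}$ does not come from $\A$ preserving $\Dom(n)$ (not assumed) but from Condition 4) of Definition \ref{defn:normal}, $[n,a]_\pm\cdot\Dom(n)\subset\Dom(\D)$, together with $\J\cdot\Dom(\D^*)\subset\Dom(\D)$, as in Proposition \ref{prop:doublecommutators}.
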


In Section \ref{sect:tripleonbdry}, 
with modest assumptions on the even relative spectral triple with Clifford normal 
$(\J\lhd\A,\h,\D,n)$, we show how to define a  Calderon 
projector, Poisson operator and  candidates for a boundary representation of
$A/J$ on the boundary Hilbert space $\mathfrak{H}_n$ and boundary Dirac operator. 
The problematic
inner product on $\mathfrak{H}_n$ prevents us from proving that {\em any} of these
objects  behaves as in the classical case. Nevertheless, in all our
examples, including the non-commutative ones (see below), 
we can check that these constructions do indeed provide
a boundary spectral triple.

Having natural definitions for all the boundary objects means that it is relatively
straightforward to check in examples whether they satisfy all the hoped-for properties,
namely that they assemble to yield a boundary spectral triple.
In the abstract, it appears that our framework does not suffice to prove
that we can construct a boundary spectral triple.

When we assume that our definitions do indeed yield a boundary spectral triple, 
see Assumption \ref{sixthass} on page \pageref{sixthass}, 
then we can consider additional (much milder) assumptions 
on $(\J\lhd\A,\h,\D,n)$ 
that guarantee that the 
even spectral triple $(\A/\J\otimes \Cl_1,\mathfrak{H}_n, \D_n)$ represents
the class $\partial[(\J\triangleleft \A,\h,\D)]$ in $K^1(A/J)\cong K^0(A/J\otimes \Cl_1)$.
We state these final results in  Theorem \ref{thm:boundarycomparisonnew} 
and Proposition \ref{prop:boundarycomparisonnew} 
(see pages \pageref{thm:boundarycomparisonnew} 
and \pageref{prop:boundarycomparisonnew} respectively).

Throughout the text we examine what all our constructions mean for 
manifolds with boundary, manifolds with conical singularities and dimension
drop algebras. In Section \ref{sec:onexamples} we complete our discussion
of these examples, and also present examples including crossed products
arising from group actions on
manifolds with boundary, $\theta$-deformations of manifolds with boundary and
Cuntz-Pimsner algebras of vector bundles on manifolds with boundary.

\subsection{The classical setup}
\label{subsec:manifold-bad}

To explain the source of the technical difficulties we encounter later, we review the
analytic subtleties of our principal examples: manifolds with boundary. 
The issues we discuss here are well-known to people working on 
boundary value problems, and are presented in more detail in \cite{BaerBall}.
Importantly, the difficulties are present even classically. 

Suppose that $\Dsla$ is a Dirac-type operator on a 
Clifford bundle $S$ over a manifold $M$ with boundary. 
For simplicity, we assume that $M$ is compact, but for the 
purposes of this subsection it suffices to assume 
that the boundary of $M$ is compact. We denote the operator on the boundary 
by $\Dsla_{\partial M}$. 

When $\Dsla$ is a Dirac 
operator on a manifold with boundary we distinguish its defining 
differential expression, which we denote by 
$\Dsla$, and its various realizations that we denote by $\D$ 
with a subscript indicating the boundary conditions. 
We take the minimal closed extension $\D_{\textnormal{min}}$, 
the closure of $C^\infty_c(M^\circ,S)$ 
in the graph norm defined from $\Dsla$, and let 
$\D_{\textnormal{max}}=(\D_{\textnormal{min}})^*$ 
denote the $L^2$-adjoint. 

The domain of $\D_{\textnormal{min}}$ 
is $H^1_0(M,S)$. The trace mapping
$$
R:\Gamma^\infty(\ol{M},S)\to\Gamma^\infty(\de M,S|_{\de M})
$$
extends to a continuous mapping on $\Dom(\D_{\textnormal{max}})$ 
that fits into a short exact sequence
$$
0\to \Dom(\D_{\textnormal{min}})
\to \Dom(\D_{\textnormal{max}})\xrightarrow{R} \check{H}(\Dsla_{\partial M})\to 0,
$$
where $\check{H}(\Dsla_{\partial M})$ is defined 
as follows. We choose a real number $\Lambda$:
zero would do. For $s\in \R$, we consider the 
subspaces of $H^s(\partial M,S|_{\partial M})$ defined by
\begin{align*}
H^{s}_{[\Lambda,\infty)}(\Dsla_{\partial M})&:=\chi_{[\Lambda,\infty)}(\Dsla_{\partial M})
H^s(\partial M,S|_{\partial M})
\quad\mbox{and}\\
H^{s}_{(-\infty,\Lambda)}(\Dsla_{\partial M})&:=\chi_{(-\infty,\Lambda)}(\Dsla_{\partial M})
H^s(\partial M,S|_{\partial M}).
\end{align*}
We define the Hilbert space
\begin{equation}
\label{staronpage6}
\check{H}(\Dsla_{\partial M})
=H^{-1/2}_{[\Lambda,\infty)}(\Dsla_{\partial M})+H^{1/2}_{(-\infty,\Lambda)}(\Dsla_{\partial M}).
\end{equation}
The two spaces $H^{-1/2}_{[\Lambda,\infty)}(\Dsla_{\partial M})$ and 
$H^{1/2}_{(-\infty,\Lambda)}(\Dsla_{\partial M})$ are 
orthogonal. For further details on the maximal domain of a Dirac operator on a 
manifold with boundary, see \cite{BaerBall}*{Section 6}.

The space $\check{H}(\Dsla_{\partial M})$ is rather complicated. 
It is not contained in the space of $L^2$-sections on the boundary $\partial M$.
Worse still, we find that Clifford multiplication by the unit normal $n$
does not preserve $\check{H}(\Dsla_{\partial M})$. 
Hence, any smooth prolongation of the normal
to $M$ does not preserve $\Dom(\D_{\textnormal{max}})$. 
In fact if $\sigma\in \Dom(\D_{\textnormal{max}})$ so that 
$R(\sigma)\in \check{H}(\Dsla_{\partial M})$, 
we find that $R(n\sigma)\in \hat{H}(\Dsla_{\partial M})$
where 
$$
\hat{H}(\Dsla_{\partial M})=\check{H}(-\Dsla_{\partial M})
=H_{[\Lambda,\infty)}^{1/2}(\Dsla_{\partial M})+H_{(-\infty,\Lambda)}^{-1/2}(\Dsla_{\partial M}).
$$
The domain of $n$, as a densely defined operator on $\Dom(\D_{\textnormal{max}})$ is 
the space 
\begin{align}
\nonumber
\Dom(\D_{\textnormal{max}})\cap n\Dom(\D_{\textnormal{max}})&=H^1(M,S)\\
\label{ndomdsforman}
&=\{\sigma\in \Dom(\D_{\textnormal{max}}): 
R(\sigma)\in  \hat{H}(\Dsla_{\partial M})\cap \check{H}(\Dsla_{\partial M})\}.
\end{align}
The fact that $\Dom(\D_{\textnormal{max}})$ is not 
preserved by $n$ leads to several subtleties 
in our development. The operator that plays the 
role of the Clifford normal for a relative spectral triple $(\J\lhd\A,H,\D)$
will need to be considered as an 
unbounded operator on $\Dom(\D^*)$, despite defining 
a bounded operator on the ambient Hilbert 
space. 

The technical way around these problems is found in the construction of the
self-adjoint double. When constructing the double, 
one glues two copies of the manifold with boundary along the
boundary. Since the orientation is changed 
on one copy, using multiplication by the Clifford normal, 
the domain of the Dirac operator on the double 
manifold is the intersection of spaces where the 
boundary value belongs to $\hat{H}(\Dsla_{\partial M})$ 
and $\check{H}(\Dsla_{\partial M})$, respectively. 
Therefore, the doubling of manifolds only uses 
$H^1(M,S)$ and the problems with $\Dom(\D_{\textnormal{max}})$ 
disappear. This approach works also in the general case of relative spectral triples with Clifford normals. 

{\bf Acknowledgements}
The first and fourth listed authors received support from the Australian Research Council. 
The second listed author was supported by the Swedish Research Council Grant 2015-00137 
and Marie Sklodowska Curie Actions, Cofund, Project INCA 600398 and would also like to thank the 
Knut and Alice Wallenberg foundation for their support. All authors thank the Institute for Analysis at the
Leibniz University of Hannover, The University of Wollongong,
the Centre for Symmetry and Deformation (Copenhagen), Chalmers University of 
Technology and the University of Gothenburg for facilitating the collaboration. 
The authors also express gratitude to 
Mathematisches Forschungsinstitut Oberwolfach 
and to 
MATRIX (University of Melbourne/Monash University/ACEMS). 

\section{Relative Kasparov modules}
\label{sec:rel-mods}

In this section we introduce the basic tools for treating boundaries: 
relative unbounded Kasparov modules, e.g. relative spectral triples.  
We work bivariantly 
with a group action by a compact group $G$, in order 
to widen the range of possible constructions. Additional details concerning relative $KK$-theory
have been collected in Appendix \ref{subsec:Khom}.
 
The main technical result of the section states that the bounded transform of relative unbounded 
Kasparov modules are relative Kasparov modules. In particular, relative spectral triples provide 
unbounded representatives of relative $K$-homology classes. 
Finally, we show that the image of the class of a relative unbounded Kasparov module in 
$KK$-theory under the boundary mapping is computable by means of extensions.

\begin{notn}
We let $A$ and $B$ denote $\Z/2$-graded $C^*$-algebras. For $a,b\in A$, 
$[a,b]_\pm=ab-(-1)^{|a||b|} ba$ denotes the graded commutator 
for homogeneous elements $a$ and $b$ of degree $|a|$ and $|b|$, respectively.
The adjointable operators on a countably generated $C^*$-module $X_B$ will
be denoted by $\End^*_B(X_B)$ and the compact endomorphisms by $\End^0_B(X_B)$. 
Elements of $\End^0_B(X_B)$ are also called $B$-compact.
\end{notn}

\subsection{Relative Kasparov modules}
\label{sect:relativetriples}
We introduce relative unbounded Kasparov modules, defined using symmetric operators.
This notion is related to the ``half-closed cycles'' studied in \cite{Hilsum}, 
and also in \cite{DGM}\footnote{The relative unbounded Kasparov modules define 
half-closed cycles in non-relative unbounded $KK$-theory: so if $(\J\lhd\A,X_B,\D)$ 
is a relative unbounded Kasparov module for $(J\triangleleft A,B)$, 
then $(\J,X_B,D)$ is a half-closed cycle for $(J,B)$.}.

\begin{defn}
\label{defn:alternate}
Let $G$ be a compact group, $A$ and $B$ be 
$\Z/2$-graded $G$-$C^*$-algebras with $A$ separable and $B$ $\sigma$-unital, 
and let $J\lhd A$ be a $G$-invariant graded ideal. 
A $G$-equivariant \textbf{even relative unbounded Kasparov module} 
$(\J\lhd \A,X_B,\D)$ for $(J\lhd A,B)$ consists 
of an even $G$-equivariant representation 
$\rho:A\rightarrow \End^*_B(X_B)$ on a $\Z/2$-graded countably generated 
$G$-equivariant $B$-Hilbert $C^*$-module $X_B$, 
an odd $G$-equivariant closed symmetric regular operator 
$\D:\Dom(\D)\subset\h\rightarrow X_B$, and dense 
sub-$\ast$-algebras $\J\subseteq J$ and $\A\subseteq A$, with $\J$ an ideal in $\A$, 
such that:
\begin{enumerate}
\item $\rho(a)\cdot\Dom(\D)\subset\Dom(\D)$ and the 
graded commutator $[\D,\rho(a)]_\pm$ is bounded for all $a\in\A$, 
and hence $[\D,\rho(a)]_\pm$ extends to an adjointable operator;
\item $\rho(j)\cdot\Dom(\D^*)\subset\Dom(\D)$ for all $j\in\J$;
\item $\rho(a)(1+\D^*\D)^{-1/2}$ is $B$-compact for all $a\in\A$;
\item[4.] $\ker(\D^*)$ is a complemented submodule of $X_B$, 
and $\rho(a)(1-P_{\ker(\D^*)})(1+\D\D^*)^{-1/2}$ is $B$-compact for all $a\in\A$.
\end{enumerate}
If $A$ and $B$ are trivially $\Z/2$-graded, 
a $G$-equivariant \textbf{odd relative unbounded Kasparov module} 
$(\J\lhd\A,X_B,\D)$ for $(J\lhd A,B)$ has the same definition except that $X_B$ 
is trivially $\Z/2$-graded and $\D$ need not be odd.
A \textbf{relative spectral triple} for $J\lhd A$ is a relative unbounded Kasparov module for $(J\lhd A,\C)$.
We will usually omit the notation $\rho$.
\end{defn}

In order to address various subtleties in the definition of relative spectral triple,
we now make a series of remarks highlighting some key points.

\begin{rem}
\label{rem:unitalandrwongcompact}
If $A$ is unital and represented non-degenerately on $\h$ and $\ker(\D)$ and $\ker(\D^*)$ are complemented submodules of $X_B$, then Condition 3. implies Condition 4. by the following argument. Let $V$ be the phase of $\D$, which is the partial isometry with initial space $\ker(\D)^\perp$ and final space $\ker(\D^*)^\perp$ defined by $\D=V|\D|$, \cite{ReedSimon1}*{Thm. VIII.32}. Then $\D\D^*=V|\D|^2V^*=V\D^*\D V^*$, which implies that 
$$(1-P_{\ker(\D^*)})(1+\D\D^*)^{-1/2}=VV^*(1+\D\D^*)^{-1/2}=V(1+\D^*\D)^{-1/2}V^*,$$
and hence $(1-P_{\ker(\D^*)})(1+\D\D^*)^{-1/2}$ is $B$-compact.
\end{rem}

\begin{rem}
\label{stabforclosed}
Note that $\ker(\D^*)$ being a complemented submodule of $X_B$ is equivalent to the inclusion 
$\ker(\D^*)\to X_B$ being adjointable. This is automatic in the case that $B=\C$. 
The modules $\ker(\D^*)$ and $\ker(\D)$ are complemented if $\D$ or $\D^*$ 
has closed range, see \cite{Lance}*{Theorem 3.2}. 
In fact, if $A$ is unital and is represented non-degenerately, 
we can, after stabilizing $X_B$ by a finitely generated 
projective $B$-module on which $A$ acts trivially 
and modifying $\D^*$ by a finite rank operator, 
always obtain that the range of $\D^*$ is closed. 
The proof of this fact goes as in \cite{DGM}*{Lemma 3.6}.
\end{rem}

\begin{rem}
\label{rem:alternatecondition}
The key result that the bounded transform of
a relative spectral triple yields a relative Fredholm module 
and so a $K$-homology class, proved in Theorem \ref{thm:alternate},
can be proved starting from the definition above,
or another variant. The alternative definition replaces
Condition 4. of Definition \ref{defn:alternate}  by 
the following more checkable condition.
\begin{itemize}
\item[4'.]  there exists a sequence $(\phi_k)_{k=1}^\infty\subset\rho(\A)$ 
such that $\phi_k[\D,\rho(a)]_\pm\rightarrow[\D,\rho(a)]_\pm$ 
and $[\D,\rho(a)]_\pm\phi_k\rightarrow[\D,\rho(a)]_\pm$ in operator norm for all $a\in\A$.
\end{itemize}
If $\A$ is unital and the representation $\rho$ is non-degenerate, 
then clearly Condition 4'. is satisfied. Compare 4'. to \cite{HigsonRoe}*{Exercise 10.9.18}.
While 4' is more checkable in examples, it  does not suffice for some of our
key constructions, except in the unital case as noted in 
Remark \ref{rem:unitalandrwongcompact}.

Either Condition 4. or Condition 4'. can be used together with Conditions 1.- 3. 
to show that $[\D(1+\D^*\D)^{-1/2},a]_\pm$ 
is $B$-compact for all $a\in\A$. Neither Condition 4. nor 
Condition 4' are needed to prove this compact commutator condition if $a(1+\D\D^*)^{-1/2}$ 
is $B$-compact for all $a\in\A$ (which is not generally the case), 
which is why such conditions do not appear in the definition of an unbounded Kasparov module. 
\end{rem}

\begin{rem}
\label{functorrem}
Relative Kasparov modules depend contravariantly on $J\lhd A$ in the following sense. 
Consider ideals $J_i\lhd A_i$, for $i=1,2$, and fix dense $*$-subalgebras 
$\J_i\subseteq J_i$, $\A_i\subseteq A_i$, with $\J_i\lhd \A_i$.
Let $\varphi:A_1\to A_2$ be an equivariant $*$-homomorphism with $\varphi(\A_1)\subseteq \A_2$ and 
$\varphi(\J_1)\subseteq \J_2$. If $(\J_2\lhd \A_2,X_B,\D)$ is a relative Kasparov module for 
$(J_2\lhd A_2,B)$, then $\varphi^*(\J_2\lhd \A_2,X_B,\D):=(\J_1\lhd\A_1,{}_\varphi X_B,\D)$ is 
a relative Kasparov module for $(J_1\lhd A_1,B)$. Here ${}_\varphi X_B$ denotes $X_B$ 
with the left action of $A_1$ induced by $\rho\circ \varphi$ (where $\rho:A_2\to \End_B^*(X_B)$ 
denotes the left action of $A_2$). Compare to Remark \ref{functorrembounded} on page 
\pageref{functorrembounded}.
\end{rem}

\begin{rem}
\label{rem:j-compact}
While $a(1+\D^*\D)^{-1/2}$ is $B$-compact for all $a\in\A$, typically $j(1+\D\D^*)^{-1/2}$ 
is $B$-compact only for $j\in J$. This follows since for $j_1,j_2\in\J$,
$$
\h\xrightarrow{(1+\D\D^*)^{-1/2}}\Dom(\D^*)\stackrel{j_1}{\rightarrow}\Dom(\D)\stackrel{j_2}{\to} \h
$$
and the map $j_2:\Dom(\D)\rightarrow\h$ is compact since $j_2(1+\D^*\D)^{-1/2}$ is compact. 
This argument shows that $j(1+\D\D^*)^{-1/2}$ is compact for $j\in \J^2$ and by a density argument 
the same holds for $j\in J$.
\end{rem}

\begin{rem}
\label{almostequi}
We can loosen the assumption on the $G$-equivariance of $\D$ 
to almost $G$-equivariant: namely $G$ acts 
strongly continuously on $\Dom(\D)$ and $B_g:=g\D g^{-1}-\D$ is adjointable on $X_B$ 
for all $g\in G$. If $\D$ satisfies the latter assumptions, 
$\D$ is a locally bounded perturbation of the $G$-equivariant 
operator $\int_G g\D g^{-1} \mathrm{d}g$, 
where the integral is interpreted pointwise as an operator 
$\Dom(\D)\to X_B$.
\end{rem}

\begin{rem}
\label{lipremark}
We topologise $\A$ using the Lip-topology defined from the norm
$$
\|a\|_\Lip:=\|a\|_A+\|[\D,\rho(a)]\|_{\End_B^*(X_B)}.
$$
It is immediate from the construction that if $(\J\lhd\A,X_B,\D)$ is a 
relative unbounded Kasparov module, then so is 
$$
(\overline{\J}^{\Lip}\lhd \overline{\A}^{\Lip},X_B,\D).
$$
It is unclear if it holds that $(J\cap \overline{\A}^{\Lip} \lhd \overline{\A}^{\Lip},X_B,\D)$ 
is a relative unbounded Kasparov module in general. 
We note that $\ker(\A\to A/J)=J\cap \A$, so if $\J\neq J\cap \A$ 
then $\A/\J\to A/J$ is not injective. 
It follows from \cite{BlackadarCuntz} that both 
$\overline{\J}^{\Lip}$ and $\overline{\A}^{\Lip}$ are 
closed under the holomorphic functional calculus inside $J$ and $A$, respectively.
\end{rem}

\subsection{Examples}
\label{subsec:examples}

\subsubsection{Manifolds with boundary}
\label{eg:diracboundary}
Let $M$ be an open submanifold of a complete Riemannian manifold 
$\wt{M}$, and let $S$ be a (possibly $\Z/2$-graded) 
Clifford module over $M$ with Clifford connection $\nabla$, 
which we assume extend to $\wt{M}$. To emphasize when 
we consider the open manifold and its closure, 
we write $M^\circ$ and $\ol{M}$, respectively. Let $\D_{\textnormal{min}}$ be the 
closure of the Dirac operator $\Dsla$ acting on smooth sections of 
$S$ with compact support in $M$. We use the notation 
$C^\infty_0(M^\circ):=C^\infty_c(\ol{M})\cap C_0(M^\circ)$. 
Here $C^\infty_c(\ol{M})$ is the space of restrictions of 
elements from $C^\infty_c(\wt{M})$ to $\ol{M}$. 

Then
$(C^\infty_0(M^\circ)\lhd C^\infty_c(\ol{M}),L^2(S),\D_{\textnormal{min}})$ 
is a relative spectral triple for $C_0(M^\circ)\lhd C_0(\ol{M})$, 
which is even if and only if $S$ is $\Z/2$-graded. 
It is not hard to see that Conditions 1. and 2. of Definition \ref{defn:alternate} 
are satisfied, and $f(1+\D^*_{\textnormal{min}}\D_{\textnormal{min}})^{-1/2}$ is compact for all 
$f\in C_0(\ol{M})$ by elliptic operator theory, 
in particular the Rellich Lemma and the identification of 
$\Dom(\D_{\textnormal{min}})$ with the closure of $\Gamma^\infty_c(M;S)$ 
in the first Sobolev space, \cite{BDT}*{Proposition 3.1}, \cite{HigsonRoe}*{10.4.3}.
Condition 4'. of Remark \ref{rem:alternatecondition} is always satisfied. 
Condition 4. of Definition \ref{defn:alternate} is satisfied when $M$ is a 
compact manifold with boundary, by the discussion in Remark \ref{rem:unitalandrwongcompact}.

In particular we obtain a relative spectral triple when 
$\ol{M}$ is a complete Riemannian manifold with 
boundary, although the case when $M$ is an open 
submanifold of a complete manifold is much more 
general. For concrete examples of relative 
spectral triples for  manifolds with boundary, see \cite{FMR}.
We note that by \cite[Theorem 2.12 and 2.17]{hewettmoiola}, 
$\D_{\textnormal{min}}$ is 
self-adjoint if $\wt{M}\take M$ is a submanifold of 
codimension greater than or equal to $2$.

The importance of the next example will be seen when we discuss doubles of relative
spectral triples in Subsection \ref{sec:double}.

\subsubsection{Relative spectral triples and extensions}
\label{technicallytrashtalking}
Relative unbounded Kasparov modules behave well 
under closed symmetric extensions of the operator $\D$. 
Classically, this corresponds to choosing symmetric 
boundary conditions. Let $(\J\lhd\A,X_B,\D)$ 
be a relative unbounded Kasparov module for $(J\lhd A,B)$. 
If $\hat{\D}$ is a regular closed symmetric extension of $\D$ 
with $\ker(\hat{\D}^*)$ complemented such that 
$a(1+\hat{\D}^*\hat{\D})^{-1/2}$ and $a(1-P_{\ker(\hat{\D}^*)})(1+\hat{\D}\hat{\D}^*)^{-1/2}$ are 
compact for all $a\in\A$, we can consider the subalgebra 
\begin{align*}
\hat{\A}&:=\{a\in \mathcal{A}: \, a\Dom(\hat{\D})\subseteq \Dom(\hat{\D})\}\quad\mbox{and the ideal}\\
\hat{\J}&:= \{j\in \hat{\A}: \,j\Dom(\hat{\D}^*)\subseteq \Dom(\hat{\D})\}.
\end{align*}
Let $\hat{A}$ and $\hat{J}$ denote the 
$C^*$-closures of $\hat{\J}$ and $\hat{\A}$, respectively. 
Then $(\hat{\J}\lhd \hat{\A},X_B,\hat{\D})$ is a relative 
unbounded Kasparov module for $(\hat{J}\lhd \hat{A},B)$. Note that if $(\J\lhd\A,X_B,\D)$ is even, then $\hat{\D}$ must be chosen to be odd.

Let us discuss the minimal closed extension $\D_{\textnormal{min}}$ of a Dirac operator on 
a compact manifold $\overline{M}$ with boundary in this context, so we have $B=\C$. In the  
general context of a $C^*$-algebra $B$, the discussion applies through the following construction:
given a smooth Hermitian $B$-bundle $\mathcal{E}_B\to \overline{M}$ with a connection  
and a closed symmetric extension $\hat{\D}$ of $\D_{\textnormal{min}}$, the twisted 
Dirac operator $\hat{\D}_\mathcal{E}$ is a closed regular symmetric extension of the twisted 
Dirac operator $\D_{\textnormal{min},\mathcal{E}}$ (see \cite{DGM}*{Subsection 1.5}).

For $\mathcal{A}=C^\infty(\overline{M})$ and $\hat{\D}$ 
the APS-extension of $\D_{\textnormal{min}}$, 
it follows from \cite[Proposition 4.7]{battistiseiler} that $\hat{\A}$ is the minimal unitisation of 
$C^\infty_0(M^\circ)$ (when $\partial M$ is connected: more generally, $\hat{\A}$ is the algebra of 
smooth functions which are locally constant when restricted to $\partial M$). Self-adjointness 
of the APS-extension implies that $\hat{\J}=\hat{\A}$. 
In this case, the subalgebra $\hat{\A}\subseteq \A$ 
corresponds to the quotient mapping $\overline{M}\to \overline{M}/\partial M\cong M\cup\{\infty\}$ 
that collapses the boundary of $M$ to a point.

In \cite{levyiochum}, chiral boundary conditions on a 
Dirac type operator were considered for an even-dimensional manifold $M$. 
The details of chiral boundary conditions are discussed in 
\cite{levyiochum}*{Subsection 4.2}. Let $\hat{\D}$ denote said extension, which is self-adjoint. 
By \cite{levyiochum}*{Theorem 4.5}, 
$(C^\infty(\overline{M}),L^2(M,S),\hat{\D})$ is an \emph{odd} spectral triple. 
The reason that the spectral triple is odd despite $M$ being even-dimensional is 
that chiral boundary conditions are not graded. 
In fact, the existence of a self-adjoint extension $\D_e$ defining an 
even spectral triple $(C^\infty(\overline{M}),L^2(M,S),\D_e)$ is obstructed 
by the class $\de[\D]\in K^1(C(\de M))$, where $[\D]$ is the class in $K^0(C_0(M^\circ))$ 
defined by the Dirac operator, which is independent of the choice of extension
(see Remark \ref{remarkonextensions}). An extensive discussion of spectral flow for 
(generalised) chiral boundary conditions appears in \cite{GLesch}.
In particular, \cite[Theorem 3.3]{GLesch} shows that $[(C^\infty(\overline{M}),L^2(M,S),\hat{\D})]$ 
is a torsion element in $K^1(C(\ol{M}))$. 

\subsubsection{Dimension drop algebras}
\label{dimdropsubex}
Let $\ol{M}$ be a manifold with boundary with Clifford module $S$ and Dirac operator $\Dsla$, 
as in subsection \ref{eg:diracboundary}. For $B\subset M_N(\C)$ a $C^*$-subalgebra, define
$$
\A=\{f\in C^\infty_c(\ol{M},M_N(\C)):\, f(x)\in B\,\,\,\forall x\in\de M\}.
$$
A typical choice for $B$ is the diagonal matrices. Let $J:=C_0(M^\circ,M_N(\C))\lhd A$ and $\J:=J\cap\A$. 
The data $(\J\lhd\A,L^2(S)\otimes\C^N,\D_{\text{min}}\otimes 1_N)$ determines a relative spectral triple 
for $J\lhd A:=\{f\in C_0(\ol{M},M_N(\C)):f(x)\in B\,\,\forall x\in\de M\}$.
All subsequent constructions for manifolds with boundary that we consider will automatically
work for these examples. This statement follows since the relative spectral triple is the pullback
of the relative spectral triple 
$$
(C^\infty_0(M^\circ,M_N(\C))\lhd C^\infty_c(\overline{M},M_N(\C)),L^2(S)\otimes\C^N,\D_{\text{min}}\otimes 1_N)
$$
along the obvious 
inclusion $A\hookrightarrow C_0(\ol{M},M_N(\C))$. 
For details on functoriality, see Remark \ref{functorrem}. While the spectrum of $\J$ is Hausdorff, 
the spectrum of $\A$ is not in general.

\subsubsection{Relative spectral triples on conical manifolds}
\label{eg:cones-on-the-brain}

Dirac operators on stratified pseudo-manifolds form a rich source of examples for relative spectral triples. 
A manifold with boundary can be viewed as a stratified pseudo-manifold with a codimension one strata: 
its boundary. We will consider the other extreme case of a stratified pseudo-manifold with stratas of 
maximal codimension, i.e. conical manifolds. The index theory for general stratified pseudo-manifolds 
was studied in \cite{alrp,alrp2}. The analysis on conical manifolds is better understood. The analytic 
results upon which this section rests can be found in Lesch \cite{leschhab}; 
we refer the reader there for precise details. 
Spectral triples on conical manifolds have been considered in the literature, see \cite{lescurethesis}. 
Nevertheless, revisiting conical manifolds within the framework of this paper serve to conceptualise 
the non-commutative geometry of singular manifolds.

Let $M$ be a conical manifold, whose cross section at the conical points is a 
closed $(d-1)$-dimensional manifold $N$. That is, we have a set of conical points 
$\mathfrak{c}=\{c_1,\dots,c_l\}\subseteq M$ and a decomposition $N=N_1\dot{\cup}\cdots \dot{\cup}N_l$ 
such that near any $c_i$ we have ``polar coordinates" $(r,x)$ where $0\leq r<1$, with $r=0$ 
corresponding to $c_i$, and $x$ denotes coordinates on $N$. The fact that $M$ is conical is encoded 
in a non-complete metric $g$ on $M\setminus \mathfrak{c}$. We assume that the metric $g$ is a 
straight-cone-metric, i.e. it will near $c_i$ take the form
$$
g=\mathrm{d} r^2+r^2h_i,
$$
for a metric $h_i$ on $N_i$. It is possible to have a smooth $r$-dependence in the metrics $(h_i)_{i=1}^l$, 
but it complicates the analysis so we assume that the cone is straight for simplicity. 
The manifolds $N_i$ are not necessarily connected. 
We let $M^{\textnormal{reg}}:=M\setminus \mathfrak{c}$.

We can construct a Dirac operator $\Dsla$ on $M$ acting on some Clifford bundle $S\to M$ 
of product type near the conical points. We can describe $\Dsla$ as follows. 
The change of metric from $r^2h_i$ to $h_i$ on 
$N_i\times \{r\}$ induces a bundle automorphism on $S|_{N_i}$ which we denote by $U_i(r)$. 
Under $U_i(r)$, the Dirac operator $\D$ takes the form  
$n(i\partial_r+r^{-1}\Dsla_{N_i})$ near the 
conical point $c_i$. 
Here $\Dsla_{N_i}$ is a bounded perturbation of the Dirac 
operator  on $N_i$ (in the metric $h_i$), and  $n$ is
Clifford multiplication by the normal vector $\overrightarrow{n}_N=\mathrm{d} r$. 
The Dirac operator $\Dsla_{N_i}$ act on $S|_{N_i}$ and anticommutes with 
$n$. 
We often identify $N_i$ with the submanifold 
$N_i\times \{1\}\subseteq N_i\times [0,1]/N_i\times \{0\}\subseteq M$.
The following theorem characterises the domains of Dirac operators on 
conical manifolds; details of the proof can be found in \cite{leschhab}.

\begin{thm}
\label{alltheconeprop}
Let $\D_{\textnormal{min}}$ denote the graph closure of $\Dsla|_{C^\infty(M\setminus \mathfrak{c},S)}$ 
and $\D_{\textnormal{max}}$ the operator $\Dsla$ with the domain
$$
\Dom(\D_{\textnormal{max}}):=\{f\in L^2(M,S): \, \Dsla f\in L^2(M,S)\}.
$$
The following properties hold:

1) The operator $\D_{\textnormal{min}}$ is a closed symmetric operator with 
$\D_{\textnormal{max}}=\D_{\textnormal{min}}^*$;

2) Any closed extension of $\D_{\textnormal{min}}$ contained in $\D_{\textnormal{max}}$ 
has compact resolvent;

3) The vector space $V:=\Dom(\D_{\textnormal{max}})/\Dom(\D_{\textnormal{min}})$ is finite dimensional 
and isomorphic to the subspace 
$$
V\cong W= \bigoplus_{i=1}^lW_i\subseteq 
\bigoplus_{i=1}^l C^\infty(N_i,S|_{N_i}).
$$
The vector subspaces $W_i$ are given by
$$
W_i:= \chi_{\left(-\frac{1}{2},\frac{1}{2}\right)}(\Dsla_{N_i})L^2(N_i,S|_{N_i})=
\bigoplus_{\lambda\in \sigma(\Dsla_{N_i}),|\lambda|<1/2}\bigoplus_{j=1}^{m_i(\lambda)} 
\C f_{i,\lambda,j}\subseteq C^\infty(N_i,S|_{N_i}),
$$
where $m_i$ is the multiplicity function of $\Dsla_{N_i}$ 
and $(f_{i,\lambda,j})_{j=1}^{m_i(\lambda)}$ 
is a basis for the eigenspace $\ker(\Dsla_{N_i}-\lambda)$. 
For suitable choices of bases 
$(f_{i,\lambda,j})_{j=1}^{m_i(\lambda)}$, the quotient mapping 
$\Dom(\D_{\text{min}})\to V$ can 
be split by a mapping $T=\oplus_{i=1}^l T_i:\oplus_{i=1}^lW_i\to \Dom(\D_{\text{max}})$ 
that takes the form
$$
f_{i,\lambda,j}\mapsto r^{-\lambda}\chi_i(r)[U_i(r)(f_{i,\lambda,j})].
$$
Here we extend $f_{i,\lambda,j}$ to a constant function on the cone 
$N_i\times [0,1]/N_i\times \{0\}$ and $\chi_i$ denotes a cutoff function.
\end{thm}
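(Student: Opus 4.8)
The plan is to reduce the whole statement to a local analysis near the conical set $\mathfrak{c}$, since on the complement of a neighbourhood of $\mathfrak{c}$ the manifold $M^{\textnormal{reg}}$ is an ordinary Riemannian manifold where the classical elliptic theory applies; a partition of unity then reduces 1)--3) to the corresponding statements for the model operator on a single cone $N_i\times(0,1)$. On such a cone I would conjugate by the bundle automorphism $U_i(r)$, bringing $\Dsla$ into the product form $n(i\partial_r+r^{-1}\Dsla_{N_i})$ on $L^2(N_i\times(0,1),\mathrm{d}r\,\mathrm{d}\textnormal{vol}_{h_i})$, and then separate variables: since $n$ is unitary and anticommutes with $\Dsla_{N_i}$, it interchanges the discrete eigenspaces $E_\lambda=\ker(\Dsla_{N_i}-\lambda)$ and $E_{-\lambda}$, and the operator decomposes as an orthogonal direct sum, over the pairs $\{\lambda,-\lambda\}$ with $\lambda\in\sigma(\Dsla_{N_i})$, of singular ordinary differential operators $\mathfrak{d}_\lambda$ on $L^2((0,1),\mathrm{d}r)\otimes(E_\lambda\oplus E_{-\lambda})$, whose radial part is of Euler type with indicial roots $\pm\lambda$. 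The theorem is then assembled by analysing each $\mathfrak{d}_\lambda$ and summing.

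The core is this ODE analysis. The homogeneous equation $\Dsla u=0$ on the $\lambda$-block has solutions behaving like $r^{-\lambda}f$ and $r^{\lambda}(nf)$ for $f\in E_\lambda$, and square-integrability near $r=0$ against $\mathrm{d}r$ forces the exponent to exceed $-\tfrac12$, which is the source of the threshold. A variation-of-parameters argument together with a Hardy inequality on $(0,1)$ should show that any $u$ with $\mathfrak{d}_\lambda u\in L^2$ admits an asymptotic expansion at $r=0$ whose singular part is spanned by the powers $r^{-\lambda}$ with $|\lambda|<\tfrac12$, the remainder lying in the closure of the sections supported away from $r=0$; for $|\lambda|\ge\tfrac12$ no such contribution survives, so $(\mathfrak{d}_\lambda)_{\textnormal{min}}=(\mathfrak{d}_\lambda)_{\textnormal{max}}$. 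Summing over $\lambda$ and over $i$ then gives that $V=\Dom(\D_{\textnormal{max}})/\Dom(\D_{\textnormal{min}})$ is finite dimensional with a basis indexed by the eigenvectors $f_{i,\lambda,j}$ having $|\lambda|<\tfrac12$, proving the description of $V$; reading off the leading term, multiplying by a cutoff $\chi_i$, and undoing the conjugation by $U_i$ produces the splitting $T=\oplus_i T_i$ with $T_i(f_{i,\lambda,j})=r^{-\lambda}\chi_i(r)[U_i(r)(f_{i,\lambda,j})]$. This settles 3).

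Parts 1) and 2) are comparatively soft once this is in hand. For 1), $\D_{\textnormal{min}}$ is closed by construction and symmetric because $\Dsla$ is formally self-adjoint and the defining sections are supported away from $\mathfrak{c}$, so Green's formula carries no boundary term; the inclusion $\D_{\textnormal{min}}^*\subseteq\D_{\textnormal{max}}$ is immediate, and the reverse inclusion is again integration by parts against such sections, giving $\D_{\textnormal{max}}=\D_{\textnormal{min}}^*$. For 2), I would show that $\Dom(\D_{\textnormal{min}})$, with the graph norm of $\Dsla$, embeds compactly into $L^2(M,S)$: near a conical point the Hardy inequality from the previous step upgrades the graph norm to a weighted first-order Sobolev norm on the cone (a Mellin--Sobolev norm) on the minimal domain, whose embedding into $L^2$ is compact, while away from $\mathfrak{c}$ this is the ordinary Rellich lemma, and a partition of unity combines the two. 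Any closed extension $\hat{\D}$ with $\D_{\textnormal{min}}\subseteq\hat{\D}\subseteq\D_{\textnormal{max}}$ then has domain containing $\Dom(\D_{\textnormal{min}})$ with finite codimension, so it too embeds compactly into $L^2$ in the graph norm, which is equivalent to $\hat{\D}$ having compact resolvent.

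The hard part will be the tip analysis of the second paragraph: establishing the sharp asymptotic expansion of elements of $\Dom(\D_{\textnormal{max}})$ and the accompanying weighted Hardy-type inequalities, which is precisely what fixes the range $|\lambda|<\tfrac12$ and forces the remainder into $\Dom(\D_{\textnormal{min}})$. Once the straight-cone model is understood, passing to the genuinely conical (only asymptotically product) geometry and patching with the regular part is routine; the detailed execution is carried out in \cite{leschhab}.
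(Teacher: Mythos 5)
Your outline is essentially correct and coincides with the standard argument: the paper itself gives no proof of this theorem, deferring entirely to Lesch \cite{leschhab}, and your reduction to the straight-cone model, conjugation by $U_i(r)$, separation of variables into Euler-type radial operators with indicial roots $\pm\lambda$, and the $L^2$-threshold analysis singling out $|\lambda|<\tfrac12$ (with the borderline $|\lambda|=\tfrac12$ correctly flagged as the delicate point, together with the Hardy/Mellin--Sobolev estimates that force the remainder into the minimal domain and give the compact embedding for part 2) is precisely the Fuchs-type analysis carried out there. No gaps beyond the acknowledged technical execution, which is exactly what the paper also leaves to the cited reference.
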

Let us turn to our setup of relative spectral triples. We take $J=C_0(M\take\mathfrak{c})$, $A=J+\sum_{i=1}^\ell\C \chi_j$, $\J=C_c^\infty(M\take\mathfrak{c})$, and $\A=\J+\sum_{i=1}^\ell\C\chi_j$, where $\chi_j$ is a cutoff near the 
conical point $c_j$. It clearly holds that
$$
A/J\cong \C^l\cong C(\mathfrak{c}) \quad\mbox{-- the continuous functions on $\mathfrak{c}$}.
$$
We note that $\A\subset A$ and $\J\subseteq J$ are both dense 
holomorphically closed sub-$\ast$-algebras, and that $\A$ preserves $\Dom(\D_{\text{max}})$.
With this Theorem \ref{alltheconeprop}, it is straightforward to show that 
$(\J\lhd \A,L^2(M,S),\D_{\text{min}})$ is a relative spectral triple for $J\lhd A$. 

There are more relative spectral triples associated to this example, and we will describe
them later when we discuss the Clifford normal and boundary Hilbert space.

For more general stratified manifolds, for which we refer to \cite[Section 2 and 3]{alrp}, 
with an iterated edge metric, it seems likely that 
Dirac operators associated to iterated edge metrics yield relative spectral triples. 
On a stratified pseudo-manifold $X$ with an iterated edge metric $g$ and a Clifford 
bundle $S\to X$, one can construct a Dirac operator $\Dsla_g$ acting on 
$C^\infty_c(X^{\textnormal{reg}},S)$, where $X^{\textnormal{reg}}\subseteq X$ denotes the regular 
part. We let $\D_{\textnormal{min}}$ denote its closure. The candidate for a relative 
spectral triple is $(C^\infty_c(X^{\textnormal{reg}})\lhd C^\infty_c(X), L^2(X,S),\D_{\textnormal{min}})$. 
Dirac operators on such singular manifolds were discussed in 
\cite{alrp,alrp2}. Sufficient conditions for essential self-adjointness were given in \cite{alrp}.

\subsection{The bounded transform of relative unbounded Kasparov modules}
\label{sect:boundedtransform}

The main result of this section is that the 
bounded transform $(X_B,\D(1+\D^*\D)^{-1/2})$ of a $G$-equivariant
relative unbounded Kasparov module $(\J\lhd \A,X_B,\D)$ is a 
$G$-equivariant relative Kasparov module for $(J\lhd A,B)$. 
Hence, a $G$-equivariant relative unbounded Kasparov module defines a class in relative 
$KK$-theory. The proof follows the same ideas as the 
proof that the bounded transform of an unbounded 
Kasparov module is a bounded Kasparov module, \cite{BaajJulg}, though the use of symmetric
operators necessitates additional technicalities. 
A similar method is also used in \cite{Hilsum}*{\S3} to show 
that the bounded transform of a ``half-closed operator'' 
\cite{Hilsum}*{p. 77} defines a bounded Kasparov module.

We begin with some results concerning non-self-adjoint operators.
The notation $\ol{T}$ denotes the operator closure of a 
closeable operator $T$ (i.e. $\ol{T}$ is the operator whose graph is the closure of the graph of $T$).
First we summarise a range of elementary results about 
closed regular operators that are required for the proof, 
and for our subsequent developments of the theory.
\begin{lemma}
\label{lem:domains}
Let $\D:\Dom(\D)\subset X_B\to X_B$ be a closed (unbounded) regular
operator on a Hilbert module $X_B$. Then:

1) $(1+\D^*\D)^{-1/2}:X_B\to \Dom(\D)$ 
is a unitary isomorphism for the graph inner product on $\Dom(\D)$;

2) $(\D(1+\D^*\D)^{-1/2})^*=\overline{(1+\D^*\D)^{-1/2}\D^*}=\D^*(1+\D\D^*)^{-1/2}$;

3) $\|\D(1+\lambda+\D^*\D)^{-1/2}\|_{\End^*_B(X_B)}\leq1$; 

4) $\|(1+\lambda+\D^*\D)^{-1/2}\|_{\End^*_B(X_B)}\leq\frac{1}{\sqrt{1+\lambda}},$
for all $\lambda\in[0,\infty)$.

Moreover, if $\A\subset \End^*_B(X_B)$ is a $\ast$-algebra such that $(1+\D^*\D)^{-1/2}a$ 
is $B$-compact for all $a\in\A$, then $(1+\lambda+\D^*\D)^{-1/2}a$ is $B$-compact for all $a\in\A$.
\end{lemma}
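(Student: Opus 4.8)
The plan is to establish each of the four numbered identities/estimates using the functional calculus for the positive self-adjoint regular operator $\D^*\D$ together with the polar decomposition $\D = V|\D|$, and then handle the final $B$-compactness claim by a resolvent-comparison argument.

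For item 1), recall that $(1+\D^*\D)^{-1/2}$ maps $X_B$ into $\Dom((1+\D^*\D)^{1/2}) = \Dom(|\D|) = \Dom(\D)$; the point is that it is isometric for the graph inner product. I would compute, for $\xi \in X_B$, the graph norm $\|(1+\D^*\D)^{-1/2}\xi\|^2 + \|\D(1+\D^*\D)^{-1/2}\xi\|^2 = \langle \xi, (1+\D^*\D)^{-1}\xi\rangle + \langle \xi, \D^*\D(1+\D^*\D)^{-1}\xi\rangle = \langle \xi, \xi\rangle$, using $\D^*\D(1+\D^*\D)^{-1} = 1 - (1+\D^*\D)^{-1}$; then surjectivity onto $\Dom(\D)$ with the graph norm follows since $(1+\D^*\D)^{1/2}$ is an isometric bijection from $\Dom(\D)$ (graph norm) onto $X_B$. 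For item 2), I would use the standard relation $(\D(1+\D^*\D)^{-1/2})^* = (1+\D^*\D)^{-1/2}\D^*$ on the natural domain and then show this closes up to $\D^*(1+\D\D^*)^{-1/2}$; the key intertwining is $\D^*(1+\D\D^*)^{-1/2} = (1+\D^*\D)^{-1/2}\D^*$, which follows from $\D^*\D\D^* = \D^*(\D\D^*)$ hence $f(\D^*\D)\D^* = \D^* f(\D\D^*)$ for bounded Borel $f$, applied to $f(t) = (1+t)^{-1/2}$ — this needs a small density/closability check to identify the closures.

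For items 3) and 4), these are pure functional calculus estimates for $\D^*\D \geq 0$: the function $t \mapsto t^{1/2}(1+\lambda+t)^{-1/2}$ is bounded by $1$ on $[0,\infty)$, giving 3) after noting $\|\D(1+\lambda+\D^*\D)^{-1/2}\xi\|^2 = \langle \xi, \D^*\D(1+\lambda+\D^*\D)^{-1}\xi\rangle \leq \|\xi\|^2$; and $t \mapsto (1+\lambda+t)^{-1/2}$ is bounded by $(1+\lambda)^{-1/2}$, giving 4). One should be a little careful that $\D^*\D(1+\lambda+\D^*\D)^{-1}$ is genuinely the bounded operator given by functional calculus and that $\D(1+\lambda+\D^*\D)^{-1/2}$ is adjointable (its adjoint being $(1+\lambda+\D^*\D)^{-1/2}\D^*$ by the same reasoning as in 2)), but this is routine once regularity of $\D$ is invoked.

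The last assertion is the only place where something beyond functional calculus on a single operator is needed, and I expect it to be the main (though still mild) obstacle. The strategy is to write $(1+\lambda+\D^*\D)^{-1/2} = (1+\D^*\D)^{-1/2} \cdot g_\lambda(\D^*\D)$ where $g_\lambda(t) = (1+t)^{1/2}(1+\lambda+t)^{-1/2}$ is a bounded Borel (indeed continuous, bounded by $1$) function, so $g_\lambda(\D^*\D) \in \End^*_B(X_B)$; then $(1+\lambda+\D^*\D)^{-1/2}a = g_\lambda(\D^*\D)\,(1+\D^*\D)^{-1/2}a$ — after checking $g_\lambda(\D^*\D)$ commutes with $(1+\D^*\D)^{-1/2}$, which it does as both are functions of $\D^*\D$ — exhibits it as the product of an adjointable operator with a $B$-compact one, hence $B$-compact since $\End^0_B(X_B)$ is an ideal in $\End^*_B(X_B)$. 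The only subtlety is justifying the operator identity $(1+\lambda+\D^*\D)^{-1/2} = (1+\D^*\D)^{-1/2} g_\lambda(\D^*\D)$ as a product of everywhere-defined bounded operators, which again is immediate from the multiplicativity of the bounded Borel functional calculus of the regular operator $\D^*\D$.
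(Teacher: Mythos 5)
Your proof is correct and follows essentially the same route the paper takes, since the paper simply delegates to references (Lance's Theorem 10.4 for part 2, Carey--Phillips-style estimates for parts 3 and 4, and a resolvent/factorisation computation for the final claim) that rest on exactly the polar-decomposition and functional-calculus facts you invoke, with the only step needing genuine care being the intertwining $f(\D^*\D)\D^*\subseteq \D^* f(\D\D^*)$, which you correctly flag. One small caution: for regular self-adjoint operators on Hilbert $C^*$-modules only the \emph{continuous} bounded functional calculus is available (there is no Borel calculus in general), but since every function you use, including $g_\lambda$, is continuous and bounded on $[0,\infty)$, your argument is unaffected.
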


Part 1) is well-known; for a proof see \cite{ForsythThesis}*{Lemma C.2}. 
Part 2) follows from \cite{Lance}*{Theorem 10.4}. 
If $\D$ is regular then so is $\D^*$, \cite{KaadLesch2}*{Lemma 2.2}, 
so parts 1) and 2) of Lemma \ref{lem:domains} also 
hold when $\D$ is replaced by $\D^*$. The proof of 
parts 3) and 4) follows the ideas of \cite[Appendix A]{CP1}, and the 
full argument can be found in \cite{ForsythThesis}*{Lemma 8.7}. The last
statement is a resolvent computation, 
\cite{ForsythThesis}*{Lemma 8.6}.

Next we recall some of the subtleties that arise when we want to
take commutators with a symmetric operator, as opposed to a self-adjoint operator.

\begin{lemma}
\label{lem:commutator}
Let $\D:\Dom(\D)\subset X_B\to X_B$ be an odd closed symmetric regular 
operator on a $\Z/2$-graded Hilbert module $X_B$, and let 
$\A\subset\End^*_B(X_B)$ be a sub-$\ast$-algebra such that for all 
$a\in\A$,  $a\cdot\Dom(\D)\subset\Dom(\D)$ 
and $[\D,a]_\pm$ has an adjointable extension to $X_B$. Then

1) $a\cdot\Dom(\D^*)\subset\Dom(\D^*)$, 
so that $[\D^*,a]_\pm$ is defined on $\Dom(\D^*)$ for all $a\in\A$,

2) $[\D^*,a]_\pm$ is bounded and extends to $\ol{[\D,a]_\pm}$ for all $a\in\A$, and

3) for all $a\in\A$ of homogeneous degree, 
\begin{align*}
[(1+\lambda+\D^*\D)^{-1},a]&=-\D^*(1+\lambda+\D\D^*)^{-1}[\D,a]_\pm(1+\lambda+\D^*\D)^{-1}\\
&\quad-(-1)^{\deg a}(1+\lambda+\D^*\D)^{-1}[\D^*,a]_\pm\D(1+\lambda+\D^*\D)^{-1}.
\end{align*}
\end{lemma}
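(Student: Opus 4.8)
\textbf{Proof plan for Lemma \ref{lem:commutator}.}

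The plan is to work systematically through the three claims, using regularity of $\D$ and $\D^*$ together with Part 1) of Lemma \ref{lem:domains} at each stage. For claim 1), I would argue as follows: since $a$ has an adjoint $a^*$ which also lies in $\A$ (as $\A$ is a $*$-algebra), $a^*$ preserves $\Dom(\D)$ and $[\D,a^*]_\pm$ is adjointable. Now take $\xi\in\Dom(\D^*)$ and any $\eta\in\Dom(\D)$; compute $\langle \D\eta, a\xi\rangle$ by moving $a^*$ across to $\eta$, using that $a^*\eta\in\Dom(\D)$, rewriting $\D(a^*\eta)=\pm a^*\D\eta+[\D,a^*]_\pm\eta$, and then pairing against $\xi\in\Dom(\D^*)$. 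This exhibits $\eta\mapsto\langle\D\eta,a\xi\rangle$ as a bounded functional of $\eta$ represented by an element of $X_B$ (here one must be slightly careful since we are on a Hilbert module, but the defining property of $\Dom(\D^*)$ is exactly that such a representing element exists), whence $a\xi\in\Dom(\D^*)$, and the representing element is $\pm a\D^*\xi + \ol{[\D,a]_\pm}\xi$ after identifying the adjoint of $[\D,a^*]_\pm$. This simultaneously gives claim 2): $[\D^*,a]_\pm = \D^* a - (-1)^{|a||\D|}a\D^*$ acts on $\xi\in\Dom(\D^*)$ as $\ol{[\D,a]_\pm}\xi$, a bounded operator, so $[\D^*,a]_\pm$ is bounded and its closure agrees with $\ol{[\D,a]_\pm}$.

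For claim 3) I would start from the algebraic identity for a commutator with a product, namely $[XY,a] = X[Y,a]+[X,a]Y$ applied formally, but since we are dealing with unbounded operators the clean route is to write $R_\lambda := (1+\lambda+\D^*\D)^{-1}$ and $\tilde R_\lambda := (1+\lambda+\D\D^*)^{-1}$, and to verify the identity by sandwiching: multiply the claimed right-hand side on the left by $(1+\lambda+\D^*\D)$ and on the right by $(1+\lambda+\D^*\D)$, and check that one recovers $(1+\lambda+\D^*\D)a - a(1+\lambda+\D^*\D) = [\D^*\D,a]$. Expanding $[\D^*\D,a] = \D^*[\D,a]_\pm(-1)^{?} + [\D^*,a]_\pm\D$ with the correct Koszul signs (using that $\D$, $\D^*$ are odd when $a$ is being graded-commuted, so the sign $(-1)^{\deg a}$ enters precisely where the second term's $\D^*$ is pushed past $a$), and using the intertwining relation $\D R_\lambda = \tilde R_\lambda \D$ on $\Dom(\D)$ (equivalently $\D(1+\lambda+\D^*\D)^{-1} = (1+\lambda+\D\D^*)^{-1}\D$, which follows from $\D\D^*\D = \D^*\D\D$ on the appropriate domain and boundedness of the resolvents via Lemma \ref{lem:domains} parts 3) and 4)), the two expressions match. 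The domain bookkeeping — checking that every composite is applied to vectors in the correct domain, so that all the formal manipulations are legitimate — is where I would be most careful; Part 1) of Lemma \ref{lem:domains} guarantees $R_\lambda$ maps into $\Dom(\D)$ (indeed into $\Dom(\D^*\D)$) and $\tilde R_\lambda$ into $\Dom(\D^*)$, which is what makes the intertwining and the application of $[\D,a]_\pm$, $[\D^*,a]_\pm$ meaningful.

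The main obstacle I anticipate is not any single deep step but rather the interplay of two sources of bookkeeping: the Koszul signs from the $\Z/2$-grading in the graded commutators, and the domain subtleties coming from $\D$ being merely symmetric rather than self-adjoint (so $\D$ and $\D^*$ have genuinely different domains, and one cannot freely replace one by the other). Concretely, in claim 3) the term $[\D^*,a]_\pm\D(1+\lambda+\D^*\D)^{-1}$ requires knowing that $[\D^*,a]_\pm$ — a priori only densely defined on $\Dom(\D^*)$ — is applied to $\D R_\lambda\xi$, and one needs claims 1) and 2) already in hand to know this makes sense and equals $\ol{[\D,a]_\pm}\D R_\lambda\xi$. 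So the logical order matters: establish 1) and 2) first, then use them freely in 3). Once that structure is respected, claim 3) is a bounded-operator identity verified by the sandwiching computation above, with Lemma \ref{lem:domains} supplying all the needed boundedness and mapping properties.
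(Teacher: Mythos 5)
Your argument for parts 1) and 2) is correct and is essentially the argument the paper relies on (the paper itself does not reprove these; it cites \cite{Hilsum}*{Lemma 2.1} for 1)--2) and the first author's thesis for 3)). The duality computation you describe — moving $a^*$ onto $\eta\in\Dom(\D)$, using $a^*\Dom(\D)\subset\Dom(\D)$, and producing an explicit representing element so that $a\xi\in\Dom(\D^*)$ with $[\D^*,a]_\pm\xi=\ol{[\D,a]_\pm}\xi$ after identifying $(\ol{[\D,a^*]_\pm})^*=-(-1)^{\deg a}\ol{[\D,a]_\pm}$ — is exactly right, and you correctly note that in the Hilbert-module setting it is the existence of the representing element (not mere boundedness of the functional) that puts $a\xi$ in $\Dom(\D^*)$.

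For part 3) your ingredients are the right ones (the expansion $[\D^*\D,a]=\D^*[\D,a]_\pm+(-1)^{\deg a}[\D^*,a]_\pm\D$, the intertwining relations, and the prior availability of 1)--2) so that $[\D^*,a]_\pm$ may be applied to $\D(1+\lambda+\D^*\D)^{-1}\xi\in\Dom(\D^*)$), but the literal two-sided sandwich does not quite make sense: neither $[(1+\lambda+\D^*\D)^{-1},a]\xi$ nor the first term of the right-hand side lands in $\Dom(\D^*\D)$ in general, so left-multiplying by $1+\lambda+\D^*\D$ is undefined; in particular $(1+\lambda+\D^*\D)\D^*(1+\lambda+\D\D^*)^{-1}\eta$ is only defined when $\eta\in\Dom(\D^*)$, which fails for $\eta=\ol{[\D,a]_\pm}\zeta$. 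The repair stays within your scheme: sandwich only on the right, i.e. apply both sides to $(1+\lambda+\D^*\D)\zeta$ with $\zeta\in\Dom(\D^*\D)$, expand $a(1+\lambda+\D^*\D)\zeta$ using the graded commutators (legitimate since $\D\zeta\in\Dom(\D^*)$ and $a\D\zeta\in\Dom(\D^*)$ by part 1)), and move resolvents inward via $(1+\lambda+\D^*\D)^{-1}\D^*\subseteq\D^*(1+\lambda+\D\D^*)^{-1}$ and $(1+\lambda+\D\D^*)^{-1}\D\subseteq\D(1+\lambda+\D^*\D)^{-1}$; this yields the stated identity directly, with no left factor to cancel (and note that sandwiching the left-hand side actually produces $-[\D^*\D,a]$, not $+[\D^*\D,a]$ as you wrote, though this sign is absorbed by the minus signs already present on the right-hand side). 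With that adjustment your proof matches the detailed argument the paper defers to \cite{ForsythThesis}*{Lemma 8.5}.
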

Parts 1) and 2) of this result can be found in \cite[Lemma 2.1]{Hilsum}. A version of 3) appears
in \cite[Equation 3.2]{Hilsum}, and a detailed proof of this statement can be found in 
\cite{ForsythThesis}*{Lemma 8.5}. The importance of part 3) can be seen from \cite{FMR}
where it is shown that not taking proper care of domains in this 
commutator expression can lead to the bounded transform {\em not} defining
a (relative) Fredholm module. See also \cite[Lemma 2.3]{CP1}.

For elements of the ideal $\J$, part 3) of the above lemma admits a significant refinement.
\begin{lemma}
\label{lem:adjointbound}
Let $A$ and $B$ be $\Z/2$-graded $C^*$-algebras, 
with $A$ separable and $B$ $\sigma$-unital, 
and let $J\lhd A$ be an ideal. Let $(\J\lhd\A,X_B,\D)$ be a 
relative unbounded Kasparov module for $(J\lhd A,B)$, and let 
$ \D_e\subset \D^*$ be a closed, (odd if $(\J\lhd\A,X_B,\D)$ is even),  regular extension of $\D$. Then
\begin{align*}
j\D(1+\lambda+\D^*\D)^{-1}-(1+\lambda+\D_e^*\D_e)^{-1}&j\D
=\D_e^*(1+\lambda+\D_e\D_e^*)^{-1}[\D^*,j]_\pm\D(1+\lambda+\D^*\D)^{-1}\\
&+(-1)^{\deg j}(1+\lambda+\D_e^*\D_e)^{-1}[\D,j]_\pm\D^*\D(1+\lambda+\D^*\D)^{-1}
\end{align*}
for all  $j\in\J$ of homogeneous degree and $\lambda\in[0,\infty)$, where both sides of the equation are defined on $\Dom(\D)$,
and hence
$$
\|\ol{j\D(1+\lambda+\D^*\D)^{-1}-(1+\lambda+\D_e^*\D_e)^{-1}j\D}\|_{\End_B^*(X_B)}\leq\frac{2\|\ol{[\D,j]_\pm}\|}{1+\lambda}.
$$
\end{lemma}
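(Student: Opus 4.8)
The plan is to derive the stated identity on $\Dom(\D)$ by manipulating resolvents, and then to estimate the right-hand side using the norm bounds of Lemma \ref{lem:domains} together with the fact that $[\D^*,j]_\pm$ extends $\ol{[\D,j]_\pm}$ (Lemma \ref{lem:commutator}, parts 1) and 2)). The starting point is the algebraic identity
\begin{align*}
j\D(1+\lambda+\D^*\D)^{-1}&-(1+\lambda+\D_e^*\D_e)^{-1}j\D\\
&=(1+\lambda+\D_e^*\D_e)^{-1}\bigl[(1+\lambda+\D_e^*\D_e)j-j(1+\lambda+\D^*\D)\bigr]\D(1+\lambda+\D^*\D)^{-1},
\end{align*}
which holds provided everything is defined on the relevant domains. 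Here the key algebraic input is that on $\Dom(\D)$ one has $\D_e\supseteq \D$, so $\D_e\xi=\D\xi$ for $\xi\in\Dom(\D)$, and $\D_e^*\subseteq\D^*$; the middle bracket then formally simplifies to $\D_e^*\D j-j\D^*\D=\D_e^*\D j-\D_e^*j\D+\D_e^*j\D-j\D^*\D=\D_e^*[\D,j]_\pm+([\D^*,j]_\pm)^{*}$-type terms. One must be careful about the sign conventions for graded commutators and about which operator ($\D$ or $\D^*$ or $\D_e$) acts where; the cleanest route is to apply the bracket to a vector of the form $(1+\lambda+\D^*\D)^{-1}\eta$, which lies in $\Dom(\D^*\D)\subseteq\Dom(\D)$, push $j$ through one factor of $\D$ at a time picking up $[\D,j]_\pm$ and $[\D^*,j]_\pm$, and use $j\Dom(\D^*)\subseteq\Dom(\D)$ (Condition 2 of Definition \ref{defn:alternate}) to guarantee that $\D_e$ may be replaced by $\D$ on the relevant range. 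This reproduces exactly the two displayed terms, with the $(-1)^{\deg j}$ coming from commuting $j$ past the leftmost $\D^*$ in $\D^*\D$.

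Once the identity is established, the norm estimate is routine. On the first term, write $\D_e^*(1+\lambda+\D_e\D_e^*)^{-1}=\bigl(\D_e(1+\lambda+\D_e^*\D_e)^{-1/2}\bigr)^*(1+\lambda+\D_e\D_e^*)^{-1/2}$ (using part 2) of Lemma \ref{lem:domains} applied to $\D_e$), so its norm is at most $1\cdot(1+\lambda)^{-1/2}$ by parts 3) and 4). The middle factor $[\D^*,j]_\pm$ has norm $\|\ol{[\D,j]_\pm}\|$ by Lemma \ref{lem:commutator} 2), and $\D(1+\lambda+\D^*\D)^{-1}=\D(1+\lambda+\D^*\D)^{-1/2}\cdot(1+\lambda+\D^*\D)^{-1/2}$ has norm at most $(1+\lambda)^{-1/2}$ by parts 3) and 4). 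Multiplying gives $\|\ol{[\D,j]_\pm}\|/(1+\lambda)$ for the first term. For the second term, $(1+\lambda+\D_e^*\D_e)^{-1}$ contributes $(1+\lambda)^{-1}$, the commutator contributes $\|\ol{[\D,j]_\pm}\|$, and $\D^*\D(1+\lambda+\D^*\D)^{-1}$ is a bounded operator of norm at most $1$ (since $\D^*\D(1+\lambda+\D^*\D)^{-1}=1-(1+\lambda)(1+\lambda+\D^*\D)^{-1}$, which is positive and bounded by $1$). This gives $\|\ol{[\D,j]_\pm}\|/(1+\lambda)$ for the second term, and adding yields the claimed bound $2\|\ol{[\D,j]_\pm}\|/(1+\lambda)$. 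Since both sides of the operator identity are defined on $\Dom(\D)$, which is dense and on which the left-hand side is bounded by the above, the closure extends to an adjointable operator on $X_B$ with the same bound.

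The main obstacle is the first paragraph: getting the operator identity to hold \emph{on the nose on $\Dom(\D)$} rather than just formally. The subtlety is precisely the one flagged in the discussion after Lemma \ref{lem:commutator}, namely that $\D$ is only symmetric, so $\D^*$ and $\D_e^*$ genuinely differ from $\D$, and one cannot cavalierly slide $j$ past these operators without checking that the intermediate vectors lie in the correct domains. The two facts that make this work are Condition 2 of Definition \ref{defn:alternate} ($j\Dom(\D^*)\subseteq\Dom(\D)$, which lets $j\D^*$ be rewritten using $\D$ and a bounded commutator) and part 1) of Lemma \ref{lem:commutator} ($j\Dom(\D^*)\subseteq\Dom(\D^*)$ hence $[\D^*,j]_\pm$ makes sense), combined with the fact that $(1+\lambda+\D^*\D)^{-1}$ maps $X_B$ into $\Dom(\D^*\D)\subseteq\Dom(\D)$ and $\D$ maps this into $\Dom(\D^*)$. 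I would therefore write the proof by applying each side to $\eta\in X_B$, setting $\xi=(1+\lambda+\D^*\D)^{-1}\eta$, and tracking the chain $\xi\in\Dom(\D^*\D)$, $\D\xi\in\Dom(\D^*)$, $j\D\xi\in\Dom(\D)$, which licenses every manipulation. This is essentially a careful bookkeeping exercise of the same flavour as \cite{ForsythThesis}*{Lemma 8.5}, and I expect no genuinely new difficulty beyond that.
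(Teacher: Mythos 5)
Your second paragraph (the norm estimate) is correct and is exactly how the paper argues, but there is a genuine gap in the first paragraph, and it is precisely the domain issue this lemma exists to control. Your starting identity factors out \emph{both} resolvents and places the raw difference $(1+\lambda+\D_e^*\D_e)j-j(1+\lambda+\D^*\D)$ in the middle; on the vectors that actually arise this bracket is not defined, and the bookkeeping chain you cite ($\xi\in\Dom(\D^*\D)$, $\D\xi\in\Dom(\D^*)$, $j\D\xi\in\Dom(\D)$) does not repair it. Concretely, with $\xi=(1+\lambda+\D^*\D)^{-1}\eta$: the piece $j(1+\lambda+\D^*\D)$ would have to act on $\D\xi$, which lies in $\Dom(\D^*)$ but not in $\Dom(\D^*\D)$ (that would require $\D\xi\in\Dom(\D)$); and cancelling $(1+\lambda+\D_e^*\D_e)^{-1}(1+\lambda+\D_e^*\D_e)$ against $j\D\xi$ requires $j\D\xi\in\Dom(\D_e^*\D_e)$, i.e.\ $\D(j\D\xi)\in\Dom(\D_e^*)$. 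But $\D(j\D\xi)=[\D^*,j]_\pm\D\xi+(-1)^{\deg j}j\D^*\D\xi$, and while the second summand lies in $\Dom(\D)$ when $\eta\in\Dom(\D)$, the first is merely the value of the bounded operator $\ol{[\D,j]_\pm}$ at $\D\xi$ and has no regularity at all. So the proviso ``provided everything is defined on the relevant domains'' is exactly what fails, and it is not a bookkeeping matter: your factorization forces $(1+\lambda+\D_e^*\D_e)$ and $(1+\lambda+\D^*\D)$ to be applied to vectors outside their domains.

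The fix is a reorganization, which is what the paper's proof does. Factor out only the right resolvent, writing the left-hand side on $\Dom(\D)$ as $\big(j\D-(1+\lambda+\D_e^*\D_e)^{-1}j\D(1+\lambda+\D^*\D)\big)(1+\lambda+\D^*\D)^{-1}$, so that $(1+\lambda+\D_e^*\D_e)$ is never applied to anything; then use the bounded functional-calculus identity $1-(1+\lambda)(1+\lambda+\D_e^*\D_e)^{-1}=\D_e^*\D_e(1+\lambda+\D_e^*\D_e)^{-1}$ to produce $\D_e^*\D_e(1+\lambda+\D_e^*\D_e)^{-1}j\D$, move $\D_e$ through the resolvent via $\D_e(1+\lambda+\D_e^*\D_e)^{-1}=(1+\lambda+\D_e\D_e^*)^{-1}\D_e$ on $\Dom(\D_e)\ni j\D\xi$ (with $\D_e j\D\xi=\D^*j\D\xi$ since $\D\subset\D_e\subset\D^*$), use the analogous identity $\D_e^*(1+\lambda+\D_e\D_e^*)^{-1}j=(1+\lambda+\D_e^*\D_e)^{-1}\D j$ on $\Dom(\D)$, and finally add and subtract $(-1)^{\deg j}\D_e^*(1+\lambda+\D_e\D_e^*)^{-1}j\D^*\D$ to generate the two commutators. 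Note also that the resulting identity holds on $\Dom(\D)$ only, not on all of $X_B$ as you suggest at the end: one needs $\eta\in\Dom(\D)$ both for the term $(1+\lambda+\D_e^*\D_e)^{-1}j\D\eta$ on the left-hand side and to guarantee $\D^*\D(1+\lambda+\D^*\D)^{-1}\eta\in\Dom(\D)$ (the paper's Equation \eqref{eqn:domainissues}) so that $[\D,j]_\pm$, rather than its closure, makes sense in the second term. With the identity established this way, your estimate via parts 2)--4) of Lemma \ref{lem:domains} and $\ol{[\D^*,j]_\pm}=\ol{[\D,j]_\pm}$ goes through verbatim and gives the bound $2\|\ol{[\D,j]_\pm}\|/(1+\lambda)$.
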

This lemma can be used when proving that
the bounded transform 
of closed extensions $\D_e$ yield
Kasparov modules for $J$ (see Theorem \ref{thm:extensions}). 
Since its proof is rather technical, it can be found in Appendix \ref{sec:technicalproof}.

The following result is the main theorem of this section. Again we defer
the proof to the Appendix. Conceptually it follows the same scheme as \cite{BaajJulg}, but
the use of symmetric operators complicates the details significantly.

\begin{thm}
\label{thm:alternate}
Let  $G$ be a compact group, $A$ and $B$ be $\Z/2$-graded $G-C^*$-algebras, 
with $A$ separable and $B$ $\sigma$-unital 
and let $J\lhd A$ be a $G$-invariant graded ideal. Let $(\J\lhd\A,X_B,\D)$ be a 
$G$-equivariant relative unbounded Kasparov module for $(J\lhd A,B)$, and let 
$F=\D(1+\D^*\D)^{-1/2}$ be the bounded transform 
of $\D$. Then $(X_B,F)$ is a $G$-equivariant relative Kasparov 
module for $(J\lhd A,B)$. The same holds true when replacing Condition 4. in 
Definition \ref{defn:alternate} by Condition 4'. in Remark \ref{rem:alternatecondition}.
\end{thm}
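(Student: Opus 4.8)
The plan is to verify that $(X_B, F)$ satisfies the three defining conditions of a relative Kasparov module for $(J \lhd A, B)$: that $[F, \rho(a)]_\pm$ is $B$-compact for all $a \in A$, that $\rho(j)(F - F^*)$ is $B$-compact for all $j \in J$, and that $\rho(j)(1 - F^2)$ is $B$-compact for all $j \in J$. By a standard density and continuity argument (since the set of $a$ with $[F,\rho(a)]_\pm$ compact is norm-closed, and similarly for $j$), it suffices to prove these for $a \in \A$ and $j \in \J$, and then extend to $A$ and $J$. The key tools are Lemma \ref{lem:domains}, Lemma \ref{lem:commutator}, and Lemma \ref{lem:adjointbound}, together with the integral formula $F = \D(1+\D^*\D)^{-1/2} = \frac{1}{\pi}\int_0^\infty \D(1+\lambda+\D^*\D)^{-1}\,\lambda^{-1/2}\,d\lambda$ (and its $\D^*$-analogue via Lemma \ref{lem:domains}(2)), which converts statements about resolvents into statements about $F$.

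First I would handle the commutator condition. For $a \in \A$ of homogeneous degree, write $F = \frac{1}{\pi}\int_0^\infty \D(1+\lambda+\D^*\D)^{-1}\lambda^{-1/2}\,d\lambda$ and compute $[F, \rho(a)]_\pm$ by moving $\rho(a)$ through the resolvent using Lemma \ref{lem:commutator}(3), being careful that the resulting expression involves $[\D,\rho(a)]_\pm$ and $[\D^*,\rho(a)]_\pm$, both of which are bounded by Lemma \ref{lem:commutator}(2). Each resulting term contains a factor of the form $(1+\lambda+\D^*\D)^{-1/2}$ or $(1+\lambda+\D\D^*)^{-1/2}$ adjacent to (a bounded operator times) $\rho(a)$ or its commutator. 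Using Condition 3 of Definition \ref{defn:alternate} and the last statement of Lemma \ref{lem:domains}, the factor $(1+\lambda+\D^*\D)^{-1/2}\rho(a)$ is $B$-compact; for the terms landing in the $\D\D^*$ picture one uses Condition 4 (or Condition 4' in the unital-type situation — this is exactly where the two variants of the definition play their roles) to replace $(1+\lambda+\D\D^*)^{-1/2}$ by $(1-P_{\ker(\D^*)})(1+\lambda+\D\D^*)^{-1/2}$ modulo a harmless term, and then invoke $B$-compactness of $\rho(a)(1-P_{\ker(\D^*)})(1+\D\D^*)^{-1/2}$. The norm bounds in Lemma \ref{lem:domains}(3),(4) give decay like $\lambda^{-1}$ on the integrand, which together with $\lambda^{-1/2}$ makes the integral converge absolutely in the $B$-compact operators.

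Next, for the conditions involving $\J$: $F - F^* = \D(1+\D^*\D)^{-1/2} - \D^*(1+\D\D^*)^{-1/2}$, which vanishes when $\D$ is self-adjoint; here Lemma \ref{lem:adjointbound} (applied with $\D_e = \D^*$, or rather with a suitable self-adjoint extension, and noting $\D^* \subseteq \D^*$) controls $\rho(j)(F - F^*)$ via a $\lambda^{-1}$-bounded integrand with a factor $[\D, j]_\pm$ or $[\D^*, j]_\pm$ adjacent to a compact resolvent factor, using Remark \ref{rem:j-compact} that $\rho(j)(1+\D\D^*)^{-1/2}$ is $B$-compact for $j \in J$. For $1 - F^2$: since $1 - F^2 = (1+\D^*\D)^{-1}$ on the relevant picture, $\rho(j)(1-F^2) = \rho(j)(1+\D^*\D)^{-1}$ is $B$-compact directly from Condition 3 (squared), or one writes $\rho(j)(1-F^2) = \rho(j)(1 - FF^*) + \rho(j)(FF^* - F^2)$ and treats the second term using the already-established compactness of $\rho(j)(F^* - F)$ times the bounded $F$. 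The $G$-equivariance of $F$ is automatic since $\D$ is $G$-equivariant and $F$ is defined by functional calculus from $\D^*\D$.

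The main obstacle I anticipate is the careful bookkeeping of domains in the commutator expansion — the expression in Lemma \ref{lem:commutator}(3) mixes $[\D, a]_\pm$ and $[\D^*, a]_\pm$ with resolvents of both $\D^*\D$ and $\D\D^*$, and one must verify that each term is genuinely $B$-compact (not merely bounded) by correctly pairing each "bad" resolvent factor $(1+\lambda+\D\D^*)^{-1/2}$ with the projection-corrected compactness from Condition 4/4'. The subtlety is that $\rho(a)(1+\D\D^*)^{-1/2}$ is \emph{not} assumed $B$-compact in general, only its projection-corrected version is; getting the Calderón-type correction term to cancel correctly — and confirming that the correction is $B$-compact or else disappears after the $\lambda$-integration — is the delicate point, which is precisely why the authors defer the full argument to Appendix \ref{sec:technicalproof}.
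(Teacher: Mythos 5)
Your overall toolbox is the right one (the integral formula \eqref{eq:iffp}, Lemmas \ref{lem:domains}, \ref{lem:commutator}, \ref{lem:adjointbound}, density, and equivariance of $F$ from equivariance of $\D$; the treatment of $\rho(j)(F-F^*)$ and $\rho(j)(1-F^2)$ via Remark \ref{rem:j-compact} is essentially sound, and the paper simply quotes Hilsum for that part). But the core step — $B$-compactness of $[F,\rho(a)]_\pm$ — has a genuine gap. When you expand $[F,a]_\pm=[\D,a]_\pm(1+\D^*\D)^{-1/2}+\D[(1+\D^*\D)^{-1/2},a]$ and apply Lemma \ref{lem:commutator} 3) under the integral, \emph{every} resolvent factor ends up adjacent to a commutator $[\D,a]_\pm$ or $[\D^*,a]_\pm$, or to $\D$, $\D\D^*$ — never to an element of $\rho(\A)$. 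Conditions 3.\ and 4.\ only give compactness of a resolvent multiplied by an algebra element, not by an arbitrary bounded operator such as $[\D,a]_\pm$, so no term of your expansion is visibly compact; already the very first term $[\D,a]_\pm(1+\D^*\D)^{-1/2}$ is not covered by Condition 3. Your parenthetical ``(a bounded operator times) $\rho(a)$ or its commutator'' is doing illegitimate work here, and your ``replace $(1+\lambda+\D\D^*)^{-1/2}$ by $(1-P_{\ker(\D^*)})(1+\lambda+\D\D^*)^{-1/2}$ modulo a harmless term'' is also not harmless: $\rho(a)P_{\ker(\D^*)}$ is not known to be compact (only $aP_{\ker(\D)}$ is).

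The paper's proof supplies exactly the devices you are missing. Under Condition 4.\ (Lemma \ref{lem:compactcommutator}) one proves compactness of $[F,ab]_\pm$ for $a,b\in\A$ (products are dense), splitting it as $[F,a]_\pm b+(-1)^{\deg a}a[F,b]_\pm$: the first summand is compact because every term ends in $(1+\lambda+\D^*\D)^{-1}b$ (Condition 3.), and the second is handled by passing to the adjoint $[F^*,b^*]_\pm a^*$ and re-expanding $[\D^*(1+\lambda+\D\D^*)^{-1},b^*]_\pm a^*$, after which the only problematic term has the shape $\D^*(1+\lambda+\D\D^*)^{-1}a\,\D^*\D(1+\lambda+\D^*\D)^{-1}b$, where Condition 4.\ applies; the kernel projection is innocuous there because $\D^*P_{\ker(\D^*)}=0$, not because of a correction next to $\rho(a)$. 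Under Condition 4'.\ the argument is genuinely different (Lemma \ref{lem:compactcommutatoralt}): the sequence $\phi_k\subset\rho(\A)$ is inserted next to the commutators, $[\D,a]_\pm(1+\lambda+\D^*\D)^{-1}=\lim_k\ol{[\D,a]_\pm}\phi_k(1+\lambda+\D^*\D)^{-1}$, which is the mechanism that places an algebra element against the resolvent; your proposal mentions 4'.\ only as a variant ``in the unital-type situation'' and never uses this approximation. Without one of these two devices the compact-commutator step does not go through, so the proposal as written does not prove the theorem.
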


That there are two  definitions of relative Kasparov module
that yield this important result is a positive for the flexibility of (some of) the theory.
It is not clear, however, in what way the two different hypotheses 4. and 4'. are related
in general.

\begin{rem}
Suppose that $(\J\lhd\A,X_B,\D)$ is a $G$-equivariant relative unbounded Kasparov module for a 
$G$-invariant graded ideal 
$J$ in a separable, $\Z/2$-graded $G$-$C^*$-algebra 
$A$. Then the bounded transform $F_{\D^*}=\D^*(1+\D\D^*)^{-1/2}$ 
of $\D^*$ also defines a $G$-equivariant relative Kasparov module with the 
same class as $F_\D=\D(1+\D^*\D)^{-1/2}$ in relative 
$KK$-theory, even though $(\J\lhd \A,X_B,\D^*)$ is not 
 a relative unbounded Kasparov module (unless $\D$ is self-adjoint). 
 This is because the path $[0,1]\ni t\mapsto tF_\D+(1-t)F_{\D^*}$ is an operator homotopy 
of $G$-equivariant relative Kasparov modules, 
using the fact that $F_\D^*=F_{\D^*}$, \cite{Lance}*{Theorem 10.4}.
\end{rem}

The next result shows that if $(\J\lhd\A,X_B,\D)$ is a 
$G$-equivariant relative unbounded Kasparov module for an ideal $J$ in a separable 
$\Z/2$-graded $G$-$C^*$-algebra $A$, then the relative 
$KK$-theory class can also be represented by the 
phase of $\D$ (whenever it is well-defined), and hence by a partial isometry.

\begin{prop}
\label{prop:phasetriple}
Let $(\J\lhd\A,X_B,\D)$ be a $G$-equivariant relative unbounded Kasparov module for a 
$G$-equivariant graded ideal 
$J$ in a separable $\Z/2$-graded $G$-$C^*$-algebra $A$. Suppose that $\ker(\D)$ is a complemented submodule of $X_B$. 
Let $V$ be the phase of $\D$, which is the 
partial isometry with initial space $\ker(\D)^\perp$ 
and final space $\ker(\D^*)^\perp$ defined by 
$\D=V|\D|$, \cite{ReedSimon1}*{Theorem VIII.32}. 
Then $(X_B,V)$ is a $G$-equivariant relative (bounded) Kasparov module 
with the same class as the bounded transform 
$(X_B,\D(1+\D^*\D)^{-1/2})$.
\end{prop}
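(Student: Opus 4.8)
The plan is to connect the phase $V$ to the bounded transform $F_\D = \D(1+\D^*\D)^{-1/2}$ via an operator homotopy through relative Kasparov modules, exploiting the fact that both operators are built from the same functional calculus of $|\D| = (\D^*\D)^{1/2}$ on $\ker(\D)^\perp$. First I would record that, since $\ker(\D)$ is complemented, we may decompose $X_B = \ker(\D) \oplus \ker(\D)^\perp$, and on the second summand $\D$ restricts to an injective closed regular operator with dense range in $\ker(\D^*)^\perp$; here $|\D|$ is injective with trivial kernel, so the continuous functions $f_t(x) := x(t + (1-t)x^2)^{-1/2}$ for $x > 0$ (extended by $0$ at $x=0$, which is consistent since on $\ker(\D)$ everything vanishes) produce a norm-continuous path $t \mapsto f_t(|\D|)$ interpolating between $f_1(|\D|) = |\D|\,|\D|^{-1}$-type behaviour giving $V = V|\D| \cdot f_1(|\D|)$... more precisely I set $G_t := \D\,(t + (1-t)\D^*\D)^{-1/2}$ (interpreting the $t=1$ endpoint as the phase $V$ via the limit $\D(\varepsilon + \D^*\D)^{-1/2} \to V$ as $\varepsilon \to 0$ on $\ker(\D)^\perp$, which holds in the strong-* topology and, crucially, stays adjointable with uniformly bounded norm by Lemma \ref{lem:domains}(3)). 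Then $G_0 = F_\D$ and $G_1 = V$.

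The key steps are then: (i) show each $G_t$ is a relative Kasparov module for $(J \lhd A, B)$, i.e. verify the three compactness/commutator conditions of a relative Fredholm/Kasparov module; (ii) show $t \mapsto G_t$ is norm-continuous in $\End^*_B(X_B)$ on $[0,1]$, or at least on $(0,1]$ with the endpoint $t=0$ handled by a separate comparison; and (iii) invoke homotopy invariance of relative $KK$-theory to conclude $[G_0] = [G_1]$. For (i), the commutator $[G_t, \rho(a)]_\pm$ being $B$-compact follows by the same functional-calculus-plus-integral-formula argument used in Theorem \ref{thm:alternate} (writing $(t+(1-t)\D^*\D)^{-1/2}$ as an integral $\int_0^\infty (\mu + t + (1-t)\D^*\D)^{-1}\,d\mu$ up to constants and commuting $\rho(a)$ through using Lemma \ref{lem:commutator}(3) and Lemma \ref{lem:adjointbound}), and the conditions $\rho(j)(G_t - G_t^*)$ and $\rho(j)(1 - G_t^2)$ being $B$-compact follow from Conditions 2.–4. of Definition \ref{defn:alternate} together with Remark \ref{rem:j-compact}, since $1 - G_t^2 = t(t + (1-t)\D\D^*)^{-1}$ on the range side and $\rho(j)$ times a resolvent of $\D\D^*$ is $B$-compact; at $t=1$ one gets $1 - V^2 = VV^* - V V^* = $ the projection onto $\ker(\D^*) \oplus (1-VV^*)$... one uses $1 - V^*V = P_{\ker(\D)}$ and $1 - VV^* = P_{\ker(\D^*)}$, and $\rho(j)P_{\ker(\D^*)}$ is $B$-compact by Condition 4.

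The hard part will be step (ii): controlling the norm-continuity of $t \mapsto \D(t + (1-t)\D^*\D)^{-1/2}$ as $t \to 0^+$, and more delicately whether the path extends continuously to $t=1$ in operator norm — it generally does \emph{not}, because $V$ is the strong limit but not the norm limit of $\D(\varepsilon + \D^*\D)^{-1/2}$ unless $\D$ has a spectral gap at $0$. I would circumvent this exactly as the authors do for $F_\D$ versus $F_{\D^*}$ in the preceding remark: rather than a single homotopy all the way to $V$, I would either (a) first reduce to the case where $\D|_{\ker(\D)^\perp}$ has closed range using the stabilization trick of Remark \ref{stabforclosed} (which does not change the $KK$-class and, after modifying $\D^*$ by a finite-rank operator, makes $0$ an isolated point of the spectrum so that the path \emph{is} norm-continuous at $t=1$), or (b) observe directly that on $\ker(\D)^\perp$ the operators $F_\D$ and $V$ differ by $\D\big((1+\D^*\D)^{-1/2} - |\D|^{-1}\big)$ which, when multiplied by $\rho(j)$, is $B$-compact (the difference of resolvent-type functions vanishes at infinity), so that $(X_B, V)$ is already manifestly a relative Kasparov module and is operator-homotopic to $F_\D$ via the straight-line path $s \mapsto sV + (1-s)F_\D$ — checking this straight-line path consists of relative Kasparov modules is routine since the relevant conditions are closed under such perturbations given the compactness just noted. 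Approach (b) is cleanest and avoids stabilization; I expect the only real work is verifying $\rho(j)(F_\D - V)$ is $B$-compact and that $\rho(j)(1 - (sV+(1-s)F_\D)^2)$ stays $B$-compact along the path, both of which follow from the spectral picture on $\ker(\D)^\perp$ together with Remark \ref{rem:j-compact}.
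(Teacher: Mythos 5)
Your preferred route (b) is, in outline, the paper's own proof: show that the difference $V-F_\D$ becomes $B$-compact after multiplication by the algebra, and then run the straight-line operator homotopy $s\mapsto sV+(1-s)F_\D$. The gap is in the strength of the compactness you establish. You only claim $\rho(j)(F_\D-V)\in\End^0_B(X_B)$ for $j\in\J$ (via Remark \ref{rem:j-compact}), and your list of things to check along the path mentions only the conditions involving $\rho(j)$. But a relative Kasparov module also requires $[G_s,\rho(a)]_\pm$ to be $B$-compact for \emph{every} $a\in A$, in particular $[V,\rho(a)]_\pm$ at the endpoint $s=1$, and this needs $\rho(a)(V-F_\D)$ (together with its adjoint-side companion $(V-F_\D)\rho(a)$) to be compact for all $a\in A$, not just for $a\in J$. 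That stronger statement does not follow from your ``difference of resolvent-type functions vanishes at infinity'' observation: writing $F_\D-V=g(\D\D^*)V$ with $g(x)=x^{1/2}(1+x)^{-1/2}-1$, the product $\rho(a)g(\D\D^*)$ is $\rho(a)(1+\D\D^*)^{-1/2}$ times a bounded operator, and Remark \ref{rem:j-compact} is precisely the warning that $\rho(a)(1+\D\D^*)^{-1/2}$ is in general compact only for $a\in J$.

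The missing ingredient is Condition 4.\ of Definition \ref{defn:alternate}, which is what the paper's proof uses: since $V=(1-P_{\ker(\D^*)})V$ and $1-P_{\ker(\D^*)}$ commutes with $g(\D\D^*)$, one has
\begin{equation*}
a(F_\D-V)=a(1-P_{\ker(\D^*)})(1+\D\D^*)^{-1/2}\,
\Bigl[(1+\D\D^*)^{1/2}\bigl((\D\D^*)^{1/2}(1+\D\D^*)^{-1/2}-1\bigr)\Bigr]V,
\end{equation*}
and the bracketed operator is bounded because $x^{1/2}-(1+x)^{1/2}$ is bounded on $[0,\infty)$; hence $a(F_\D-V)$ is $B$-compact for all $a\in\A$ by Condition 4., and for all $a\in A$ by density (the adjoint-side compactness follows in the same way using Condition 3.\ and the identity $F_{\D^*}-V^*=g(\D^*\D)V^*$). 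With this full-strength statement the straight-line path is indeed an operator homotopy of relative Kasparov modules, which is exactly the paper's argument; without it, the commutator condition for general $a\in A$ is simply not addressed in your write-up. Note also that your fallback (a) is not available in the stated generality, since the stabilisation of Remark \ref{stabforclosed} assumes $A$ unital and non-degenerately represented, and your first path $G_t=\D(t+(1-t)\D^*\D)^{-1/2}$ fails at $t=1$ for the reason you yourself identify.
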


\begin{proof}
We note that since $\ker(\D)$ and 
and $\ker(\D^*)$ are complemented, the phase $V$ is well-defined.
We claim that $a(V-\D(1+\D^*\D)^{-1/2})$ is $B$-compact for all $a\in A$, 
from which it follows that $[0,1]\ni t\mapsto tV+(1-t)\D(1+\D^*\D)^{-1/2}$ 
is an operator homotopy of $G$-equivariant 
relative Kasparov modules. 
Since
$a(1-P_{\ker(\D^*)})(1+\D\D^*)^{-1/2}$ is $B$-compact for all $a\in\A$, then 
\begin{align*}
a(\D(1+\D^*\D)^{-1/2}&-V)=aV\left((\D^*\D)^{1/2}(1+\D^*\D)^{-1/2}-1\right)\\
&=a(1-P_{\ker(\D^*)})(1+\D\D^*)^{-1/2}(1+\D\D^*)^{1/2}\left((\D\D^*)^{1/2}(1+\D\D^*)^{-1/2}-1\right)
\end{align*}
is $B$-compact for all $a\in\A$, proving the claim.
\end{proof}

The following result is a specialisation of \cite{Hilsum}*{Theorem 3.2}. 
It can be proved by  using Lemma \ref{lem:adjointbound} and the 
integral formula for fractional powers (see Equation \eqref{eq:iffp} in 
Appendix \ref{sec:technicalproof}) to show that 
$j(F_{\D}-F_{\D_e})$ is compact for all $j\in\J$.

\begin{thm}
\label{thm:extensions}
Let $(\J\lhd\A,X_B,\D)$ be a $G$-equivariant 
relative unbounded Kasparov module for a 
$G$-invariant graded ideal $J$ in a 
separable $\Z/2$-graded $G$-$C^*$-algebra $A$, and let 
$\D\subset \D_e\subset \D^*$ be a closed regular 
$G$-equivariant extension of $\D$. Then:

1) The data $(X_B,F_{\D_e}=\D_e(1+\D_e^*\D_e)^{-1/2})$ 
defines a $G$-equivariant
Kasparov module for $(J,B)$, and so $\D_e$ defines 
a class $[\D_e]\in KK_G^*(J,B)$; and

2) $[\D_e]=[\D]\in KK^*_G(J,B)$; i.e. the $KK$-theory 
class is independent of the choice of extension.
\end{thm}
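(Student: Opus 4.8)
The plan is to reduce the statement to two independent tasks: first, show that $(X_B, F_{\D_e})$ is a genuine (non-relative) Kasparov module for $(J,B)$, and second, exhibit an operator homotopy within Kasparov modules connecting $F_{\D_e}$ to $F_\D$ (where $F_\D$ is already known to give a relative Kasparov module by Theorem \ref{thm:alternate}, and whose restriction to $J$ is thus a Kasparov module). Both parts should follow once we control the difference $j(F_\D - F_{\D_e})$ for $j \in \J$.

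\textbf{Step 1: the key compactness estimate.} First I would use the integral formula for fractional powers, $\D(1+\D^*\D)^{-1/2} = \frac{1}{\pi}\int_0^\infty \D(1+\lambda+\D^*\D)^{-1}\,\lambda^{-1/2}\,\mathrm{d}\lambda$ (Equation \eqref{eq:iffp}), applied to both $\D$ and $\D_e$. Subtracting and inserting $j$ on the left, the integrand becomes $j\D(1+\lambda+\D^*\D)^{-1} - (1+\lambda+\D_e^*\D_e)^{-1}j\D_e$, which on $\Dom(\D)$ equals the expression controlled by Lemma \ref{lem:adjointbound}, giving an operator-norm bound of $2\|\ol{[\D,j]_\pm}\|/(1+\lambda)$. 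Multiplying by $\lambda^{-1/2}$ and integrating yields a finite bound, but more importantly each term $j\D(1+\lambda+\D^*\D)^{-1}$ is $B$-compact (by Condition 3 of Definition \ref{defn:alternate} together with Lemma \ref{lem:domains}, since $j\D(1+\lambda+\D^*\D)^{-1} = j(1+\lambda+\D^*\D)^{-1/2}\cdot\D(1+\lambda+\D^*\D)^{-1/2}$ and the first factor is $B$-compact). Hence the norm-convergent integral $j(F_\D - F_{\D_e})$ is $B$-compact for all $j \in \J$, and by density for all $j \in J$.

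\textbf{Step 2: assembling the two conclusions.} For part 1), I would verify the Kasparov module axioms for $(X_B, F_{\D_e})$ over $(J,B)$ directly: $\rho(j)(1 - F_{\D_e}^2)$, $\rho(j)(F_{\D_e} - F_{\D_e}^*)$ and $[F_{\D_e}, \rho(j)]_\pm$ are all $B$-compact. Writing $\rho(j)(1-F_{\D_e}^2) = \rho(j)(1-F_\D^2) + \rho(j)(F_\D^2 - F_{\D_e}^2)$, the first summand is $B$-compact by Theorem \ref{thm:alternate}, and the second is handled by Step 1 (factoring $F_\D^2 - F_{\D_e}^2 = (F_\D - F_{\D_e})F_\D + F_{\D_e}(F_\D - F_{\D_e})$, though one must be slightly careful to keep a $j$ adjacent to the compact factor — using that $j(F_\D - F_{\D_e})$ and $(F_\D - F_{\D_e})j^*$ are both $B$-compact). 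The same splitting handles the self-adjointness defect and the commutator. For part 2), the straight-line path $t \mapsto tF_{\D_e} + (1-t)F_\D$ is well-defined (both are self-adjoint contractions, so the convex combination is a self-adjoint contraction), and by Step 1 the difference between consecutive points of the path, multiplied by any $j \in J$, is $B$-compact; combined with norm-continuity of $t \mapsto tF_{\D_e}+(1-t)F_\D$ this gives an operator homotopy of Kasparov modules, so $[\D_e] = [\D]$ in $KK_G^*(J,B)$.

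\textbf{Main obstacle.} The routine-looking part — Step 1 — is actually where the care lies, because $\D_e^* \supsetneq \D$ in general and the algebraic identity of Lemma \ref{lem:adjointbound} only holds \emph{on} $\Dom(\D)$; one must justify that after taking the norm-convergent integral the resulting bounded operator really is $j(F_\D - F_{\D_e})$ and not merely its restriction to a dense subspace, i.e. that closures behave well under the integral. This is the same subtlety flagged after Lemma \ref{lem:commutator}, and it is the reason the detailed argument for Lemma \ref{lem:adjointbound} is deferred to Appendix \ref{sec:technicalproof}; once that lemma is in hand, the $G$-equivariance is automatic (all operators built from $\D$ and $\D_e$ commute with the $G$-action) and the remaining bookkeeping is formal.
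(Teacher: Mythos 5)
Your overall strategy is the same as the paper's: the paper treats Theorem \ref{thm:extensions} as a specialisation of Hilsum's Theorem 3.2 and sketches precisely your route, namely use Lemma \ref{lem:adjointbound} together with the integral formula \eqref{eq:iffp} to show that $j(F_\D-F_{\D_e})$ is $B$-compact for $j\in\J$, after which the Kasparov axioms for $(X_B,F_{\D_e})$ over $(J,B)$ and the straight-line operator homotopy are formal. The problem is that your Step 1, which carries all the content, does not go through as written. Subtracting the two integral formulas and multiplying by $j$ on the left gives the integrand $\lambda^{-1/2}\big(j\D(1+\lambda+\D^*\D)^{-1}-j\D_e(1+\lambda+\D_e^*\D_e)^{-1}\big)$, whereas Lemma \ref{lem:adjointbound} controls $j\D(1+\lambda+\D^*\D)^{-1}-(1+\lambda+\D_e^*\D_e)^{-1}j\D$: you have silently moved $j\D_e$ through the resolvent $(1+\lambda+\D_e^*\D_e)^{-1}$. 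Integrating the lemma only yields control of $jF_\D-\ol{(1+\D_e^*\D_e)^{-1/2}j\D}$, and the missing comparison of $\ol{(1+\D_e^*\D_e)^{-1/2}j\D}$ with $jF_{\D_e}$ is a second resolvent-commutator estimate of the same delicacy as Lemma \ref{lem:adjointbound} itself: one needs both compactness of the integrand and decay of order $(1+\lambda)^{-1}$, since the two terms separately only decay like $(1+\lambda)^{-1/2}$, which is not integrable against $\lambda^{-1/2}$. Moreover this extra estimate cannot be quoted from Lemma \ref{lem:commutator}, because $\D_e$ is not symmetric; the manipulations have to be redone for $\D_e$ using $j\Dom(\D_e^*)\subseteq\Dom(\D)\subseteq\Dom(\D_e)$ and the fact that $[\D_e,j]_\pm$ and $[\D_e^*,j]_\pm$ are restrictions of $[\D^*,j]_\pm$. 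This is a genuine missing step, not bookkeeping about closures under the integral.

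Two further points. Your compactness claim for the integrand rests on the false identity $j\D(1+\lambda+\D^*\D)^{-1}=j(1+\lambda+\D^*\D)^{-1/2}\cdot\D(1+\lambda+\D^*\D)^{-1/2}$: the operator $\D$ does not commute with $(1+\lambda+\D^*\D)^{-1/2}$, the correct intertwining being $j\D(1+\lambda+\D^*\D)^{-1}=\ol{j(1+\lambda+\D\D^*)^{-1}\D}$, so the compactness one actually needs is that of $j(1+\D\D^*)^{-1/2}$, which holds for $j\in J$ only by Condition 2 and the $\J^2$-density argument of Remark \ref{rem:j-compact}, not by Condition 3 alone; and one must also beware that a compact operator composed with an unbounded one (even with bounded closure) need not be compact, so the grouping matters. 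Finally, in Step 2 the operators $F_\D$ and $F_{\D_e}$ are contractions but not self-adjoint (indeed $F_\D^*=F_{\D^*}$; self-adjointness would defeat the purpose of the half-closed setting), and the two-sided input you invoke, compactness of $(F_\D-F_{\D_e})j^*$, is not supplied by your Step 1: it follows by applying the one-sided statement to the extension $\D_e^*$, which also satisfies $\D\subseteq\D_e^*\subseteq\D^*$ and has $F_{\D_e^*}=F_{\D_e}^*$, combined with compactness of $j(F_\D-F_\D^*)$ from Lemma \ref{lem:theotherstuffiscompacttoo}; this needs to be said explicitly, since it is exactly what makes each $tF_{\D_e}+(1-t)F_\D$ a Kasparov module for $(J,B)$ along the homotopy.
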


\begin{rem}
\label{remarkonextensions}
It should be emphasised that $F_{\D_e}$ does not 
generally define a Kasparov module for $A$, 
even if $\D_e$ is self-adjoint with compact resolvent. 
The case $B=\C$ was studied in \cite{FMR}.
If $\D_e$ is self-adjoint with compact resolvent, 
the triple $(\A,\h,\D_e)$ appears to satisfy the 
conditions of a spectral triple since $[\D_e,a]_\pm$ 
is well-defined and bounded on $\Dom(\D)$, which is 
dense in $\h$. However, if $[\D_e,a]_\pm$ is not 
well-defined on $\Dom(\D_e)$ the bounded transform 
need not define a Fredholm module. 

If $\D$ does admit a $G$-equivariant self-adjoint regular extension $\D_e$ 
such that $a\cdot \Dom(\D_e)\subset\Dom(\D_e)$ 
and $a(1+\D_e^2)^{-1/2}$ is compact for all $a\in\A$, 
then $(\A,X_B,\D_e)$ is a $G$-equivariant unbounded Kasparov module for $A$ and 
hence defines a class in $KK^*_G(A,B)$. It follows from 
the exactness of the six-term exact sequence in 
$KK$-theory that $\de([\D])=0\in KK^{*+1}_G(A/J,B)$ 
(for $A$ trivially $\Z/2$-graded and $A\to A/J$ semisplit), 
since $[\D]=\iota^*[(\A,X_B,\D_e)]\in KK^*_G(J,B)$, where 
$\iota:J\rightarrow A$ is the inclusion map. So the 
non-vanishing of $\de([\D])$ is an obstruction to the 
existence of such extensions. For a Dirac-type operator 
on a compact manifold with boundary,  this obstruction is expressed in \cite{BDT}*{Corollary 4.2}.
\end{rem}

\subsection{The $K$-homology boundary map for relative spectral triples}
\label{subsec:bdry-map}

We will now turn to study the image of the class of a relative unbounded Kasparov 
module under the boundary mapping $\partial:KK^0_G(J\triangleleft A,B)\to KK^{1}_G(A/J,B)$. 
The computation uses the isomorphism $KK^{1}_G(A/J,B)\cong \Ext_G^{-1}(A/J,B)$, 
see \cite{thomsenequi} and further discussion in Appendix \ref{subsec:Khom}.

Suppose that $A$ and $B$ are trivially $\Z/2$-graded $G$-$C^*$-algebras, with $A$ separable and $B$ $\sigma$-unital, 
 and that $J\lhd A$ is a $G$-invariant ideal such that $A\to A/J$ is semisplit. 
Let $(\J\lhd\A,X_B,\D)$ be a $G$-equivariant even relative unbounded Kasparov module for $(J\lhd A,B)$. 
With respect to the $\Z/2$-grading $X_B=X_B^+\oplus X_B^-$, we write
$$
\D=\begin{pmatrix}0&\D^-\\\D^+&0\end{pmatrix}
\quad\mbox{and}\quad 
F=\begin{pmatrix}0&F^-\\F^+&0\end{pmatrix}=\D(1+\D^*\D)^{-1/2}.
$$
The operator $F$ is the bounded transform of $\D$. The hypotheses of Proposition \ref{prop:extbdrymap} are satisfied for $(X_B,F)$, and so the boundary class in 
$KK^1_G(A/J,B)$ of $[\D]\in KK^0_B(J\lhd A,B)$ has a simple description in terms of extensions. 

\begin{prop}
\label{representingyo}
Let $(\J\lhd\A,X_B,\D)$ be a $G$-equivariant even relative unbounded Kasparov module 
for $(J\lhd A,B)$ such that  $A\to A/J$ is semisplit and assume that $\ker(\D)$ is complemented. Then 
$$
\partial [(\J\lhd \A,X_B,\D)]=[\alpha] \in KK^1_G(A/J,B),
$$
where the invertible extension $\alpha$ of $A/J$ is defined by the Busby invariant
\begin{align*}
&\alpha:A/J\rightarrow \mathcal{Q}_B(\ker (\D^*)^+),\quad \alpha(a)
:=\pi(P_{\ker((\D^*)^+)}\wt{a}P_{\ker((\D^*)^+)}),
\end{align*}
where $\wt{a}\in A$ is any preimage of $a\in A/J$.
\end{prop}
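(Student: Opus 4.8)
The plan is to reduce the statement to the bounded transform and then appeal to Proposition~\ref{prop:extbdrymap}. First I would recall that, by Theorem~\ref{thm:alternate}, the pair $(X_B,F)$ with $F=\D(1+\D^*\D)^{-1/2}$ is a $G$-equivariant relative Kasparov module for $(J\lhd A,B)$, and that it represents the class $[(\J\lhd\A,X_B,\D)]\in KK^0_G(J\lhd A,B)$ essentially by definition of the latter. Since the boundary map $\partial\colon KK^0_G(J\lhd A,B)\to KK^1_G(A/J,B)$ is a homomorphism of $KK$-groups, this gives $\partial[(\J\lhd\A,X_B,\D)]=\partial[(X_B,F)]$, so it suffices to produce an extension representing the boundary of the \emph{bounded} relative Kasparov module $(X_B,F)$.

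The next step is to check the hypotheses of Proposition~\ref{prop:extbdrymap} for $(X_B,F)$ and to identify the projection appearing there with $P_{\ker((\D^*)^+)}$. By parts~1) and 2) of Lemma~\ref{lem:domains} one has $F=\D(1+\D^*\D)^{-1/2}$ and $F^*=\D^*(1+\D\D^*)^{-1/2}$. Because $(1+\D^*\D)^{-1/2}$ is injective and restricts to the identity on $\ker(\D^*\D)=\ker(\D)$, one gets $\ker(F)=\ker(\D)$, and symmetrically $\ker(F^*)=\ker(\D^*)$. By hypothesis $\ker(\D)$ is complemented, and by Condition~4.\ of Definition~\ref{defn:alternate} (see also Remark~\ref{stabforclosed}) so is $\ker(\D^*)$; since $\D^*$ is odd these submodules are graded, hence $P_{\ker(\D^*)}$ is even and $\ker((\D^*)^+)=\ker(\D^*)\cap X_B^+=\ker((F^*)^+)$ is a complemented submodule of $X_B^+$. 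With this data Proposition~\ref{prop:extbdrymap} applies to $(X_B,F)$ and yields $\partial[(X_B,F)]=[\alpha]\in KK^1_G(A/J,B)$, with $\alpha$ the compression extension $a\mapsto\pi(P_{\ker((F^*)^+)}\wt aP_{\ker((F^*)^+)})$, which by the kernel identification is exactly the $\alpha$ in the statement.

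It remains to confirm that $\alpha$ is a well-defined invertible extension. For well-definedness I would note that if $\wt a\in J$ then, using $(1+\D\D^*)^{-1/2}P_{\ker(\D^*)}=P_{\ker(\D^*)}$, the operator $\wt aP_{\ker(\D^*)}=\wt a(1+\D\D^*)^{-1/2}P_{\ker(\D^*)}$ is $B$-compact by Remark~\ref{rem:j-compact}, whence so is $P_{\ker((\D^*)^+)}\wt aP_{\ker((\D^*)^+)}$; thus $\alpha$ descends to a $*$-homomorphism $A/J\to\mathcal{Q}_B(\ker((\D^*)^+))$. For invertibility I would compress a completely positive contractive section $s$ of $A\to A/J$, which exists since $A\to A/J$ is semisplit, to the completely positive contractive lift $a\mapsto P_{\ker((\D^*)^+)}s(a)P_{\ker((\D^*)^+)}$ of $\alpha$. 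Under the isomorphism $\Ext_G^{-1}(A/J,B)\cong KK^1_G(A/J,B)$ this gives the claim.

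I expect the real work to lie not in this deduction but inside Proposition~\ref{prop:extbdrymap}: the delicate point there is to show that, modulo $B$-compacts, $F^+$ restricts to an isomorphism from the orthogonal complement of $\ker((F^*)^+)$ and intertwines the quotient representation of $A/J$, so that the compression map genuinely takes values in the Calkin algebra of $\ker((F^*)^+)$ and computes $\partial$. Within the present argument the only real subtlety is the kernel identification $\ker(F^*)=\ker(\D^*)$ together with the compatibility with the grading that makes $\ker((\D^*)^+)$ separately complemented in $X_B^+$; everything else is formal once Theorem~\ref{thm:alternate} and Proposition~\ref{prop:extbdrymap} are in place.
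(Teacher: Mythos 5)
Your overall strategy is the paper's: pass to the bounded transform and invoke Proposition~\ref{prop:extbdrymap}, then discard the compression to $\ker(\D)$ as a trivial extension. The side checks you make (that $\ker(F_\D)=\ker(\D)$ and $\ker(F_\D^*)=\ker(\D^*)$, that both kernels are complemented and graded, that $\alpha$ is well defined because $\wt{a}P_{\ker(\D^*)}$ is compact for $\wt{a}\in J$, and that semisplitness gives a completely positive lift) are all correct. But there is a genuine gap at the key identification. Proposition~\ref{prop:extbdrymap}, applied to a relative Kasparov module $(X_B,F)$, produces extensions supported on $\ker(T)$ and $\ker(T^*)$, where $T=F^+:X_B^+\to X_B^-$ is the even-to-odd part of $F$ itself. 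If you feed in $F=F_\D=\D(1+\D^*\D)^{-1/2}$, then $\ker(T)=\ker(\D^+)\subseteq X_B^+$ (trivial, as you say), while $\ker(T^*)=\ker\big((F_\D^+)^*\big)=\ker\big((\D^+)^*\big)=\ker(\D^*)\cap X_B^-$, which sits in the \emph{odd} part of $X_B$ and moreover enters the formula with a minus sign. It is not $\ker((F^*)^+)=\ker(\D^*)\cap X_B^+$ as you claim; note $(F^+)^*\neq (F^*)^+$ since $F_\D$ is not self-adjoint. So your route yields $\partial[(\J\lhd\A,X_B,\D)]=-[\ker((\D^*)^-)]$, and passing from this to the stated $+[\ker((\D^*)^+)]$ is not a bookkeeping step: it is essentially equivalent to the assertion you are trying to prove, and nothing in your argument supplies it.

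The missing ingredient is the operator homotopy $[0,1]\ni t\mapsto tF_\D+(1-t)F_{\D^*}$, where $F_{\D^*}=\D^*(1+\D\D^*)^{-1/2}=F_\D^*$ is itself a relative Kasparov module with the same class (this is the remark following Theorem~\ref{thm:alternate}, and is what the paper's proof does first). Applying Proposition~\ref{prop:extbdrymap} to $F_{\D^*}$ instead, the even-to-odd part has kernel $\ker((\D^*)^+)\subseteq X_B^+$, appearing with the correct sign, while the other term lives on $\ker(\D^-)$ and is trivial because $aP_{\ker(\D)}$ is compact (by Condition 3.\ of Definition~\ref{defn:alternate}); the hypotheses of Lemma~\ref{lem:doublinguparelativefredholmmodule} for $F_{\D^*}$ follow from Conditions 3.\ and 4.\ together with the assumption that $\ker(\D)$ is complemented. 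With that step inserted, your argument closes and coincides with the paper's proof.
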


\begin{proof}
By a slight  abuse of notation, given a complemented submodule $W\subset X_B$ such that 
$P_WjP_W$ and $[P_W,b]$ are compact for all $j\in J$ and $b\in A$, 
we also denote by $W$ the extension $A/J\rightarrow \mathcal{Q}_B(W)$ given by 
$a\mapsto\pi(P_W\wt{a}P_W)$, where $\wt{a}\in A$ is any lift of 
$a\in A/J$ and $\pi:\End^*_B(W)\rightarrow \mathcal{Q}_B(W)$ is the quotient map.

Since $(X_B,\D^*(1+\D\D^*)^{-1/2})$ is a relative 
Kasparov module for $(J\lhd A,B)$ with the same class as $(X_B,\D(1+\D^*\D)^{-1/2})$ 
(an operator homotopy is $[0,1]\ni t\mapsto t\D(1+\D^*\D)^{-1/2}+(1-t)\D^*(1+\D\D^*)^{-1/2}$), 
we can use Proposition \ref{prop:extbdrymap} to express the boundary map as
\begin{equation*}
\de[(\J\lhd\A,\h,\D)]=[\ker((\D^*)^+)]-[\ker(\D^-)]\in \Ext_G^{-1}(A/J,B)\cong  KK^1_G(A/J,B).
\end{equation*}
Since $a(1+\D^*\D)^{-1/2}$ is compact for all
$a\in A$, $aP_{\ker(\D)}$ is compact for all $a\in A$. Therefore $\ker(\D^-)$ is a 
trivial extension, and so $\de[(\J\lhd\A,\h,\D)]=[\ker((\D^*)^+)]$.
\end{proof}

\section{The Clifford normal and the double of a relative spectral triple}
\label{sect:normal}

In this section we will discuss further geometric constructions for a relative spectral triple 
$(\J\lhd\A,\h,\D)$ for $J\lhd A$. To simplify the discussion, we restrict to $B=\C$ and 
assume that $G$ is the trivial group.

We will show that an auxiliary operator we call a \emph{Clifford normal}, denoted by $n$, 
can be used to encode the additional information needed for geometric constructions.
Motivated by the doubling construction on a 
manifold with boundary, \cite{BBW}*{Ch. 9}, 
we use the Clifford normal to construct a 
spectral triple for the pullback algebra 
$\wt{A}=\{(a,b)\in A\oplus A:a-b\in J\}$. 

The Clifford normal $n$ can also be used to 
construct a ``boundary'' Hilbert space $\mathfrak{H}_n$. 
The Hilbert space $\mathfrak{H}_n$ carries a densely defined action of 
$\A/\J\wh{\otimes}\Cl_1$ which extends to an action of $A/J\wh{\otimes}\Cl_1$ under 
additional assumptions. Under additional assumptions on $\D$ and the Clifford normal $n$
we can construct a symmetric operator $\D_n$ on $\mathfrak{H}_n$. 
In Section \ref{sect:tripleonbdry}, we will consider what happens when 
$(\A/\J\wh{\otimes}\Cl_1,\mathfrak{H}_n,\D_n)$ defines a spectral triple.

\subsection{The Clifford normal and the boundary Hilbert space}
\label{subsectionnormal}
 
The motivation for the Clifford normal comes from the classical example of a manifold with boundary. 
Let $\Dsla$ be a Dirac operator on a Clifford module $S$ over a compact Riemannian manifold with 
boundary $\ol{M}$. We emphasize that $\Dsla$ is the differential expression defining the Dirac operator, 
and not an unbounded operator with a prescribed domain. Let $(\cdot|\cdot)$ denote the pointwise 
inner product on $S$. For sections $\xi,\eta\in C^\infty(\ol{M},S)$, 
we have Green's formula \cite{BBW}*{Proposition 3.4}
\begin{align}
\label{eq:greens}
\Ideal{\Dsla\xi,\eta}_{L^2(S)}-\Ideal{\xi,\Dsla\eta}_{L^2(S)}
=\int_{\de M}(\xi|n\eta)\,\vol_{\de M}
&=\Ideal{\xi,n\eta}_{L^2(S|_{\de M})},
\end{align}
where $n$ denotes Clifford multiplication by the inward unit normal.  
If, abusing notation, $n$ is also Clifford multiplication by some smooth extension of 
the inward unit normal to the whole manifold,
then the boundary inner product  can be expressed as
$$
\Ideal{\xi,\eta}_{L^2(S|_{\de M})}
=\Ideal{\xi,\Dsla n\eta}_{L^2(S)}-\Ideal{\Dsla\xi,n\eta}_{L^2(S)}.
$$
The operator $n$ is the model for the Clifford normal.

\begin{defn}
\label{defn:normal}
Let $A$ be a separable $\Z/2$-graded $C^*$-algebra and $J\lhd A$ a graded ideal. 
We assume that $(\J\lhd\A,\h,\D)$ 
is a relative spectral triple for $J\lhd A$. A \textbf{Clifford normal} for $(\J\lhd\A,\h,\D)$ is an odd 
(in the case that $(\J\lhd\A,\h,\D)$ is even) operator $n\in \B(\h)$ such that:

1) $n\cdot\Dom(\D)\subset\Dom(\D)$ and $\Dom(n):=\Dom(\D^*)\cap n\Dom(\D^*)$ is a core for $\D^*$;

2)
$n^*=-n$;

3)
$[\D^*,n]$ is a densely defined symmetric operator  on $\h$;

4) $[n,a]_\pm\cdot\Dom(n)\subset\Dom(\D)$ for all $a\in\A$;

5) $(n^2+1)\cdot\Dom(n)\subset\Dom(\D)$;

6)
$\Ideal{\xi,\D^*n\xi}-\Ideal{\D^*\xi,n\xi}\geq0$ for all $\xi\in\Dom(n)$;

7) For $w,\,z\in\Dom(\D^*)$, if  
$\Ideal{w,\D^*n\xi}-\Ideal{\D^*w,n\xi}=-\Ideal{z,\D^*\xi}+\Ideal{\D^*z,\xi}$ for all $\xi\in\Dom(n)$ then
$w+nz\in \Dom(\D)$.

If $n$ is a Clifford normal for $(\J\lhd\A,\h,\D)$, we say that $(\J\lhd\A,\h,\D,n)$ 
is a relative spectral triple with Clifford normal.
\end{defn}

Condition 6) in Definition \ref{defn:normal} will be 
necessary for our purposes. Certainly Condition 6) is 
something we would prefer to prove from more 
conceptually elementary assumptions, but it is unclear 
whether this is possible. 

The opaque non-degeneracy 
assumption in Condition 7) of Definition \ref{defn:normal} 
will be necessary for self-adjointness in the 
construction of the ``double" (see Subsection \ref{sec:double}) 
as well as for a non-degeneracy condition of the 
quadratic form in Condition 6). An equivalent form of the 
non-degeneracy condition 7) is given in 
Remark \ref{remark59} (see page \pageref{remark59}).

\begin{rem}
Condition 2) of Definition \ref{defn:normal} can be weakened to $(n+n^*)\cdot\Dom(n)\subset\Dom(\D)$. 
In the case that $(\J\lhd\A,\h,\D)$ is even, the condition that $n$ is odd can be weakened 
to $n\gamma+\gamma n$  extending by continuity in the graph norm to an operator on 
$\Dom(\D^*)$ such that $(n\gamma+\gamma n)\cdot\Dom(\D^*)\subset\Dom(\D)$. 
Here $\gamma$ is the grading operator on $\h$.
In practice we do not need this level of generality.
\end{rem}

\begin{rem}
The space $\Dom(n)\subseteq \Dom(\D^*)$ is the domain of $n$ as a densely defined operator on $\Dom(\D^*)$.
Note that conditions 1) and 5) together imply that $n$ preserves $\Dom(n)$, so that $\Dom(n^2)=\Dom(n)$, 
viewing $n$ and $n^2$ as densely defined operators on $\Dom(\D^*)$.
\end{rem}

To put the conditions 6) and 7) of Definition \ref{defn:normal} in context, we recall the following well-known fact for 
symmetric operators. We say that a sesquilinear form $\omega$ is anti-Hermitian if
$\omega(\xi,\eta)=-\overline{\omega(\eta,\xi)}$ for all $\xi$ and $\eta$ in its domain. 

\begin{lemma}
\label{nondegeprop}
Let $\D$ be a closed symmetric operator. The anti-Hermitian form 
\begin{align*}
\omega_{\D^*}:\Dom(\D^*)/ &\Dom(\D)\times\Dom(\D^*)/\Dom(\D)\to \C\\
& \omega_{\D^*}([\xi],[\eta]):=\Ideal{\xi,\D^*\eta}-\Ideal{\D^*\xi,\eta},
\end{align*}
is well-defined and non-degenerate,
where for $\xi\in\Dom(\D^*)$, $[\xi]$ denotes the class in $\Dom(\D^*)/\!\Dom(\D)$.
\end{lemma}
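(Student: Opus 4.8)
The plan is to verify the two assertions — well-definedness and non-degeneracy — separately, exploiting von Neumann's theory of boundary values for a closed symmetric operator $\D$. For well-definedness, I would first check that the defining formula $\Ideal{\xi,\D^*\eta}-\Ideal{\D^*\xi,\eta}$ does not depend on the representatives: if $\eta\in\Dom(\D)$ then $\D^*\eta=\D\eta$ and symmetry of $\D$ gives $\Ideal{\xi,\D\eta}=\Ideal{\D^*\xi,\eta}$, so $\omega_{\D^*}([\xi],[\eta])=0$; the same argument in the first slot handles $\xi\in\Dom(\D)$. The anti-Hermitian property $\omega_{\D^*}([\xi],[\eta])=-\overline{\omega_{\D^*}([\eta],[\xi])}$ is immediate by conjugating the inner products.

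Next I would address non-degeneracy, which is the substantive part. Suppose $[\eta]\in\Dom(\D^*)/\Dom(\D)$ satisfies $\omega_{\D^*}([\xi],[\eta])=0$ for all $\xi\in\Dom(\D^*)$; I must show $\eta\in\Dom(\D)$. The condition reads $\Ideal{\xi,\D^*\eta}=\Ideal{\D^*\xi,\eta}$ for all $\xi\in\Dom(\D^*)$, i.e. $\eta$ lies in the domain of $(\D^*)^*$. Since $\D$ is closed, $(\D^*)^*=\D$, hence $\eta\in\Dom(\D)$, which is exactly the claim. The symmetric case where we start with a fixed $[\xi]$ that pairs trivially with everything is handled identically using that $\D$ is symmetric, so $\D\subseteq\D^*$ and $\xi\in\Dom((\D^*)^*)=\Dom(\D)$ again.

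The only point requiring minor care is the passage $(\D^*)^*=\D$: this uses that $\D$ is closed (a closed symmetric operator is automatically closable with closure itself, and $\D^{**}=\overline{\D}=\D$), which is part of the hypothesis. I would state this reduction cleanly and note that in the Hilbert-module setting one should check $\D$ regular so that $\D^*$ is densely defined and $\D^{**}=\D$ still holds; but since the lemma is stated for the Hilbert-space case $B=\C$ (as in Section~\ref{sect:normal}), no subtlety arises. I do not anticipate a real obstacle here — the lemma is essentially a restatement of $\Dom(\D)=\{\eta\in\Dom(\D^*):\Ideal{\xi,\D^*\eta}=\Ideal{\D^*\xi,\eta}\ \forall\,\xi\in\Dom(\D^*)\}$, which is the definition of the adjoint combined with $\D^{**}=\D$.
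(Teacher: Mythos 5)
Your proposal is correct and follows essentially the same route as the paper: well-definedness is checked on representatives using symmetry of $\D$, and non-degeneracy is reduced (via the anti-Hermitian property) to observing that $\omega_{\D^*}([\xi],[\eta])=0$ for all $\xi\in\Dom(\D^*)$ means $\eta\in\Dom(\D^{**})=\Dom(\D)$ by closedness. No gaps; your explicit remark that $(\D^*)^*=\D$ uses closedness is exactly the point the paper's proof relies on.
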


\begin{proof}
We first establish that $\omega_{\D^*}$ is well-defined. 
If $\xi\in\Dom(\D)$ and $\eta\in\Dom(\D^*)$, then
\begin{align*}
\Ideal{\xi,\D^*\eta}-\Ideal{\D^*\xi,\eta}
&=\Ideal{\xi,\D^*\eta}-\Ideal{\D\xi,\eta}=\Ideal{\xi,\D^*\eta}-\Ideal{\xi,\D^*\eta}=0,
\end{align*}
On the other hand, if $\eta\in\Dom(\D)$ and $\xi\in\Dom(\D^*)$, then
\begin{align*}
\Ideal{\xi,\D^*\eta}-\Ideal{\D^*\xi,\eta}
&=\Ideal{\xi,\D \eta}-\Ideal{\D^*\xi,\eta}=\Ideal{\D^*\xi,\eta}-\Ideal{\D^*\xi,\eta}=0.
\end{align*}
These calculations show that $\omega_{\D^*}([\xi],[\eta])$ 
does not depend on the choice of representatives 
$\xi,\eta\in\Dom(\D^*)$ of $[\xi],[\eta]\in\Dom(\D^*)/\Dom(\D)$, 
and hence that $\omega_{\D^*}$ is well-defined.

It is clear that $\omega_{\D^*}$ is anti-Hermitian. 
To show that $\omega_{\D^*}$ is non-degenerate, it therefore 
suffices to prove that $\omega_{\D^*}([\xi],[\eta])=0$ for all $\xi\in \Dom(\D^*)$
implies that $\eta\in \Dom(\D)$. If $\omega_{\D^*}([\xi],[\eta])=0$ for all $\xi\in \Dom(\D^*)$, 
it follows from the definition of $\omega_{\D^*}$
that $\Ideal{\D^*\xi,\eta}=\Ideal{\xi,\D^*\eta}$ for any $\xi\in \Dom(\D^*)$. 
Hence $\xi\in \Dom(\D^{**})=\Dom(\D)$ and the lemma follows.
\end{proof}

\begin{defn}
We say that two Clifford normals $n$ and $n'$ for a relative spectral triple 
$(\J\lhd \A,\h,\D)$ are equivalent if $n-n'$ extends by continuity in the graph norm to a continuous operator 
$n-n':\Dom(\D^*)\to \Dom(\D)$. An equivalence class of Clifford normals for $(\J\lhd \A,\h,\D)$ is 
called a normal structure.
\end{defn}

\begin{defn}
\label{def:haitch}
We introduce the notations (to be justified by the classical example below)
\begin{align*}
\mathfrak{H}^{1/2}_n&:=\Dom(n)/\Dom(\D) 
\quad\mbox{and} \quad
\check{\mathfrak{H}}:=\Dom(\D^*)/\Dom(\D).
\end{align*}
Given $\xi\in\Dom(\D^*)$, we let $[\xi]$ denote the class in $\check{\mathfrak{H}}$. 
Similarly, given $\xi\in\Dom(n)$, $[\xi]$ denotes the class in $\mathfrak{H}_n^{1/2}$. 
We topologise the spaces $\Dom(n)$, $\mathfrak{H}^{1/2}_n$ and 
$\check{\mathfrak{H}}$ as Hilbert spaces using the respective graph inner products.
\end{defn}

\begin{notn}
We reserve the font $\h$ to refer to Hilbert spaces classically associated to the total space and 
the font $\mathfrak{H}$ for Hilbert spaces classically associated to the boundary.
\end{notn}

We remark that the space $\check{\mathfrak{H}}$ 
is independent of the choice of Clifford normal. 
The spaces $\Dom(n)$ and $\mathfrak{H}^{1/2}_n$ 
only depend on the normal structure, i.e. 
if $n\sim n'$ then the identity map defines 
continuous isomorphisms $\Dom(n)\cong\Dom(n')$ and 
$\mathfrak{H}^{1/2}_n\cong \mathfrak{H}^{1/2}_{n'}$.

\begin{eg}
\label{classicalexampleonmfd}
The reader is encouraged to revisit the discussion for manifolds with boundary in the introduction.
Let $\D_{\textnormal{min}}$ be the minimal closed extension of a Dirac operator $\Dsla$ on a Clifford module $S$ 
over a compact Riemannian manifold with boundary $\ol{M}$. Then as in Subsubsection \ref{eg:diracboundary}, 
$(C^\infty_0(M^\circ)\lhd C^\infty(\ol{M}),L^2(M,S),\D_{\textnormal{min}})$ is a relative spectral triple for $C_0( M^\circ)\lhd C(\ol{M})$. 
We can extend the inward unit normal on the boundary to a unitary endomorphism defined 
on a collar neighbourhood of the boundary (for instance using parallel transport).
By multiplying by a cut-off function only depending on the normal coordinate 
we can define an anti-self-adjoint endomorphism $n$ over 
the manifold $\ol{M}$. The normal structure is independent of choice of extension of the 
normal vector to the interior. The operator $n$ immediately satisfies all the conditions of 
Definition \ref{defn:normal}, except perhaps Conditions 1) and 3), which we now verify.

By \cite{BaerBall}*{Theorem 6.7}, $\Dom(n)=H^1(\ol{M},S)$ is a core for $\D_{\textnormal{max}}=\D^*_{\textnormal{min}}$.
Condition 1) is satisfied because $nH^1_0(M^\circ,S)\subseteq H^1_0(M^\circ,S)$ and $nH^1(\ol{M},S)\subseteq H^1(\ol{M},S)$.

To address Condition 3), we examine the behaviour of the 
Dirac operator near the boundary. In a collar neighbourhood of the boundary, $\Dsla$ has the form
$$
\Dsla=n\left(\frac{\de}{\de u}+B_u\right)
$$
where $u$ is the inward normal coordinate and $B_u$ is a family of 
Dirac operators over the boundary, \cite{BBW}*{p. 50}. Near the boundary,
\begin{equation}
\label{commwithdira}
[\Dsla,n]=n\left(\frac{\de}{\de u}+B_u\right)n-n^2\left(\frac{\de}{\de u}+B_u\right)
=n\frac{\de n}{\de u}+nB_un+B_u.
\end{equation}
The second and third terms are symmetric, and since $n$ commutes with 
$\frac{\partial n}{\partial u}$, it is straightforward to check that $n\frac{\partial n}{\partial u}$
is self-adjoint. Thus, $[\D^*_{\textnormal{min}},n]$ is a perturbation of a symmetric operator 
by a bounded self-adjoint operator, which is then symmetric.

If $M$ is merely an open submanifold of a complete manifold, 
then we still obtain a relative spectral triple, as in 
Subsubsection \ref{eg:diracboundary}. However, in this case 
$\ol{M}$ need not be a manifold with boundary and 
there need not be a Clifford normal. So the Clifford normal 
$n$ is additional structure that is imposed on the 
geometry in order to obtain a reasonable boundary.
\end{eg}

\begin{rem}
For manifolds with boundary, $\Dom(n)=H^1(\ol{M},S)$ 
by Equation \eqref{ndomdsforman} 
(on page \pageref{ndomdsforman}).
Using Equations  \eqref{staronpage6} 
and \eqref{ndomdsforman} on page \pageref{staronpage6},
it can be checked that $\mathfrak{H}^{1/2}_n=H^{1/2}(\partial M,S|_{\partial M})$ and 
$\check{\mathfrak{H}}=\check{H}(\D_{\partial M})$. In this case 
$$
\omega_{\D^*}(\xi,\eta)=\int_{\de M}(\xi|n\eta)\,\vol_{\de M}.
$$
The anti-Hermitian form $\omega_{\D^*}$ is a 
well-defined pairing on $\check{H}(\D_{\partial M})$ 
because $n$ defines a unitary isomorphism 
$$
\check{H}(\D_{\partial M})\to \hat{H}(\D_{\partial M})\cong \check{H}(\D_{\partial M})^*.
$$
The last identification is via the $L^2$-pairing on $\partial M$.
\end{rem}

\begin{lemma}
\label{innerproductlemma}
Let $n$ be a Clifford normal for the relative spectral triple $(\J\lhd \A,\h,\D)$. 
The form $\langle [\xi],[\eta]\rangle_n :=\omega_{\D^*}([\xi],[n\eta])$ defines a 
Hermitian inner product on $\mathfrak{H}^{1/2}_n$ only 
depending on the normal structure that $n$ defines.
\end{lemma}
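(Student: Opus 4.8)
The plan is to verify, in order: (i) $\langle\cdot,\cdot\rangle_n$ is well-defined on $\mathfrak{H}^{1/2}_n$, (ii) it is Hermitian, (iii) it is positive definite, and (iv) it depends only on the normal structure. For well-definedness, note that $\omega_{\D^*}$ is already well-defined on $\check{\mathfrak{H}}\times\check{\mathfrak{H}}$ by Lemma \ref{nondegeprop}, so I only need that $\eta\mapsto[n\eta]$ gives a well-defined map $\mathfrak{H}^{1/2}_n\to\check{\mathfrak{H}}$; this is exactly Condition 1) of Definition \ref{defn:normal} (which gives $n\cdot\Dom(\D)\subseteq\Dom(\D)$, so $[\eta]=0$ in $\mathfrak{H}^{1/2}_n$ forces $[n\eta]=0$ in $\check{\mathfrak{H}}$) together with the observation that $n\Dom(n)\subseteq\Dom(\D^*)$ so $[n\eta]$ makes sense. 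Hence $\langle[\xi],[\eta]\rangle_n=\omega_{\D^*}([\xi],[n\eta])$ is unambiguous.

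For the Hermitian property I would compute directly: using $\omega_{\D^*}([\xi],[\eta])=-\overline{\omega_{\D^*}([\eta],[\xi])}$ (anti-Hermitian) and $n^*=-n$ (Condition 2)),
\begin{align*}
\langle[\xi],[\eta]\rangle_n&=\omega_{\D^*}([\xi],[n\eta])=\Ideal{\xi,\D^*n\eta}-\Ideal{\D^*\xi,n\eta}\\
&=-\Ideal{n\xi,\D^*\eta}+\Ideal{\D^*n\xi,\eta}+\text{(correction terms involving }[\D^*,n]).
\end{align*}
The cleaner route: write $\langle[\xi],[\eta]\rangle_n=\omega_{\D^*}([\xi],[n\eta])$ and compare with $\overline{\langle[\eta],[\xi]\rangle_n}=\overline{\omega_{\D^*}([\eta],[n\xi])}=-\omega_{\D^*}([n\xi],[\eta])$. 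So I must show $\omega_{\D^*}([\xi],[n\eta])=-\omega_{\D^*}([n\xi],[\eta])$, i.e. $\omega_{\D^*}([\xi],[n\eta])+\omega_{\D^*}([n\xi],[\eta])=0$. Expanding the left side gives $\Ideal{\xi,\D^*n\eta}-\Ideal{\D^*\xi,n\eta}+\Ideal{n\xi,\D^*\eta}-\Ideal{\D^*n\xi,\eta}$. Using $n^*=-n$ on the middle two terms, $\Ideal{\D^*\xi,n\eta}=-\Ideal{n\D^*\xi,\eta}$ and $\Ideal{n\xi,\D^*\eta}=-\Ideal{\xi,n\D^*\eta}$, this becomes $\Ideal{\xi,(\D^*n - n\D^*)\eta} - \Ideal{(\D^*n-n\D^*)\xi,\eta}=\Ideal{\xi,[\D^*,n]\eta}-\Ideal{[\D^*,n]\xi,\eta}$, which vanishes precisely because $[\D^*,n]$ is symmetric on $\Dom(n)\supseteq\Dom(n)$ by Condition 3). (One must check $\xi,\eta\in\Dom(n)$ suffices to run these manipulations, which uses that $n$ preserves $\Dom(n)$ — noted in the remark after Definition \ref{defn:normal} via Conditions 1) and 5) — so all pairings are between elements of $\Dom(\D^*)$.) For positive definiteness: $\langle[\xi],[\xi]\rangle_n=\omega_{\D^*}([\xi],[n\xi])=\Ideal{\xi,\D^*n\xi}-\Ideal{\D^*\xi,n\xi}\geq 0$ by Condition 6), so the form is positive semi-definite; that it is Hermitian (just shown) plus positive semi-definite makes $\langle\cdot,\cdot\rangle_n$ a genuine inner product candidate, and strict positivity (non-degeneracy) follows from Condition 7): if $\langle[\xi],[\xi]\rangle_n=0$ then by the Cauchy--Schwarz inequality for the semi-definite form, $\langle[w],[\xi]\rangle_n=0$ for all $[w]\in\mathfrak{H}^{1/2}_n$, i.e. $\omega_{\D^*}([w],[n\xi])=0$, i.e. $\Ideal{w,\D^*n\xi}-\Ideal{\D^*w,n\xi}=0$ for all $w\in\Dom(n)$; taking $z=0$ in Condition 7) (so the right-hand side $-\Ideal{z,\D^*\xi}+\Ideal{\D^*z,\xi}=0$) yields $w+n\cdot 0=w\in\Dom(\D)$ — wait, that is automatic. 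I would instead argue: Condition 7) with $z=0$ says that if $\omega_{\D^*}([w],[n\xi])=0$ for all $w\in\Dom(n)$ then $w\in\Dom(\D)$ for all such $w$, which is false, so I need the correct reading: set $w$ ranging and fix the hypothesis for a single would-be degenerate $[\xi]$; Condition 7) should be applied with the roles arranged so that $w=0$, giving $0+nz\in\Dom(\D)$ when $\Ideal{\D^*w,n\xi}$-type identity holds — the precise bookkeeping here is the main obstacle and I would match it carefully against Remark \ref{remark59}. Concretely, degeneracy of $[\xi]$ means $\Ideal{w,\D^*n\xi}-\Ideal{\D^*w,n\xi}=0$ for all $w\in\Dom(n)$; extending by density (Condition 1) says $\Dom(n)$ is a core for $\D^*$) this holds for all $w\in\Dom(\D^*)$, so $n\xi\in\Dom(\D^{**})=\Dom(\D)$, whence by Condition 1) (and since $n^2+1$ maps $\Dom(n)$ into $\Dom(\D)$, Condition 5)) we get $\xi=-n(n\xi)+(n^2+1)\xi\in\Dom(\D)$, i.e. $[\xi]=0$ in $\mathfrak{H}^{1/2}_n$.

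Finally, for dependence only on the normal structure: if $n\sim n'$ then $n-n'$ extends to a continuous map $\Dom(\D^*)\to\Dom(\D)$, so for $\xi,\eta\in\Dom(n)$ the difference $\langle[\xi],[\eta]\rangle_n-\langle[\xi],[\eta]\rangle_{n'}=\omega_{\D^*}([\xi],[(n-n')\eta])$, and since $(n-n')\eta\in\Dom(\D)$ its class in $\check{\mathfrak{H}}$ vanishes; as $\omega_{\D^*}$ is well-defined on $\check{\mathfrak{H}}\times\check{\mathfrak{H}}$ (Lemma \ref{nondegeprop}) this pairing is $0$. Also $\mathfrak{H}^{1/2}_n\cong\mathfrak{H}^{1/2}_{n'}$ via the identity, as already noted, so the two inner products live on the same space and agree.

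The main obstacle I anticipate is the non-degeneracy argument: threading Condition 7) (or its reformulation in Remark \ref{remark59}) correctly, in particular getting the density extension from "$\Dom(n)$ is a core for $\D^*$" to conclude $n\xi\in\Dom(\D^{**})=\Dom(\D)$, and then using Conditions 1) and 5) to push this back to $\xi$ itself. The Hermitian-symmetry computation is bookkeeping around $[\D^*,n]$ being symmetric, and positivity is immediate from Condition 6); but pinning down exactly why the semi-definite form has trivial kernel is where the subtle Conditions 1), 5), 7) all get used together, so I would write that step out in full detail and only sketch the rest.
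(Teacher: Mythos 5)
Your proof is correct and is essentially the paper's argument: Hermitian symmetry from $n^*=-n$ together with the symmetry of $[\D^*,n]$ (Condition 3)), positive semi-definiteness from Condition 6), and definiteness via Cauchy--Schwarz plus non-degeneracy, where your density step (using that $\Dom(n)$ is a core for $\D^*$ to conclude $n\xi\in\Dom(\D^{**})=\Dom(\D)$, and then $\xi=(n^2+1)\xi-n(n\xi)\in\Dom(\D)$ by Conditions 1) and 5)) is precisely what the paper's terse appeal to Lemma \ref{nondegeprop} and the density of $\Dom(n)\subseteq\Dom(\D^*)$ amounts to. The detour through Condition 7) should simply be deleted --- it plays no role in this lemma --- and the rest, including the normal-structure independence via $(n-n')\eta\in\Dom(\D)$, stands as written.
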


\begin{proof}
To show that the form is Hermitian, we compute
\begin{align*}
&\ol{\Ideal{[\xi],[\eta]}_n}=\ol{\Ideal{\xi,\D^*n\eta}}-\ol{\Ideal{\D^*\xi,n\eta}}
=\Ideal{\D^*n\eta,\xi}-\Ideal{n\eta,\D^*\xi}\\
&=\Ideal{n\D^*\eta,\xi}+\Ideal{[\D^*,n]\eta,\xi}+\Ideal{\eta,n\D^*\xi}
=-\Ideal{\D^*\eta,n\xi}+\Ideal{\eta,[\D^*,n]\xi}+\Ideal{\eta,n\D^*\xi}\\
&=\Ideal{\eta,\D^*n\xi}-\Ideal{\D^*\eta,n\xi}=\Ideal{[\eta],[\xi]}_n.
\end{align*}
By Lemma \ref{nondegeprop} and density of $\Dom(n)\subseteq \Dom(\D^*)$ 
in the graph norm, $\Ideal{\cdot,\cdot}_n$ is non-degenerate. 
Condition 6) of Definition \ref{defn:normal} ensures that $\Ideal{\cdot,\cdot}_n$ 
is positive-definite, since it is non-degenerate.
\end{proof}

\begin{defn}
\label{defn:bdry-Hilbert}
The completion of $\mathfrak{H}^{1/2}_n$ with 
respect to the norm coming from $\Ideal{\cdot,\cdot}_n$ 
is a Hilbert space, which we call the \textbf{boundary Hilbert space} and denote by $\mathfrak{H}_n$.
\end{defn}

\begin{defn}
For a relative spectral triple $(\J\lhd\A,\h,\D,n)$ with Clifford normal, we define an operator 
$n_\de:\mathfrak{H}^{1/2}_n\rightarrow\mathfrak{H}^{1/2}_n$ by $n_\de[\xi]:=[n\xi]$.
\end{defn}

\begin{lemma}\label{lem:n1}
Let $n$ be a Clifford normal for the relative spectral triple $(\J\lhd \A,\h,\D)$. 
The operator $n_\de$ extends to a bounded operator on $\mathfrak{H}_n$ that only depends on 
the normal structure that $n$ defines. The operator $n_\de$ satisfies the properties $n_\de^2=-1$, 
$$\Ideal{n_\de[\xi],n_\de[\eta]}_n=\Ideal{[\xi],[\eta]}_n,\quad\mbox{for all $[\xi],[\eta]\in\mathfrak{H}_n$},$$
and $n_\de$ restricts to a continuous operator on $\mathfrak{H}^{1/2}_n$.
\end{lemma}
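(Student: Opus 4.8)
The plan is to establish the assertions in the natural order: first that $n_\de$ is a well-defined linear endomorphism of $\mathfrak{H}^{1/2}_n$; then the identities $n_\de^2=-1$ and $\Ideal{n_\de[\xi],n_\de[\eta]}_n=\Ideal{[\xi],[\eta]}_n$ on $\mathfrak{H}^{1/2}_n$, from which the bounded (indeed unitary) extension to $\mathfrak{H}_n$ will follow by density; then continuity of $n_\de$ on $\mathfrak{H}^{1/2}_n$ in its graph topology; and finally independence of the normal structure. Well-definedness is the quickest point: by Condition 1) of Definition \ref{defn:normal} together with the remarks following that definition, multiplication by $n$ carries $\Dom(\D)$ into $\Dom(\D)$ and $\Dom(n)$ into $\Dom(n)$, hence descends to a linear endomorphism of the quotient $\mathfrak{H}^{1/2}_n=\Dom(n)/\Dom(\D)$, which is by definition $n_\de$.

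Next I would record the two algebraic identities on $\mathfrak{H}^{1/2}_n$. For $\xi\in\Dom(n)$, Condition 5) gives $(n^2+1)\xi\in\Dom(\D)$, so $n_\de^2[\xi]=[n^2\xi]=-[\xi]$, i.e. $n_\de^2=-1$. The isometry identity is the only genuine computation: writing $\Ideal{[u],[v]}_n=\omega_{\D^*}([u],[nv])$ as in Lemma \ref{innerproductlemma} and using that $n\eta$ represents $n_\de[\eta]$, one has $\Ideal{n_\de[\xi],n_\de[\eta]}_n=\omega_{\D^*}([n\xi],[n^2\eta])=-\omega_{\D^*}([n\xi],[\eta])$; the anti-Hermitian property of $\omega_{\D^*}$ (Lemma \ref{nondegeprop}) together with the Hermitian property of $\Ideal{\cdot,\cdot}_n$ (Lemma \ref{innerproductlemma}) then rewrite $-\omega_{\D^*}([n\xi],[\eta])$ as $\Ideal{[\xi],[\eta]}_n$ --- alternatively this is essentially the computation in the proof of Lemma \ref{innerproductlemma}, using the symmetry of $[\D^*,n]$ from Condition 3). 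An isometry of the dense subspace $\mathfrak{H}^{1/2}_n\subseteq\mathfrak{H}_n$ extends uniquely to a norm-one operator on $\mathfrak{H}_n$; the identities $n_\de^2=-1$ and $\Ideal{n_\de[\xi],n_\de[\eta]}_n=\Ideal{[\xi],[\eta]}_n$ persist there by continuity, and in particular the extension is unitary with inverse $-n_\de$.

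The step I expect to cause the most trouble is the continuity of $n_\de$ on $\mathfrak{H}^{1/2}_n$ in \emph{its own} graph topology, which does not come for free from the isometry on $\mathfrak{H}_n$; the two topologies must be kept distinct throughout. Since $n$ preserves $\Dom(\D)$, it suffices to show that $n$ is a bounded operator from $\Dom(n)$ to itself, where $\Dom(n)$ carries the graph norm $\|\zeta\|_{\Dom(n)}^2=\|\zeta\|^2+\|\D^*\zeta\|^2+\|n\zeta\|^2+\|\D^*n\zeta\|^2$ of $n$ viewed as a densely defined operator on $\Dom(\D^*)$. Applying this to $\zeta=n\xi$, all resulting terms are dominated by $\|\xi\|_{\Dom(n)}^2$ --- using boundedness of $n$ on $\h$ --- with the sole exception of $\|\D^*n^2\xi\|^2$; for that term I would write $n^2\xi=-\xi+(n^2+1)\xi$, so $\D^*n^2\xi=-\D^*\xi+\D(n^2+1)\xi$, and bound $\|\D(n^2+1)\xi\|$ by noting that $(n^2+1)\colon\Dom(n)\to\Dom(\D)$ (which lands in $\Dom(\D)$ by Condition 5) is closed, being the restriction of the bounded operator $n^2+1$ on $\h$, and hence bounded by the closed graph theorem.

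Finally, independence of the normal structure. If $n\sim n'$, the remark preceding Example \ref{classicalexampleonmfd} shows that the identity map identifies $\Dom(n)$ with $\Dom(n')$ and $\mathfrak{H}^{1/2}_n$ with $\mathfrak{H}^{1/2}_{n'}$ as topological vector spaces, while Lemma \ref{innerproductlemma} identifies the inner products $\Ideal{\cdot,\cdot}_n$ and $\Ideal{\cdot,\cdot}_{n'}$, so $\mathfrak{H}_n=\mathfrak{H}_{n'}$. For $\xi\in\Dom(n)=\Dom(n')$ we have $n\xi-n'\xi=(n-n')\xi\in\Dom(\D)$ by the definition of equivalence of Clifford normals, hence $[n\xi]=[n'\xi]$ and $n_\de=n'_\de$ on $\mathfrak{H}^{1/2}_n$, and therefore also on $\mathfrak{H}_n$ after extension.
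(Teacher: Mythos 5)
Your proof is correct and its skeleton matches the paper's: well-definedness and $n_\de^2=-1$ from Condition 5), the isometry identity on $\mathfrak{H}^{1/2}_n$, and extension to $\mathfrak{H}_n$ by density. Where you genuinely diverge is the isometry computation: the paper recomputes $\Ideal{n_\de[\xi],n_\de[\eta]}_n$ directly in terms of $\h$-inner products, using $(n^2+1)\cdot\Dom(n)\subset\Dom(\D)$, $n^*=-n$ and the symmetry of $[\D^*,n]$, whereas you deduce it formally from $[n^2\eta]=-[\eta]$ in $\check{\mathfrak{H}}$ together with the anti-Hermitian property of $\omega_{\D^*}$ (Lemma \ref{nondegeprop}) and the Hermitian property of $\Ideal{\cdot,\cdot}_n$ already established in Lemma \ref{innerproductlemma}; this is a cleaner bookkeeping of the same ingredients, since the symmetry of $[\D^*,n]$ enters only once, inside Lemma \ref{innerproductlemma}. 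You also supply two items that the paper's one-line proof leaves implicit: the continuity of $n_\de$ on $\mathfrak{H}^{1/2}_n$ for its graph topology, which you obtain by a closed-graph argument for $(n^2+1):\Dom(n)\to\Dom(\D)$ (legitimate, since $\Dom(n)$ is complete in its graph norm by the closedness of $n$ as an operator on $\Dom(\D^*)$, proved independently in Proposition \ref{bdrysympform}, and Condition 5) gives only the inclusion, not an estimate), and the independence of the normal structure, which you correctly reduce to $(n-n')\Dom(n)\subset\Dom(\D)$ together with the normal-structure independence of $\Ideal{\cdot,\cdot}_n$ from Lemma \ref{innerproductlemma}. Both additions are sound and fill real gaps in the written argument rather than altering its substance.
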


\begin{proof}
The first claim follows from $(n^2+1)\cdot\Dom(n)\subset\Dom(\D)$. For the second claim, we have
\begin{align*}
&\Ideal{n_\de[\xi],n_\de[\eta]}_n=\Ideal{n\xi,\D^*n^2\eta}-\Ideal{\D^*n\xi,n^2\eta}\\
&=-\Ideal{n\xi,\D^*\eta}+\Ideal{\D^*n\xi,\eta}+\Ideal{n\xi,\D(n^2+1)\xi}-\Ideal{\D^*n\xi,(n^2+1)\xi}\\
&=\Ideal{\xi,n\D^*\eta}-\Ideal{\D^*\xi,n\eta}+\Ideal{[\D^*,n]\xi,\eta}\\
&=\Ideal{\xi,\D^*n\eta}-\Ideal{\xi,[\D^*,n]\eta}-\Ideal{\D^*\xi,n\eta}+\Ideal{[\D^*,n]\xi,\eta}
=\Ideal{[\xi],[\eta]}_n.&\qedhere
\end{align*}
\end{proof}

\begin{prop}
\label{bdrysympform}
Suppose that $(\J\lhd \A,\h,\D,n)$ is a relative spectral triple with Clifford normal. 
The densely defined operator $n:\Dom(n)\subset\Dom(\D^*)\rightarrow\Dom(\D^*)$ is a 
closed operator (for the graph norm on $\Dom(\D^*)$). 
Moreover, the anti-Hermitian form $\omega_{\D^*}$ is tamed by the complex structure 
$n$ in the sense that 
$$
\langle \cdot,\cdot \rangle_n:
\check{\mathfrak{H}}\times \mathfrak{H}^{1/2}_n\to \C, \quad 
\langle [\xi],[\eta]\rangle_n :=\omega_{\D^*}([\xi],n_\de [\eta]),
$$
is a non-degenerate pairing. Moreover, this pairing only depends on the 
normal structure that $n$ defines.
\end{prop}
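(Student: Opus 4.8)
The plan is to verify the three assertions of the proposition in turn, using the already-established machinery of Lemma \ref{nondegeprop} (non-degeneracy of $\omega_{\D^*}$), Lemma \ref{innerproductlemma} (the inner product $\langle\cdot,\cdot\rangle_n$ on $\mathfrak{H}^{1/2}_n$), and Lemma \ref{lem:n1} (the properties of $n_\de$), together with Conditions 1)--7) of Definition \ref{defn:normal}. First I would establish that $n:\Dom(n)\subset\Dom(\D^*)\to\Dom(\D^*)$ is closed for the graph norm of $\D^*$. Take a sequence $\xi_k\in\Dom(n)$ with $\xi_k\to\xi$ and $n\xi_k\to\zeta$, both in the graph norm of $\D^*$. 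Since $n$ is bounded on $\h$, we get $\zeta=n\xi$ at the level of $\h$, so it remains to see $\xi\in\Dom(n)$, i.e. $n\xi\in\Dom(\D^*)$; but $\D^*(n\xi_k)=\D^*\zeta_k$ converges, and $\D^*$ is closed, so $n\xi\in\Dom(\D^*)$ and $\D^*(n\xi)=\lim\D^*(n\xi_k)=\D^*\zeta$. This is the routine part. (One should also note $\Dom(n)$ is dense in $\Dom(\D^*)$ by Condition 1).)

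Next I would handle the pairing $\langle[\xi],[\eta]\rangle_n:=\omega_{\D^*}([\xi],n_\de[\eta])$ for $[\xi]\in\check{\mathfrak{H}}$ and $[\eta]\in\mathfrak{H}^{1/2}_n$. Well-definedness in the first variable is exactly the statement that $\omega_{\D^*}$ descends to $\check{\mathfrak{H}}=\Dom(\D^*)/\Dom(\D)$, which is Lemma \ref{nondegeprop}; well-definedness in the second variable follows since $n_\de$ is well-defined on $\mathfrak{H}^{1/2}_n$ (change of representative of $[\eta]$ by an element of $\Dom(\D)$ changes $n\eta$ by $n\Dom(\D)\subseteq\Dom(\D)$ using Condition 1). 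The substantive claim is non-degeneracy: if $\langle[\xi],[\eta]\rangle_n=0$ for all $[\xi]\in\check{\mathfrak{H}}$ then $[\eta]=0$; and if it vanishes for all $[\eta]\in\mathfrak{H}^{1/2}_n$ then $[\xi]=0$. For the first: $\langle[\xi],[\eta]\rangle_n=\omega_{\D^*}([\xi],n_\de[\eta])=0$ for all $\xi\in\Dom(\D^*)$ forces, by Lemma \ref{nondegeprop}, that $n\eta\in\Dom(\D)$; combined with $\eta\in\Dom(\D^*)$ this says $\eta\in\Dom(n)$ and $n_\de[\eta]=[n\eta]$; but then apply $n_\de$ and use $n_\de^2=-1$ from Lemma \ref{lem:n1} — actually more directly, $\langle\eta,\eta\rangle_n=\omega_{\D^*}([\eta],n_\de[\eta])$, and if $n_\de[\eta]=0$ in $\check{\mathfrak{H}}$ then this is zero, so by positive-definiteness of $\langle\cdot,\cdot\rangle_n$ on $\mathfrak{H}^{1/2}_n$ (Lemma \ref{innerproductlemma}) we get $[\eta]=0$. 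For the second direction: $\omega_{\D^*}([\xi],n_\de[\eta])=0$ for all $[\eta]\in\mathfrak{H}^{1/2}_n$, i.e. for all $n_\de[\eta]$ with $\eta\in\Dom(n)$; since $n_\de$ maps $\mathfrak{H}^{1/2}_n$ into (a dense subspace of, after completion) $\check{\mathfrak{H}}$, and since $\Dom(n)$ is dense in $\Dom(\D^*)$ in the graph norm while $\omega_{\D^*}$ is continuous on $\check{\mathfrak{H}}\times\check{\mathfrak{H}}$ in the graph norms, this forces $\omega_{\D^*}([\xi],[\sigma])=0$ for all $[\sigma]$ in the graph-norm closure of $n\Dom(n)$; finally, $n\Dom(n)=\Dom(n)$ as a subspace (Condition 5) together with Condition 1) show $n$ preserves $\Dom(n)$, so $n\Dom(n)\subseteq\Dom(n)$, and applying $n$ again with $n^2+1$ landing in $\Dom(\D)$ gives the reverse inclusion modulo $\Dom(\D)$), hence $n\Dom(n)$ is dense in $\Dom(\D^*)$, and Lemma \ref{nondegeprop} yields $[\xi]=0$.

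Finally, independence of the normal structure: if $n\sim n'$, then $n-n'$ extends to a continuous map $\Dom(\D^*)\to\Dom(\D)$, so $n_\de[\eta]=n'_\de[\eta]$ in $\check{\mathfrak{H}}$ for all $[\eta]$, whence the pairing is unchanged; this is immediate. The main obstacle I anticipate is the second non-degeneracy direction: one has to be careful that $n_\de$ applied to $\mathfrak{H}^{1/2}_n$ really does produce a graph-norm-dense set of classes in $\check{\mathfrak{H}}$, which rests on the density of $\Dom(n)$ in $\Dom(\D^*)$ (Condition 1) and on the fact, extracted from Conditions 1) and 5), that $n$ preserves $\Dom(n)$ so that $n\Dom(n)$ is again dense; assembling these density statements with the continuity of $\omega_{\D^*}$ in the graph norm is where the argument needs the most care.
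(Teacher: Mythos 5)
Your proof is correct. The closedness argument is the same as the paper's, and your treatment of non-degeneracy in the $\mathfrak{H}^{1/2}_n$-variable is essentially the paper's as well (the paper simply takes $\xi=\eta$ via the inclusion $\mathfrak{H}^{1/2}_n\subset\check{\mathfrak{H}}$ and invokes positive-definiteness of $\Ideal{\cdot,\cdot}_n$ from Lemma \ref{innerproductlemma}; your detour through Lemma \ref{nondegeprop} lands in the same place). Where you genuinely diverge is non-degeneracy in the $\check{\mathfrak{H}}$-variable: the paper settles this in one line by citing Condition 7) of Definition \ref{defn:normal} with $z=0$ (vanishing of $\Ideal{w,\D^*n\xi}-\Ideal{\D^*w,n\xi}$ for all $\xi\in\Dom(n)$ forces $w\in\Dom(\D)$), whereas you derive it from Conditions 1) and 5): since $n$ preserves $\Dom(n)$ and $(n^2+1)\Dom(n)\subset\Dom(\D)$, the classes $n_\de[\eta]$ exhaust $\mathfrak{H}^{1/2}_n$, so the hypothesis gives $\omega_{\D^*}(\xi,\zeta)=0$ for all $\zeta\in\Dom(n)$; the core property of $\Dom(n)$ together with graph-norm continuity of $\omega_{\D^*}$ extends this to all of $\Dom(\D^*)$, and then $\xi\in\Dom(\D^{**})=\Dom(\D)$ as in Lemma \ref{nondegeprop}. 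This alternative route is valid and in fact slightly more informative: it shows that the $z=0$ instance of Condition 7) is automatic from Conditions 1) and 5), so the full strength of Condition 7) (general $z$) is only needed elsewhere, e.g. for the self-adjointness of the double and the injectivity statement in Remark \ref{remark59}; the paper's route is shorter because Condition 7) was tailor-made for exactly this non-degeneracy. Two small points of care in your write-up: what your argument actually shows to be graph-norm dense is $n\Dom(n)+\Dom(\D)$ (equivalently, the image of $\Dom(n)$ in $\check{\mathfrak{H}}$), not necessarily $n\Dom(n)$ itself — harmless, since $\omega_{\D^*}$ vanishes on $\Dom(\D)$ — and the fact that $n$ preserves $\Dom(n)$, which you extract from Conditions 1) and 5), is indeed asserted in the paper in the remark following Definition \ref{defn:normal}, so you may cite it rather than reprove it.
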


\begin{proof}
Let us first prove that $n$ is closed in its graph norm on $\Dom(\D^*)$. 
Assume that $(\xi_j)_j\subseteq \Dom(n)$ is a Cauchy sequence in the graph norm of $\Dom(n)$. 
That is, the sequences $(\xi_j)_j$, $(\D^*\xi_j)_j$ and $(\D^*n\xi_j)_j$ 
are Cauchy sequences in $\h$. We write $\xi:=\lim \xi_j$. Since $\D^*$ is closed, 
$\D^*\xi_j\to \D^*\xi$ and $\xi\in \Dom(\D^*)$. 
Moreover, $n\xi_j$ converges to $n\xi$, because $n$ is continuous 
in the Hilbert space $\h$. If $n\xi_j\to n\xi$ and $\D^*n\xi_j$ converges, the closedness of $\D^*$ 
implies $\D^*n\xi_j\to \D^*n\xi$ and $n\xi\in \Dom(\D^*)$. Therefore $\xi\in\Dom(n)$.

We prove non-degeneracy of $\langle \cdot,\cdot \rangle_n$ one variable at a time. 
If $\langle \xi,\eta\rangle_n =0$ for all $\eta\in  \mathfrak{H}^{1/2}_n$, 
Condition 7) implies that $\xi=0$ in $\check{\mathfrak{H}}$. 
The non-degeneracy in the second variable follows from the non-degeneracy of the Hermitian 
form $\Ideal{\cdot,\cdot}_n$ on $\mathfrak{H}^{1/2}_n$ and the natural inclusion 
$\mathfrak{H}^{1/2}_n\subset\check{\mathfrak{H}}$.
\end{proof}

\begin{question}
\label{q:abnormal-normal?}
It remains open if a relative spectral triple $(\J\lhd\A,\h,\D)$ for $J\lhd A$ admits several Clifford normals 
defining ``boundary'' Hilbert spaces such that the identity mapping on $\mathfrak{H}^{1/2}_n$ 
does not extend to an isomorphism on the different completions. Such Clifford normals must clearly 
define different normal structures.
\end{question}

\subsection{The doubled spectral triple}
\label{sec:double}

We continue our study of relative spectral triples with 
Clifford normals for a graded ideal $J\lhd A$. 
We now additionally require that $(\J\lhd\A,\h,\D,n)$ 
is \emph{even}. The odd case can be treated by associating to the odd relative spectral triple 
an even relative spectral triple for $J\wh{\otimes}\Cl_1\lhd A\wh{\otimes}\Cl_1$ analogously to 
\cite{Connes}*{Proposition IV.A.14}, where $\Cl_1$ is the complex Clifford algebra with one generator.
In this section, we will use the Clifford normal $n$ 
to construct the ``doubled'' spectral triple 
$(\wt{\A},\wt{\h},\wt{\D})$, which is a spectral triple 
for the $C^*$-algebra constructed as the restricted 
direct sum
$$
\wt{A}:=A\oplus_JA=\{(a,b)\in A\oplus A:a-b\in J\}.
$$
For a manifold with boundary $\ol{M}$, $C(\ol{M})\oplus_{C_0(M)}C(\ol{M})=C(2M)$ and 
the construction in this subsection mimics the doubling construction of a Dirac operator on 
$\ol{M}$, \cite{BBW}*{Ch. 9}. 

Let $\wt{\h}=\h\oplus\h$. We equip $\wt{\h}$ with the $\mathbb{Z}/2$-grading 
$\wt{\h}^\pm=\h^\pm\oplus\h^{\mp}$. The pullback algebra $\wt{A}$ is represented 
on $\wt{\h}$ by $(a,b)\cdot(\xi,\eta)=(a\xi,b\eta)$.

\begin{defn}
\label{defofdtwiddle}
Let $(\J\lhd \A,\h,\D,n)$ be a relative spectral triple 
with Clifford normal and denote the grading operator on $\h$ by $\gamma$. 
Define an operator $\wt{\D}$ on $\wt{\h}$ on the domain
$$
\Dom(\wt{\D})=\{(\xi,\eta)\in\Dom(n)\oplus\Dom(n):\eta-n\gamma\xi\in\Dom(\D)\}
$$
by $\wt{\D}(\xi,\eta)=(\D^*\xi,\D^*\eta)$.
\end{defn}

\begin{rem}
The operator $\wt{\D}$ only depends on the normal structure that $n$ defines.
\end{rem}

\begin{prop}
\label{prop:doubleadjoint}
The operator $\wt{\D}$ is self-adjoint on $\wt{\h}$.
\end{prop}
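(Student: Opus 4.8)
The plan is to show separately that $\wt{\D}$ is symmetric and that $\wt{\D}^*\subseteq\wt{\D}$, i.e. every $(w,z)\in\Dom(\wt{\D}^*)$ actually lies in $\Dom(\wt{\D})$. For symmetry, take $(\xi,\eta),(\xi',\eta')\in\Dom(\wt{\D})$ and compute
$$
\Ideal{\wt{\D}(\xi,\eta),(\xi',\eta')}-\Ideal{(\xi,\eta),\wt{\D}(\xi',\eta')}=\big(\Ideal{\D^*\xi,\xi'}-\Ideal{\xi,\D^*\xi'}\big)+\big(\Ideal{\D^*\eta,\eta'}-\Ideal{\eta,\D^*\eta'}\big).
$$
Using $\omega_{\D^*}$ from Lemma \ref{nondegeprop}, this is $-\omega_{\D^*}([\xi],[\xi'])-\omega_{\D^*}([\eta],[\eta'])$. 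The defining domain condition says $[\eta]=[n\gamma\xi]=n_\partial\gamma[\xi]$ in $\check{\mathfrak H}$ (and likewise for the primed pair), so the second term becomes $-\omega_{\D^*}(n_\partial\gamma[\xi],n_\partial\gamma[\xi'])$. One then checks, exactly as in the computation in Lemma \ref{lem:n1} (the identity $\Ideal{n_\de[\xi],n_\de[\eta]}_n=\Ideal{[\xi],[\eta]}_n$) together with the facts $n\gamma=-\gamma n$ and $\gamma$ unitary, that $\omega_{\D^*}(n_\partial\gamma[\xi],n_\partial\gamma[\xi'])=-\omega_{\D^*}([\xi],[\xi'])$; the two terms cancel and symmetry follows.

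For self-adjointness, suppose $(w,z)\in\Dom(\wt{\D}^*)$, so there is $(u,v)\in\wt\h$ with $\Ideal{\wt{\D}(\xi,\eta),(w,z)}=\Ideal{(\xi,\eta),(u,v)}$ for all $(\xi,\eta)\in\Dom(\wt{\D})$. First, by feeding in pairs $(\xi,n\gamma\xi)$ with $\xi$ ranging over a suitable set, and exploiting that $\Dom(n)$ is a core for $\D^*$ (Definition \ref{defn:normal}(1)), deduce that $w,z\in\Dom(\D^*)$ and $u=\D^*w$, $v=\D^*z$. Then the defining equation, after using $[\eta]=n_\partial\gamma[\xi]$ and rearranging, says precisely that $\omega_{\D^*}([\xi],[w])+\omega_{\D^*}(n_\partial\gamma[\xi],[z])=0$ for all $\xi\in\Dom(n)$, which after moving $n_\partial\gamma$ and the sign changes it introduces is an identity of the form appearing in Condition 7) of Definition \ref{defn:normal}, with $w$ and $z$ (up to applying $\gamma$ and $n$) in the roles there. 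Condition 7) then yields $w+nz\in\Dom(\D)$ — equivalently $(w,z)$ satisfies the constraint $z-n\gamma w\in\Dom(\D)$ up to grading bookkeeping — and together with $w,z\in\Dom(n)$ (which should come out of the same analysis via $n\Dom(\D^*)$ considerations and Condition 1)) this places $(w,z)\in\Dom(\wt{\D})$.

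The main obstacle is the self-adjointness half, and within it the bookkeeping of $\gamma$, $n$, and signs so that the hypothesis extracted from $(w,z)\in\Dom(\wt{\D}^*)$ matches verbatim the premise of Condition 7); this is exactly the point the authors designed that opaque non-degeneracy condition to handle, so the content is in aligning notation rather than in new analysis. A secondary technical point is justifying that $w$ and $z$ individually lie in $\Dom(n)=\Dom(\D^*)\cap n\Dom(\D^*)$, not merely in $\Dom(\D^*)$; this should follow by testing against elements of the form $(0,\eta)$ and $(\xi,0)$ that are admissible because $n\gamma$ of them lands in $\Dom(\D)$, using Conditions 1) and 5), but it needs to be done carefully. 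Everything else — the cancellation in the symmetry computation and the resolvent-free manipulations — is routine given Lemmas \ref{nondegeprop}, \ref{innerproductlemma}, and \ref{lem:n1}.
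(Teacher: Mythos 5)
Your plan is correct and is essentially the paper's proof: the symmetry step is the same Green-type cancellation (the paper computes it directly from the symmetry of $[\D^*,n]$ and Condition 5), which is precisely the content of the Lemma \ref{lem:n1} identity you invoke), and the self-adjointness step is identical in structure — one notes that $\wt{\D}$ extends $\D\oplus\D$, so $\wt{\D}^*\subset\D^*\oplus\D^*$ (this is the cleanest way to get your first reduction, rather than testing against pairs $(\xi,n\gamma\xi)$, which does not separate the two components), and then rearranges the adjoint identity so that Condition 7) of Definition \ref{defn:normal}, applied with $\gamma\zeta$ and $\eta$ in its two slots, yields $\gamma\zeta+n\eta\in\Dom(\D)$, hence $\zeta-n\gamma\eta\in\Dom(\D)$ after applying $\gamma$ — exactly the grading bookkeeping you anticipated. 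Your remaining concern about verifying membership in $\Dom(n)$ is a point the paper passes over silently, and it does follow by the bootstrapping you sketch from $\zeta-n\gamma\eta\in\Dom(\D)$ together with Conditions 1) and 5), so no new idea is missing.
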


\begin{proof}
We first show that $\wt{\D}$ is symmetric, and then show that $\Dom(\wt{\D}^*)\subset\Dom(\wt{\D})$ 
which establishes that $\wt{\D}$ is self-adjoint. Let $(\xi,n\gamma\xi+\varphi),(\xi',n\gamma\xi'+\varphi')\in\Dom(\wt{\D})$, 
where $\xi,\xi'\in\Dom(n)$ and $\varphi,\varphi'\in\Dom(\D)$. Then
\begin{align*}
&\Ideal{\wt{\D}(\xi,n\gamma\xi+\varphi),(\xi',n\gamma\xi'+\varphi')}\\
&=\Ideal{\D^*\xi,\xi'}+\Ideal{\D^*n\gamma\xi,n\gamma\xi'}+
\Ideal{\D\varphi,n\gamma\xi'}+\Ideal{\D^*n\gamma\xi,\varphi'}\\
&=\Ideal{\D^*\xi,\xi'}+\Ideal{[\D^*,n]\gamma\xi,n\gamma\xi'}+
\Ideal{n\D^*\gamma\xi,n\gamma\xi'}+
\Ideal{\varphi,\D^*n\gamma\xi'}+\Ideal{n\gamma\xi,\D\varphi'}\\
&=\Ideal{\D^*\xi,\xi'}+\Ideal{\gamma\xi,[\D^*,n]n\gamma\xi'}+
\Ideal{\D^*\xi,n^2\xi'}+\Ideal{\varphi,\D^*n\gamma\xi'}+\Ideal{n\gamma\xi,\D\varphi'}\\
&\qquad\text{(since $[\D^*,n]$ is symmetric)}\\
&=\Ideal{\xi,\D^*\xi'}+
\Ideal{\gamma\xi,[\D^*,n]n\gamma\xi'}+\Ideal{\xi,\D^*n^2\xi'}+
\Ideal{\varphi,\D^*n\gamma\xi'}+\Ideal{n\gamma\xi,\D\varphi'}\\
&\qquad\text{(since $(n^2+1)\cdot\Dom(n)\subset\Dom(\D)$)}\\
&=\Ideal{(\xi,n\gamma\xi+\varphi),\wt{\D}(\xi',n\gamma\xi'+\varphi')},
\end{align*}
after some rearranging, which shows that $\wt{\D}$ is symmetric.

We now show that $\Dom(\wt{\D}^*)\subset\Dom(\wt{\D})$.  Let $(\eta,\zeta)\in\Dom(\wt{\D}^*)$, 
which means that there exists $(\rho,\sigma)\in\wt{\h}$ such that for all 
$(\xi,n\gamma\xi+\varphi)\in\Dom(\wt{\D})$, with $\varphi\in\Dom(\D)$, we have
\begin{align}\label{eq:adjointequationfortildeD}
\Ideal{\wt{\D}(\xi,n\gamma\xi+\varphi),(\eta,\zeta)}=\Ideal{(\xi,n\gamma\xi+\varphi),(\rho,\sigma)}.
\end{align}
Since $\wt{\D}$ is an extension of $\D\oplus\D$, the adjoint $\wt{\D}^*$ is a restriction of $\D^*\oplus\D^*$, 
and so $(\rho,\sigma)=(\D^*\eta,\D^*\zeta)$. Rearranging Equation \eqref{eq:adjointequationfortildeD}, we have
\begin{align*}
-\Ideal{\D^*\xi,\eta}+\Ideal{\xi,\D^*\eta}=\Ideal{\D^*n\xi,\gamma\zeta}-\Ideal{n\xi,\D^*\gamma\zeta}
\end{align*}
for all $\xi\in\Dom(n)$, which by Condition 7) of Definition \ref{defn:normal} implies that $\gamma\zeta+n\eta\in\Dom(\D)$. 
Applying the grading operator $\gamma$ yields $\zeta-n\gamma\eta\in\Dom(\D)$ and hence $(\eta,\zeta)\in\Dom(\wt{\D})$, 
and thus we have established that $\wt{\D}$ is self-adjoint.
\end{proof}

\begin{prop}
\label{prop:doublecommutators}
Let $(\J\lhd \A,\h,\D,n)$ be an even relative spectral triple for $J\lhd A$ with Clifford normal. We set
$$
\wt{\A}:=\{(a,b)\in\A\oplus \A:a-b\in \J\},
$$
and define $\wt{\D}$ as in Definition \ref{defofdtwiddle}. The $\Z/2$-graded commutators 
$[\wt{\D},\wt{a}]_\pm$ are defined and bounded on $\Dom(\wt{\D})$ for all $\wt{a}\in\wt{\A}$.
\end{prop}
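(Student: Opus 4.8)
The plan is to reduce the computation of $[\wt{\D},\wt{a}]_\pm$ on $\Dom(\wt{\D})$ to the known commutator bounds for $\D^*$ and the defining properties of the Clifford normal $n$. First I would fix $\wt{a}=(a,b)\in\wt{\A}$ and write $b=a+j$ with $a\in\A$, $j\in\J$, and take an arbitrary element $(\xi,n\gamma\xi+\varphi)\in\Dom(\wt{\D})$ with $\xi\in\Dom(n)$ and $\varphi\in\Dom(\D)$. Since $\wt{\D}$ is the restriction of $\D^*\oplus\D^*$, computing the graded commutator $[\wt{\D},\wt{a}]_\pm$ amounts to computing $[\D^*,a]_\pm$ on the first component (applied to $\xi$) and $[\D^*,b]_\pm$ on the second component (applied to $n\gamma\xi+\varphi$). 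By Lemma \ref{lem:commutator}, part 2), $[\D^*,a]_\pm$ and $[\D^*,b]_\pm$ are bounded, extending $\ol{[\D,a]_\pm}$ and $\ol{[\D,b]_\pm}$ respectively, so the remaining issue is purely that $\wt{a}$ must preserve $\Dom(\wt{\D})$.

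The crux of the argument is therefore showing $\wt{a}\cdot\Dom(\wt{\D})\subseteq\Dom(\wt{\D})$, i.e.\ that $(a\xi,b(n\gamma\xi+\varphi))\in\Dom(\wt{\D})$. For this I need two things: (i) $a\xi\in\Dom(n)$ and $b(n\gamma\xi+\varphi)\in\Dom(n)$, and (ii) $b(n\gamma\xi+\varphi)-n\gamma(a\xi)\in\Dom(\D)$. For (i): $\Dom(n)=\Dom(\D^*)\cap n\Dom(\D^*)$, and $\A$ preserves $\Dom(\D^*)$ (Lemma \ref{lem:commutator}, part 1)), while $n\Dom(\D^*)$ is handled because $[n,a]_\pm\cdot\Dom(n)\subseteq\Dom(\D)\subseteq\Dom(\D^*)$ (Condition 4) of Definition \ref{defn:normal}) together with $n(a\xi)=\pm a(n\xi)+[n,a]_\pm\xi$; since $\xi\in\Dom(n)$ gives $n\xi\in\Dom(\D^*)$ and $a$ preserves $\Dom(\D^*)$, we get $n(a\xi)\in\Dom(\D^*)$, hence $a\xi\in\Dom(n)$. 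The same reasoning applies to $b$ on $n\gamma\xi+\varphi$, noting $\varphi\in\Dom(\D)\subseteq\Dom(n)$. For (ii), I would expand
\begin{align*}
b(n\gamma\xi+\varphi)-n\gamma(a\xi)
&=bn\gamma\xi - n\gamma a\xi + b\varphi\\
&=b\varphi + \big(b\,n\gamma\xi - n\,b\gamma\xi\big) + \big(n\,b\gamma\xi - n\,a\gamma\xi\big)\\
&=b\varphi + [b,n]_\pm\gamma\xi + n\,j\gamma\xi,
\end{align*}
where I have used that $a,b$ commute with $\gamma$ up to the grading conventions (absorbing signs into the graded commutator bookkeeping) and $b-a=j$. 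Now $b\varphi\in\Dom(\D)$ since $\A$ preserves $\Dom(\D)$ by Condition 1) of Definition \ref{defn:alternate}; $[b,n]_\pm\gamma\xi\in\Dom(\D)$ by Condition 4) of Definition \ref{defn:normal} (as $\gamma\xi\in\Dom(n)$); and $n\,j\gamma\xi\in\Dom(\D)$ because $j\gamma\xi\in\J\cdot\Dom(\D^*)\subseteq\Dom(\D)$ by Condition 2) of Definition \ref{defn:alternate}, and then $n$ preserves $\Dom(\D)$ by Condition 1) of Definition \ref{defn:normal}. Hence the sum lies in $\Dom(\D)$, establishing (ii).

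\textbf{Main obstacle.} The genuinely delicate point is the careful bookkeeping of the grading operator $\gamma$ and the $\pm$-signs in the graded commutators when $n$, $a$, $b$ are all homogeneous of mixed parities: $n$ is odd, $\gamma$ is the grading, and $a,b$ need not be homogeneous, so one must split into homogeneous components and verify the algebra $bn\gamma\xi-n\gamma a\xi = b\varphi$-remainder decomposition produces exactly graded commutators (and not, say, ungraded ones) landing in $\Dom(\D)$. Everything else is an assembly of the cited conditions: once $\wt{a}$ is shown to preserve $\Dom(\wt{\D})$, boundedness of $[\wt{\D},\wt{a}]_\pm$ is immediate from Lemma \ref{lem:commutator}, since on each component it agrees with the bounded operator $\ol{[\D,a]_\pm}$ respectively $\ol{[\D,b]_\pm}$, and the norm of $[\wt{\D},\wt{a}]_\pm$ on $\wt{\h}$ is bounded by $\max\{\|\ol{[\D,a]_\pm}\|,\|\ol{[\D,b]_\pm}\|\}$. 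I would close by remarking, as a sanity check, that this is consistent with the self-adjointness of $\wt{\D}$ established in Proposition \ref{prop:doubleadjoint} and with the classical picture where $\wt{\D}$ is the Dirac operator on the double $2M$ and $\wt{\A}=C^\infty(2M)$.
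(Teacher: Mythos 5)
Your proposal is correct and takes essentially the same route as the paper: both proofs reduce the statement to showing that $\wt{\A}$ preserves $\Dom(\wt{\D})$ via the decomposition $b\,n\gamma\xi = n\gamma a\xi + [b,n]_\pm\gamma\xi + n\gamma j\xi$ (using Condition 4) of Definition \ref{defn:normal} and Condition 2) of Definition \ref{defn:alternate}, plus $\A\cdot\Dom(\D)\subseteq\Dom(\D)$), and then deduce boundedness of $[\wt{\D},\wt{a}]_\pm$ from the boundedness of $[\D^*,\cdot]_\pm$. Your write-up is in fact slightly more careful than the paper's (which omits the explicit $\Dom(n)$-membership checks), and the only blemish is the sign bookkeeping for odd homogeneous elements in your displayed identity (the correct third term is $n\gamma j\xi$ rather than $n\,j\gamma\xi$), which you flag yourself and which does not affect the conclusion since either vector lies in $\Dom(\D)$.
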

\begin{proof}
Let $(\xi,n\gamma\xi+\varphi)\in\Dom(\wt{\D})$, 
where $\varphi\in\Dom(\D)$, and let $(a,a+j)\in\wt{\A}$, where $j\in\J$. Then
$$
(a\xi,an\gamma\xi+jn\gamma\xi+(a+j)\varphi)
=(a\xi,n\gamma a\xi+[a,n]_\pm\gamma\xi+jn\gamma\xi+(a+j)\varphi)
$$
which is in $\Dom(\wt{\D})$ since $[a,n]_\pm\cdot\Dom(n)\subset\Dom(\D)$, 
which is Condition 4) of Definition \ref{defn:normal}. The boundedness of the 
commutators follows from the fact that $[\D^*,a]_\pm$ is bounded for all $a\in\A$.
\end{proof}

The following result will be used to show that $\wt{\D}$ 
has locally compact resolvent.

\begin{prop}
\label{prop:extresolvent}
Let $T$ be a closed symmetric operator on a separable 
Hilbert space $\h$, let $a\in \B(\h)$ be an operator such that $a(1+T^*T)^{-1/2}$ and 
$a(1-P_{\ker(T^*)})(1+TT^*)^{-1/2}$ are compact, 
and let $T\subset T_e\subset T^*$ be a closed extension of $T$. 
Then $a(1+T_e^*T_e)^{-1/2}$ is compact if and only if $aP_{\ker(T_e)}$ is compact.
\end{prop}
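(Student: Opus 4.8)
The plan is to relate the operator $T_e$ to the original symmetric operator $T$ via a finite-dimensional correction supported on the deficiency spaces. Write $\D_e := T_e$ for brevity. Since $T \subset T_e \subset T^*$, the domain $\Dom(T_e)$ sits between $\Dom(T)$ and $\Dom(T^*)$, and the key point is that both $aP_{\ker(T_e)}$ and $a(1+T_e^*T_e)^{-1/2}$ can be understood by comparing resolvents. First I would record the algebraic identity relating $(1+T_e^*T_e)^{-1}$ to $(1+T^*T)^{-1}$ and $(1+TT^*)^{-1}$: both $T$ and $T_e$ have the same ambient Hilbert space, and on $\ker(T_e)^\perp$ the operator $T_e$ behaves like an invertible operator, so $T_e$ restricted there is ``comparable'' to $T$ on a corresponding subspace.

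The cleanest route is via the phase/polar decomposition. Let $V_e$ be the phase of $T_e$ (the partial isometry with $T_e = V_e|T_e|$, initial space $\ker(T_e)^\perp$, final space $\ker(T_e^*)^\perp$), which is well-defined once we check $\ker(T_e)$ and $\ker(T_e^*)$ are complemented — and since the excerpt works over $B=\C$ in this section, closedness considerations (Lemma \ref{lem:domains} applies to $T$; for $T_e$ one uses that $T_e$ is a closed symmetric operator on a Hilbert space, so $\ker(T_e^*)$ is always closed hence complemented, and similarly $\ker(T_e)$). Then, exactly as in Remark \ref{rem:unitalandrwongcompact} and Proposition \ref{prop:phasetriple}, the compactness of $a(1+T_e^*T_e)^{-1/2}$ is equivalent (modulo compacts) to the compactness of $a(1-P_{\ker(T_e)})(1+T_e^*T_e)^{-1/2}$ \emph{plus} $aP_{\ker(T_e)}(1+T_e^*T_e)^{-1/2} = aP_{\ker(T_e)}$ (since $(1+T_e^*T_e)^{-1/2}$ acts as the identity on $\ker(T_e)$). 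So the statement reduces to proving:
$$
a(1-P_{\ker(T_e)})(1+T_e^*T_e)^{-1/2} \ \text{is always compact.}
$$
This is the heart of the matter.

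For that reduced claim I would argue as follows. On $\ker(T_e)^\perp$, using the phase $V_e$, one has $(1-P_{\ker(T_e^*)})(1+T_eT_e^*)^{-1/2} = V_e(1+T_e^*T_e)^{-1/2}V_e^*$ and $a(1-P_{\ker(T_e)})(1+T_e^*T_e)^{-1/2}$ is unitarily equivalent (via $V_e$) to the corresponding expression with $T_e^*$; so it suffices to control $a(1-P_{\ker(T_e^*)})(1+T_eT_e^*)^{-1/2}$. Now invoke the resolvent comparison: since $\ker(T_e^*) \subseteq \ker(T^*)$ (as $T_e \supseteq T$ forces $T_e^* \subseteq T^*$), we have $1-P_{\ker(T^*)} \leq 1-P_{\ker(T_e^*)}$, and a computation with the second resolvent identity applied to the chain $T \subset T_e \subset T^*$ expresses $(1-P_{\ker(T_e^*)})(1+T_eT_e^*)^{-1/2}$ in terms of $(1-P_{\ker(T^*)})(1+TT^*)^{-1/2}$ plus terms that are finite rank (supported on the finite- or infinite-dimensional space $\Dom(T_e)/\Dom(T)$ — here one must be a touch careful, as this need not be finite-dimensional in general, so instead of ``finite rank'' one should say the difference factors through the bounded operator $(1+T^*T)^{-1/2}$ composed with $a$, staying compact by hypothesis). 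The hypothesis that $a(1-P_{\ker(T^*)})(1+TT^*)^{-1/2}$ is compact then transfers the compactness to $T_e$. Assembling: $a(1+T_e^*T_e)^{-1/2}$ is compact $\iff$ $aP_{\ker(T_e)}$ is compact.

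\medskip

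\textbf{The main obstacle.} The delicate step is the resolvent comparison relating $(1+T_e^*T_e)^{-1}$ (or its square root / the projection-weighted version) back to $(1+T^*T)^{-1}$ and $(1+TT^*)^{-1}$, because $T_e$ and $T$ do \emph{not} commute with these resolvents and the domain difference $\Dom(T_e)/\Dom(T)$ is a priori infinite-dimensional. Getting an honest identity — rather than a heuristic ``finite-rank correction'' — requires carefully using that $T_e \subset T^*$ so that $T_e^* \supset T$, writing $(1+T_e^*T_e)^{-1} - $ (something built from $T$) as a composite of bounded pieces sandwiched between $(1+T^*T)^{-1/2}$'s and $a$, which is where all four hypothesized compact operators get used. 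I expect this to follow the pattern of Lemma \ref{lem:adjointbound} and the computations deferred to Appendix \ref{sec:technicalproof}, and that is the step I would write out most carefully.
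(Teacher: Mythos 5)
Your opening reduction is correct and is the right way to start: since $(1+T_e^*T_e)^{-1/2}$ commutes with $P_{\ker(T_e)}$ and is the identity on $\ker(T_e)$, the only substantive direction of the Proposition follows once one shows that $a(1-P_{\ker(T_e)})(1+T_e^*T_e)^{-1/2}$ is compact. But your route to that claim has a genuine gap, in two places. First, the phase trick does not transfer the problem to $T_e^*$: from $(1-P_{\ker(T_e^*)})(1+T_eT_e^*)^{-1/2}=V_e(1+T_e^*T_e)^{-1/2}V_e^*$ you only learn something about $aV_e(1+T_e^*T_e)^{-1/2}$, not about $a(1-P_{\ker(T_e)})(1+T_e^*T_e)^{-1/2}$; the partial isometry $V_e$ does not commute with $a$, and in Remark \ref{rem:unitalandrwongcompact} the conjugation argument works precisely because no $a$ is in the way (unital, non-degenerate case). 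Second, and more seriously, the ``resolvent comparison'' you defer to a Lemma \ref{lem:adjointbound}--style computation cannot be supplied from the stated hypotheses at all: the reduced claim, and with it the ``if'' direction of the Proposition as stated, can fail. Take $\h=\bigoplus_{n\geq 1}L^2[0,1]$, $T=\bigoplus_n nT_n$ with $T_n=-i\,d/dx$ on $H^1_0[0,1]$, and $a=1$. Then $(1+T^*T)^{-1/2}$ and $(1-P_{\ker(T^*)})(1+TT^*)^{-1/2}$ are compact (the $n$-th blocks are compact with norms $(1+n^2\pi^2)^{-1/2}$, being scaled Dirichlet and Neumann resolvents). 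Now let $T_e=\bigoplus_n nD_{1/n}$, where $D_{\theta}$ is $-i\,d/dx$ with boundary condition $u(1)=e^{i\theta}u(0)$; this is a closed (indeed self-adjoint) extension with $T\subset T_e\subset T^*$, $\ker(T_e)=0$ (the $n$-th block has spectrum $1+2\pi n\Z$), yet $1$ is an eigenvalue of $T_e$ of infinite multiplicity (eigenfunctions $e^{ix/n}$), so $(1-P_{\ker(T_e)})(1+T_e^*T_e)^{-1/2}=(1+T_e^2)^{-1/2}$ is not compact even though $aP_{\ker(T_e)}=0$ is. So no identity built only from the hypothesised compact operators can close your argument; some extra control on how $\Dom(T_e)$ (or $a$) interacts with $\ker(T^*)$ is indispensable.

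For comparison, the paper's own proof takes a different and more economical route: it works in the graph-norm picture ($a(1+S^*S)^{-1/2}$ is compact iff $a:\Dom(S)\to\h$ is compact, via Lemma \ref{lem:domains}), writes $a|_{\Dom(T_e)}=aP_{\ker(T^*)}|_{\Dom(T_e)}+a(1-P_{\ker(T^*)})|_{\Dom(T_e)}$, and kills the second term using the hypothesis on $(1-P_{\ker(T^*)})(1+TT^*)^{-1/2}$ — no phases, no resolvent identities. However, it then asserts $aP_{\ker(T^*)}|_{\Dom(T_e)}=aP_{\ker(T_e)}+\textnormal{compact}$ without justification, i.e.\ that $a(P_{\ker(T^*)}-P_{\ker(T_e)})|_{\Dom(T_e)}$ is compact in the graph norm of $T_e$; this is exactly the point your heuristic also needs, and exactly what fails in the example above (there $P_{\ker(T^*)}|_{\Dom(T_e)}$ is non-compact while $P_{\ker(T_e)}=0$). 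So your instinct that this is ``the step to write out most carefully'' was sound, but it is not a technicality: neither your proposed comparison nor the paper's argument fills it under the stated hypotheses, and in the intended application (the doubled operator $\wt{\D}$, where $a$ preserves the relevant domains and the kernel is computed by Lemma \ref{lem:doublekernel}) a correct proof has to use that additional structure rather than the two compactness hypotheses alone.
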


\begin{proof}

For a closed operator $S$ on $\h$, let $a_S:\Dom(S)\rightarrow\h$ 
be the composition of $a$ with the inclusion $\Dom(S)\hookrightarrow\h$.
Since $(1+S^*S)^{-1/2}:\h\rightarrow\Dom(S)$ is unitary, 
Lemma \ref{lem:domains}, (where $\Dom(S)$ is 
equipped with the graph inner product), $a(1+S^*S)^{-1/2}$ 
is compact as an operator on $\h$ if and only if $a_S$ is compact.
 
We can write 
$$
a_{T^*}=a_{T^*}P_{\ker(T^*)}+a_{T^*}(1-P_{\ker(T^*)}),
$$
where the second term is compact since $a(1-P_{\ker(T^*)})(1+TT^*)^{-1/2}$ is compact.
 
Let $T_e$ be a closed operator with $T\subset T_e\subset T^*$. We write $\iota$ for 
the domain inclusion $\Dom(T_e)\to \Dom(T^*)$. Then
\begin{align*}
a_{T_e}&=a_{T^*}\iota
=a_{T^*}P_{\ker(T^*)}\iota+a_{T^*}(1-P_{\ker(T^*)})\iota\\
&=a_{T^*}P_{\ker(T_e)}+\text{ compact operator.}
\end{align*}
Since the graph inner product and 
$\h$-inner product agree on $\ker(T^*)$, $a_{T^*}P_{\ker(T_e)}$ 
is compact if and only if $a P_{\ker(T_e)}$ is compact.  
Using the fact that  $a_{T_e}$ is compact if and only if 
$a(1+T_e^*T_e)^{-1/2}\in \K(\h)$ completes the proof.
\end{proof}

\begin{lemma}
\label{lem:doublekernel}
With $\wt{\D}$ as above, $\ker(\wt{\D})=\ker(\D)\oplus\ker(\D)$, and hence $\wt{a}P_{\ker(\wt{D})}$ 
is compact for all $\wt{a}\in\wt{A}$.
\end{lemma}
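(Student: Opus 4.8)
The plan is to establish the two inclusions defining $\ker(\wt{\D})=\ker(\D)\oplus\ker(\D)$ separately, and then read off the compactness claim from the locally compact resolvent property of $\D$. The inclusion $\ker(\D)\oplus\ker(\D)\subseteq\ker(\wt{\D})$ is routine: for $\xi,\eta\in\ker(\D)$ one has $\xi,\eta\in\Dom(\D)\subseteq\Dom(n)$, and since $\gamma$ preserves $\Dom(\D)$, Condition 1) of Definition \ref{defn:normal} gives $n\gamma\xi\in\Dom(\D)$, hence $\eta-n\gamma\xi\in\Dom(\D)$ and $(\xi,\eta)\in\Dom(\wt{\D})$ with $\wt{\D}(\xi,\eta)=(\D\xi,\D\eta)=0$.

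For the reverse inclusion I would take $(\xi,\eta)\in\ker(\wt{\D})$, put $\varphi:=\eta-n\gamma\xi\in\Dom(\D)$, and record that $\xi,\eta\in\Dom(n)$ and $\D^*\xi=\D^*\eta=0$. The crux is to force $\xi\in\Dom(\D)$; for this I would compute $\langle[\xi],[\xi]\rangle_n$ in $\mathfrak{H}^{1/2}_n=\Dom(n)/\Dom(\D)$ and show it vanishes, so that positive-definiteness of $\langle\cdot,\cdot\rangle_n$ (Lemma \ref{innerproductlemma}, where Condition 6) of Definition \ref{defn:normal} enters) yields $\xi\in\Dom(\D)$. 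Since $\D^*\xi=0$, one has $\langle[\xi],[\xi]\rangle_n=\langle\xi,\D^*n\xi\rangle$, and it remains to check this scalar is zero. Expanding $0=\D^*\eta=\D^*(n\gamma\xi)+\D\varphi$ and using that both $n$ and $\D^*$ are odd (so $n\gamma=-\gamma n$ and $\D^*\gamma=-\gamma\D^*$ on the relevant domains) rewrites this as $\D^*(n\xi)=-\gamma\D\varphi$; then, since $\gamma\xi\in\Dom(\D^*)$ and $\varphi\in\Dom(\D)$, passing $\D$ over to $\D^*$ gives $\langle\xi,\D^*n\xi\rangle=-\langle\D^*(\gamma\xi),\varphi\rangle=\langle\gamma\D^*\xi,\varphi\rangle=0$. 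Once $\xi\in\Dom(\D)$ is known, $\D\xi=\D^*\xi=0$ so $\xi\in\ker(\D)$, and $\eta=n\gamma\xi+\varphi\in\Dom(\D)$ by Condition 1) again, whence $\eta\in\ker(\D)$.

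Finally, to obtain compactness of $\wt{a}P_{\ker(\wt{\D})}$, I would note that under $\wt{\h}=\h\oplus\h$ the kernel projection splits as $P_{\ker(\wt{\D})}=P_{\ker(\D)}\oplus P_{\ker(\D)}$, so for $\wt{a}=(a,b)\in\wt{A}$ the operator $\wt{a}P_{\ker(\wt{\D})}$ is block-diagonal with entries $aP_{\ker(\D)}$ and $bP_{\ker(\D)}$. Because $(1+\D^*\D)^{-1/2}$ acts as the identity on $\ker(\D)$, one has $cP_{\ker(\D)}=c(1+\D^*\D)^{-1/2}P_{\ker(\D)}$ for every $c\in A$, which is compact since $c(1+\D^*\D)^{-1/2}$ is compact for all $c\in A$ (Condition 3) of Definition \ref{defn:alternate}, extended from $\A$ to $A$ by density); hence both blocks, and thus $\wt{a}P_{\ker(\wt{\D})}$, are compact.

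The step I expect to be the main obstacle is the vanishing of $\langle\xi,\D^*n\xi\rangle$: it is where the non-self-adjointness of $\D$ bites, and each passage of $\D$ or $\D^*$ across the inner product, together with each use of $n$ preserving or mapping into $\Dom(\D)$, must be matched against the precise domains $\Dom(\D)\subseteq\Dom(n)\subseteq\Dom(\D^*)$. Everything else --- including the observation that this argument needs only Conditions 1) and 6) of Definition \ref{defn:normal} and not self-adjointness of $\wt{\D}$ --- is bookkeeping.
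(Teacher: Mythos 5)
Your proof is correct and follows essentially the same route as the paper: take $(\xi,\eta)\in\ker(\wt{\D})$ with $\eta=n\gamma\xi+\varphi$, use the grading relation $\{\D^*,\gamma\}=0$ and $\varphi\in\Dom(\D)$ to show $\Ideal{[\xi],[\xi]}_n=\Ideal{\xi,\D^*n\xi}=0$, then invoke definiteness of $\Ideal{\cdot,\cdot}_n$ (Lemma \ref{innerproductlemma}) to get $\xi\in\Dom(\D)$, hence $\eta\in\Dom(\D)$, and finally deduce compactness from $aP_{\ker(\D)}=a(1+\D^*\D)^{-1/2}P_{\ker(\D)}$. The only cosmetic difference is that the paper packages the cancellation by noting $n\xi-\gamma\varphi\in\ker(\D^*)$ rather than rearranging $\D^*\eta=0$ into $\D^*n\xi=-\gamma\D\varphi$, which is the same computation.
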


\begin{proof}
Let $(\xi,n\gamma\xi+\varphi)\in\ker(\wt{\D})$, 
where $\varphi\in\Dom(\D)$, so $\xi,n\gamma\xi+\varphi\in\ker(\D^*)$. 
Since $\{\D^*,\gamma\}=0$,  $\gamma$ preserves $\ker(\D^*)$, 
and so $-\gamma(n\gamma\xi+\varphi)=n\xi-\gamma\varphi\in\ker(\D^*)$. Hence
\begin{align*}
0&=\Ideal{\xi,\D^*(n\xi-\gamma\varphi)}-\Ideal{\D^*\xi,n\xi-\gamma\varphi}
=\Ideal{\xi,\D^*n\xi}-\Ideal{\D^*\xi,n\xi}=\Ideal{[\xi],[\xi]}_n
\end{align*}
since $\gamma\varphi\in\Dom(\D)$ and so 
$\Ideal{\xi,\D\gamma\varphi}=\Ideal{\D^*\xi,\gamma\varphi}$. 
The definiteness of $\Ideal{\cdot,\cdot}_n$ implies that 
$[\xi]=0$; i.e. $\xi\in\Dom(\D)$. This in turn implies that 
$n\gamma\xi+\varphi\in\Dom(\D)$, and hence $(\xi,n\gamma\xi+\varphi)\in\ker(\D)\oplus\ker(\D)$. 
If $(a,b)\in\wt{A}$, then $(a,b)P_{\ker(\wt{D})}=aP_{\ker(\D)}\oplus b P_{\ker(\D)}$ is compact.
\end{proof}

\begin{rem}
Let $\D_{\text{min}}$ be the minimal closed extension of a 
Dirac operator $\Dsla$ on a Clifford module over a 
compact manifold with boundary, as in Subsubsection \ref{eg:diracboundary}. 
Then $\ker(\D_{\text{min}})=\{0\}$, \cite{BBW}*{Corollary 8.3}. 
The above result corresponds to the doubled operator $\wt{\D}$ 
being invertible in this case, \cite{BBW}*{Theorem 9.1}.
\end{rem}

By combining Propositions \ref{prop:doubleadjoint}, 
\ref{prop:doublecommutators}, \ref{prop:extresolvent} and 
Lemma \ref{lem:doublekernel}, we arrive at the main result of this section.

\begin{thm}
\label{thm:doubletriple}
Let $(\J\lhd \A,\h,\D,n)$ be a relative spectral triple with Clifford normal for a graded ideal 
$J$ in a $\Z/2$-graded, separable,  $C^*$-algebra $A$. 
The triple $(\wt{\A},\wt{\h},\wt{\D})$ is a spectral triple for the pullback algebra 
$\wt{A}=\{(a,b)\in A\oplus A:a-b\in J\}$.
\end{thm}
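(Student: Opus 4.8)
The plan is to assemble the theorem by invoking, in order, the four preparatory results that have been set up precisely for this purpose, and then to verify the two remaining spectral-triple axioms that were not handled piecemeal: density of the domain and boundedness of $\widetilde{A}$ acting on $\widetilde{\h}$ (which is immediate since $\widetilde A$ acts by the restriction of the bounded representation $\rho\oplus\rho$).

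First I would recall that Proposition \ref{prop:doubleadjoint} gives that $\wt{\D}$ is self-adjoint on $\wt{\h}$, and that Proposition \ref{prop:doublecommutators} gives that for each $\wt a\in\wt{\A}$ the graded commutator $[\wt{\D},\wt a]_\pm$ is defined on $\Dom(\wt{\D})$ and bounded (hence extends to an adjointable operator on $\wt\h$). In particular $\wt a\cdot\Dom(\wt\D)\subset\Dom(\wt\D)$ for $\wt a\in\wt\A$, which was shown inside the proof of Proposition \ref{prop:doublecommutators}. It remains only to establish the locally-compact-resolvent condition: that $\wt a(1+\wt\D^2)^{-1/2}$ is compact for every $\wt a$ in the $C^*$-closure $\wt A$ of $\wt\A$ (equivalently, for every $\wt a\in\wt\A$, and then by a density/norm-estimate argument for all of $\wt A$, using that $(1+\wt\D^2)^{-1/2}$ is bounded).

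The key step is to apply Proposition \ref{prop:extresolvent} on each summand. Write $\wt\h=\h\oplus\h$ and observe that $\wt\D$ is a closed symmetric (in fact self-adjoint) extension of $\D\oplus\D$ sitting inside $\D^*\oplus\D^*=(\D\oplus\D)^*$. For $\wt a=(a,b)\in\wt\A$ (so $a,b\in\A$), the relative spectral triple axioms 3.\ and 4.\ of Definition \ref{defn:alternate} say exactly that $a(1+\D^*\D)^{-1/2}$ and $a(1-P_{\ker(\D^*)})(1+\D\D^*)^{-1/2}$ are compact, and likewise for $b$; thus the hypotheses of Proposition \ref{prop:extresolvent} are met with $T=\D\oplus\D$, $T_e=\wt\D$, and the operator $\wt a\in\B(\wt\h)$. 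The conclusion of that proposition is that $\wt a(1+\wt\D^2)^{-1/2}$ is compact if and only if $\wt aP_{\ker(\wt\D)}$ is compact. But Lemma \ref{lem:doublekernel} identifies $\ker(\wt\D)=\ker(\D)\oplus\ker(\D)$ and shows $\wt aP_{\ker(\wt\D)}=aP_{\ker(\D)}\oplus bP_{\ker(\D)}$ is compact for all $\wt a\in\wt A$ (using that $a(1+\D^*\D)^{-1/2}$ compact forces $aP_{\ker(\D)}$ compact). Hence $\wt a(1+\wt\D^2)^{-1/2}$ is compact for all $\wt a\in\wt\A$, and by approximating a general element of $\wt A$ in norm by elements of $\wt\A$ (note $\wt\A$ is dense in $\wt A$ since $\A$ is dense in $A$ and $\J$ is dense in $J$) together with boundedness of $(1+\wt\D^2)^{-1/2}$, the same holds for all $\wt a\in\wt A$.

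Finally, $\Dom(\wt\D)$ is dense in $\wt\h$: it contains $\Dom(\D)\oplus\Dom(\D)$ (take $\xi\in\Dom(\D)\subseteq\Dom(n)$, then $n\gamma\xi\in\Dom(\D)$ by Condition 1) of Definition \ref{defn:normal}, so the constraint $\eta-n\gamma\xi\in\Dom(\D)$ is satisfiable with $\eta$ ranging over a dense set), and $\Dom(\D)$ is dense in $\h$. Collecting these facts: $\wt\D$ is self-adjoint with locally compact resolvent, $\wt\A$ is a dense $*$-subalgebra of $\wt A$ preserving $\Dom(\wt\D)$ with bounded graded commutators, and the representation is bounded; therefore $(\wt\A,\wt\h,\wt\D)$ is a spectral triple for $\wt A$. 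I do not anticipate a genuine obstacle here — all the real work was done in Propositions \ref{prop:doubleadjoint}--\ref{prop:extresolvent} and Lemma \ref{lem:doublekernel}; the only point requiring a moment's care is the passage from $\wt\A$ to its closure $\wt A$ in the locally-compact-resolvent condition, which is the standard density argument.
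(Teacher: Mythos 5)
Your proposal is correct and follows exactly the paper's route: the paper proves Theorem \ref{thm:doubletriple} precisely by combining Propositions \ref{prop:doubleadjoint}, \ref{prop:doublecommutators}, \ref{prop:extresolvent} (applied with $T=\D\oplus\D$, $T_e=\wt\D$) and Lemma \ref{lem:doublekernel}, which is what you do. Your added remarks on density of $\Dom(\wt\D)$ and passing from $\wt\A$ to $\wt A$ are the standard details the paper leaves implicit, and they are handled correctly.
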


\begin{rem}
\label{rem:fu}
The double construction for algebras is functorial 
for maps  $\phi:(\mathcal{I},\mathcal{B})\rightarrow(\J,\A)$, meaning
that
we can define $\wt{\phi}:\wt{\mathcal{B}}\rightarrow\wt{\A}$.
Moreover the double construction for relative spectral triples with Clifford normals is also functorial, so
we also find that if 
$(\mathcal{I}\lhd \mathcal{B},\h,\D,n)=\phi^*(\J\lhd \A,\h,\D,n)$ 
then $(\wt{\mathcal{B}},\wt{\h},\wt{\D})=\wt{\phi}^*(\wt{\A},\wt{\h},\wt{\D})$.
\end{rem}

\begin{eg}
In a limited and very non-unique way, the doubling construction is invertible.
Let $(\wt{\A},\wt{\h},\wt{\D})$ be an even 
spectral triple for a unital $C^*$-algebra $\wt{A}$ represented non-degenerately on $\wt{\h}$. Suppose there is an odd involution $Z$ on $\wt{\h}$
that preserves the domain of $\D$ and commutes with $\D$. 
The involution $Z$ implements a non-graded $\Z/2$-action on 
$(\wt{\A},\wt{\h},\wt{\D})$. Assume that there is a graded decomposition 
\begin{equation}
\label{decomposofw}
\wt{\h}=\h^1\oplus \h^2
\quad\mbox{in which}\quad 
Z=\begin{pmatrix} 0 & U\\ U^* & 0\end{pmatrix},
\end{equation}
for an odd unitary $U:\h^2\to \h^1$. 
The possible decompositions as in Equation \eqref{decomposofw} 
stand in a one-to-one correspondence with closed 
graded subspaces $\h^1\subseteq \wt{\h}$ 
such that $\h^1\perp Z\h^1$ and 
$\wt{\h}=\h^1+Z\h^1$. If  $\h^1\subseteq \wt{\h}$ 
is such a subspace, we obtain a decomposition 
as in Equation \eqref{decomposofw} by 
setting $\h^2:=Z\h^1$ and $U:=Z|_{\h^2}$.
In general, neither existence nor uniqueness of 
such decompositions can be guaranteed. 
Usually they arise from further information available in the example at hand.

We let $\A$ denote the commutant of $Z$ in $\wt{\A}$. 
If $\h^1$ is $\wt{\D}$-invariant, we define $\D$ by restricting 
$\wt{\D}$ to $\Dom(\D):=\Dom(\wt{\D})\cap \h^1$. 
Then $\D$ is easily seen to be symmetric. 
If the domain $\Dom(\D)$ is preserved by $\A$, $\D$ has 
bounded commutators with $\A$. We define 
$\J:=\{j\in\A:\,j\cdot\Dom(\D^*)\subset\Dom(\D)\}$, 
and let $J$ denote its $C^*$-closure. 
Under the assumption that $\h^1$ is chosen such that: 
\begin{enumerate}
\item $\h^1\cap \Dom(\wt{\D})\subseteq \h^1$ is dense and 
$\wt{\D}(\h^1\cap \Dom(\wt{\D}))\subseteq \h^1$;
\item $\h^1$ is preserved by $\mathcal{A}$;
\item $\D$ is closed with $\Dom(\D):=\h^1\cap \Dom(\wt{\D})$;
\end{enumerate}
it is straightforward to check that $(\J\lhd \A,\h^1,\D)$ is a relative 
Kasparov module for $(J\triangleleft A,B)$. This is the ``inverse'' construction of the 
double construction presented above.

The operator $\D$ is highly dependent on the choice of $\h^1$. 
For instance, let $M$ be a compact manifold with 
boundary and $2M$ its double. Let $S$ be a $\Z/2$-graded Clifford module on $M$, 
which extends to a $\Z/2$-graded Clifford module on $2M$, 
also denoted $S$. The flip mapping 
$\sigma:2M\to 2M$ (interchanging the two copies of $M$) lifts to 
an odd involution $Z$ on $\wt{\h}:=L^2(2M,S)$. 

For suitable 
choices of geometric data, we can construct a Dirac 
operator $\wt{\D}$ on $S\to 2M$ commuting with $Z$. 
If we take $\h^1=L^2(M,S)$ (for one copy of $M\subseteq 2M$) 
$\D$ will be the minimal closed extension of $\wt{\D}$ restricted to $M$. 
The conditions $\h^1\perp Z\h^1$ and $\wt{\h}=\h^1+ Z\h^1$ are
satisfied whenever $\h^1=L^2(W,S)$ for a subset $W\subseteq 2M$ 
such that $\sigma(W)\cup W$ has full 
measure and $W\cap \sigma(W)$ has zero measure. 
It suffices for $W$ to be open for 
$(C^\infty_0(M^\circ)\lhd C^\infty(\overline{M}),\h^1,\D)$ to be a 
relative spectral triple. 
Here $C^\infty(\overline{M})$ is the restriction of $C^\infty(2M)$ to $\ol{M}$.
\end{eg}

\begin{eg}(Clifford normals on manifolds with boundary)
We have already shown that for a manifold with boundary, Clifford multiplication by a smooth
extension of the unit normal vector field provides a Clifford normal, 
see Example \ref{classicalexampleonmfd}. The abstract double construction
applied to the relative spectral triple with Clifford normal of a manifold with boundary produces
the spectral triple of the doubled manifold (cf. \cite{BBW}).
\end{eg}

\begin{eg}(Clifford normals and dimension drop algebras)
For a dimension drop algebra on a manifold with boundary, Clifford multiplication
by a smooth
extension of the unit normal vector field provides a 
Clifford normal. The double is determined
from the double of the manifold with boundary using Remark \ref{rem:fu}.
\end{eg}

\subsection{Clifford normals on manifolds with conical singularities}
\label{cliffordoncon}
We consider conical manifolds as in Subsubsection \ref{eg:cones-on-the-brain}.
For conical manifolds, we can characterise all symmetric extensions and 
describe their normal structures. Under the isomorphism $V\cong \oplus_{i=1}^lW_i$ 
of the deficiency space of $\D_{\textnormal{min}}$, the 
symplectic form $\omega_{\D_{\textnormal{max}}}$ 
corresponds to the direct sum of the symplectic forms
$$
\omega_i(f,g)
:=\langle f,c(n_N)g\rangle_{L^2(N_i,S|_{N_i})},\quad f,\,g\in W_i
=\chi_{\left(-\frac{1}{2},\frac{1}{2}\right)}(\Dsla_{N_i})L^2(N_i,S|_{N_i}).
$$
Let us briefly sketch why the symplectic form 
$\omega_{\D_{\textnormal{max}}}$ takes the form it does. 
The ``Clifford normal" $n$ anticommutes with $\Dsla_{N_i}$. 
In particular, if $f\in W_i$ is an eigenvector with 
eigenvalue $\lambda$ then $n f\in W_i$ is an 
eigenvector with eigenvalue $-\lambda$. If we take 
two eigenvectors $f$ and $g$ of $\Dsla_{N_i}$, with 
eigenvalues $\lambda$ and $\mu$ we have by 
partial integration on the open submanifold 
$M_\epsilon:=\{r>\epsilon\}\subseteq M$ that
$$
\omega_{\D_{\textnormal{max}}}(Tf,Tg)=
\lim_{\epsilon\to 0}\int_{N_i}\epsilon^{\lambda+\mu}\langle f,c(n_N)g\rangle.
$$
Since $\overrightarrow{n}_N$ interchanges the eigenspaces of 
opposite signs, this can be non-zero if and only if 
$\mu+\lambda=0$ in which case the limit is finite. 
We also note that non-degeneracy of 
$\omega_{\D_{\textnormal{max}}}$ follows from Lemma \ref{nondegeprop}.

We will view $W$ as a $\C^l$-module 
under the decomposition $W= \oplus_{i=1}^lW_i$. 
The $\C^l$-action is compatible with the 
symplectic form $\omega_{\D_{\textnormal{max}}}$. 
For a subspace $L\subseteq W$ we denote its annihilator by
$$
L^\perp:=\{x\in W: \omega_{\D_{\textnormal{max}}}(x,y)=0\;\forall \,y\in L\}.
$$
Recall that a subspace $L\subseteq W$ of a 
symplectic vector space is called isotropic if 
$L\subseteq L^\perp$ and Lagrangian if $L=L^\perp$. 
The following Proposition is an immediate 
consequence of Theorem \ref{alltheconeprop} and the definition
of the map $T:\,W\to \Dom(\D_{\textnormal{max}})$.

\begin{prop}
\label{closedectofdir}
Extensions $\hat{\D}$ of $\D_{\textnormal{min}}$ contained in 
$\D_{\textnormal{max}}$ stand in a 
bijective correspondence to subspaces $L\subseteq W$ via 
$$
\Dom(\hat{\D})=\Dom(\D_{\textnormal{min}})+T(L).
$$
Denoting the extension associated with 
$L$ by $\D_L$, we have the following properties:

1) $\D_L^*=\D_{L^\perp}$;

2) $\D_L$ is symmetric if and only if $L$ is isotropic;

3) $\D_L$ is self-adjoint if and only if $L$ is Lagrangian.
\end{prop}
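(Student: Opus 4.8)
The plan is to establish the bijection between closed extensions and subspaces $L\subseteq W$ first, and then verify the three properties using the symplectic pairing.

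\medskip

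\noindent\textbf{Step 1: The bijection.} By part 3) of Theorem \ref{alltheconeprop}, the quotient map $\Dom(\D_{\textnormal{max}})\to V\cong W$ is surjective with kernel $\Dom(\D_{\textnormal{min}})$, and the section $T:W\to \Dom(\D_{\textnormal{max}})$ splits it. Hence every subspace $D$ with $\Dom(\D_{\textnormal{min}})\subseteq D\subseteq \Dom(\D_{\textnormal{max}})$ is uniquely of the form $\Dom(\D_{\textnormal{min}})+T(L)$ where $L$ is the image of $D$ in $W$; and conversely $\Dom(\D_{\textnormal{min}})+T(L)$ always defines such a subspace. Since $V\cong W$ is finite dimensional, $T(L)$ is closed and finite dimensional, so $\Dom(\D_{\textnormal{min}})+T(L)$ is closed in the graph norm; therefore $\D_L:=\Dsla|_{\Dom(\D_{\textnormal{min}})+T(L)}$ is a closed operator with $\D_{\textnormal{min}}\subseteq \D_L\subseteq \D_{\textnormal{max}}$. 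This gives the bijective correspondence.

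\medskip

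\noindent\textbf{Step 2: Computing the adjoint, part 1).} The adjoint $\D_L^*$ is the restriction of $\D_{\textnormal{max}}$ to those $\eta\in\Dom(\D_{\textnormal{max}})$ with $\langle \D_L\xi,\eta\rangle=\langle \xi,\D_{\textnormal{max}}\eta\rangle$ for all $\xi\in\Dom(\D_L)$. Since this identity holds automatically for $\xi\in\Dom(\D_{\textnormal{min}})$, it reduces to the condition $\omega_{\D_{\textnormal{max}}}([\xi],[\eta])=0$ for all $\xi\in\Dom(\D_L)$, i.e.\ $\omega_{\D_{\textnormal{max}}}(x,[\eta])=0$ for all $x\in L$ (using that $T$ is a section and $\omega_{\D_{\textnormal{max}}}$ descends to $W$). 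By definition of the annihilator, $[\eta]\in L^\perp$, so $\Dom(\D_L^*)=\Dom(\D_{\textnormal{min}})+T(L^\perp)$, which is exactly $\Dom(\D_{L^\perp})$; thus $\D_L^*=\D_{L^\perp}$.

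\medskip

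\noindent\textbf{Step 3: Symmetry and self-adjointness, parts 2) and 3).} The operator $\D_L$ is symmetric iff $\D_L\subseteq \D_L^*=\D_{L^\perp}$, which by Step 1's uniqueness is equivalent to $L\subseteq L^\perp$, i.e.\ $L$ isotropic. Similarly $\D_L$ is self-adjoint iff $\D_L=\D_{L^\perp}$, equivalently $L=L^\perp$, i.e.\ $L$ Lagrangian. The main thing to be careful about is that $\omega_{\D_{\textnormal{max}}}$ genuinely descends to a form on $W$ under $T$ that agrees with the stated $\omega_i$; this is precisely the content of the displayed identification of $\omega_{\D_{\textnormal{max}}}$ with $\oplus_i\omega_i$ preceding the proposition, together with the non-degeneracy from Lemma \ref{nondegeprop} (so that $L^\perp$ behaves as expected and $(L^\perp)^\perp=L$). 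I anticipate that no step is a serious obstacle given Theorem \ref{alltheconeprop}; the only genuine bookkeeping is checking that the graph-norm closedness of $\Dom(\D_{\textnormal{min}})+T(L)$ does not require any hypothesis beyond finite-dimensionality of $W$, which is immediate.
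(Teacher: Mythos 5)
Your proof is correct and follows exactly the route the paper intends: the paper states Proposition \ref{closedectofdir} as an immediate consequence of Theorem \ref{alltheconeprop} and the splitting $T$, and your three steps (intermediate domains correspond to subspaces of the finite-dimensional quotient, the adjoint is cut out of $\Dom(\D_{\textnormal{max}})$ by vanishing of $\omega_{\D_{\textnormal{max}}}$ against $L$, and symmetry/self-adjointness translate into $L\subseteq L^\perp$ and $L=L^\perp$) are precisely the standard boundary-form argument being invoked. No gaps; the closedness remark via finite-dimensionality of $W$ is the right and only bookkeeping point.
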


Recall the notation $\J=C_c^\infty(M\take\mathfrak{c})$, $\A=\J+\sum_{i=1}^\ell\C\chi_j$, 
where $\chi_j$ is a cutoff near the conical point $c_j$. The action of $\A$ on 
$\Dom(\D_{\textnormal{max}})$ induces the $\C^l$-action on $W$ via $\A/\J\cong \C^l$.
Although $\J$ maps $\Dom(\D_{\textnormal{max}})$ to 
$\Dom(\D_{\textnormal{min}})$, beware that $\A$ 
preserves $\Dom(\D_L)$, for a subspace $L\subseteq W$, 
if and only if $L$ is a $\C^l$-submodule of $W$. We 
will see that a choice of normal structure in the sense of 
Definition \ref{defn:normal} is a choice of complex 
structure $I$ on $V$ compatible with $\omega$ and such that $I$ is 
$\omega_{\D_{\textnormal{max}}}$-compatible (i.e. 
$\omega_{\D_{\textnormal{max}}}(I[\xi],I[\eta])=\omega_{\D_{\textnormal{max}}}([\xi],[\eta])$)
and tames $\omega_{\D_{\textnormal{max}}}$ (i.e. $\omega_{\D_{\textnormal{max}}}(\cdot, I\cdot)$ 
is positive semi-definite).

\begin{thm}
\label{conicalandnormal}
Let $M$ be a conical manifold with a Dirac operator 
$\Dsla$ acting on a Clifford bundle $S$ and 
$\A$ is as above. 

1) The relative spectral triples $(\J\lhd \A,L^2(M,S),\hat{\D})$ for $J=C_0(M\take\mathfrak{c})\lhd A=J+\sum_{i=1}^\ell\C \chi_j$,
with $\D_{\textnormal{min}}\subseteq \hat{\D}\subseteq \D_{\textnormal{max}}$, 
stand in a one-to-one correspondence with 
$\C^l$-invariant isotropic graded subspaces 
$L\subseteq W$ via $\hat{\D}=\D_L$. The triple
$(\A,L^2(M,S),\D_L)$ is a spectral triple for $\A$ if and 
only if $L$ is Lagrangian.

2) The normal structures $n$ for $(\J\lhd \A,L^2(M,S),\D_L)$ 
stand in a one-to-one correspondence with $\omega_{\D_L^*}$-compatible odd 
$\C^l$-linear complex structures $I=\bigoplus_{i=1}^lI_i$ on 
$L^\perp/L=\bigoplus_{i=1}^l (L^\perp\cap W_i)/(L\cap W_i)$ that 
tame $\omega_{\D_L^*}$, and the normal structure 
associated with $I$ is determined by the identity $nT=TI$.

3) Whenever $( \J\lhd\A,L^2(M,S),\D_L)$ admits a normal structure, 
there exists a $\C^l$-invariant Lagrangian graded subspace
$L\subseteq \wh{L}\subseteq L^\perp$ such that the spectral triple $(\A,L^2(M,S),\D_{\wh{L}})$ 
lifts the class of the relative spectral triple $(\J\lhd \A,L^2(M,S),\D_L)$ 
in $K^*(J\lhd A)$ under the mapping $K^*(A)\to K^*(J\lhd A)$.
\end{thm}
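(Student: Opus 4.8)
For Part~1, the plan is to combine Proposition~\ref{closedectofdir} with Theorem~\ref{alltheconeprop}. By the former, the closed extensions $\D_{\textnormal{min}}\subseteq\hat\D\subseteq\D_{\textnormal{max}}$ are exactly the $\D_L$, $L\subseteq W$, with $\D_L$ symmetric iff $L$ is isotropic and self-adjoint iff $L$ is Lagrangian, so only the remaining hypotheses of Definition~\ref{defn:alternate} must be read off. Since $T$ respects the grading, $\D_L$ is odd iff $L$ is graded; since $\A/\J\cong\C^l$ acts on $\Dom(\D_{\textnormal{max}})$ only through $W$ while $\J=C^\infty_c(M\take\mathfrak c)$ pushes $\Dom(\D_{\textnormal{max}})$ into $\Dom(\D_{\textnormal{min}})$, the algebra $\A$ preserves $\Dom(\D_L)$ iff $L$ is a $\C^l$-submodule (as recalled before the theorem), and $\J\cdot\Dom(\D_L^*)\subseteq\Dom(\D_{\textnormal{min}})\subseteq\Dom(\D_L)$ holds automatically. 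Theorem~\ref{alltheconeprop}(2) gives compact resolvent, so Condition~3.\ of Definition~\ref{defn:alternate} holds for all $a\in A$ and Condition~4.\ follows from Remark~\ref{rem:unitalandrwongcompact}. This gives the bijection with $\C^l$-invariant isotropic graded $L$, and $(\A,L^2(M,S),\D_L)$ is a spectral triple precisely when $\D_L$ is moreover self-adjoint, i.e.\ $L$ is Lagrangian.

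For Part~2, passing from $n$ to $I$, let $n$ be a Clifford normal for $\D_L$. Since $\Dom(\D_L)$ has finite codimension in $\Dom(\D_L^*)=\Dom(\D_{L^\perp})$ in the graph norm, every intermediate subspace is graph-closed, so the core requirement in Condition~1) forces $\Dom(n)=\Dom(\D_L^*)$; together with $n\cdot\Dom(\D_L)\subseteq\Dom(\D_L)$ this means $n$ descends to an operator $I$ on $\Dom(\D_L^*)/\Dom(\D_L)\cong L^\perp/L$, and transporting through $T$ yields the identity $nT=TI$ modulo $\Dom(\D_L)$. Condition~5) gives $I^2=-1$; Condition~4), applied to the even elements $\chi_j$, gives that $I$ commutes with the $\C^l$-action; oddness of $n$ together with $\gamma$-invariance of the domains gives that $I$ is odd; the Clifford-normal axioms make $\langle[\xi],[\eta]\rangle_n=\omega_{\D_L^*}([\xi],I[\eta])$ Hermitian (Lemma~\ref{innerproductlemma}), Condition~6) makes it positive, i.e.\ $I$ tames $\omega_{\D_L^*}$, and Lemma~\ref{lem:n1} gives $\omega_{\D_L^*}$-compatibility of $I$. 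Equivalent Clifford normals clearly induce the same $I$, and a short computation with Theorem~\ref{alltheconeprop} gives the converse, so $n\mapsto I$ is a well-defined injection on normal structures.

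The main obstacle is the converse in Part~2: given an odd $\C^l$-linear $\omega_{\D_L^*}$-compatible taming complex structure $I$ on $L^\perp/L$, one must produce a bounded odd $n\in\B(L^2(M,S))$ satisfying Definition~\ref{defn:normal} with $\overline n=I$. I would build $n$ from Clifford multiplication by a fixed smooth unit extension of $\overrightarrow n_N=\mathrm dr$ away from $\mathfrak c$, corrected near each $c_i$ on the finite-dimensional span of the $T(f_{i,\lambda,j})$ so that, via $T$, it realises $I$ while still landing in $\Dom(\D_{\textnormal{max}})$. The hard part will be (i) arranging that $n$ is $L^2$-bounded and preserves $\Dom(\D_L^*)$ and $\Dom(\D_L)$: a model computation on a straight cone shows that bare Clifford multiplication by $\mathrm dr$ need \emph{not} preserve $\Dom(\D_{\textnormal{max}})$ when $\Dsla_{N_i}$ has eigenvalues in $(0,\tfrac12)$, so a genuine (not purely geometric) correction is needed there; and (ii) verifying Conditions~3), 6), 7), which near the cone — after removing a bounded symmetric term exactly as in Equation~\eqref{commwithdira} — reduce to non-degeneracy of $\omega_{\D_L^*}$ (Lemma~\ref{nondegeprop}) and to the taming and compatibility of $I$.

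For Part~3, assume $\D_L$ admits a normal structure. Since $\D_L$ is odd, $\omega_{\D_L^*}$ is off-diagonal for the grading on $L^\perp/L$, so $(L^\perp/L)^+$ is a $\C^l$-invariant graded Lagrangian subspace of $(L^\perp/L,\omega_{\D_L^*})$ (when convenient one takes instead a Lagrangian adapted to $I$); its preimage $\wh L$ in $L^\perp$ is a $\C^l$-invariant graded Lagrangian subspace of $W$ with $L\subseteq\wh L\subseteq L^\perp$. By Part~1, $(\A,L^2(M,S),\D_{\wh L})$ is a spectral triple, hence a class in $K^*(A)$, whose image under $K^*(A)\to K^*(J\lhd A)$ is represented by $(L^2(M,S),F_{\D_{\wh L}})$. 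Because $\D_L\subseteq\D_{\wh L}\subseteq\D_L^*$, Lemma~\ref{lem:adjointbound} and the argument of Theorem~\ref{thm:extensions} give that $j(F_{\D_{\wh L}}-F_{\D_L})$ is compact for all $j\in J$, while the Remark following Theorem~\ref{thm:alternate} gives that $j(F_{\D_L}-F_{\D_L}^*)$ is compact; hence $t\mapsto(1-t)F_{\D_L}+tF_{\D_{\wh L}}$ is an operator homotopy of relative Fredholm modules for $J\lhd A$, so the image of $[\D_{\wh L}]$ in $K^*(J\lhd A)$ equals $[(\J\lhd\A,L^2(M,S),\D_L)]$, as required.
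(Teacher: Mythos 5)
Parts 1 and 3, and the forward direction of Part 2, are essentially the paper's argument: your observation that finite codimension of $\Dom(\D_L)$ in $\Dom(\D_L^*)$ forces $\Dom(n)=\Dom(\D_L^*)$ is exactly the paper's use of the finite-dimensionality of $\check{\mathfrak{H}}$ from Theorem \ref{alltheconeprop}, and your operator-homotopy argument via Lemma \ref{lem:adjointbound}/Theorem \ref{thm:extensions} in Part 3 just spells out what the paper compresses into ``symplectic considerations'' (your Lagrangian $(L^\perp/L)^+$, which is legitimate because the odd complex structure forces $\dim(L^\perp/L)^+=\dim(L^\perp/L)^-$, differs harmlessly from the paper's choice of $L_0$ adapted to the metric $\omega_{\D_L^*}(\cdot,I\cdot)$).

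The genuine gap is the converse half of Part 2, i.e.\ the surjectivity of the map $n\mapsto I$. You propose to realise a given taming complex structure $I$ by correcting Clifford multiplication by an extension of $\overrightarrow{n}_N$ near the conical points, and you yourself flag that the uncorrected geometric normal fails to preserve $\Dom(\D_{\textnormal{max}})$ (cf.\ Remark \ref{geoemtricnormal}) and that a non-geometric correction plus verification of Conditions 3), 6), 7) is needed --- but you never produce the correction or carry out those verifications, so the bijection claimed in Part 2 is not established. The paper's proof avoids the geometry altogether: given $I$, define an odd operator $n_0$ by $n_0T=TI$ on the finite-dimensional subspace $T(L^\perp/L)\subseteq\Dom(\D_{\textnormal{max}})$ and extend by zero on its orthogonal complement in $L^2(M,S)$. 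Because everything then happens on a finite-dimensional space, the conditions of Definition \ref{defn:normal} --- in the weakened form permitted by the remark following it, namely $(n_0+n_0^*)\cdot\Dom(n_0)\subseteq\Dom(\D_L)$ and $(n_0\gamma+\gamma n_0)\cdot\Dom(\D_L^*)\subseteq\Dom(\D_L)$ --- reduce to linear algebra on $L^\perp/L$, with positivity and non-degeneracy of $\Ideal{\cdot,\cdot}_n$ coming exactly from the hypotheses that $I$ is $\omega_{\D_L^*}$-compatible and taming. Since only the normal structure (the equivalence class modulo operators $\Dom(\D^*)\to\Dom(\D)$) is being classified, this purely algebraic $n_0$ suffices; your geometric route, even if completed, would only reproduce the same normal structure at the cost of exactly the hard analysis you leave open.
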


\begin{proof}
Part 1 is immediate from Proposition \ref{closedectofdir} and the discussion after it. We 
fix a $\C^l$-invariant isotropic graded subspace $L\subseteq W$ for the remainder of the 
proof. Assume that $n$ is a Clifford normal for $(\J\lhd \A,L^2(M,S),\D_L)$. Since 
$\check{\mathfrak{H}}$ is finite-dimensional by Theorem \ref{alltheconeprop}, 
the Clifford normal $n$ must preserve $\Dom(\D_L^*)=\Dom(\D_{L^\perp})$.
Therefore $n$ induces an odd $\C^l$-linear complex structure on $L^\perp/L$.

Conversely, if $I$ is an odd $\C^l$-linear complex structure on $L^\perp/L$ we can 
define an odd operator $n_0$ by declaring $n_0T=TI$ on $T(L^\perp/L)$ and extending by 
$0$ on the orthogonal complement in $L^2(M,S)$. Since 
$(n_0+n_0)^*\Dom(\D_L^*)\subseteq \Dom(\D_L)$ 
and $(n_0\gamma+\gamma n_0)\Dom(\D_L^*)\subseteq \Dom(\D_L)$, $n_0$ defines 
a normal structure. Part 3 follows easily from symplectic considerations. 
If there exists a $\omega_{\D_L^*}$-compatible odd $\C^l$-linear complex structure 
$I=\bigoplus_{i=1}^lI_i$ taming $\omega_{\D_L^*}$, there are graded $\C^l$-invariant Lagrangian 
subspaces $L_0\subseteq L^\perp/L$. This follows because we can find a graded $\C^l$-invariant 
subspace $L_0$ such that $L_0=(JL_0)^\perp=L^\perp_0$ where the first $\perp$ is taken relative 
to the inner product $\omega_{\D_{\textnormal{max}}}(\cdot, I\cdot)$. 
We can take $\wh{L}\subseteq L^\perp$ as the preimage of such an $L_0$. 
That $\wh{L}$ is Lagrangian 
follows because $x\in \wh{L}^\perp$ if and only if $x\mod L\in L_0^\perp=L_0$.

\end{proof}

\begin{rem}
\label{geoemtricnormal}
The geometric normal $\overrightarrow{n}_N$ 
to the cross-section $N$ defines an odd anti-selfadjoint 
operator $n$ via Clifford multiplication. 
It follows from Theorem \ref{alltheconeprop} that $n$ preserves 
$\Dom(\D_{L^\perp})$ (and in turn also $\Dom(\D_L)$) if and only if 
$$
L^\perp\cap W\subseteq \bigoplus_{i=1}^l \chi_{\{0\}}(\Dsla_{N_i})L^2(N_i,S|_{N_i}).
$$
For instance, if $L=0$ then $n$ is a Clifford normal for 
$(\J\lhd \A,L^2(M,S),\D_{\textnormal{min}})$ 
if and only if $\sigma(\Dsla_{N_i})\cap (-\frac{1}{2},\frac{1}{2})\subseteq \{0\}$ 
for all $i$. We do however note that $n$ always acts on 
$\Dom(\D_{\textnormal{max}})/\Dom(\D_{\textnormal{min}})$ as an odd 
$\C^l$-linear complex structure and as such \emph{induces} a Clifford normal 
for the minimal closed extension.
\end{rem}

\begin{rem}
\label{khomcomp}
Part 3 of Theorem \ref{conicalandnormal} and the above remark imply that 
$\de [(\J\lhd \A,L^2(M,S),\D_L)]=0\in K^{*+1}(\C^l)$ for any $l$ (cf. Theorem \ref{thm:extensions}). 
The vanishing of $\de [(\J\lhd \A,L^2(M,S),\D_L)]$ is implemented by the fact that $[(\J\lhd \A,L^2(M,S),\D_L)]$ 
is in the image of $K^*(A)\to K^*(J\lhd A)$. The spectral triples considered in \cite{lescurethesis} 
will in our language correspond to $L=V^+$ (the even part of $V$).
\end{rem}

\section{Constructing a spectral triple for the ``boundary'' $A/J$}
\label{sect:tripleonbdry}

Using the choice of a Clifford normal $n$ of a relative spectral triple 
$(\J\lhd\A,\h,\D)$ for $J\lhd A$, we constructed a ``boundary'' 
Hilbert space $\mathfrak{H}_n$ in Subsection \ref{subsectionnormal}. In this section  we will construct a 
densely defined $*$-action of $\A/\J\hat{\otimes}\Cl_1$ on $\mathfrak{H}_n$ and a densely defined operator $\D_n$.
For a manifold with boundary, these objects  correspond to pointwise multiplication and
the boundary Dirac operator respectively. The goal is for $(\A/\J\wh{\otimes}\Cl_1,\mathfrak{H}_n,\D_n)$ to be a geometrically constructed spectral triple which represents the class of the boundary $\de[(\J\lhd\A,\h,\D)]\in KK^1(A/J,\C)$.

In general, the existence of a Clifford normal allows the definition of these various objects, but need  
not guarantee their good behaviour.
In particular, the assumptions on a Clifford normal need not guarantee 
that the object $(\A/\J\wh{\otimes}\Cl_1, \mathfrak{H}_n,\D_n)$ 
is a spectral triple. In Subsection \ref{sect:calderon} we consider even relative spectral triples for which the algebra is unital and represented non-degenerately. For such relative spectral triples we construct the 
``Calderon projector'' $P_C$ on the Hilbert space dual of $\mathfrak{H}_n^{1/2}$. 
The operator $P_C$ plays the role of the Calderon projector on a manifold with boundary. 
In Subsection \ref{representingsubsec} we impose the additional constraint that $A$ is trivially $\Z/2$-graded. We use the 
Calderon projector to show that $\de[(\J\lhd\A,\h,\D)]=[(\A/\J\wh{\otimes}\Cl_1,\mathfrak{H}_n,\D_n)]$ under various assumptions.

\subsection{The action of the boundary algebra}

We will see in this subsection how the action of $\A$ on 
$\Dom(\D^*)$ and the Clifford normal $n$ induces an action 
of $\A/\J\hat{\otimes}\Cl_1$ on $\mathfrak{H}^{1/2}_n$. 

\begin{lemma}
\label{lem:starhom}
Let $n$ be a Clifford normal for the relative spectral triple 
$(\J\lhd \A,\h,\D)$ for $J\lhd A$. Given $[a]\in\A/\J$, define 
$$
\rho_\de([a]):\mathfrak{H}^{1/2}_n\rightarrow\mathfrak{H}^{1/2}_n,
\quad\rho_\de([a])[\xi]=[a\xi].
$$
The linear map $[a]\mapsto\rho_\de([a])$ is multiplicative, 
and respects the $*$-operation defined from 
the inner product $\Ideal{\cdot,\cdot}_n$ in the sense that
$$
\Ideal{\rho_\de([a])[\xi],[\eta]}_n
=\Ideal{[\xi],\rho_\de([a]^*)[\eta]}_n,\qquad[\xi],[\eta]\in\mathfrak{H}^{1/2}_n.
$$
The mapping $\A\to \B(\mathfrak{H}^{1/2}_n)$, $a\mapsto \rho_\de([a])$ is continuous in the $\Lip$-topology 
(see Remark \ref{lipremark} on page \pageref{lipremark}).
\end{lemma}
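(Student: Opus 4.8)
The plan is to verify the three assertions—multiplicativity, the $\ast$-relation with respect to $\langle\cdot,\cdot\rangle_n$, and $\Lip$-continuity—each by a short direct computation on the dense subspace $\mathfrak{H}^{1/2}_n=\Dom(n)/\Dom(\D)$, after first checking that $\rho_\de([a])$ is well-defined and maps $\mathfrak{H}^{1/2}_n$ into itself. Well-definedness is the first thing to nail down: for $[a]\in\A/\J$ and $\xi\in\Dom(n)=\Dom(\D^*)\cap n\Dom(\D^*)$ we must see $a\xi\in\Dom(n)$, and that the class $[a\xi]\in\mathfrak{H}^{1/2}_n$ depends only on $[a]$ and $[\xi]$. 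The former uses that $a$ preserves $\Dom(\D^*)$ (Lemma \ref{lem:commutator}.1) and that $an\xi=na\xi+[a,n]_\pm\gamma\xi$ with $[a,n]_\pm\gamma\xi\in\Dom(\D)\subseteq\Dom(\D^*)$ by Condition 4) of Definition \ref{defn:normal}, so $na\xi\in\Dom(\D^*)$, i.e. $a\xi\in\Dom(n)$; the latter uses that if $a\in\J$ then $a\Dom(\D^*)\subseteq\Dom(\D)$, hence $[a\xi]=0$, and if $\xi\in\Dom(\D)$ then $a\xi\in\Dom(\D)$ since $\Dom(\D)$ is $\A$-invariant.

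Multiplicativity $\rho_\de([a])\rho_\de([b])=\rho_\de([ab])$ is then immediate on representatives: $[a[b\xi]]=[(ab)\xi]$. For the $\ast$-relation, I would compute $\langle\rho_\de([a])[\xi],[\eta]\rangle_n=\omega_{\D^*}([a\xi],[n\eta])=\langle a\xi,\D^* n\eta\rangle-\langle\D^* a\xi,n\eta\rangle$, then move $a$ across using $\D^* a=a\D^*+[\D^*,a]_\pm$ (Lemma \ref{lem:commutator}), so this equals $\langle\xi,a^*\D^* n\eta\rangle-\langle\xi,a^*\D^*\eta\rangle$ up to a term $-\langle[\D^*,a]_\pm\xi,n\eta\rangle$ that must be recognised as (minus) the contribution arising from $\D^*(a^*n\eta)$ versus $a^*\D^* n\eta$; reorganising, and using $a^*n\eta=na^*\eta+[a^*,n]_\pm\gamma\eta$ with $[a^*,n]_\pm\gamma\eta\in\Dom(\D)$ (so it drops out of $\omega_{\D^*}$ against anything in $\Dom(\D^*)$), one lands on $\omega_{\D^*}([\xi],[n a^*\eta])=\langle[\xi],\rho_\de([a^*])[\eta]\rangle_n$. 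The only care needed is to keep each pairing between an element of $\Dom(\D^*)$ and one of $\Dom(\D)$ genuinely zero, which is exactly the well-definedness of $\omega_{\D^*}$ from Lemma \ref{nondegeprop}.

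Finally, $\Lip$-continuity: I want a norm bound $\|\rho_\de([a])\|_{\B(\mathfrak{H}^{1/2}_n)}\lesssim\|a\|_\Lip$. The $\mathfrak{H}^{1/2}_n$-norm is the graph norm of $\Dom(n)$, controlled by $\|\xi\|$, $\|\D^*\xi\|$, $\|\D^*n\xi\|$. For $a\xi$ we estimate $\|a\xi\|\le\|a\|\|\xi\|$, $\|\D^* a\xi\|\le\|a\|\|\D^*\xi\|+\|[\D^*,a]_\pm\|\|\xi\|$ using Lemma \ref{lem:commutator}.2 (so $\|[\D^*,a]_\pm\|=\|\overline{[\D,a]_\pm}\|\le\|a\|_\Lip$), and for the third component use $\D^* n a\xi=\D^* a n\xi-\D^*[a,n]_\pm\gamma\xi=a\D^* n\xi+[\D^*,a]_\pm n\xi-\D^*[a,n]_\pm\gamma\xi$; here $[a,n]_\pm\gamma\xi\in\Dom(\D)$ so $\D^*$ acts as $\D$ on it, and one needs the closed-graph / Lip-topology machinery (as in Remark \ref{lipremark}, or directly that $a\mapsto[a,n]_\pm\gamma$ and $a\mapsto\D[a,n]_\pm\gamma$ are $\Lip$-bounded into $\h$) to bound $\|\D[a,n]_\pm\gamma\xi\|$ by $\|a\|_\Lip$ times a graph-norm quantity in $\xi$. \textbf{This last term is the main obstacle}: controlling $\D[a,n]_\pm\gamma\xi$ uniformly in $a$ is not formal, since $[a,n]_\pm$ is only known to land in $\Dom(\D)$ pointwise, and one must argue that the resulting map $\Dom(n)\to\h$ is bounded with norm depending only on $\|a\|_\Lip$—most cleanly by invoking the closed graph theorem on the map $a\mapsto(\xi\mapsto\D[a,n]_\pm\gamma\xi)$ after equipping $\A$ with its $\Lip$-Banach structure, or by absorbing this into an a priori estimate that $\rho_\de$ extends continuously from a $\Lip$-dense subalgebra. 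Once that bound is in hand, continuity of $a\mapsto\rho_\de([a])$ in the $\Lip$-topology follows by linearity.
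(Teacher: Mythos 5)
Your verification of well-definedness, multiplicativity and the $*$-relation follows the same route as the paper. In particular the $*$-relation is exactly the paper's computation: expand $\Ideal{\rho_\de([a])[\xi],[\eta]}_n-\Ideal{[\xi],\rho_\de([a]^*)[\eta]}_n$, use Condition 4) of Definition \ref{defn:normal} so that the terms involving $[n,a^*]_\pm\eta\in\Dom(\D)$ pair off via $\Ideal{\xi,\D[n,a^*]_\pm\eta}=\Ideal{\D^*\xi,[n,a^*]_\pm\eta}$, and cancel what remains using $([\D^*,a]_\pm)^*\supset-(-1)^{\deg a}[\D^*,a^*]_\pm$ together with Lemma \ref{lem:commutator}. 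One small slip: no grading operator enters these identities; for graded commutators $an\xi=(-1)^{\deg a\deg n}na\xi+[a,n]_\pm\xi$, and Condition 4) reads $[n,a]_\pm\cdot\Dom(n)\subset\Dom(\D)$, so the $\gamma$'s you insert in $an\xi=na\xi+[a,n]_\pm\gamma\xi$ should simply be deleted; they are unjustified but also do no work, so the substance of your argument is unaffected.

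Concerning the Lip-continuity, your diagnosis is accurate, and you should be aware that the paper's own proof of this lemma consists only of the multiplicativity remark and the $*$-relation computation; no estimate of $\|\rho_\de([a])\|_{\B(\mathfrak{H}^{1/2}_n)}$ in terms of $\|a\|_{\Lip}$ is spelled out there. For each fixed $a\in\A$, boundedness of $\rho_\de([a])$ on $\mathfrak{H}^{1/2}_n$ does follow from a closed graph argument: $\xi\mapsto[a\xi]$ is an everywhere defined closed map $\Dom(n)\to\mathfrak{H}^{1/2}_n$, because $a$ is graph-continuous on $\Dom(\D^*)$ (Lemma \ref{lem:commutator}) and $\Dom(\D)$ is graph-closed in $\Dom(\D^*)$. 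The uniform bound by $\|a\|_{\Lip}$ is, as you say, the sticking point: the uncontrolled term is precisely $\D[n,a]_\pm\xi$, which the axioms control only qualitatively, and your proposed remedy of applying the closed graph theorem over $\A$ with its Lip norm does not go through as stated, since $(\A,\|\cdot\|_{\Lip})$ need not be complete and Condition 4) is not known to hold for elements of $\ol{\A}^{\Lip}$, so $\rho_\de$ cannot simply be extended to the completion before invoking such theorems. So your treatment of this last assertion is a sketch rather than a proof, but it flags exactly the point that the paper's displayed proof also leaves unargued, and everything you do prove matches the paper's argument.
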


\begin{proof}
That $[a]\mapsto\rho_\de([a])$ is multiplicative is immediate. 
Let $\xi,\eta\in\Dom(n)$, and let $a\in\A$ be of homogeneous degree. Then
\begin{align*}
&\Ideal{\rho_\de([a])[\xi],[\eta]}_n-\Ideal{[\xi],\rho_\de([a]^*)[\eta]}_n\\
&=\Ideal{a\xi,\D^*n\eta}-\Ideal{\D^*a\xi,n\eta}-\Ideal{\xi,\D^*na^*\eta}+\Ideal{\D^*\xi,na^*\eta}\\
&=\Ideal{\xi,a^*\D^*n\eta}-\Ideal{\D^*a\xi,n\eta}
-(-1)^{\deg a}\Ideal{\xi,\D^*a^*n\eta}-\Ideal{\xi,\D[n,a^*]_\pm\eta}\\
&\quad+(-1)^{\deg a}\Ideal{\D^*\xi,a^*n\eta}+\Ideal{\D^*\xi,[n,a^*]_\pm\eta}\\
&=-(-1)^{\deg a}\Ideal{\xi,[\D^*,a^*]_\pm n\eta}-\Ideal{[\D^*,a]_\pm \xi,n\eta}=0,
\end{align*}
since $([\D^*,a]_\pm)^*\supset-(-1)^{\deg a}[\D^*,a^*]_\pm$.
\end{proof}

\begin{rem}
It is unclear when $\rho_\de(\A/\J)\subseteq \B(\mathfrak{H}_n)$. The intricate definition of the inner 
product $\Ideal{\cdot,\cdot}_n$ in terms of the symplectic form $\omega_{\D^*}$ makes 
norm estimates difficult. 
\end{rem}

\begin{defn}[Graded structure of the boundary]
\label{defn:gradingonboundary}
Let $n$ be a Clifford normal for the relative spectral triple 
$(\J\lhd \A,\h,\D)$. We consider the cases that $(\J\lhd\A,\h,\D)$ is even and odd separately.

Suppose first $(\J\lhd \A,\h,\D)$ is even. The boundary Hilbert space 
$\mathfrak{H}_n$ inherits the $\Z/2$-grading of $\h$.
Define a representation of the Clifford algebra $\Cl_1$ on $\mathfrak{H}_n$ by $c\mapsto -in_\de$, 
where $c$ is the self-adjoint unitary generator of $\Cl_1$. 
This is a well-defined representation by Lemma \ref{lem:n1}. 
Condition 4) of Definition \ref{defn:normal} ensures that 
$[n_\de,\rho_\de[a]]_\pm=0$ for all $a\in \A/\J$. The mapping 
$a\wh{\otimes}z\mapsto\rho_\de(a)z$ defines a $*$-action of 
$\A/\J\wh{\otimes}\Cl_1$ on the dense subspace space $\mathfrak{H}^{1/2}_n \subseteq \mathfrak{H}_n$.

Suppose now that $(\J\lhd\A,\h,\D)$ is odd. Lemma \ref{lem:n1} 
shows that $\gamma:=-in_\de$ defines a grading operator on 
$\mathfrak{H}_n$; i.e. $\gamma=\gamma^*$ and $\gamma^2=1$.
\end{defn}

\subsection{The boundary Dirac operator}

Let $n$ be a Clifford normal for the relative spectral triple $(\J\lhd \A,\h,\D)$ for $J\lhd A$. 
We construct an (unbounded) operator $\D_n$ on the boundary Hilbert space $\mathfrak{H}_n$, 
so that $(\A/\J\wh{\otimes}\Cl_1,\mathfrak{H}_n,\D_n)$ is a candidate for a spectral triple. 
We need an additional assumption in order to construct $\D_n$, however.

\begin{defn}
We define the spaces
\begin{align*}
\h^2_n:=\{\xi\in\Dom(n):\D^*(n\xi),\ \D^*\xi\in\Dom(n)\},\qquad\h^2_{n,0}:=\h^2_n\cap\Dom(\D).
\end{align*}
\end{defn}

\begin{rem}
For a manifold with boundary, $\h^2_n=H^2(\ol{M},S)$ 
and $\h^2_{n,0}=H^2(\ol{M},S)\cap H^1_0(M,S)$. 
\end{rem}

\begin{defn}
Let $n$ be a Clifford normal for the relative spectral triple with normal
$(\J\lhd\A,\h,\D)$ such that $[\D^*,n]\cdot\h^2_{n,0}\subset\Dom(\D)$. 
We define $\D_n:\Dom(\D_n)\subset\mathfrak{H}_n\rightarrow\mathfrak{H}_n$ by
$$
\Dom(\D_n)=\{[\xi]:\xi\in\h^2_n\}\subset\mathfrak{H}_n\quad\mbox{by}\quad
\D_n[\xi]:=\left[\frac{1}{2}n[\D^*,n]\xi\right].
$$
We call the operator $\D_n$ the \textbf{boundary operator}. 
If $(n^2+1)\cdot\h^2_n\subset\Dom(\D^2)$ then $n_\de$ preserves $\Dom(\D_n)$ and
$\D_n$ anticommutes with $n_\de$, in which case $\D_n$ is an 
odd operator whether $(\J\lhd\A,\h,\D)$ is even or odd.
\end{defn}

\begin{rem}
Not every representative of $[\xi]$ necessarily belongs to 
$\h^2_n$. Such a representative 
must be chosen when defining $\D_n[\xi]$. However, the assumption
that $[\D^*,n]\cdot\h^2_{n,0}\subset\Dom(\D)$
implies that $\D_n$ does not depend on the particular 
choice of representative.
\end{rem}

The next result shows that there are several equivalent statements to the assumption that 
$[\D^*,n]\cdot\h^2_{n,0}\subset\Dom(\D)$. 
Note that $\{\cdot,\cdot\}$ denotes the anticommutator $\{a,b\}=ab+ba$.

\begin{prop}
\label{prop:equivalences}
We assume that $n$ is a Clifford normal for the relative spectral triple $(\J\lhd\A,\h,\D)$. 
The following are equivalent:

1) $[\D^*,n]\cdot\h^2_{n,0}\subset\Dom(\D)$;

2) For all $\xi\in\h^2_{n,0}$, $\eta\in\h^2_n$, \ 
$
\Ideal{\{\D^*,[\D^*,n]\}\xi,\eta}=\Ideal{\xi,\{\D^*,[\D^*,n]\}\eta};
$

3) For all $\xi\in\h^2_{n,0}$, $\eta\in\h^2_n$, \ 
$
\Ideal{[\D^*,[\D^*,n]]\xi,\eta}=-\Ideal{\xi,[\D^*,[\D^*,n]]\eta};
$

4) For all $\xi\in\h^2_{n,0}$, $\eta\in\h^2_n$, \ 
$
\Ideal{\D^*[\D^*,n]\xi,\eta}=\Ideal{\xi,[\D^*,n]\D^*\eta}.
$

Moreover, the boundary operator $\D_n$ is well-defined 
and symmetric with respect to the boundary inner product 
$\Ideal{\cdot,\cdot}_n$ if and only if $\{\D^*,[\D^*,n]\}$ is  
symmetric as an operator on $\h$ with domain $\h^2_n$.
\end{prop}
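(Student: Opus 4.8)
The plan is to reduce everything to one bookkeeping identity, deduce $2)\Leftrightarrow 3)\Leftrightarrow 4)$ by pure algebra, settle $1)\Leftrightarrow 4)$ through the anti-Hermitian form $\omega_{\D^*}$, and obtain the final claim from a direct computation of $\langle\D_n\cdot,\cdot\rangle_n$. First I would dispose of the domain bookkeeping: for $\xi\in\h^2_n$ one checks, using that $n$ preserves $\Dom(n)$ (Conditions 1) and 5) of Definition \ref{defn:normal}; cf.\ the remark after it), that $[\D^*,n]\xi=\D^*n\xi-n\D^*\xi\in\Dom(n)$ and hence $n[\D^*,n]\xi\in\Dom(n)$; thus $\D_n[\xi]=\tfrac12[n[\D^*,n]\xi]$ always determines an element of $\mathfrak{H}^{1/2}_n$ once a representative in $\h^2_n$ is chosen, and (using $n\Dom(\D)\subseteq\Dom(\D)$ and $(n^2+1)\Dom(n)\subseteq\Dom(\D)$ to pass between $n[\D^*,n]\xi\in\Dom(\D)$ and $[\D^*,n]\xi\in\Dom(\D)$) well-definedness of $\D_n$ is precisely the statement that $[\D^*,n]\xi\in\Dom(\D)$ for all $\xi\in\h^2_{n,0}$, i.e.\ Condition 1). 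The key auxiliary identity is that for $\xi\in\h^2_{n,0}$ (so $\D^*\xi=\D\xi$) and $\eta\in\h^2_n$,
\[
\langle[\D^*,n]\D^*\xi,\eta\rangle=\langle\xi,\D^*[\D^*,n]\eta\rangle ,
\]
which follows by moving $\D$ across the pairing (legitimate since $\xi\in\Dom(\D)$ and $[\D^*,n]\eta\in\Dom(\D^*)$) and using the symmetry of $[\D^*,n]$ (Condition 3), noting $\D^*\xi,\D^*\eta\in\Dom(n)\subseteq\Dom([\D^*,n])$. Expanding $\{\D^*,[\D^*,n]\}=\D^*[\D^*,n]+[\D^*,n]\D^*$ and $[\D^*,[\D^*,n]]=\D^*[\D^*,n]-[\D^*,n]\D^*$ and substituting this identity, the two symmetry statements 2) and 3) both reduce to 4); hence $2)\Leftrightarrow 3)\Leftrightarrow 4)$.

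For $1)\Leftrightarrow 4)$, the implication $1)\Rightarrow 4)$ is immediate: if $[\D^*,n]\xi\in\Dom(\D)$ then $\langle\D^*[\D^*,n]\xi,\eta\rangle=\langle\D[\D^*,n]\xi,\eta\rangle=\langle[\D^*,n]\xi,\D^*\eta\rangle=\langle\xi,[\D^*,n]\D^*\eta\rangle$. Conversely, using the symmetry of $[\D^*,n]$ one rewrites Condition 4) as $\omega_{\D^*}([[\D^*,n]\xi],[\eta])=0$ for all $\eta\in\h^2_n$; since $\omega_{\D^*}$ is continuous for the graph norm and non-degenerate on $\check{\mathfrak{H}}$ (Lemma \ref{nondegeprop}), this forces $[\D^*,n]\xi\in\Dom(\D)$ \emph{provided the image of $\h^2_n$ in $\check{\mathfrak{H}}$ is dense}, equivalently $\h^2_n$ is a core for $\D^*$. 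Establishing this density is the main obstacle: the natural attempt is a smoothing argument, starting from the core $\Dom(n)$ (Condition 1), regularising by $(1+t\D\D^*)^{-1}$ as $t\to 0$, and checking that after a further mollification the approximants may be taken in $\h^2_n$ modulo $\Dom(\D)$; in the examples it is automatic, being $H^2\subset\Dom(\D_{\textnormal{max}})$ for manifolds with boundary and a consequence of finite-dimensionality for conical manifolds.

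For the final claim I would compute, for $\xi,\eta\in\h^2_n$ and with no extra hypothesis,
\[
\langle\D_n[\xi],[\eta]\rangle_n=\tfrac12\,\omega_{\D^*}\big([[\D^*,n]\xi],[\eta]\big),
\]
using $\langle[\alpha],[\beta]\rangle_n=\omega_{\D^*}([\alpha],[n\beta])$ together with $n_\de^2=-1$ and $\langle n_\de\cdot,n_\de\cdot\rangle_n=\langle\cdot,\cdot\rangle_n$ (Lemma \ref{lem:n1}), which give $\omega_{\D^*}([n\alpha],[\beta])=-\omega_{\D^*}([\alpha],[n\beta])$. Subtracting the conjugate of this identity with $\xi$ and $\eta$ interchanged (using that $\langle\cdot,\cdot\rangle_n$ is Hermitian by Lemma \ref{innerproductlemma}, $\omega_{\D^*}$ is anti-Hermitian, and again the symmetry of $[\D^*,n]$) yields, on $\h^2_n$,
\[
\langle\D_n[\xi],[\eta]\rangle_n-\langle[\xi],\D_n[\eta]\rangle_n=\tfrac12\Big(\langle\xi,\{\D^*,[\D^*,n]\}\eta\rangle-\langle\{\D^*,[\D^*,n]\}\xi,\eta\rangle\Big).
\]
Hence the defect of symmetry of $\D_n$ is half that of $\{\D^*,[\D^*,n]\}$ on $\h^2_n$. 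If $\{\D^*,[\D^*,n]\}$ is symmetric on $\h^2_n$, restricting $\xi$ to $\h^2_{n,0}$ gives Condition 2), hence Condition 1) by the equivalences, hence $\D_n$ is well-defined, and the displayed identity then forces it to be symmetric; conversely, if $\D_n$ is well-defined and symmetric then the same identity immediately gives symmetry of $\{\D^*,[\D^*,n]\}$ on $\h^2_n$. Beyond routine bookkeeping, the only genuinely delicate ingredient remains the density of $\h^2_n$ in $\check{\mathfrak{H}}$ used for $4)\Rightarrow 1)$.
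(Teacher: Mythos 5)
Your algebraic skeleton is correct, and the paper itself gives no in-text proof to compare with (it defers to \cite{ForsythThesis}*{Propositions 9.10 and 9.11}), so I assess the argument on its own terms. The auxiliary identity $\Ideal{[\D^*,n]\D^*\xi,\eta}=\Ideal{\xi,\D^*[\D^*,n]\eta}$ for $\xi\in\h^2_{n,0}$, $\eta\in\h^2_n$ is valid (both $\D^*\xi$ and $\eta$ lie in $\Dom(n)$, on which $[\D^*,n]$ is symmetric), and it does reduce 2) and 3) to 4); the implication 1) $\Rightarrow$ 4) is fine; the identification of well-definedness of $\D_n$ with Condition 1) is fine; and your defect-of-symmetry formula $\Ideal{\D_n[\xi],[\eta]}_n-\Ideal{[\xi],\D_n[\eta]}_n=\tfrac12\big(\Ideal{\xi,\{\D^*,[\D^*,n]\}\eta}-\Ideal{\{\D^*,[\D^*,n]\}\xi,\eta}\big)$ on $\h^2_n$ checks out, using $\omega_{\D^*}([nu],[v])=-\omega_{\D^*}([u],[nv])$ for $u,v\in\Dom(n)$ and $[n^2\eta]=-[\eta]$.

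The genuine gap is the implication 4) $\Rightarrow$ 1), which is the actual content of the proposition and which the forward direction of the ``Moreover'' statement also relies on through your chain ``symmetry $\Rightarrow$ 2) $\Rightarrow$ 1)''. Your route via Lemma \ref{nondegeprop} needs $\omega_{\D^*}([[\D^*,n]\xi],[\eta])=0$ for all $\eta\in\Dom(\D^*)$, while 4) only gives it for $\eta\in\h^2_n$; the density of (the image of) $\h^2_n$ that you invoke is not a hypothesis of the proposition and does not follow from Definition \ref{defn:normal} --- Condition 1) makes $\Dom(n)$, not $\h^2_n$, a core for $\D^*$, and the sketched regularisation by $(1+t\D\D^*)^{-1}$ has no reason to produce approximants in $\h^2_n$, since nothing in the axioms makes that resolvent interact with $n$. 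Note also that the definition supplies a tool aimed at exactly this conclusion, namely Condition 7): taking $w=\D^*n\xi$ and $z=-\D^*\xi$ (both in $\Dom(n)$ for $\xi\in\h^2_n$), one has $\omega_{\D^*}([\D^*n\xi],[n\zeta])-\omega_{\D^*}([\D^*\xi],[\zeta])=\omega_{\D^*}([[\D^*,n]\xi],[n\zeta])$ for all $\zeta\in\Dom(n)$ (using Conditions 2), 3), 5) and that $n$ preserves $\Dom(n)$), so the hypothesis of Condition 7) for this pair is precisely the vanishing of $\omega_{\D^*}([[\D^*,n]\xi],\cdot)$ on $[n\Dom(n)]=\mathfrak{H}^{1/2}_n$, and its conclusion is $[\D^*,n]\xi=w+nz\in\Dom(\D)$, i.e. 1). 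This lowers the required test space from $\check{\mathfrak{H}}$ to $\mathfrak{H}^{1/2}_n$, but the same bottleneck remains: 4) only gives vanishing against classes coming from $\h^2_n$, so you still need to pass from $\h^2_n$ to $\Dom(n)$ (in graph norm modulo $\Dom(\D)$), or find a different argument. As written, you have proved the equivalences only under the extra standing assumption that $\h^2_n+\Dom(\D)$ is a core for $\D^*$; to prove the proposition as stated you must either close this approximation step or follow the argument of the cited thesis propositions.
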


The proof of Proposition \ref{prop:equivalences} can be found in \cite{ForsythThesis}*{Propositions 9.10 and 9.11}.

\begin{defn}
Let $(\J\lhd\A,\h,\D)$ be a relative spectral triple with two Clifford normals $n$ and $n'$. 
We say that $n$ and $n'$ are second order equivalent if $n$ and $n'$ are equivalent 
and $(n-n')\h^2_n\subseteq \Dom(\D^2)$. We call a second order 
equivalence class of a Clifford normal a second order normal structure.
\end{defn}

The operator $\D_n$ 
only depends on the second order normal structure 
that $n$ defines.

\begin{question}
\label{q:sa-bdd-comms-resolvent}
It is unknown whether the existing assumptions on the 
Clifford normal suffice to ensure that 
$\D_n$ is self-adjoint and has bounded commutators 
with $\A/\J$, or whether $\D_n$ has $\A/\J$-compact resolvent.
\end{question}

We can not prove that the various ingredients $\mathfrak{H}_n$, 
the representation $\rho$ of $\A/\J$ on 
$\mathfrak{H}_n$ and $\D_n$ form a spectral triple. 
On the other hand, if the relative spectral triple is even and the algebra $A$ is 
unital and represented non-degenerately, we can define an analogue of the Calderon projector, 
which facilitates the identification of the image of $[(\J\triangleleft \A,\h,\D)]$ under the 
$K$-homology boundary map.

\subsection{The Calderon projector}
\label{sect:calderon}

Let $(\J\lhd \A,\h,\D)$ be an even relative spectral triple with Clifford normal $n$, 
such that the algebra $A$ is unital and represented non-degenerately. 
The odd case can be treated by constructing an associated even relative spectral triple similarly 
to \cite{Connes}*{Proposition IV.A.13}.
Recall the construction of the doubled spectral triple $(\wt{\A},\wt{\h},\wt{\D})$  from Subsection \ref{sec:double}.
We let $e_1:\h\to \wt{\h}$ denote inclusion into the first factor and $r_1:\wt{\h}\to \h$ the 
projection onto the first factor. The notations $e_1$ and $r_1$ come from the classical 
case of manifolds with boundary where they correspond to ``extension by zero" and 
restriction. Note that $e_1(\Dom(\D))=e_1(\h)\cap \Dom(\wt{\D})$ and 
$r_1(\Dom(\wt{\D}))= \Dom(n)$.

\begin{defn}
We define  $\mathfrak{H}^{-1/2}_n$ as the dual space of $\mathfrak{H}^{1/2}_n$. 
\end{defn}

Recall the pairing from Proposition \ref{bdrysympform} 
$\check{\mathfrak{H}}\times\mathfrak{H}^{1/2}_n\to\C$ given by $\langle [\xi],[\eta]\rangle_n :=\omega_{\D^*}([\xi],n_\de [\eta])$.
The pairing allows us to define a map $\check\iota:\check{\mathfrak{H}}\to \mathfrak{H}^{-1/2}_n$ via 
$\check{\iota}(\psi)(\zeta):=\langle \psi,\zeta\rangle_n$, for $\psi\in \check{\mathfrak{H}}$, $\zeta\in \mathfrak{H}^{1/2}_n$.
We can embed $\mathfrak{H}^{1/2}_n$ continuously into $\check{\mathfrak{H}}$ using the inclusion $\Dom(n)\subseteq \Dom(\D^*)$.

\begin{lemma}
\label{remarkondualrho}  
The following hold:
\newline 1) the map  $\check{\iota}:\check{\mathfrak{H}}\to \mathfrak{H}^{-1/2}_n$
is a continuous inclusion 
and $\check{\iota}(\mathfrak{H}_{n}^{1/2})$ 
is dense in $\mathfrak{H}^{-1/2}_{n}$;
\newline
2) the restriction of $\check{\iota}$ to $\mathfrak{H}_n^{1/2}$ extends to 
a continuous embedding 
$i_\de:\mathfrak{H}_n \hookrightarrow \mathfrak{H}^{-1/2}_n$;
\newline
3) the composition $R:\Dom \D^{*}\to \check{\mathfrak{H}}\to \mathfrak{H}^{-1/2}_{n}$ 
defines a continuous mapping for the graph norm on $\Dom \D^{*}$;\newline
4) there is a $\Lip$-continuous homomorphism $\A\to\A/\J\xrightarrow{\rho_\de} \B(\mathfrak{H}^{-1/2}_n)$. 
\vspace{1mm}
\newline
Moreover, the maps $\check{\iota}$ and $R$ are $\A$-linear.
\end{lemma}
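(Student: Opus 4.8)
The plan is to verify each of the four claims in turn, using that every map in sight is built from the inclusion $\Dom(n)\subseteq\Dom(\D^*)$ together with the pairing $\langle\cdot,\cdot\rangle_n$ from Proposition \ref{bdrysympform}, whose non-degeneracy on $\check{\mathfrak H}\times\mathfrak H^{1/2}_n$ is the main structural input.

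\textbf{Claim 1.} By Proposition \ref{bdrysympform}, $\langle\cdot,\cdot\rangle_n:\check{\mathfrak H}\times\mathfrak H^{1/2}_n\to\C$ is a bounded bilinear pairing, bounded for the graph-norm topologies, and non-degenerate. Boundedness gives that $\check\iota(\psi)=\langle\psi,\cdot\rangle_n$ is a bounded functional on $\mathfrak H^{1/2}_n$, so $\check\iota:\check{\mathfrak H}\to\mathfrak H^{-1/2}_n$ is continuous; non-degeneracy in the first variable gives injectivity. For density of $\check\iota(\mathfrak H^{1/2}_n)$ in $\mathfrak H^{-1/2}_n$: a functional on $\mathfrak H^{-1/2}_n$ vanishing on $\check\iota(\mathfrak H^{1/2}_n)$ is, by reflexivity of the Hilbert space $\mathfrak H^{1/2}_n$, an element $\zeta\in\mathfrak H^{1/2}_n$ with $\langle[\eta],\zeta\rangle_n=0$ for all $[\eta]\in\mathfrak H^{1/2}_n$; since $\langle\cdot,\cdot\rangle_n$ restricted to $\mathfrak H^{1/2}_n\times\mathfrak H^{1/2}_n$ is (up to $n_\de$, which is unitary by Lemma \ref{lem:n1}) the Hilbert inner product $\Ideal{\cdot,\cdot}_n$, this forces $\zeta=0$. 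Hence the image is dense.

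\textbf{Claims 2 and 3.} The restriction of $\check\iota$ to $\mathfrak H^{1/2}_n$ is, via the identification above, given by $[\xi]\mapsto\Ideal{n_\de[\xi],\cdot\,}_n$ up to a sign, i.e. essentially the Riesz map of $\mathfrak H^{1/2}_n$ into its own dual composed with the inclusion into $\mathfrak H^{-1/2}_n$; since $n_\de$ is $\Ideal{\cdot,\cdot}_n$-unitary (Lemma \ref{lem:n1}) and the $\mathfrak H^{1/2}_n$-norm dominates the $\mathfrak H_n$-norm, this map is bounded for the $\mathfrak H_n$-norm on the source and so extends to a continuous $i_\de:\mathfrak H_n\hookrightarrow\mathfrak H^{-1/2}_n$; injectivity on the completion follows because on $\mathfrak H_n$ it agrees with the (isometric onto its image) Riesz isomorphism $\mathfrak H_n\cong\mathfrak H_n^*$ followed by the dense inclusion of $\mathfrak H_n^*$ into $\mathfrak H^{-1/2}_n$ dual to $\mathfrak H^{1/2}_n\hookrightarrow\mathfrak H_n$. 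For claim 3, $R$ is the composite $\Dom\D^*\twoheadrightarrow\check{\mathfrak H}\xrightarrow{\check\iota}\mathfrak H^{-1/2}_n$; the quotient map $\Dom\D^*\to\check{\mathfrak H}=\Dom\D^*/\Dom\D$ is a contraction for the graph norm by definition of the quotient Hilbert-space structure, and $\check\iota$ is continuous by claim 1, so $R$ is graph-norm continuous.

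\textbf{Claim 4 and $\A$-linearity.} For $a\in\A$, $\rho_\de([a])$ on $\mathfrak H^{1/2}_n$ is $\Lip$-continuous and satisfies $\Ideal{\rho_\de([a])[\xi],[\eta]}_n=\Ideal{[\xi],\rho_\de([a]^*)[\eta]}_n$ by Lemma \ref{lem:starhom}; dualising, $\rho_\de([a])$ acts on $\mathfrak H^{-1/2}_n$ by $(\rho_\de([a])\phi)(\zeta):=\phi(\rho_\de([a]^*)\zeta)$, which is bounded with the same $\Lip$-bound, and multiplicativity and the $*$-property pass to the dual, giving the $\Lip$-continuous homomorphism $\A\to\A/\J\to\B(\mathfrak H^{-1/2}_n)$. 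The $\A$-linearity of $\check\iota$ and $R$ is then the adjoint reformulation of the already-established compatibility: the pairing $\langle\cdot,\cdot\rangle_n$ is $\A$-balanced in the sense that $\langle[a\psi],[\eta]\rangle_n=\langle[\psi],[a^*\eta]\rangle_n$ (this is exactly the computation in the proof of Lemma \ref{lem:starhom}, run for $\psi\in\Dom\D^*$ rather than $\psi\in\Dom(n)$, using Conditions 3) and 4) of Definition \ref{defn:normal} and that $[\D^*,a]_\pm$ is adjointable), which unwinds to $\check\iota(a\psi)=\rho_\de([a])\check\iota(\psi)$, and composing with the quotient map gives the same for $R$. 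The main obstacle is keeping the four topologies ($\mathfrak H^{1/2}_n$, $\mathfrak H_n$, $\mathfrak H^{-1/2}_n$, graph norm on $\Dom\D^*$) and their duality pairings straight, and in particular checking that the $\A$-balancing identity for $\langle\cdot,\cdot\rangle_n$ extends from $\Dom(n)$ to all of $\Dom\D^*$, which requires a density argument in the graph norm together with the continuity statements just proved.
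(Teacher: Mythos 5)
Your proof is correct and follows essentially the same route as the paper: the graph-norm estimate for the pairing gives continuity of $\check{\iota}$, the density statement and the extension $i_\de$ come from the same duality argument (your annihilator formulation is exactly the paper's adjoint-injectivity argument, using that $\Ideal{\cdot,\cdot}_n$ is positive definite on $\mathfrak{H}^{1/2}_n$ and that $i_\de$ is the dual of the dense inclusion $\mathfrak{H}^{1/2}_n\hookrightarrow\mathfrak{H}_n$), claim 3 is the same contraction-plus-composition observation, and claim 4 with $\A$-linearity is obtained by dualising Lemma \ref{lem:starhom} precisely as the paper indicates. One cosmetic slip: on $\mathfrak{H}^{1/2}_n$ the map $\check{\iota}$ is literally $[\xi]\mapsto\Ideal{[\xi],\cdot}_n$, i.e.\ the composite $\mathfrak{H}^{1/2}_n\hookrightarrow\mathfrak{H}_n\cong\mathfrak{H}_n^*\hookrightarrow\mathfrak{H}^{-1/2}_n$, not the Riesz map of $\mathfrak{H}^{1/2}_n$ for its graph inner product, but your subsequent estimates use the correct composite (and the balancing identity does hold directly for $\psi\in\Dom(\D^*)$, so the density step you flag is optional), so nothing breaks.
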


\begin{proof}  
For $\xi\in\Dom(\D^*)$, $\eta\in\Dom(n)$, it is straightforward to establish the estimate
\[
|\omega_{\D^*}([\xi],n_\de [\eta])|\leq | \Ideal{\xi,\D^{*}n\eta}_\h|+
|\Ideal{\D^{*}\xi, n\eta}_\h|.
\]
By taking the infimum over all representatives $\xi$ and $\eta$, 
we obtain that  $\check{\iota}:\check{\mathfrak{H}}\to \mathfrak{H}^{-1/2}_n$ 
is continuous. We make the identifications $\mathfrak{H}_{n}^{1/2}=\left(\mathfrak{H}_{n}^{-1/2}\right)^{*}$ 
and $\mathfrak{H}_{n}^{-1/2}=\left(\mathfrak{H}_{n}^{1/2}\right)^{*}$. Let $\check{\iota}^{*}$ denote the Banach adjoint map
$(\check{\iota}_{\mathfrak{H}_n^{1/2}})^*$, 
which under these identifications takes the form 
$$ 
\check{\iota}^{*}:\mathfrak{H}_{n}^{1/2}=(\mathfrak{H}_{n}^{-1/2})^{*}\to \mathfrak{H}_{n}^{-1/2}=(\mathfrak{H}_{n}^{1/2 })^{*} ,
$$
The map $\check{\iota}^{*}$ coincides with the restriction
$\check{\iota}|_{\mathfrak{H}_n^{1/2}}:\mathfrak{H}^{1/2}_{n}\to \mathfrak{H}_{n}^{-1/2}$, 
since for $\psi,\zeta\in\mathfrak{H}^{1/2}_{n}$:
\[
(\check{\iota}^{*}(\psi),\zeta)=(\psi,\check{\iota}(\zeta))=\check{\iota}(\zeta)(\psi)
=\omega_{\D^{*}}(\zeta, n_{\partial}\psi)=\omega_{\D^{*}}(\psi, n_{\partial}\zeta)
=\check{\iota}(\psi)(\zeta). 
\]
Therefore $\check{\iota}^{*}$ is injective, and so $\check{\iota}$ 
has dense range,  proving 1).

The argument proving 2), is similar to the argument proving 1) upon noting that the dual 
of the continuous inclusion $\mathfrak{H}^{1/2}_n\hookrightarrow \mathfrak{H}_n$ is 
$i_\de:\mathfrak{H}_n \hookrightarrow \mathfrak{H}^{-1/2}_n$.

Statement 3) follows from 1) since $\Dom\D^{*}\to\check{\mathfrak{H}}$ 
is a contraction. 
For 4), we use duality and the $\Lip$-continuous map 
$\A\to\A/\J\xrightarrow{\rho_\de} \B(\mathfrak{H}^{1/2}_n)$ 
from Lemma \ref{lem:starhom}.
\end{proof}

On a manifold with boundary, $\mathfrak{H}^{-1/2}_n\cong H^{-1/2}(\de M,S|_{\de M})$ and 
the duality with $H^{1/2}(\de M,S|_{\de M})$ is implemented via the $L^2$-pairing. 

\begin{rem} 
\label{remark59}
Recall the discussion after Definition \ref{defn:normal}.
We can consider $n_\de\check{\mathfrak{H}}\subseteq  \mathfrak{H}^{-1/2}_n$ 
as a subspace by defining 
$n_\de\xi\in \mathfrak{H}^{-1/2}_n$ as the functional 
$\mathfrak{H}^{1/2}_n\ni \eta\mapsto -\langle \xi,n_\de\eta\rangle_n$ 
for $\xi\in \check{\mathfrak{H}}$. Therefore, there is a continuous embedding 
$$
\check{\iota}:\check{\mathfrak{H}}
+n_\de\check{\mathfrak{H}}\hookrightarrow  \mathfrak{H}^{-1/2}_n,
$$ 
defined from the pairing $\Ideal{\cdot,\cdot}_n$. 
In fact, it follows from Lemma \ref{nondegeprop} 
that injectivity of $\check{\iota}$ is 
equivalent to Condition 7) in Definition \ref{defn:normal}. 
Continuity is clear from the definitions of the norms. 
These inclusions are compatible with the partially defined 
$\A/\J$-actions using $\rho_\de$ (see Lemmas \ref{lem:starhom} 
and  \ref{remarkondualrho}). 
\end{rem}

\begin{defn}
\label{defn:cauchy} 
Recall the continuous map 
$R:\Dom\D^{*}\to\mathfrak{H}^{-1/2}_{n}$ from 
Lemma \ref{remarkondualrho} part 3). We define the 
{\bf Cauchy space} $\mathfrak{H}^{-1/2}_C\subseteq \mathfrak{H}^{-1/2}_n$ 
as the closure of the image of $\ker(\D^*)$ in $\mathfrak{H}^{-1/2}_n$.
\end{defn}

We shall see below, in Corollary \ref{closedcor}, 
that the image of $\ker(\D^*)$ in $\mathfrak{H}^{-1/2}_n$ 
is already closed.

Since $A$ is unital 
and represented non-degenerately on $\h$, 
the operator $\wt{\D}:\Dom(\wt{\D})\rightarrow\wt{\D}$ 
is Fredholm and hence has closed range. This allows us 
to define the pseudoinverse $\wt{\D}^{-1}\in \B(\wt{\h})$, 
which is the extension by zero of the inverse of the 
invertible operator $\wt{\D}:\ker(\wt{\D})^\perp\rightarrow\ker(\wt{\D})^\perp$.  

\begin{rem}
If $\D$ is a Dirac operator on a compact manifold with 
boundary, then $\ker(\wt{\D})=\ker(\D)\oplus\ker(\D)=\{0\}$, 
and so $\wt{\D}$ is genuinely invertible, \cite{BBW}*{Theorem 9.1}.
\end{rem}

\begin{prop}
\label{poandcal}
Let $(\J\lhd \A,\h,\D,n)$ be an even relative spectral triple with Clifford normal such that 
the $C^*$-algebra $A$ is unital and represented non-degenerately. The even operator 
$\mathcal{K}:\mathfrak{H}^{-1/2}_n \to\h$ defined by
$$
\langle \mathcal{K}f,v\rangle_\h
=\langle f, [nr_1\wt{\D}^{-1}e_1v]\rangle_{n}\quad \forall v\,\in \h,
$$
is well-defined and continuous, and has range  $\ker(\D)^\perp\subset\ker(\D^*)$. The even continous operator
$$
P_C:\mathfrak{H}^{-1/2}_n\to \mathfrak{H}^{-1/2}_n,\quad P_C :=R\circ\mathcal{K},
$$
is a (not necessarily self-adjoint) idempotent with range $\mathfrak{H}^{-1/2}_C$. 
The continuous operators 
$\mathcal{K}$ and $P_C$ only depend on the normal structure.
\end{prop}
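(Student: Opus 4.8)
The plan is to build the operator $\mathcal{K}$ first, verify it is well-defined and continuous, then identify its range, and finally obtain $P_C = R\circ\mathcal{K}$ together with its idempotency and range. The defining formula for $\mathcal{K}$ is a pairing identity, so the first task is to check that the right-hand side $v\mapsto \langle f,[nr_1\wt{\D}^{-1}e_1 v]\rangle_n$ really is a bounded conjugate-linear functional of $v\in\h$ for each fixed $f\in\mathfrak{H}^{-1/2}_n$; Riesz representation then produces $\mathcal{K}f\in\h$. Here one chases the chain $\h\xrightarrow{e_1}\wt{\h}\xrightarrow{\wt{\D}^{-1}}\Dom(\wt{\D})\xrightarrow{r_1}\Dom(n)\xrightarrow{n}\Dom(\D^*)\to\mathfrak{H}^{1/2}_n$. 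Each arrow is continuous: $\wt{\D}^{-1}\in\B(\wt{\h})$ lands in $\Dom(\wt{\D})$ with the graph norm by closedness of $\wt{\D}$; $r_1(\Dom(\wt{\D}))=\Dom(n)$ is exactly the remark preceding Definition \ref{defn:cauchy}; $n:\Dom(n)\to\Dom(\D^*)$ is continuous by Condition 1) of Definition \ref{defn:normal} (and Proposition \ref{bdrysympform} shows $n$ is closed there); and the quotient $\Dom(n)\to\mathfrak{H}^{1/2}_n$ is continuous by Definition \ref{def:haitch}. Pairing with $f\in\mathfrak{H}^{-1/2}_n=(\mathfrak{H}^{1/2}_n)^*$ is then bounded. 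So $\mathcal{K}$ is well-defined, continuous, and even because each constituent map is even (the grading on $\wt{\h}$ is $\wt{\h}^\pm=\h^\pm\oplus\h^\mp$, under which $e_1,r_1$ shift parity in a compensating way, and $n$ is odd).

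Next I would identify $\range(\mathcal{K})$. The key point is that $\mathcal{K}$ is, essentially by construction, the Hilbert-space adjoint of the Poisson-type operator $f\mapsto$ (boundary-to-interior solution operator). Concretely, writing the defining identity as $\langle\mathcal{K}f,v\rangle_\h=\langle f, (R\text{-image of }nr_1\wt{\D}^{-1}e_1 v)\rangle$ and using that $\wt{\D}^{-1}e_1 v$ solves $\wt{\D}w = e_1 v$, one shows $\mathcal{K}f\in\ker(\D^*)$: for $v\in\Dom(\D)$ (dense in $\h$) one integrates by parts using Green's form $\omega_{\D^*}$ and the definition of $\Dom(\wt{\D})$ to see that $\langle\mathcal{K}f,\D v\rangle_\h$ vanishes, hence $\mathcal{K}f\perp\range(\D)$, i.e. $\mathcal{K}f\in\ker(\D^*)$; refining the computation with $v\in\ker(\D)$ and the fact that $r_1\wt{\D}^{-1}e_1 v=0$ when $v\in\ker(\D)$ (since $\ker(\wt{\D})=\ker(\D)\oplus\ker(\D)$ by Lemma \ref{lem:doublekernel}, so $e_1 v\in\ker(\wt{\D})$ and $\wt{\D}^{-1}$ kills it) shows $\mathcal{K}f\perp\ker(\D)$, placing $\range(\mathcal{K})\subseteq\ker(\D)^\perp\cap\ker(\D^*)$. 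For the reverse inclusion one exhibits, for a given $\psi\in\ker(\D)^\perp\cap\ker(\D^*)$, an $f$ (built from $R\psi$ and the non-degeneracy of $\langle\cdot,\cdot\rangle_n$ in Proposition \ref{bdrysympform}) with $\mathcal{K}f=\psi$; surjectivity onto $\ker(\D)^\perp\subset\ker(\D^*)$ follows because $\psi\mapsto R\psi$ is injective on $\ker(\D^*)\ominus$(something killed by $R$) and one undoes the unitary $\wt{\D}^{-1}$ on $\ker(\wt{\D})^\perp$.

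Finally, $P_C := R\circ\mathcal{K}:\mathfrak{H}^{-1/2}_n\to\mathfrak{H}^{-1/2}_n$ is continuous since $R$ is continuous (Lemma \ref{remarkondualrho}, part 3) and $\mathcal{K}$ is. Its range is $R(\range\mathcal{K})=R(\ker(\D)^\perp\subset\ker(\D^*))$, and since $R$ agrees with $\check\iota\circ(\Dom\D^*\to\check{\mathfrak{H}})$ and $R$ restricted to $\ker(\D^*)$ is injective modulo $\ker(\D)$ (because $\ker(\D)\subseteq\Dom(\D)$ so $\ker(\D)$ maps to $0$ in $\check{\mathfrak{H}}$, while $\ker(\D)^\perp\cap\ker(\D^*)$ maps injectively), the range is exactly the closure of $R(\ker(\D^*))$, i.e. $\mathfrak{H}^{-1/2}_C$ by Definition \ref{defn:cauchy}. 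For idempotency: $P_C^2 = R\mathcal{K}R\mathcal{K}$, and the composite $\mathcal{K}R:\h\to\h$ restricted to $\ker(\D^*)$ should be the identity on $\ker(\D)^\perp\cap\ker(\D^*)$ — this is the crux. One checks $\mathcal{K}R\psi=\psi$ for $\psi\in\range(\mathcal{K})$ by pairing against $v\in\Dom(\D)$: $\langle\mathcal{K}R\psi,v\rangle_\h = \langle R\psi, [nr_1\wt{\D}^{-1}e_1 v]\rangle_n$, and expanding $R\psi$ through the Green's form, using $\wt{\D}(e_1\psi + \text{correction})$ and the domain description of $\wt{\D}$, collapses to $\langle\psi,v\rangle_\h$; hence $\mathcal{K}R\mathcal{K}=\mathcal{K}$ and therefore $P_C^2=R\mathcal{K}R\mathcal{K}=R\mathcal{K}=P_C$. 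Dependence only on the normal structure follows since $\wt{\D}$, $n_\de$, $R$, and the pairing $\langle\cdot,\cdot\rangle_n$ each depend only on the normal structure (Remark after Definition \ref{defofdtwiddle}, Lemma \ref{lem:n1}, Lemma \ref{remarkondualrho}). The main obstacle is the identity $\mathcal{K}R=\id$ on $\range(\mathcal{K})$: this requires a careful integration by parts in which the boundary term produced by $\wt{\D}$ on $\Dom(\wt{\D})$ — precisely the condition $\eta-n\gamma\xi\in\Dom(\D)$ — has to be matched against the definition of $R$ and the taming identity $\langle\xi,\eta\rangle_n=\omega_{\D^*}(\xi,n_\de\eta)$, and one must be scrupulous about which representatives lie in which domain.
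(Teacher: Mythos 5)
Your proposal is correct and follows essentially the same route as the paper: define $\mathcal{K}$ by Riesz representation from the continuous chain $e_1,\wt{\D}^{-1},r_1,n$ into $\mathfrak{H}^{1/2}_n$, locate $\range(\mathcal{K})$ in $\ker(\D)^\perp\subset\ker(\D^*)$ by testing against $\D v$ (where $[nv]=0$) and against $v\in\ker(\D)$ (using $\ker(\wt{\D})=\ker(\D)\oplus\ker(\D)$), and then obtain surjectivity together with idempotency of $P_C=R\circ\mathcal{K}$ from the identity $\mathcal{K}(Ru)=u$ for $u\in\ker(\D)^\perp\cap\ker(\D^*)$. One small remark: your crux step does not require expanding through Green's form with $\wt{\D}$ applied to $e_1\psi$ plus a correction (indeed $e_1\psi$ need not lie in $\Dom(\wt{\D})$); in the paper it is the one-line computation $\langle\mathcal{K}[u],v\rangle_\h=\langle u,\D^*r_1\wt{\D}^{-1}e_1v\rangle_\h=\langle u,v\rangle_\h$, which uses only $\D^*r_1=r_1\wt{\D}$ on $\Dom(\wt{\D})$, the pseudoinverse identity $\wt{\D}\wt{\D}^{-1}=1-P_{\ker(\wt{\D})}$, and $u\in\ker(\D^*)$, $u\perp\ker(\D)$ -- ingredients you had already assembled.
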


\begin{proof}
By construction, for any $v\in \h$ we have $r_1\wt{\D}^{-1}e_1v\in\Dom(n)$ and 
$\h\ni v\mapsto r_1\wt{\D}^{-1}e_1v\in \Dom(n)$ is continuous. 
Therefore $[r_1\wt{\D}^{-1}e_1v]\in \mathfrak{H}^{1/2}_n$ is well-defined and for a 
suitable constant $c$,
\begin{align*}
\|[r_1\wt{\D}^{-1}e_1v]\|_{\mathfrak{H}^{1/2}_n}\leq c\|v\|_{ \h}.
\end{align*}
It follows that, for $f\in\mathfrak{H}^{-1/2}_n$, the functional 
$\h\ni v\mapsto \langle f, [nr_1\wt{\D}^{-1}e_1v]\rangle_{n}$ 
is continuous. By Riesz' representation theorem, 
there is a $\mathcal{K}f\in\h$ such that 
$$
\langle \mathcal{K}f,v\rangle_{ \h}=\langle f, [nr_1\wt{\D}^{-1}e_1v]\rangle_{n}
$$ 
for all $v\in \h$. It follows from the construction of the double $\wt{\D}$ that 
$$
\|[nr_1\wt{\D}^{-1}e_1v]\|_{\mathfrak{h}^{1/2}_n}
\leq \|nr_1\wt{\D}^{-1}e_1v\|_{\Dom(n)}\lesssim \|v\|_{\h},
$$ 
so $\mathcal{K}:\mathfrak{H}^{-1/2}_n \to\h$ is continuous. By Lemma \ref{remarkondualrho}, 
the map $ \ker \D^{*}\to \mathfrak{H}^{-1/2}_{n}$ is continuous with respect to the graph norm, 
which on $\ker \D^{*}$  coincides with the subspace norm $\ker \D^{*}\subset \mathcal{H}$. 
Thus, the composition $P_{C}=R\circ\mathcal{K}$ is continuous as well. Now suppose that $v\in \Dom(\D)$. 
Then we have
\begin{align*}
\langle \mathcal{K}f,\D v\rangle_\h
=\langle f, [nr_1\wt{\D}^{-1}e_1\D v]\rangle_{n}=\langle f, [nv]\rangle_{n}=0,
\end{align*}
because $nv\in \Dom(\D)$. It follows that 
$\mathcal{K}f\in \range(\D)^\perp=\ker(\D^*)$. 
If $v\in\ker(\D)$, then $\wt{\D}^{-1}e_1v=0$ since 
$\wt{\D}$ is zero on $\ker(\wt{\D})$. Hence $\range\mathcal{K}\subset\ker(\D)^\perp$.

Now assume that $u\in \ker(\D)^\perp\subset\ker(\D^*)$, and set 
$f:=[u]\in \mathfrak{H}_C^{-1/2}$. Then, for any $v\in \h$
\begin{align*}
\langle \mathcal{K}f,v\rangle_\h=\Ideal{u,\D^*r_1\wt{\D}^{-1}e_1v}_\h=\langle u,v\rangle_\h.
\end{align*}
It follows that $u=\mathcal{K}f$, so the operator 
$P_Cf:=[\mathcal{K}f]$ defines an idempotent with range $\mathfrak{H}_C^{-1/2}$.
\end{proof}

\begin{cor}
\label{closedcor}
Let $(\J\lhd \A,\h,\D,n)$ be relative spectral triple with normal. The image of $\ker(\D^*)\to \mathfrak{H}^{-1/2}_n$ is closed.
\end{cor}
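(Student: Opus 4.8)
The plan is to derive the closedness of the image of $\ker(\D^*)$ in $\mathfrak{H}^{-1/2}_n$ directly from the structure established in Proposition \ref{poandcal}, specifically from the fact that $\mathcal{K}$ restricted to the closure $\mathfrak{H}^{-1/2}_C$ of that image is a bounded left inverse to $R|_{\ker(\D)^\perp}$, up to identifications. First I would recall that $R:\Dom(\D^*)\to\mathfrak{H}^{-1/2}_n$ is continuous for the graph norm (Lemma \ref{remarkondualrho}(3)), and that on the closed subspace $\ker(\D^*)\subseteq\Dom(\D^*)$ the graph norm agrees with the $\h$-norm; hence $R$ restricts to a bounded operator $R_0:\ker(\D^*)\to\mathfrak{H}^{-1/2}_n$ with image the (a priori not closed) space whose closure is $\mathfrak{H}^{-1/2}_C$. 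The point is to show $R_0$ has closed range, equivalently is bounded below on $\ker(\D)^\perp$ (note $R_0$ kills $\ker(\D)$: if $v\in\ker(\D)$ then $\langle Rv,[\eta]\rangle_n=\omega_{\D^*}([v],n_\de[\eta])=\langle v,\D^*n\eta\rangle-\langle\D^*v,n\eta\rangle=\langle v,\D n\eta\rangle-0=0$ since $n\eta\in\Dom(\D)$).

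The key step is then the following: for $u\in\ker(\D)^\perp\subseteq\ker(\D^*)$, set $f:=R_0u=[u]\in\mathfrak{H}^{-1/2}_n$; the computation in the last paragraph of the proof of Proposition \ref{poandcal} shows $\mathcal{K}f=u$. Since $\mathcal{K}:\mathfrak{H}^{-1/2}_n\to\h$ is continuous, there is a constant $c$ with $\|u\|_\h=\|\mathcal{K}R_0u\|_\h\leq c\|R_0u\|_{\mathfrak{H}^{-1/2}_n}$ for all $u\in\ker(\D)^\perp$. Thus $R_0$ is bounded below on $\ker(\D)^\perp$, so $R_0(\ker(\D)^\perp)$ is a closed subspace of $\mathfrak{H}^{-1/2}_n$. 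But $R_0(\ker(\D^*))=R_0(\ker(\D)^\perp)$ because $R_0$ annihilates $\ker(\D)$ and $\ker(\D^*)=\ker(\D)\oplus\ker(\D)^\perp$ (recall $A$ unital and represented non-degenerately, so $\ker(\D)$ and $\ker(\D^*)$ are complemented, cf.\ Remarks \ref{rem:unitalandrwongcompact} and \ref{stabforclosed}). Hence the image of $\ker(\D^*)\to\mathfrak{H}^{-1/2}_n$ is closed, and in particular it already equals $\mathfrak{H}^{-1/2}_C$.

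I expect the only genuine subtlety — rather than an outright obstacle — to be bookkeeping of the identifications: checking that the "$\mathcal{K}f=u$" computation of Proposition \ref{poandcal} is legitimately applied with $f=[u]$ viewed in $\mathfrak{H}^{-1/2}_n$ via $\check\iota\circ i_\de$ rather than just in $\mathfrak{H}^{1/2}_n$, and that the composition $\mathcal{K}\circ R_0$ really is the identity on $\ker(\D)^\perp$ and not merely a projection. This is handled by unwinding the definition $\langle\mathcal{K}f,v\rangle_\h=\langle f,[nr_1\wt\D^{-1}e_1v]\rangle_n$ together with the identity $\D^*r_1\wt\D^{-1}e_1v=v$ for $v\in\ker(\D^*)$, which holds because $\wt\D^{-1}e_1v$ solves the relevant boundary value problem; for $v=u\in\ker(\D)^\perp\subseteq\ker(\D^*)$ this gives $\langle\mathcal{K}[u],v\rangle_\h=\langle u,v\rangle_\h$ for all $v\in\h$, whence $\mathcal{K}[u]=u$. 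Everything else is a standard "bounded below implies closed range" argument in Banach spaces, requiring no new estimates beyond the continuity statements already proved.
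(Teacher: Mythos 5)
Your argument is correct and is in substance the paper's own (implicit) proof: Corollary \ref{closedcor} is read off directly from Proposition \ref{poandcal} — the paper by noting that the continuous idempotent $P_C=R\circ\mathcal{K}$ has range $\mathfrak{H}^{-1/2}_C$ while $\range(\mathcal{K})\subseteq\ker(\D^*)$ forces that range to lie inside the image of $\ker(\D^*)$, and you by repackaging the same identity $\mathcal{K}[u]=u$ for $u\in\ker(\D)^\perp$ as a lower bound $\|u\|_\h\leq c\,\|Ru\|_{\mathfrak{H}^{-1/2}_n}$, so both deductions rest on exactly the same facts. One small slip in your asides: for $\eta\in\Dom(n)$ one only has $n\eta\in\Dom(\D^*)$, not $n\eta\in\Dom(\D)$, but $R(\ker(\D))=0$ is immediate anyway since $\ker(\D)\subset\Dom(\D)$ and $R$ factors through $\check{\mathfrak{H}}=\Dom(\D^*)/\Dom(\D)$; similarly $\D^*r_1\wt{\D}^{-1}e_1v=v-P_{\ker(\D)}v$ rather than $v$, which is harmless once paired against $u\perp\ker(\D)$.
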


\begin{defn}
\label{defn:fish}
The operator $\mathcal{K}:\mathfrak{H}^{-1/2}_n \to \ker(\D)^\perp\subset\ker(\D^*)$ 
is called the Poisson operator 
of the even relative spectral triple with Clifford normal $(\J\lhd\A,\h,\D,n)$, 
and $P_C$ is called its Calderon projector. The Calderon 
projector is an idempotent but is not necessarily an orthogonal projection.
\end{defn}

\begin{rem}
The Calderon projector $P_C$ plays an important role in the classical case of manifolds 
with boundary. In this case, $P_C$ is self-adjoint up to a compact perturbation. 
To be precise, $P_C$ is a pseudo-differential operator of order $0$ with self-adjoint 
principal symbol by \cite{BBW}*{Theorem 12.4}. Moreover, 
the idempotent $P_C$ defines an odd $K$-homology cycle whose class coincides with the image 
of $(\J\lhd\A,\h,\D)$ under the boundary mapping. We will in the next section use $P_C$ to place 
natural conditions on the Clifford normal that makes $(\A/\J\hat{\otimes}\Cl_1, \mathfrak{H}_n,\D_n)$ into a 
spectral triple representing $\de[(\J\lhd\A,\h,\D)]$.
\end{rem}

\subsection{Representing the boundary in $K$-homology by a spectral triple}
\label{representingsubsec}

In the last subsection we saw how to construct a candidate for a 
``boundary spectral triple'' plus an analogue of the Calderon projector.
It is desirous to know whether the $K$-homological boundary of a ``geometrically defined'' 
relative spectral triple can also be represented ``geometrically''. In examples, such
as our running examples and the crossed products and $\theta$-deformations 
to be discussed later, we can check that our constructions 
do in fact produce a boundary spectral
triple.

In the generality that we are working in, we believe that it is impossible to prove that
all the boundary objects that we have defined assemble into a spectral triple,
let alone that this spectral triple represents the $K$-homological boundary.
Classically  the compatible geometry of the boundary is implicit
in the definition of a manifold with boundary, and one need not try to construct 
the boundary as we have done.

We overcome this impasse by making the additional assumption that the geometrically defined triple 
$(\A/\J\wh{\otimes}\Cl_1,\mathfrak{H}_n,\D_n)$  is a spectral triple for $A/J\wh{\otimes}\Cl_1$. 
This assumption can be verified for all of our examples. We then show that the Calderon
projector we have defined plays an important role in showing that the
$K$-homological boundary of $(\J\lhd\A,\h,\D)$ is represented by $(\A/\J\wh{\otimes}\Cl_1,\mathfrak{H}_n,\D_n)$, as it is classically.

\begin{ass}
\label{sixthass}
Let $(\J\lhd\A,\h,\D,n)$ be an (even or odd) relative spectral triple with Clifford normal for $J\lhd A$. 
The operator $\{\D^*,[\D^*,n]\}:\h^2_n\subset\h\rightarrow\h$ is symmetric
(cf. Proposition \ref{prop:equivalences} on page \pageref{prop:equivalences}) and the following hold.

a. $\J=J\cap\A$, and there is a $C>0$ such that for each $a\in \A$, $\rho_\de([a])$ extends to 
a bounded operator on $\mathfrak{H}_n$ satisfying
$$\|\rho_\de([a])\|_{\B(\mathfrak{H}_n)}\leq C\|a\|_{\Lip},$$ 
$\rho_\de([a])$ preserves $\Dom(\D_n)$ and the densely defined operator $[\D_n,\rho_\de([a])]_\pm$ is bounded.

b. $(n^2+1)\cdot\h^2_n\subset\Dom(\D^2)$, so that $n_\de$ preserves $\Dom(\D_n)$ and
$\D_n$ anticommutes with $n_\de$.

c. $\D_n$ is essentially self-adjoint, or equivalently $\range(\D_n\pm i)$ are dense in $\mathfrak{H}_n$.

d. $a(1+\D_n^2)^{-1/2}$ is compact for all $a\in\A/\J$, or equivalently $a:\Dom(\D_n)\rightarrow\mathfrak{H}_n$ 
is compact with respect to the graph norm, where by an abuse of notation $\D_n$ also refers to its self-adjoint closure.
\end{ass}

We will {\bf explicitly state whenever we use Assumption \ref{sixthass}}. In the even case, Assumption
\ref{sixthass} includes the assumption that our boundary data
$$
(\A/\J\wh{\otimes} \Cl_1,\mathfrak{H}_n,\D_n)
$$
assembles to give an even spectral triple for $A/J\wh{\otimes} \Cl_1$. (In the odd case, 
Assumption \ref{sixthass} ensures that $(\A/\J,\mathfrak{H}_n,\D_n)$ is an even spectral triple for $A/J$.) 
Indeed, $\rho_\de$ extends to 
a representation of $A/J\wh{\otimes} \Cl_1$ due to the following argument. 
Part a.\ of Assumption \ref{sixthass} implies that 
$\rho_\de$ defines a $*$-homomorphism 
$\overline{\A}^{\Lip}\to \B(\mathfrak{H}_n)$. 
Since $\overline{\A}^{\Lip}\subseteq A$ is closed 
under holomorphic functional calculus (see Remark \ref{lipremark} 
on page \pageref{lipremark}), we can extend to a $*$-homomorphism 
$A\to \B(\mathfrak{H}_n)$ vanishing on $\J$, and by density also vanishing on $J$. Therefore $\rho_\de$ 
extends to a $*$-homomorphism 
$\rho_\de:A/J\hat{\otimes}\Cl_1\to \B(\mathfrak{H}_n)$.

As an example, we note that on a manifold 
with boundary, the operator $\{\D^*,[\D^*,n]\}$ 
coincides with the symmetric operator $-2n\Dsla_\de^2$ near the boundary.

In this section we examine some further assumptions guaranteeing that 
$$\de[(\J\lhd\A,\h,\D)]=[(\A/\J\wh{\otimes}\Cl_1,\mathfrak{H}_n,\D_n)]\in KK^1(A/J,\C).$$
We will assume that $A$ is trivially $\Z/2$-graded and that the spectral triple $(\J\lhd\A,\h,\D)$ is even, 
since in this case we can (with Assumption \ref{sixthass} holding) construct an odd spectral triple 
$(\A/\J,\mathfrak{H}_n^+,-n_\de^-\D_n^+)$ representing the same class as 
$(\A/\J\hat{\otimes}\Cl_1, \mathfrak{H}_n,\D_n)$ in $KK^1(A/J,\C)$. 
There is an extension which represents the class of this odd spectral triple triple, 
and we can compare this extension to the extension of Proposition \ref{representingyo} 
which represents $\de[(\J\lhd\A,\h,\D)]$. 

\begin{lemma}\label{lem:oddbdrytriple}
Let $(\J\lhd\A,\h,\D,n)$ be an even relative spectral triple with Clifford normal for $J\lhd A$, 
with $A$ trivially graded, and suppose Assumption \ref{sixthass} holds so that 
$(\A/\J\wh{\otimes}\Cl_1,\mathfrak{H}_n,\D_n)$ is an even spectral triple. Write 
$n_\de=\left(\begin{smallmatrix}0&n_\de^-\\n_\de^+&0\end{smallmatrix}\right)$, 
$\D_n=\left(\begin{smallmatrix}0&\D_n^-\\\D_n^+&0\end{smallmatrix}\right)$ with respect to 
the $\Z/2$-grading of $\mathfrak{H}_n$, and let $\D_\de:=-n_\de^- \D_n^+$. Then
$$\big(\A/\J,\mathfrak{H}_n^+,\D_\de\big)$$
is an odd spectral triple representing the same class as $(\A/\J\wh{\otimes}\Cl_1,\mathfrak{H}_n,\D_n)$ 
in $KK^1(A/J,\C)$.
\end{lemma}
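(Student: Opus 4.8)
The plan is to verify directly that $(\A/\J,\mathfrak{H}_n^+,\D_\de)$ is an odd spectral triple, and then to match its class with that of $(\A/\J\wh{\otimes}\Cl_1,\mathfrak{H}_n,\D_n)$ by comparing bounded transforms. Throughout I take $\D_n$ to be its self-adjoint closure (which exists by Assumption \ref{sixthass}c), so that $(\D_n^+)^*=\D_n^-$. By Lemma \ref{lem:n1} the operator $n_\de$ is a skew-adjoint unitary on $\mathfrak{H}_n$: it satisfies $n_\de^2=-1$ and preserves $\Ideal{\cdot,\cdot}_n$, hence $n_\de^*=n_\de^{-1}=-n_\de$. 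In block form this gives $(n_\de^+)^*=-n_\de^-$ and $n_\de^-n_\de^+=-1$ on $\mathfrak{H}_n^+$, so $-n_\de^-$ is bounded with bounded inverse $n_\de^+$. By Assumption \ref{sixthass}b, $n_\de$ preserves $\Dom(\D_n)$; since $-n_\de$ is its inverse, $n_\de^+$ and $-n_\de^-$ restrict to mutually inverse bijections between $\Dom(\D_n)\cap\mathfrak{H}_n^+$ and $\Dom(\D_n)\cap\mathfrak{H}_n^-$.

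First I would establish that $\D_\de=-n_\de^-\D_n^+$ is self-adjoint. Since $-n_\de^-$ is bounded,
\[
\D_\de^*=(-n_\de^-\D_n^+)^*=(\D_n^+)^*(-n_\de^-)^*=\D_n^-\,n_\de^+,
\]
which has domain $\{\xi\in\mathfrak{H}_n^+:n_\de^+\xi\in\Dom(\D_n^-)\}$; by the last sentence of the previous paragraph this domain is exactly $\Dom(\D_n)\cap\mathfrak{H}_n^+=\Dom(\D_\de)$. Finally, the anticommutation $\D_nn_\de=-n_\de\D_n$ on $\Dom(\D_n)$ (Assumption \ref{sixthass}b) reads, in block form, $\D_n^-n_\de^+=-n_\de^-\D_n^+$, so $\D_\de^*=\D_\de$ on this common domain.

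Next I would check the remaining axioms. As $A$ is trivially graded, $\rho_\de$ is an even representation (Definition \ref{defn:gradingonboundary}), hence block diagonal; it therefore preserves $\mathfrak{H}_n^+$ and restricts there to a $*$-homomorphism of $\A/\J$, which is $\Lip$-continuous by Assumption \ref{sixthass}a and so extends to $A/J$ as explained after Assumption \ref{sixthass}. For $a\in\A$, $\rho_\de([a])$ commutes with $n_\de$ (Definition \ref{defn:gradingonboundary}) and preserves $\Dom(\D_n)$ (Assumption \ref{sixthass}a), hence preserves $\Dom(\D_\de)$, and on that domain $[\D_\de,\rho_\de([a])]$ equals the restriction to $\mathfrak{H}_n^+$ of $-n_\de^-[\D_n,\rho_\de([a])]$, which is bounded by Assumption \ref{sixthass}a. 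Using the anticommutation relation and $n_\de^2=-1$ once more, $\D_\de^2$ is the restriction of $\D_n^2$ to $\mathfrak{H}_n^+$; since $\D_n^2$ commutes with the grading, uniqueness of positive square roots gives that $(1+\D_\de^2)^{-1/2}$ is the restriction of $(1+\D_n^2)^{-1/2}$ to $\mathfrak{H}_n^+$, so $\rho_\de(a)(1+\D_\de^2)^{-1/2}$ is a compression of the compact operator $\rho_\de(a)(1+\D_n^2)^{-1/2}$ of Assumption \ref{sixthass}d and hence compact. This gives the odd spectral triple.

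Finally, to compare classes I would pass to bounded transforms. The even spectral triple gives the $\Cl_1$-graded even Fredholm module $(\mathfrak{H}_n,F_n,c)$ over $A/J\wh{\otimes}\Cl_1$ with $F_n=\D_n(1+\D_n^2)^{-1/2}$ and $c=-in_\de$, while $(\A/\J,\mathfrak{H}_n^+,\D_\de)$ gives the odd Fredholm module $(\mathfrak{H}_n^+,F_\de)$ with $F_\de=\D_\de(1+\D_\de^2)^{-1/2}=-n_\de^-F_n^+$, using again that $\D_\de^2$ is $\D_n^2$ restricted to $\mathfrak{H}_n^+$. The passage $(\mathfrak{H}_n,F_n,c)\mapsto(\mathfrak{H}_n^+,-n_\de^-F_n^+)$ --- restrict to the even subspace and twist the off-diagonal block of $F_n$ by the Clifford generator --- is the standard reduction of a $\Cl_1$-graded even Fredholm module to an odd one; it is natural in $A/J$ and implements the isomorphism $KK^0(A/J\wh{\otimes}\Cl_1,\C)\cong KK^1(A/J,\C)$ (Clifford periodicity; cf.\ the treatment of $\Cl_1$-graded cycles in \cite{HigsonRoe}). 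Hence the two triples represent the same class in $KK^1(A/J,\C)$. I expect the main obstacle to be the domain bookkeeping in the self-adjointness step, especially the identification $\Dom(\D_\de^*)=\Dom(\D_\de)$, which rests on $n_\de$ restricting to a bijection between $\Dom(\D_n)\cap\mathfrak{H}_n^+$ and $\Dom(\D_n)\cap\mathfrak{H}_n^-$, together with making the cycle-level $\Cl_1$-reduction explicit enough that its naturality in $A/J$ --- and hence the equality of $KK$-classes --- is manifest.
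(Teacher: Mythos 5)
Your proposal is correct, and for the spectral-triple part it follows the same route as the paper (self-adjointness of $\D_\de$ from the anticommutation $\D_n n_\de=-n_\de\D_n$, bounded commutators from $[n_\de,\rho_\de(a)]=0$ and Assumption \ref{sixthass}a, and compact resolvent via $\D_\de^2=\D_n^-\D_n^+$), just with the domain bookkeeping spelled out more fully than in the paper, which dispatches these points in two sentences. Where you genuinely diverge is the identification of classes: you pass to bounded transforms and invoke the standard reduction of a $\Cl_1$-graded even Fredholm module to an odd one, citing Clifford periodicity, whereas the paper stays at the unbounded level, forms the even spectral triple $\bigl(\A/\J\wh{\otimes}\Cl_1,\mathfrak{H}_n^+\oplus\mathfrak{H}_n^+,\left(\begin{smallmatrix}0&-i\D_\de\\ i\D_\de&0\end{smallmatrix}\right)\bigr)$ associated to $(\A/\J,\mathfrak{H}_n^+,\D_\de)$ via \cite{Connes}*{Proposition IV.A.13}, and exhibits the explicit graded unitary $u=\left(\begin{smallmatrix}1&0\\0&-in_\de^+\end{smallmatrix}\right)$ conjugating it onto $(\A/\J\wh{\otimes}\Cl_1,\mathfrak{H}_n,\D_n)$. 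Your route is fine --- indeed your recipe $F_\de=-n_\de^-F_n^+$ is, with the paper's conventions ($c=-in_\de$, doubling with $\left(\begin{smallmatrix}0&-i\D_\de\\ i\D_\de&0\end{smallmatrix}\right)$), exactly inverse to the doubling construction, so no sign flip occurs --- but this convention check, and the fact that the given even module is equivalent to the double of its reduction, is precisely the content you are delegating to the citation; the paper's unitary $u$ is the two-line computation that makes it explicit, and since the sign conventions for the isomorphism $KK^1(A/J,\C)\cong KK^0(A/J\wh{\otimes}\Cl_1,\C)$ vary in the literature (a mismatch would replace the class by its negative), it would strengthen your argument to include that verification rather than only cite it. The trade-off: your bounded-transform argument is closer to the general machinery and generalises readily, while the paper's explicit unitary equivalence is self-contained and immune to convention ambiguities.
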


\begin{proof}
Since $\D_n$ anticommutes with $n_\de$, $\D_\de$ is self-adjoint. 
It is clear that $\D_\de$ has bounded commutators with $\A/\J$, 
and $a(1+\D_\de^*\D_\de)^{-1/2}=a(1+\D_n^-\D_n^+)^{-1/2}$ 
is compact for all $a\in A/J$. Hence $(\A/\J,\mathfrak{H}_n^+,\D_\de)$ 
is a spectral triple for $A/J$. 

The even spectral triple for $A/J\wh{\otimes}\Cl_1$ 
associated to $(\A/\J,\mathfrak{H}_n^+,\D_\de)$ is 
$$
\left(\A/\J\wh{\otimes}\Cl_1,\mathfrak{H}_n^+\oplus\mathfrak{H}_n^+,
\left(\begin{smallmatrix}0&in^-_\de\D_n^+\\-in^-_\de\D_n^+&0\end{smallmatrix}\right)\right),
$$
where $\mathfrak{H}_n^+\oplus\mathfrak{H}_n^+$ is graded by 
$\gamma=\left(\begin{smallmatrix}1&0\\0&-1\end{smallmatrix}\right)$, 
and the self-adjoint unitary generator of $\Cl_1$ 
acts by $c\mapsto\left(\begin{smallmatrix}0&1\\1&0\end{smallmatrix}\right)$, 
\cite{Connes}*{Proposition IV.A.13}. Take the $\Z/2$-graded unitary 
$u:=\left(\begin{smallmatrix}1&0\\0&-in_\de^+\end{smallmatrix}\right)
:\mathfrak{H}_n^+\oplus\mathfrak{H}_n^+\rightarrow\mathfrak{H}_n$. Then
\begin{align*}
u\begin{pmatrix}0&in_\de^-\D_n^+\\-in_\de^-\D_n^+&0\end{pmatrix}u^*
&=\begin{pmatrix}0&n_\de^-\D_n^+n_\de^-\\-n_\de^+n_\de^-\D_n^+&0\end{pmatrix}=\D_n
\end{align*}
while
\begin{align*}
u\begin{pmatrix}0&1\\1&0\end{pmatrix}u^*&=\begin{pmatrix}0&-in_\de^-\\-in_\de^+&0\end{pmatrix}
\end{align*}
which is the action of the Clifford generator on $\mathfrak{H}_n$. 
This establishes that the even spectral triple associated to 
$(\A/\J,\mathfrak{H}_n^+,\D_\de)$ is unitarily equivalent to 
$(\A/\J\wh{\otimes}\Cl_1,\mathfrak{H}_n,\D_n)$.
\end{proof}

Recall the Calderon projector $P_C\in \B(\mathfrak{H}^{-1/2}_n)$ 
of Proposition \ref{poandcal}. The operator $P_C$ is even and hence has a 
restriction $P_C^+:(\mathfrak{H}_n^{-1/2})^+\rightarrow(\mathfrak{H}_n^{-1/2})^+$.
We will also make use of the $\A/\J$-action $\rho_\de$ on  
$\mathfrak{H}_n^{-1/2}$, see Lemma \ref{remarkondualrho}. For the proof Theorem 
\ref{thm:boundarycomparisonnew} below, we recall the following notion.
\begin{defn}Let $B$ be a $\ast$-algebra, and let $\h_i$ be a Hilbert space, 
$\mathcal{Q}(\h_i)=\mathbb{B}(\h_i)/\mathbb{K}(\h_i)$ be the Calkin algebra, 
and $\beta_i:B\to \mathcal{Q}(\h_i)$ be a $\ast$-homomorphism for $i=1,2$. We say
that $\beta_1$ and $\beta_2$ are \textbf{similar} if there is a 
Fredholm operator $S:\h_1\to \h_2$ such that $\beta_1(a)=\pi(S)\beta_2(a)\pi(S)^{-1}$, 
where $\pi:\mathbb{B}(\h_2)\to\mathcal{Q}(\h_2)$ denotes the quotient map onto the Calkin algebra. 
\end{defn}

\begin{lemma} 
\label{lem:similaritylemma}
Let $B$ be a $C^*$-algebra and $\beta_i:B\to \mathcal{Q}(\h_i)$, $i=1,2$ be similar. 
Then $[\beta_{1}]=[\beta_{2}]\in \Ext(B,\C)$.
\end{lemma}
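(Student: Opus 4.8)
The plan is to upgrade the given \emph{similarity} (conjugation by a Fredholm operator) to an honest \emph{unitary equivalence} of extensions, and then to invoke the standard fact that unitarily equivalent extensions define the same class in $\Ext(B,\C)$.

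First I would normalise the implementing operator. Let $S$ be the Fredholm operator implementing the similarity, so that $\pi(S)$ is invertible in the Calkin calculus (its inverse being the class of any parametrix for $S$) and $\Ad(\pi(S))$ intertwines $\beta_{1}$ and $\beta_{2}$. Take the polar decomposition $S=V|S|$, with $|S|=(S^{*}S)^{1/2}$ and $V$ the accompanying partial isometry. Fredholmness of $S$ gives that $\ker(V)$ and $\coker(V)$ are finite dimensional, so that $\pi(V)$ is a unitary between the relevant Calkin algebras, while $\pi(S^{*}S)$, having $0$ isolated in its spectrum, is a positive invertible element of a Calkin algebra, and hence so is $\pi(|S|)$. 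The crucial observation is that, since $\beta_{1}=\Ad(\pi(S))\circ\beta_{2}$ is a $*$-homomorphism, comparing $\beta_{1}(a^{*})$ with $\beta_{1}(a)^{*}$ forces $\pi(S^{*}S)$ — and hence, by continuous functional calculus, $\pi(|S|)$ — to commute with the image of $\beta_{2}$. Therefore the positive part of the polar decomposition drops out of $\Ad(\pi(S))$, and the similarity is already implemented by the Calkin-unitary $\pi(V)$: we may assume $\beta_{1}=\Ad(\pi(V))\circ\beta_{2}$.

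Next I would absorb the index of $V$ by stabilising with finite-dimensional Hilbert spaces. Since $\ker(V)$ and $\coker(V)$ are finite dimensional, adjoining to $\h_{1}$ and $\h_{2}$ suitable finite-dimensional summands turns $V\oplus 0$ into a finite-rank perturbation of a genuine unitary $U$ between the enlarged spaces. Finite-dimensional summands are invisible in the Calkin algebra — the inclusion $T\mapsto T\oplus 0$ induces an isomorphism $\mathcal{Q}(\h_{i}\oplus\C^{n_{i}})\cong\mathcal{Q}(\h_{i})$ — so under these isomorphisms the $\beta_{i}$ correspond to extensions $\widetilde{\beta}_{i}$ with $[\widetilde{\beta}_{i}]=[\beta_{i}]$ in $\Ext(B,\C)$, while $\pi(U)=\pi(V\oplus 0)=\pi(V)$ gives $\widetilde{\beta}_{1}=\Ad(\pi(U))\circ\widetilde{\beta}_{2}$. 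Hence $\widetilde{\beta}_{1}$ and $\widetilde{\beta}_{2}$ are unitarily equivalent extensions, and therefore $[\beta_{1}]=[\widetilde{\beta}_{1}]=[\widetilde{\beta}_{2}]=[\beta_{2}]$.

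The only step requiring genuine thought is the reduction in the second paragraph — namely that the $*$-homomorphism property of the conjugated map forces the positive part of $S$ (equivalently $\pi(|S|)$) into the commutant of the relevant image algebra. Without this, $\Ad$ by the non-unitary invertible $\pi(S)$ would fail to be $*$-preserving, and the passage to a unitary equivalence, on which everything downstream rests, would break down. The remaining ingredients (the polar decomposition, the finite-dimensional stabilisation, and the unitary-invariance of $\Ext$-classes) are routine.
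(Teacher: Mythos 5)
Your proof is correct and is essentially the paper's own argument: both hinge on the observation that the $*$-homomorphism property forces $\pi(S^{*}S)$ (hence $\pi(|S|)$) to commute with the image of $\beta_{2}$, so only the unitary part of the polar decomposition survives, combined with a finite-dimensional stabilisation (carrying the trivial representation) to absorb the kernel and cokernel of $S$. The only difference is the order of the two steps --- the paper stabilises first, making $S$ invertible so that the polar part is an honest unitary, and then invokes the commutation, whereas you take the polar decomposition of the original Fredholm $S$ first and stabilise afterwards --- which is purely cosmetic.
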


\begin{proof} 
Suppose that $\beta_1(a)=\pi(S)\beta_2(a)\pi(S)^{-1}$  with $S$ Fredholm. 
We may assume that $S$ is invertible by the following argument.
Replace $\h_{1}$ by $\h_{1}\oplus \ker S^{*}$, $\h_{2}$ by $\h_{2}\oplus \ker S$, 
and $S$ by $S|_{\ker{S}^\perp}\oplus v$ 
where the finite dimensional spaces $\ker S$, $\ker S^{*}$ 
carry the trivial $B$ representation and 
$v:\ker S\oplus\ker S^{*}\to \ker S^{*}\oplus \ker S$ 
is a unitary. It then follows that
\[
\pi(S)\beta_{2}(a)=\beta_{1}(a)\pi(S),\quad \pi(S^{*})\beta_{1}(a)
=\beta_{2}(a)\pi(S^{*}),\quad \pi(S^{*}S)\beta_{2}(a)=\beta_{2}(a)\pi(S^{*}S), 
\] 
and thus by polar decomposition $S=u(S^{*}S)^{1/2}$ 
and $\pi(S)\beta_{2}(a)\pi(S)^{-1}=\pi(u)\beta_{2}(a)\pi(u)^{*}$. 
It is now straightforward to show that the $\beta_{i}$ 
define $*$-isomorphic extensions and thus $[\beta_{1}]=[\beta_{2}]$.
\end{proof}

The following theorem gives sufficient conditions for when the ``geometric'' boundary 
$(\A/\J,\mathfrak{H}_n^+,\D_\de)$ represents the $K$-homological boundary of $(\J\lhd\A,\h,\D)$. 
The motivation for the theorem comes from the classical manifold setting, where there exists an 
elliptic Fredholm pseudodifferential operator $S$ between the Sobolev spaces 
$\mathfrak{H}_n^{-1/2}$ and $\mathfrak{H}_n$ intertwining $P_\geq$ and $P_C$ up to compacts. 
The proof is motivated by the proof of \cite[Proposition 4.3]{BDT}. We revisit this result below 
under conditions closer resembling the situation on a manifold.

\begin{thm}
\label{thm:boundarycomparisonnew}
Let $(\J\lhd\A,\h,\D,n)$ be an even relative spectral triple with Clifford normal for $J\lhd A$, 
where $A$ is trivially $\Z/2$-graded, unital and represented non-degenerately on $\h$. 
Suppose the relative spectral triple satisfies Assumption \ref{sixthass}, so that $(\A/\J,\mathfrak{H}_n^+,\D_\de)$ 
is an odd spectral triple for $A/J$. Let $P_\geq\in\B(\mathfrak{H}_n^+ )$ be the non-negative spectral projection 
associated to $\D_\de$. If there exists a Fredholm operator $S:(\mathfrak{H}_n^{-1/2})^+\rightarrow\mathfrak{H}_n^+$ 
such that

1. $P_{\geq}[S,\rho_\de(a)]P_C^+:(\mathfrak{H}_n^{-1/2})^+\rightarrow\mathfrak{H}_n^+$ is compact for all $a\in \A/\J$, and

2. $P_{\geq}S-SP_C^+:(\mathfrak{H}_n^{-1/2})^+\rightarrow\mathfrak{H}_n^+$ is compact,

then $\de[(\J\lhd\A,\h,\D)]=[(\A/\J,\mathfrak{H}_n^+,\D_\de)]\in KK^1(A/J,\C)$.
\end{thm}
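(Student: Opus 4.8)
The plan is to identify both $\de[(\J\lhd\A,\h,\D)]$ and $[(\A/\J,\mathfrak{H}_n^+,\D_\de)]$ with explicit classes in $\Ext(A/J,\C)$ and then invoke Lemma \ref{lem:similaritylemma} via the Fredholm operator $S$. By Proposition \ref{representingyo}, $\de[(\J\lhd\A,\h,\D)]$ is represented by the Busby invariant $\alpha(a)=\pi(P_{\ker((\D^*)^+)}\wt a P_{\ker((\D^*)^+)})$ acting on $\ker((\D^*)^+)$. On the other side, the odd spectral triple $(\A/\J,\mathfrak{H}_n^+,\D_\de)$ represents (via the usual spectral-triple-to-extension construction, using that $\D_\de$ is essentially self-adjoint with $\A/\J$-compact resolvent by Assumption \ref{sixthass}) the Toeplitz-type extension with Busby invariant $\tau(a)=\pi(P_{\geq}\rho_\de(a)P_{\geq})$ on $P_{\geq}\mathfrak{H}_n^+$, where $P_\geq$ is the non-negative spectral projection of $\D_\de$. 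So the task reduces to showing $[\alpha]=[\tau]$ in $\Ext(A/J,\C)$.

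The bridge between the two pictures is the Calderon projector. First I would establish the intermediate identification $[\alpha]=[P_C^+]$, where $P_C^+$ denotes the extension $a\mapsto\pi(P_C^+\rho_\de(a)P_C^+)$ on $P_C^+\mathfrak{H}_n^{-1/2,+}=(\mathfrak{H}_C^{-1/2})^+$: this is the content of the classical fact that ``the boundary of Dirac is computed by the Calderon projector'' and should follow from Proposition \ref{poandcal} together with the Poisson operator $\mathcal{K}$, which intertwines $\ker((\D^*)^+)$ with the Cauchy space $(\mathfrak{H}_C^{-1/2})^+$ and conjugates $P_{\ker((\D^*)^+)}$-compression into $P_C^+$-compression modulo compacts (using that $\mathcal{K}$ has range $\ker(\D)^\perp\subseteq\ker(\D^*)$, is $\A$-linear, and $R\circ\mathcal{K}=P_C$). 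This step needs the $\A$-linearity statements of Lemma \ref{remarkondualrho} and Corollary \ref{closedcor} to know everything is genuinely defined on closed submodules. Then, given hypotheses 1.\ and 2.\ of the theorem, the Fredholm operator $S$ restricts to a map $(\mathfrak{H}_C^{-1/2})^+=P_C^+\mathfrak{H}_n^{-1/2,+}\to P_{\geq}\mathfrak{H}_n^+$ which, by condition 2., is Fredholm between these subspaces, and by conditions 1.\ and 2.\ together satisfies $\pi(S)\,\pi(P_C^+\rho_\de(a)P_C^+) = \pi(P_{\geq}\rho_\de(a)P_{\geq})\,\pi(S)$ modulo compacts. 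Thus $P_C^+$ and $\tau$ are similar in the sense of the definition preceding Lemma \ref{lem:similaritylemma}, and that lemma gives $[P_C^+]=[\tau]$, hence $\de[(\J\lhd\A,\h,\D)]=[\alpha]=[P_C^+]=[\tau]=[(\A/\J,\mathfrak{H}_n^+,\D_\de)]$.

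I expect the main obstacle to be the intermediate step $[\alpha]=[P_C^+]$: one must check carefully that the Poisson operator $\mathcal{K}$ (or a Fredholm modification of it) actually implements a \emph{similarity} of extensions between the $\ker((\D^*)^+)$-picture and the Calderon picture — in particular that compressing by $P_{\ker((\D^*)^+)}$ versus by $P_C^+$ differ by a compact after transport along $\mathcal{K}$, which is exactly where the delicate interplay between the $\h$-inner product on $\ker(\D^*)$ and the symplectically-defined inner product on $\mathfrak{H}^{-1/2}_n$ enters. A secondary subtlety is bookkeeping the $\Z/2$-grading: passing from the even triple to the odd triple $(\A/\J,\mathfrak{H}_n^+,\D_\de)$ via Lemma \ref{lem:oddbdrytriple}, and matching the ``$+$'' parts of $P_C$, $\D_n$ and $n_\de$ consistently, so that $S$ is genuinely a grading-compatible intertwiner. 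Once these are in place the rest is a formal application of Lemmas \ref{lem:similaritylemma} and \ref{lem:oddbdrytriple} and Proposition \ref{representingyo}.
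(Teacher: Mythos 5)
Your proposal is correct and follows essentially the same route as the paper: reduce to the extensions $\alpha(a)=\pi(P_{\ker((\D^*)^+)}\wt aP_{\ker((\D^*)^+)})$ (Proposition \ref{representingyo}) and $\tau(a)=\pi(P_{\geq}\rho_\de(a)P_{\geq})$ (via Lemma \ref{lem:oddbdrytriple}), link $\alpha$ to the Calderon compression $a\mapsto\pi(P_C^+\rho_\de(a)P_C^+)$ using $R$, $\mathcal{K}$, $P_C=R\mathcal{K}$, $\mathcal{K}R|_{\ker(\D^*)}=P_{\ker(\D^*)}-P_{\ker(\D)}$ and $\A$-linearity, link that to $\tau$ via the Fredholm operator $S$ and hypotheses 1.\ and 2., and conclude with Lemma \ref{lem:similaritylemma} and a density argument. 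The intermediate step you flag as the main obstacle is handled in the paper exactly by the identities you cite, so no genuinely new idea is needed beyond what you outline.
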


\begin{proof}
By Lemma \ref{lem:oddbdrytriple}, the class of $(\A/\J\wh{\otimes}\Cl_1,\mathfrak{H}_n,\D_n)$ 
is represented by the odd spectral triple $(\A/\J,\mathfrak{H}_n^+,\D_\de)$. 
An extension corresponding to this  odd spectral triple under the isomorphism between 
$KK^1(A/J,\C)$ and $\Ext^{-1}(A/J,\C)$ is 
$$\tau:A/J\rightarrow\mathcal{Q}(P_{\geq}\mathfrak{H}_n^+),\quad  
\tau(a):= \pi(P_{\geq}\rho_\de(a)P_{\geq}), $$ 
where $\pi:\mathbb{B}(\mathfrak{H}_n^+)\to\mathcal{Q}(\mathfrak{H}_n^+)$ is the quotient map. 
By Proposition  \ref{representingyo}, $\de[(\J\lhd\A,\h,\D)]$ is represented by the extension 
\begin{align*}
&\alpha:A/J\rightarrow \mathcal{Q}(\ker ((\D^*)^+)),\quad \alpha(a)
:=\pi(P_{\ker((\D^*)^+)}\wt{a}P_{\ker((\D^*)^+)}),
\end{align*}
where $\wt{a}\in A$ is any pre-image of $a\in A/J$. We show that $\alpha$ is equivalent to $\tau$. 
First, we claim that the extension $\tau$ restricted to 
$\A/\J$ is similar to
$$\beta:\A/\J\rightarrow\mathcal{Q}((\mathfrak{H}_C^{-1/2})^+),\quad  
\beta(a):=\pi(P_C^+\rho_\de(a)P_C^+),$$ where $\mathfrak{H}_C^{-1/2}$ is the Cauchy space
from Definition \ref{defn:cauchy}. The extension defined by $\beta$ coincides with that defined from 
the map $a\mapsto \pi(P_C^+\rho_\de(a)P_C^+)\in\mathcal{Q}((\mathfrak{H}_n^{-1/2})^+)$. 
Let $S:(\mathfrak{H}_n^{-1/2})^+\rightarrow\mathfrak{H}_n^+$ be a Fredholm operator satisfying 1. and 2. 
This implies that 
$$\pi(P_C^+\rho_\de(a)P_C^+)=\pi(S)^{-1}\pi(P_{\geq}\rho_\de(a)P_{\geq})\pi(S)=\pi(S)^{-1}\tau(a)\pi(S) $$ for 
$a\in \A/\J$, and the claim follows.

Let $R$ be the continuous map 
$R:\Dom \D^*\rightarrow\check{\mathfrak{H}}\hookrightarrow\mathfrak{H}_n^{-1/2}$ 
from Lemma \ref{remarkondualrho}, part 3), 
and  $\mathcal{K}:\mathfrak{H}_n^{-1/2}\rightarrow\ker(\D^*)$ 
the Poisson operator from Definition \ref{defn:fish}. 
Let $R^+$, $\mathcal{K}^+$ denote the restrictions to the even subspaces. 
Since 
$$
P_C=R\mathcal{K}\in\B(\mathfrak{H}_n^{-1/2}),\quad\mathcal{K}R|_{\ker(\D^*)}=P_{\ker(\D^*)}-P_{\ker(\D)},
$$ 
and $P_{\ker(\D)}$ is compact, it follows that  
$$
\mathcal{K}^+P_C^+:(\mathfrak{H}_C^{-1/2})^+\rightarrow\ker((\D^*)^+)\quad\mbox{and}
\quad R^+P_{\ker((\D^*)^+)}:\ker((\D^*)^+)\rightarrow(\mathfrak{H}_C^{-1/2})^+
$$ 
are Fredholm and mutually inverse to each other modulo compact operators. 
Hence the extension $\alpha$ is similar to 
$$
\wt{\alpha}:\A/\J\rightarrow\mathcal{Q}((\mathfrak{H}_C^{-1/2})^+),
\quad \wt{\alpha}(a):=\pi(P_C^+R^+P_{\ker((\D^*)^+)}\wt{a}
P_{\ker((\D^*)^+)}\mathcal{K}^+P_C^+),
$$ 
where $\wt{a}\in \A$ denotes any pre-image of $a\in \A/\J$. Since 
$$
P_CRP_{\ker(\D^*)}=P_CR,\quad P_{\ker((\D^*)^+)}\mathcal{K}^+P_C^+
=\mathcal{K}^+P_C^+,\quad R\wt{a}=\rho_\de(a)R, \quad a\in \A/\J,
$$ 
it follows that $\wt{\alpha}(a)=\pi(P_C^+\rho_\de(a)P_C^+)=\beta(a)$ for all $a\in \A/\J$. 
We deduce that $\alpha$ and $\tau$ restricted to $\A/\J$ 
are similar. By density, $\alpha$ and $\tau$ are similar. 
The proof of the theorem follows from Lemma \ref{lem:similaritylemma}.
\end{proof}

Recall that if $M$ is a compact Riemannian manifold 
with boundary and $\Dsla$ is a Dirac operator on $M$, 
then $(C^\infty_0(M^\circ)\lhd C^\infty(\ol{M}),L^2(E),\D_{\text{min}})$ 
is a relative spectral triple for $C_0(M^\circ)\lhd C(\ol{M})$. 
In this case, 
$$
(\mathfrak{H}^{1/2}_n)^+=H^{1/2}(E^+|_{\de M})=\Dom((1+\D_\de^2)^{1/4}),
$$ 
and the dual space  $(\mathfrak{H}^{-1/2}_n)^+$ 
is $H^{-1/2}(E^+|_{\de M})$, which is the completion 
of $L^2(E^+|_{\de M})$ in the norm 
$\|\xi\|_{H^{-1/2}}=\|(1+\D_\de^2)^{-1/4}\xi\|$.

In the general case $(\mathfrak{H}^{1/2}_n)^+$ and 
$(\mathfrak{H}^{-1/2}_n)^+$ are not defined in terms 
of the boundary operator $\D_\de$. 
However, if we assume that $(\mathfrak{H}_n^{-1/2})^+$ 
is related to $\D_\de$ in a similar way, then $(1+\D_\de^2)^{-1/4}$  
is a Fredholm operator $(\mathfrak{H}_n^{-1/2})^+\rightarrow\mathfrak{H}_n^+$ and 
hence a candidate for an operator satisfying the conditions of Theorem \ref{thm:boundarycomparisonnew}.

\begin{prop}
\label{prop:boundarycomparisonnew}
Let $(\J\lhd\A,\h,\D,n)$ be an even relative 
spectral triple with Clifford normal for $J\lhd A$, 
where $A$ is trivially $\Z/2$-graded, unital and 
represented non-degenerately on $\h$. 
We suppose that the relative spectral triple 
satisfies Assumption \ref{sixthass}. Viewing 
$\mathfrak{H}_{n}$ as a dense subspace 
of $\mathfrak{H}_{n}^{-1/2}$  
cf. Lemma \ref{remarkondualrho}, we additionally suppose

1)  the $\mathfrak{H}_n^{-1/2}$ norm and 
$\|(1+\D_\de^2)^{-1/4}\cdot\|_{\mathfrak{H}_{n}^+}$, 
  are equivalent on $\mathfrak{H}_{n}^+$;

2) the operator $P_{\geq}-P_C^+$, which is densely 
defined on $(\mathfrak{H}_{n}^{-1/2})^+$ with domain $\mathfrak{H}_{n}^+$,  
extends to a compact operator on $(\mathfrak{H}_n^{-1/2})^+$.

Then $\de[(\J\lhd\A,\h,\D)]=[(\A/\J,\mathfrak{H}_n^+,\D_\de)]\in KK^1(A/J,\C)$.
\end{prop}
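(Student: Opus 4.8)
The plan is to deduce Proposition \ref{prop:boundarycomparisonnew} from Theorem \ref{thm:boundarycomparisonnew} by exhibiting the operator $S:=(1+\D_\de^2)^{-1/4}:(\mathfrak{H}_n^{-1/2})^+\to\mathfrak{H}_n^+$ and checking that it satisfies hypotheses 1.\ and 2.\ of that theorem. First I would observe that, by hypothesis 1) of the present proposition, the $\mathfrak{H}_n^{-1/2}$-norm on $\mathfrak{H}_n^+$ is equivalent to $\|(1+\D_\de^2)^{-1/4}\,\cdot\,\|_{\mathfrak{H}_n^+}$; since $(1+\D_\de^2)^{-1/4}$ is, by the functional calculus for the self-adjoint operator $\D_\de$ (which exists by Assumption \ref{sixthass}(c)), an isometry from $\mathfrak{H}_n^+$ with this latter norm onto $\Dom((1+\D_\de^2)^{1/4})=\mathfrak{H}_n^+$, the map $S$ extends to a bounded isomorphism $(\mathfrak{H}_n^{-1/2})^+\xrightarrow{\sim}\mathfrak{H}_n^+$. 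In particular $S$ is Fredholm (indeed invertible), so the structural requirement on $S$ in Theorem \ref{thm:boundarycomparisonnew} is met.

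Next I would verify hypothesis 2.\ of Theorem \ref{thm:boundarycomparisonnew}, namely that $P_\geq S-SP_C^+$ is compact. Write $P_\geq S-SP_C^+=(P_\geq-P_C^+)S+P_C^+S-SP_C^+=(P_\geq-P_C^+)S+[P_C^+,S]$. The first term is compact: by hypothesis 2) of the proposition the operator $P_\geq-P_C^+$ extends to a compact operator on $(\mathfrak{H}_n^{-1/2})^+$, and $S$ maps $(\mathfrak{H}_n^{-1/2})^+$ boundedly onto $\mathfrak{H}_n^+$, so the composition $(P_\geq-P_C^+)\circ($pre-composed suitably$)$ is a compact operator into $\mathfrak{H}_n^+$; one must be slightly careful about which space each factor acts on, but since $S$ identifies the two spaces this is routine. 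For the commutator term $[P_C^+,S]$ I would argue that $S=(1+\D_\de^2)^{-1/4}$ commutes with any operator commuting with $\D_\de$ and, more to the point, that $P_C^+$ differs from $P_\geq$ by a compact operator on $(\mathfrak{H}_n^{-1/2})^+$ (hypothesis 2)), while $P_\geq$ is a spectral projection of $\D_\de$ hence commutes with $S$; thus $[P_C^+,S]=[P_C^+-P_\geq,S]+[P_\geq,S]=[P_C^+-P_\geq,S]$ is compact as a product of a compact and a bounded operator. Combining, $P_\geq S-SP_C^+$ is compact.

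It then remains to check hypothesis 1.\ of Theorem \ref{thm:boundarycomparisonnew}: $P_\geq[S,\rho_\de(a)]P_C^+$ is compact for all $a\in\A/\J$. Here I would use that $(\A/\J,\mathfrak{H}_n^+,\D_\de)$ is an odd spectral triple (Assumption \ref{sixthass}), so $\rho_\de(a)$ preserves $\Dom(\D_\de)$ and has bounded commutator with $\D_\de$; a standard argument (e.g.\ via the integral representation of $(1+\D_\de^2)^{-1/4}$, or Baaj--Julg type estimates) then shows $[S,\rho_\de(a)]=[(1+\D_\de^2)^{-1/4},\rho_\de(a)]$ maps $\mathfrak{H}_n^+$ into $\Dom((1+\D_\de^2)^{s})$ for some $s>0$, i.e.\ is ``smoothing of positive order''; composing with $P_\geq$ on the left and with $P_C^+$ on the right — the latter, up to the compact error $P_C^+-P_\geq$, being $P_\geq$, which maps into the spectral subspace where $(1+\D_\de^2)^{1/2}\geq 1$ — and using that $a(1+\D_\de^2)^{-1/2}$ is compact (Assumption \ref{sixthass}(d)) yields compactness of the whole product. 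Once both hypotheses of Theorem \ref{thm:boundarycomparisonnew} are verified, the conclusion $\de[(\J\lhd\A,\h,\D)]=[(\A/\J,\mathfrak{H}_n^+,\D_\de)]\in KK^1(A/J,\C)$ follows immediately from that theorem.

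The main obstacle I expect is bookkeeping of which Hilbert space each operator acts on and in which topology compactness is asserted: the Calderon projector $P_C^+$ naturally lives on $(\mathfrak{H}_n^{-1/2})^+$, the spectral projection $P_\geq$ on $\mathfrak{H}_n^+$, and the commutator $[S,\rho_\de(a)]$ improves regularity, so establishing compactness requires carefully threading these through the isomorphism $S$ and using the interpolation-type equivalence of norms in hypothesis 1). A secondary subtlety is justifying that $[(1+\D_\de^2)^{-1/4},\rho_\de(a)]$ genuinely gains positive order; this is the analogue of Lemma \ref{lem:domains}/Lemma \ref{lem:commutator} applied to the self-adjoint operator $\D_\de$, and should be quotable or a short computation, but it is the one analytic input that is not purely formal.
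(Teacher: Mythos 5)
Your strategy is sound and does prove the proposition, but it is not the route the paper takes, even though the paper itself flags $(1+\D_\de^2)^{-1/4}$ as ``a candidate'' for the operator $S$ in Theorem \ref{thm:boundarycomparisonnew}. The paper's proof bypasses the theorem and argues directly with Busby invariants: it represents $\de[(\J\lhd\A,\h,\D)]$ by the extension $\alpha$ (similar, as in the theorem's proof, to $a\mapsto\pi(P_C^+\rho_\de(a)P_C^+)$ on $(\mathfrak{H}_C^{-1/2})^+$), represents $[(\A/\J,\mathfrak{H}_n^+,\D_\de)]$ by $\tau(a)=\pi(P_\geq\rho_\de(a)P_\geq)$, and then conjugates $\tau$ by $(1+\D_\de^2)^{\pm1/4}$, using \cite{ciprianiguidoscarlatti}*{Lemma 1.5} to see that $\rho_\de(a)$ preserves $\Dom((1+\D_\de^2)^{1/4})$ with $[(1+\D_\de^2)^{1/4},\rho_\de(a)]$ bounded and that the resulting error term is compact because $(1+\D_\de^2)^{-1/4}\in\K(\mathfrak{H}_n^+)$; hypothesis 1) turns the conjugation into a similarity with an extension on $(\mathfrak{H}_n^{-1/2})^+$, and hypothesis 2) identifies it with $\wt{\alpha}$, so Lemma \ref{lem:similaritylemma} finishes. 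Your route instead feeds $S=(1+\D_\de^2)^{-1/4}$ back into Theorem \ref{thm:boundarycomparisonnew}; this works, but you should tighten two points. First, for condition 2.\ of the theorem the clean statement is the intertwining $P_\geq S=SP_\geq^{\mathrm{ext}}$ on $(\mathfrak{H}_n^{-1/2})^+$ (both being functions of $\D_\de$ on the dense subspace $\mathfrak{H}_n^+$, and $P_\geq$ extending boundedly by hypothesis 2)), whence $P_\geq S-SP_C^+=S(P_\geq^{\mathrm{ext}}-P_C^+)$ is compact; your decomposition $(P_\geq-P_C^+)S+[P_C^+,S]$ does not typecheck as written since $P_\geq$ and $P_C^+$ act on different spaces. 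Second, for condition 1.\ the vague ``smoothing of positive order'' claim should be replaced by exactly the input the paper quotes: from \cite{ciprianiguidoscarlatti}*{Lemma 1.5} one gets $[S,\rho_\de(a)]=-(1+\D_\de^2)^{-1/4}[(1+\D_\de^2)^{1/4},\rho_\de(a)](1+\D_\de^2)^{-1/4}$ on a dense subspace, and after the identification of norms in hypothesis 1) this shows $[S,\rho_\de(a)]$ is already compact as a map $(\mathfrak{H}_n^{-1/2})^+\to\mathfrak{H}_n^+$ (compact resolvent times bounded operators), so condition 1.\ holds without needing $P_C^+\approx P_\geq$; you also need the (true, but worth one line) compatibility of the two actions of $\rho_\de(a)$ under the embedding $i_\de$ of Lemma \ref{remarkondualrho}, which follows from the $\A$-linearity there and Lemma \ref{lem:starhom}. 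With these repairs your argument is a legitimate alternative proof: it buys reuse of Theorem \ref{thm:boundarycomparisonnew} as a black box, at the cost of the same analytic inputs plus extra bookkeeping, while the paper's direct similarity argument avoids ever inserting the Calderon projector into a commutator estimate. Also note the small slip that $\Dom((1+\D_\de^2)^{1/4})$ is only dense in $\mathfrak{H}_n^+$, not equal to it; surjectivity of the extended $S$ follows from bounded-below plus dense range, not from that equality.
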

\begin{proof}
As in the proof of Theorem \ref{thm:boundarycomparisonnew}, $\de[(\J\lhd\A,\h,\D)]$ is 
represented by the extension $$\alpha:A/J\rightarrow\mathcal{Q}(\ker((\D^*)^+)),\quad  
\alpha(a)=\pi(P_{\ker((\D^*)^+)}\wt{a}P_{\ker((\D^*)^+)}),$$ and restricted to $\A/\J,$
the homomorphism $\alpha$ is similar to 
$$
\wt{\alpha}:\A/\J\rightarrow\mathcal{Q}((\mathfrak{H}_C^{-1/2})^+),\quad 
\wt{\alpha}(a)=\pi(P_C^+\rho_\de(a)P_C^+).
$$ 
The extension representing $[(\A/\J,\mathfrak{H}_n^+,\D_\de)]\in KK^1(A/J,\C)\cong \Ext^{-1}(A/J)$ 
is given by the Busby invariant 
$$
\tau:A/J\rightarrow\mathcal{Q}(P_\geq\mathfrak{H}_n^+),
\quad  \tau(a)=\pi(P_{\geq}\rho_\de(a)P_\geq).
$$
If $a\in\A/\J$, then after applying \cite[Lemma 1.5]{ciprianiguidoscarlatti} 
to the invertible operator 
$\begin{pmatrix}1&\D_\de\\\D_\de&-1\end{pmatrix}$ we find that 
$$
\rho_\de(a)\cdot\Dom((1+\D_\de^2)^{1/4})\subset\Dom((1+\D_\de^2)^{1/4}),
$$ 
and the operator $[(1+\D_\de^2)^{1/4},\rho_\de(a)]$ is bounded, 
so on $\Dom((1+\D_\de^2)^{1/4})\subset\mathfrak{H}_n^+$ and for $a\in\A/\J$,
\begin{align*}
(1+\D_\de^2)^{-1/4}P_{\geq}\rho_\de(a)P_{\geq}(1+\D_\de^2)^{1/4}=
-P_{\geq}(1+\D_\de^2)^{-1/4}[(1+\D_\de)^{1/4},\rho_\de(a)]P_\geq+P_{\geq}\rho_\de(a)P_{\geq}.
\end{align*}
The first term extends to a compact operator on $\mathfrak{H}_n^+$ since 
$(1+\D_\de^2)^{-1/4}\in\K(\mathfrak{H}_n^+)$. Hence the restriction of $\tau$ to $\A/\J$ is similar to 
$$\sigma:\A/\J\rightarrow\mathcal{Q}(\mathfrak{H}_n^+),\quad \sigma(a)=
\pi(\ol{(1+\D_\de^2)^{-1/4}P_{\geq}\rho_\de(a)P_{\geq}(1+\D_\de^2)^{1/4}}).$$
By 1) we may assume that the operator $(1+\D_\de^2)^{-1/4}$ is a unitary between 
$(\mathfrak{H}_n^{-1/2})^+$ and $\mathfrak{H}_n^+$, so $\sigma$ is similar to 
$$\wt{\sigma}:\A/\J\rightarrow\mathcal{Q}((\mathfrak{H}_n^{-1/2})^+),\quad \wt{\sigma}(a)=\pi(P_{\geq}\rho_\de(a) P_{\geq}),$$ 
where by 2) $P_\geq$ extends to a bounded operator on $(\mathfrak{H}_n^{-1/2})^+$. 
Property 2) also implies that 
$\wt{\sigma}(a)=\pi(P_C^+\rho_\de(a)P_C^+)=\wt{\alpha}(a).$ 
Hence the restrictions of $\alpha$ and $\tau$ to $\A/\J$ are similar, which by density implies that $\alpha$ and $\tau$ are similar. 
\end{proof}

\begin{rem}
In \cite{BDT}, the analogue of Proposition \ref{prop:boundarycomparisonnew} is proven for 
even-dimensional manifolds with boundaries, where Assumption \ref{sixthass} 
is a well-known statement.  The approach in \cite{BDT} to odd-dimensional manifolds uses a suspension 
by $S^1$ to reduce to the even-dimensional case. In a similar vein, we can suspend relative spectral 
triples with Clifford normals. If $(\J\lhd \A,\h,\D,n)$ is a relative spectral triple with Clifford normal we define 
$$\Sigma(\J\lhd \A,\h,\D,n):=(C^\infty(S^1,\J)\lhd C^\infty(S^1,\A),L^2(S^1,\hat{\h}),\hat{\D},\hat{n}),$$
where smoothness in $C^\infty(S^1,\A)$ and $C^\infty(S^1,\J)$ is defined in the Lipschitz norm topology, $\hat{\h}=\h\oplus\h$ 
as graded by $1\oplus (-1)$ in the odd case, $\hat{\h}=\h$ with a trivial grading in the even case, and
\begin{align*}
\hat{\D}&:=
\begin{pmatrix}
0& -\partial_\theta+\D\\
\de_\theta+\D&0
\end{pmatrix}, 
\quad\quad 
\hat{n}:=
\begin{pmatrix}
0& n\\
n&0
\end{pmatrix} \quad\mbox{in the odd case, and}\\
\hat{\D}&:=\D+i\gamma\de_\theta
\quad\quad 
\hat{n}:=n
 \quad\mbox{in the even case.}
\end{align*}
Here $\partial_\theta$ denotes differentiation in the angular variable on $S^1$. 

A lengthier exercise with matrices, using \cite[Theorem 1.3]{DGM}, shows that the suspension functor
$(\J\lhd \A,\h,\D,n)\mapsto \Sigma(\J\lhd \A,\h,\D,n)$ preserves Assumption \ref{sixthass}. 
It is unclear to the authors if the assumptions appearing in Theorem \ref{thm:boundarycomparisonnew} 
or Proposition \ref{prop:boundarycomparisonnew} are preserved under suspension; the Sobolev spaces are not 
necessarily well behaved under suspension. However, the argument in \cite{BDT} shows that if $(\J\lhd\A,\h,\D,n)$ 
is an odd relative spectral triple with normal such that the even relative spectral triple with normal $\Sigma(\J\lhd\A,\h,\D,n)$ 
satisfies the conditions of Theorem \ref{thm:boundarycomparisonnew} 
(or Proposition \ref{prop:boundarycomparisonnew}), then 
$$\de[(\J\lhd\A,\h,\D)]=[(\A/\J,\mathfrak{H}_n,\D_n)]\in KK^{0}(A/J,\C).$$
Recall here that for $(\J\lhd\A,\h,\D,n)$ odd, $\mathfrak{H}_n$ is $\Z/2$-graded by 
$\gamma=-i n_\de$ as in Definition \ref{defn:gradingonboundary}.
\end{rem}

\section{Examples}
\label{sec:onexamples}

\subsection{Concluding remarks on the main examples}

\subsubsection{Manifolds with boundary}
\label{ass6andmfds}

All our assumptions are satisfied for a manifold with boundary $M$. The assumptions hold because of 
sophisticated machinery involving pseudodifferential calculus. Assumption \ref{sixthass} 
is immediate from the fact that $(C^\infty(\partial M)\wh{\otimes}\Cl_1,L^2(\partial M,S|_{\partial M}),\D_n)$ is a spectral triple. 
The assumptions in Proposition \ref{prop:boundarycomparisonnew}, and therefore also Theorem \ref{thm:boundarycomparisonnew}, 
follows from symbol arguments: $P_{\geq}$ and $P_C$ are pseudo-differential operators of order 
zero with the same principal symbol. This reproves the well-known result in $K$-homology that 
the boundary mapping applied to a Dirac operators correspond to the geometric boundary, see \cite[Proposition 5.1]{BDT}.

\subsubsection{Revisiting the boundary class of a conical manifold}

For a conical manifold $M$, as in the Subsubsections \ref{eg:cones-on-the-brain} and \ref{cliffordoncon}, 
we know that the boundary class $\de[(\J\lhd \A,L^2(M,S),\D_L)]=0\in K^*(\C^l)$ where $\C^l=\A/\J$ is the boundary 
algebra (see Remark \ref{khomcomp}). We will revisit this result in light of Subsection \ref{representingsubsec}. 

An elementary computation with the eigenfunctions on the cross-section $N$ 
gives us 
$$
\Dom(\D_{L^\perp}^2)/\Dom(\D_L^2)=\Dom(\D_{L^\perp})/\Dom(\D_L)=L^\perp/L.
$$ 
Since $\D_L^*f\in \Dom\D_L$ for any $f\in \Dom(\D_{L^\perp}^2)$, 
we find that
$\D_n=0$ so Assumption \ref{sixthass} 
and the assumptions of Proposition \ref{prop:boundarycomparisonnew} 
hold trivially (as they deal with finite-dimensional spaces). 
Clearly $\mathfrak{H}_n=\Dom(\D_L^*)/\Dom(D_L)=L^\perp/L$. If $M$ is even-dimensional, 
$K^{\dim(M)+1}(\C^l)=0$ and so  we find that $\de[(\J\lhd \A,L^2(M,S),\D_L)]=0$ holds trivially. 
If $M$ is odd-dimensional, the boundary class $\de[(\J\lhd \A,L^2(M,S),\D_L)]\in K^{\dim(M)+1}(\C^l)$ 
is represented by the graded $\C^l$-module $L^\perp/L$ graded by $iI$ (where $I$ is the complex 
structure defined by the normal, see Theorem \ref{conicalandnormal}). Since $I$ is a complex 
structure, the odd and even parts of $L^\perp/L$ 
have the same dimension, so $[L^\perp/L]=0$ in $K^0(\C^l)$.

\subsubsection{The boundary of a dimension drop algebra}
For a dimension drop algebra $A$ over a manifold with boundary $M$, with 
$A|_{\partial M}=C(\partial M,B)$, $B\subset M_N(\C)$, we see that we obtain 
the boundary class represented by
$$
(C^\infty(\partial M,B)\wh{\otimes}\Cl_1,L^2(S|_{\partial M}\otimes \C^N), \Dsla_{\partial M}\otimes 1_N)
$$
and $C(\partial M,B)$ is Morita equivalent to $C(\partial M,\C^n)$ for some $1\leq n\leq N$.

\subsubsection{Relative spectral triples and boundary mappings on $\Z/k$-manifolds}
The $\Z/k$-manifolds are manifolds with boundary whose boundary has a very specific structure. 
Let $M$ be a Riemannian manifold with boundary such that a collar neighborhood $U$ of the 
boundary $\de M$ is isometric to $(0,1)\times kN$ (disjoint union of $k$ copies of $N$) where 
$N$ is a closed Riemannian manifold. We identify $\{1\}\times kN$ with the boundary of $M$. 
This situation is well-studied in the literature, see 
\cite{HigsonZk} for an operator theoretic approach and \cites{RobinZk,MelroseFreedZk} 
for index theoretical results. The $C^*$-algebra $C(\ol{M})$ acts as (non-central) multipliers on 
the $C^*$-algebra $C_0((0,1]\times N,M_k(\C))$ via the diagonal embedding 
$$
C(\ol{U})=C([0,1]\times \partial M)\cong C([0,1]\times N,\C^k)\subseteq C([0,1]\times N,M_k(\C)).
$$ 
We consider the $C^*$-algebra
$$
A:=C(\ol{M})+C_0((0,1]\times N,M_k(\C)),
$$
which was constructed as a groupoid $C^*$-algebra in \cite{rosengroupo}, 
see also \cite{RobinZk}*{Section 2.2}. 
The manifold with boundary $M\setminus U$ is again a $\Z/k$-manifold, with boundary $kN$. Consider the 
dimension drop algebra $A_0:=\{f\in C([0,1]\times N,M_k(\C)): f(0)\in \C^k\}$. Despite that $A$ is not a 
dimension drop algebra in the sense of Subsection \ref{dimdropsubex}, 
we can write $A$ as a restricted sum $A=C(M\setminus U)\oplus_{C(N,\C^k)} A_0$.
We define
$$
\A:=C^\infty(\ol{M})+C_0^\infty((0,1\times N,M_k(\C)) \quad\mbox{and}\quad 
\J:=C^\infty_0(M^\circ)+C_0^\infty((0,1)\times N,M_k(\C)).
$$
Clearly, $\A\subseteq A$ is a holomorphically closed $*$-algebra, $\J\lhd \A$ and 
$\J=J\cap \A$ where $J:=C_0(M^\circ)+C_0((0,1)\times N,M_k(\C))$. We have that 
$$
\A/\J=C^\infty(N,M_k(\C))\subseteq C(N,M_k(\C))=A/J.
$$

The relevant Clifford bundles $S\to \ol{M}$ takes the form $S|_U\cong kS_N$ near the boundary, 
for a Clifford bundle $S_N\to (0,1)\times N$. Here $kS_N\to (0,1)\times kN$ is the disjoint union of 
$k$ copies of $S_N\to (0,1)\times N$. We consider a 
Dirac operator $\Dsla$ acting on $C^\infty_c(M^\circ,S)$ respecting 
the $\Z/k$-structure (e.g. when constructed from a 
$\Z/k$-Clifford connection on $S$) and let $\D_{\textnormal{min}}$ 
denote its minimal closure. The algebra $A$ acts on $L^2(M,S)$ via the decomposition 
$$L^2(M,S)=L^2(M\setminus U,S)\oplus L^2(N,S_N\otimes \C^k).$$
We deduce the following result from Proposition \ref{prop:boundarycomparisonnew} using the 
Clifford multiplication by a unit vector normal to the boundary 
that is the same on all components of the boundary.

\begin{prop}
If the Clifford bundle $S$ and $\Dsla$ respects the $\Z/k$-structure, $(\J\lhd \A,L^2(M,S),\D_{\textnormal{min}})$ is a 
relative spectral triple and 
$$\de[(\J\lhd \A,L^2(M,S),\D_{\textnormal{min}})]=[(C^\infty(N)\otimes \Cl_1,L^2(N,S_N),\Dsla_N)],$$
under the isomorphism $K^{\dim(M)+1}(C(N))\cong K^{\dim(M)+1}(C(N,M_k(\C)))$.

\end{prop}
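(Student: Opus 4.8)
The plan is to exhibit $(\J\lhd\A,L^2(M,S),\D_{\textnormal{min}})$ as a relative spectral triple carrying an explicit Clifford normal, to identify the resulting boundary data with the Dirac spectral triple of $kN$ twisted by $M_k(\C)$, and then to invoke Proposition~\ref{prop:boundarycomparisonnew}. The key observation is that near $\de M$ a $\Z/k$-manifold is, up to the inert twisting bundle $\C^k$ and the $M_k(\C)$-valued coefficient algebra, nothing but an ordinary collar $\times\,kN$, so every analytic input from the manifold-with-boundary theory (Subsubsection~\ref{eg:diracboundary}, Example~\ref{classicalexampleonmfd}, and \cite{BBW}) transfers with no essential change.

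First I would check that $(\J\lhd\A,L^2(M,S),\D_{\textnormal{min}})$ is a relative spectral triple. Since $M$ is compact, $A$ is unital --- its unit is the constant function $1$, which embeds diagonally on the collar as $1_{M_k(\C)}$ --- and is represented non-degenerately, so Condition~4 of Definition~\ref{defn:alternate} follows from Remark~\ref{rem:unitalandrwongcompact}. Exactly as in Subsubsection~\ref{eg:diracboundary}: multiplication by $a\in\A$ preserves $\Dom(\D_{\textnormal{min}})=H^1_0(M,S)$; multiplication by $j\in\J$, being supported away from $\de M$, carries $\Dom(\D_{\textnormal{max}})=H^1(M,S)$ into $H^1_0(M,S)$; the commutators $[\D_{\textnormal{min}},a]_\pm$ are Clifford multiplications by smooth one-forms, hence bounded; and local compactness is the Rellich lemma on the compact manifold $M$ with the auxiliary bundle $\C^k$. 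None of this is affected by the $M_k(\C)$-valued coefficients. Next, since $\Dsla$ and $S$ respect the $\Z/k$-structure, on $U\cong(0,1)\times kN$ the operator has the form $n(\de_u+B_u)$ with $n$ Clifford multiplication by the inward unit normal taken identically on the $k$ copies of $N$ and $B_u$ a curve of Dirac operators on $kN$; a smooth cutoff prolongation $n$ of this field over $M$ is then a Clifford normal in the sense of Definition~\ref{defn:normal}, by the same verifications as in Example~\ref{classicalexampleonmfd}. Taking the normal to be the same on all boundary components is exactly what makes $n$ commute with the $M_k(\C)$-action (which only touches the $\C^k$-index), so the boundary objects remain $M_k(\C)$-linear.

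With $n$ fixed I would identify the boundary data. As in the discussion following Example~\ref{classicalexampleonmfd}, $\mathfrak{H}^{1/2}_n=H^{1/2}(kN,S_N\otimes\C^k)$, $\check{\mathfrak{H}}=\check H(\Dsla_{kN})$, and the pairing $\Ideal{\cdot,\cdot}_n$ reduces on the boundary to the $L^2$-inner product of $kN$; hence $\mathfrak{H}_n=L^2(N,S_N)\otimes\C^k=L^2(N,S_N\otimes\C^k)$, the representation $\rho_\de$ of $\A/\J=C^\infty(N,M_k(\C))$ is the evident one, is bounded, and extends to $A/J=C(N,M_k(\C))$, and the boundary operator $\D_n=\tfrac12 n[\D^*,n]$, computed near $\de M$, is (up to the complex structure $n_\de$ and a sign) the Dirac operator $\Dsla_N\otimes 1_k$ of $kN$. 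Thus the boundary triple is the $\Cl_1$-stabilised, $M_k(\C)$-twisted form of the Dirac spectral triple of $kN$, and Assumption~\ref{sixthass} holds: symmetry of $\{\D^*,[\D^*,n]\}$ (which equals $-2n\Dsla_{kN}^2$ near $\de M$), part~a.\ (boundedness of the $L^2$-action and of commutators with $\Dsla_{kN}$), part~b.\ ($n^2=-1$ near $\de M$), part~c.\ (essential self-adjointness of $\Dsla_{kN}$ on the closed manifold $kN$), and part~d.\ (Rellich on $kN$) are all standard. Then I would verify the two hypotheses of Proposition~\ref{prop:boundarycomparisonnew}: hypothesis~1) holds because $\mathfrak{H}^{1/2}_n=H^{1/2}(kN,S_N\otimes\C^k)=\Dom((1+\D_\de^2)^{1/4})$, so $\mathfrak{H}^{-1/2}_n=H^{-1/2}$ with norm $\|(1+\D_\de^2)^{-1/4}\cdot\|$; and hypothesis~2) is the compactness of $P_{\geq}-P_C^+$, which follows because a collar of $\de M$ is an ordinary manifold-with-boundary collar (times the flat bundle $\C^k$), so by \cite{BBW}*{Theorem~12.4} $P_C$ is a classical pseudodifferential operator of order $0$ whose principal symbol is the non-negative spectral projection of the symbol of $\Dsla_{kN}$, the same as that of $P_{\geq}$, whence $P_{\geq}-P_C^+$ has order $-1$ and is compact on $H^{-1/2}$. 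Proposition~\ref{prop:boundarycomparisonnew} (in the even case, or in the odd case via the suspension of relative spectral triples with Clifford normals discussed after it) then gives
$$
\de[(\J\lhd\A,L^2(M,S),\D_{\textnormal{min}})]=[(C^\infty(N,M_k(\C))\wh{\otimes}\Cl_1,\,L^2(N,S_N\otimes\C^k),\,\Dsla_N\otimes 1_k)].
$$
Finally, $C(N,M_k(\C))=C(N)\otimes M_k(\C)$ is Morita equivalent to $C(N)$, and under the induced isomorphism $K^{\dim(M)+1}(C(N))\cong K^{\dim(M)+1}(C(N,M_k(\C)))$ the right-hand class corresponds to $[(C^\infty(N)\otimes\Cl_1,L^2(N,S_N),\Dsla_N)]$, since that isomorphism acts on $K$-homology by amplification along the bimodule $\C^k$ carrying the zero operator.

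The hard part is hypothesis~2) of Proposition~\ref{prop:boundarycomparisonnew}, namely comparing the Calderon projector $P_C^+$ with the spectral projection $P_{\geq}$ of $\D_\de$ modulo compacts; this is precisely the pseudodifferential input that is not available from the abstract framework of Section~\ref{sect:tripleonbdry} and must be imported from the manifold theory via \cite{BBW}. Everything else --- that the data form a relative spectral triple with Clifford normal, the identification of $\mathfrak{H}_n$ and $\D_n$, the verification of Assumption~\ref{sixthass}, and the Morita reduction --- is either a direct transcription of the manifold-with-boundary case or routine.
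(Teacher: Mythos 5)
Your proof is correct and takes essentially the same route as the paper: choose the Clifford normal to be Clifford multiplication by a unit normal taken identically on all $k$ boundary components, verify Assumption \ref{sixthass} and the hypotheses of Proposition \ref{prop:boundarycomparisonnew} by the classical pseudodifferential comparison of $P_C$ and $P_\geq$ (exactly as in Subsubsection \ref{ass6andmfds}), identify the boundary data with $(C^\infty(N,M_k(\C))\wh{\otimes}\Cl_1,L^2(N,S_N\otimes\C^k),\Dsla_N\otimes 1_k)$, and pass to $C(N)$ via Morita equivalence. One small correction: $\Dom(\D_{\textnormal{max}})$ is not $H^1(M,S)$ (cf.\ Equation \eqref{ndomdsforman}); the property you actually need, $\J\cdot\Dom(\D_{\textnormal{max}})\subseteq\Dom(\D_{\textnormal{min}})$, follows instead from interior elliptic regularity as in \cite{BDT}, so this slip does not affect the argument.
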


\subsection{Relative spectral triples for crossed products}
\label{crossedproductapproach}

Another way to obtain relative spectral triples from group actions is from the crossed product 
associated with a group action on a manifold with boundary. We will first state some general 
results after which we restrict to compact groups acting on a compact manifold with boundary. 
Let $(M,g)$ denote a Riemannian manifold with boundary and assume that a Lie group $G$ acts 
isometrically on $M$. We tacitly assume all structures, including the group action, to be of product 
type near the boundary of $\ol{M}$. We define $\A$ as the $*$-subalgebra 
$C^\infty_c(\ol{M}\times G)+C_c^\infty(\ol{M})$ of the $C^*$-algebra 
$C_0(\ol{M})+C_0(\ol{M})\rtimes G\subseteq \mathcal{M}(C_0(M)\rtimes G)$.
We define $\J\lhd \A$ by $\J:=C^\infty_c(M^\circ\times G)+C_c^\infty(M^\circ)$. If $S\to \ol{M}$ is a 
$G$-equivariant vector bundle, the algebra $\A$ acts on $L^2(M,S)$ via the integrated representation 
$\pi:\A\to \B(L^2(M,S))$. More precisely, if $a=a_0+a_1$ where $a_0\in C_c^\infty(\ol{M})$ and 
$a_1\in C^\infty_c(\ol{M}\times G)$,
$$\pi(a)f(x):=a_0(x)f(x)+\int_G a_1(x,\gamma)[\gamma.f](x)\mathrm{d}\gamma, \quad f\in L^2(M,S).$$

\begin{prop}
\label{firstproponcrossed}
If $\Dsla$ is a Dirac operator acting on a $G$-equivariant Clifford bundle $S\to \ol{M}$ with minimal 
closure $\D$, the object $(\J\lhd\A,L^2(M,S),\D)$ satisfies the conditions for a relative spectral triple 
if $M$ is compact or once replacing Condition 4. of Definition \ref{defn:alternate} with 
Condition 4'. of Remark \ref{rem:alternatecondition}. If $n_{\de M}$ is a unit 
normal, $c(n_{\de M})$ induces a normal structure on $(\J\lhd\A,L^2(M,S),\D)$. If $\partial M$ is compact, 
$c(n_{\de M})$ induces a second order normal structure $n$ satisfying the conditions appearing in 
Proposition \ref{prop:equivalences} and 
$$\D_\de=\Dsla_{\de M},\quad\mbox{on the image of $C^\infty_c(\ol{M},S)\to 
\mathfrak{H}^{1/2}_n\subseteq L^2(\partial M,S|_{\partial M})$}.$$
\end{prop}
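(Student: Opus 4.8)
The plan is to reduce everything to the non-equivariant case treated in Subsection \ref{eg:diracboundary} and Example \ref{classicalexampleonmfd}, then add the crossed-product layer by hand. First I would check that $(\J\lhd\A,L^2(M,S),\D)$ is a relative spectral triple. Conditions 1.\ and 2.\ of Definition \ref{defn:alternate} are local: for $a_0\in C^\infty_c(\ol{M})$ they are exactly as in Subsubsection \ref{eg:diracboundary}, and for $a_1\in C^\infty_c(\ol{M}\times G)$ one uses that $G$ acts by isometries preserving the Clifford structure of product type near the boundary, so $\gamma$ maps $\Dom(\D_{\textnormal{min}})$ and $\Dom(\D_{\textnormal{max}})$ into themselves; differentiating under the integral sign gives $[\D,\pi(a_1)]_\pm$ as an integrated representation of a smooth compactly supported family of bounded operators, hence bounded. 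For Condition 3.\ (resp.\ 4.), one writes $\pi(a)(1+\D^*\D)^{-1/2}$ as an average over $G$ of $\gamma\cdot(\text{operator of the form }f(1+\D^*\D)^{-1/2}\gamma^{-1})$ and invokes the scalar case plus compactness of the average (or Condition 4'.\ directly, using an approximate unit $\phi_k\in C^\infty_c(M^\circ)$ as in Remark \ref{rem:alternatecondition}).

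Next I would identify the Clifford normal. Let $n=c(n_{\de M})$ extended by a cutoff depending only on the normal coordinate, exactly as in Example \ref{classicalexampleonmfd}; since the $G$-action is of product type near $\de M$ and isometric, $n$ is $G$-invariant, so it commutes with $\pi(\A)$ up to the graded commutator already controlled in Condition 4) of Definition \ref{defn:normal}. All seven conditions of Definition \ref{defn:normal} are \emph{local near the boundary} and were verified for a single copy in Example \ref{classicalexampleonmfd}; the crossed-product variables only enter through Condition 4), $[n,a]_\pm\cdot\Dom(n)\subset\Dom(\D)$, which for $a=a_1$ follows because $[n,\gamma]=0$ and hence $[n,\pi(a_1)]_\pm=\pi(\text{smooth family supported in }M^\circ)$, landing in $\Dom(\D_{\textnormal{min}})$. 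That $\partial M$ compact gives the \emph{second order} normal structure and Condition $[\D^*,n]\cdot\h^2_{n,0}\subset\Dom(\D)$, hence the equivalences in Proposition \ref{prop:equivalences}, is again the local computation \eqref{commwithdira} from Example \ref{classicalexampleonmfd}: $[\Dsla,n]=n\tfrac{\de n}{\de u}+nB_un+B_u$ and $\{\D^*,[\D^*,n]\}$ is symmetric with domain $H^2(\ol M,S)$ by the same pseudodifferential/elliptic argument used for manifolds with boundary.

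Finally I would compute $\D_\de$ on the image of $C^\infty_c(\ol M,S)$. By Definition of $\D_n$, for $[\xi]$ with $\xi\in\h^2_n=H^2(\ol M,S)$ we have $\D_n[\xi]=\bigl[\tfrac12 n[\D^*,n]\xi\bigr]$, and using \eqref{commwithdira} together with the product structure $\Dsla=n(\partial_u+B_u)$ near $\de M$, the terms involving $\partial_u$ and $\tfrac{\de n}{\de u}$ restrict to zero on $\de M$, leaving $\tfrac12 n(nB_un+B_u)|_{\de M}=B_0=\Dsla_{\de M}$ (possibly up to the bounded perturbation built into $B_u$, absorbed in the definition of $\Dsla_{\de M}$); this identifies $\D_\de=-n_\de^-\D_n^+$ with $\Dsla_{\de M}$ on $\mathfrak{H}^{1/2}_n=H^{1/2}(\de M,S|_{\de M})$, exactly as in \cite{BBW}*{p.\ 50}. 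The main obstacle, and the step I would spend the most care on, is \textbf{not} any of the local geometry — that is essentially quoted from Example \ref{classicalexampleonmfd} — but verifying that the \emph{integrated} operators $\pi(a_1)$ genuinely preserve $\Dom(\D_{\textnormal{min}})$, $\Dom(\D_{\textnormal{max}})$ and $\Dom(n)$, and that differentiation under the $G$-integral is legitimate in the relevant graph norms; this requires a Leibniz estimate showing $\gamma\mapsto \gamma\sigma$ is $C^1$ from $G$ into $\Dom(\D_{\textnormal{max}})$ for $\sigma\in C^\infty_c(\ol M,S)$, which in turn rests on the $G$-action being smooth and isometric with product structure near $\de M$, and on the uniform boundedness of $\gamma$ on the various Sobolev spaces.
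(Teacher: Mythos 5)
Your overall route coincides with the paper's: the relative-spectral-triple conditions are reduced to the classical manifold-with-boundary arguments of Subsubsection \ref{eg:diracboundary}, the crossed-product elements are handled by computing $[\Dsla,\pi(a_1)]$ under the $G$-integral on smooth compactly supported sections, the Clifford normal is the cutoff extension of Example \ref{classicalexampleonmfd} with the product-type action near $\de M$ giving compatibility with the crossed product, and the second-order conditions of Proposition \ref{prop:equivalences} together with the identification of the boundary operator are quoted from the local computation \eqref{commwithdira}.

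Three concrete repairs are needed. First, you implicitly use exact $G$-invariance of $\Dsla$ (and of the extended $n$), which the hypotheses do not give: only the bundle is equivariant and the action isometric, so $\Dsla$ is merely almost equivariant. The paper's computation keeps the correction term, $[\Dsla,\pi(a_1)]f=\int_G c(\mathrm{d}a_1(\cdot,\gamma))[\gamma.f]\,\mathrm{d}\gamma+\int_G a_1(\cdot,\gamma)\big[(\Dsla-\gamma\Dsla\gamma^{-1})\gamma.f\big]\,\mathrm{d}\gamma$, and boundedness follows because $\Dsla-\gamma\Dsla\gamma^{-1}\in C^\infty(\ol{M},\End(S))$ is order zero; likewise only $\gamma n\gamma^{-1}-n\in C^\infty_c(M,\End(S))$ is needed (and available), not $[n,\gamma]=0$. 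Once this term is included, your description of the commutator as an integrated family of bounded operators is accurate, and the $C^1$-in-$\gamma$ Leibniz estimate you single out as the main obstacle is not required: one computes pointwise on $C^\infty_c(M,S)$ for fixed $\gamma$ (the interchange of $\Dsla$ with the compactly supported $\gamma$-integral is elementary) and extends by boundedness, which is exactly what the paper does. Second, in Condition 4'. the approximate unit must be taken in $C^\infty_c(\ol{M})\subset\A$, equal to $1$ on exhausting compacts of $\ol{M}$, not in $C^\infty_c(M^\circ)$: for $a\in C^\infty_c(\ol{M})$ the commutator $c(\mathrm{d}a)$ need not vanish near $\de M$, so $\phi_k\,c(\mathrm{d}a)$ does not converge in norm to $c(\mathrm{d}a)$ if the $\phi_k$ vanish near the boundary. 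Third, a bookkeeping slip in the last step: since $B_u$ anticommutes with $n$ and $n^2=-1$, one has $\tfrac12 n(nB_un+B_u)\big|_{\de M}=nB_0$, not $B_0$; it is precisely the further composition with $-n_\de$ in $\D_\de=-n_\de^-\D_n^+$ that returns $B_0=\Dsla_{\de M}$, so your stated conclusion is correct but the intermediate identity as written is inconsistent with it.
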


\begin{proof}
It is clear that $\D$ has locally compact resolvent, because $H^1_{c}(M,S)\to L^2(M,S)$ is 
compact. Clearly, for $a\in C^\infty_c(\ol{M})$, the commutator $[\Dsla,\pi(a)]$ is Clifford multiplication 
by $\mathrm{d}a$ and hence bounded. The Dirac operator $\Dsla$ is almost $G$-equivariant 
and for $a\in C^\infty_c(\ol{M}\times G)$ and $f\in C^\infty_c(M,S)$, 
\begin{align*}
[\Dsla,\pi(a)]f(x)&=\int_G \left(\Dsla(a_1(\cdot ,\gamma)[\gamma.f])(x)-
a_1(x ,\gamma)[\gamma.(\Dsla)f])(x)\right)\mathrm{d}\gamma=\\
&=\int_G c(\mathrm{d}a_1(x,\gamma))[\gamma.f](x)\mathrm{d}\gamma+
\int_G a_1(x,\gamma)[(\Dsla-\gamma\Dsla\gamma^{-1})\gamma.f](x)\mathrm{d}\gamma.
\end{align*}
Since $\Dsla-\gamma\Dsla\gamma^{-1}\in C^\infty(\ol{M},\End(S))$, it follows that $[\Dsla,\pi(a)]$ 
extends to a bounded operator on $L^2(M,S)$. 

If $n_{\de M}$ is a unit normal, we can extend $c(n_{\de M})$ to an odd section $n\in C^\infty_b(\ol{M},\End(S))$
as in Example \ref{classicalexampleonmfd}. The group action is of product type near the boundary, so it is clear 
from the construction that $\gamma n\gamma^{-1}-n\in C^\infty_c(M,\End(S))$. 
We deduce that $n$ is a Clifford normal for $(\J\lhd\A,L^2(M,S),\D)$. The conditions appearing in 
Proposition \ref{prop:equivalences} follows if $\de M$ is compact in the usual way.
\end{proof}

\begin{prop}
\label{prop:computewithcrosse}
If $M$, $n$ and $(\J\lhd\A,L^2(M,S),\D)$ are as in Proposition \ref{firstproponcrossed} with $\ol{M}$ compact,
the second order normal structure $n$ satisfies Assumption \ref{sixthass}. Moreover, 
\begin{equation}
\label{boundarycomp}
\de[(\J\lhd\A,L^2(M,S),\D)]=
[(C^\infty(\de M)+C^\infty_c(\de M\times G))\hat{\otimes} \Cl_1,L^2(\de M,S|_{\de M}),\D_{\de M}],
\end{equation}
in $K^{*+1}(C(\de M)+C(\de M)\rtimes G)$.
\end{prop}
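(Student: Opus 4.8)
The plan is to verify Assumption~\ref{sixthass} and then apply Proposition~\ref{prop:boundarycomparisonnew}, reducing everything to the classical manifold-with-boundary case and keeping track of the extra crossed-product summand. First I would check Assumption~\ref{sixthass} part by part. Since $\ol{M}$ is compact, the crossed product $C_0(\ol M)\rtimes G$ is unital when $G$ is compact and $\A$ is represented non-degenerately on $L^2(M,S)$; the equality $\J=J\cap\A$ is immediate from the definitions. That $\{\D^*,[\D^*,n]\}$ is symmetric on $\h^2_n$ and that $(n^2+1)\cdot\h^2_n\subset\Dom(\D^2)$ are local statements near $\de M$, where the crossed-product structure is of product type and $n$ is exactly Clifford multiplication by the unit normal as in Example~\ref{classicalexampleonmfd}; hence these follow from the classical computation $\{\D^*,[\D^*,n]\}=-2n\Dsla_{\de M}^2$ near the boundary, already noted in the text. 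The boundary operator is then $\D_n$ with $\D_\de=\Dsla_{\de M}$ on the image of $C^\infty_c(\ol M,S)$ by Proposition~\ref{firstproponcrossed}, so $\mathfrak H_n=L^2(\de M,S|_{\de M})$ with its genuine $L^2$ inner product (the problematic form $\langle\cdot,\cdot\rangle_n$ reduces to the classical boundary pairing), $\D_\de$ is essentially self-adjoint, and $\rho_\de$ extends to a $\Lip$-bounded representation of $C^\infty(\de M)+C^\infty_c(\de M\times G)$ on $L^2(\de M,S|_{\de M})$ via the integrated representation of the boundary action. Compactness of $a(1+\D_\de^2)^{-1/2}$ follows from the Rellich lemma on the closed manifold $\de M$. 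The bounded-commutator condition $[\D_\de,\rho_\de([a])]_\pm$ bounded is checked exactly as in the proof of Proposition~\ref{firstproponcrossed}: for $a_0\in C^\infty(\de M)$ the commutator is Clifford multiplication by $\mathrm da_0$, and for $a_1\in C^\infty_c(\de M\times G)$ one gets a Clifford-multiplication term plus an $\End(S|_{\de M})$-valued term coming from the almost-equivariance defect $\Dsla_{\de M}-\gamma\Dsla_{\de M}\gamma^{-1}$.

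Next I would establish the boundary comparison \eqref{boundarycomp} using Proposition~\ref{prop:boundarycomparisonnew}. The two hypotheses there are that the $\mathfrak H_n^{-1/2}$-norm is equivalent to $\|(1+\D_\de^2)^{-1/4}\cdot\|$, and that $P_{\ge}-P_C^+$ extends to a compact operator on $(\mathfrak H_n^{-1/2})^+$. Both are again local near $\de M$ and hence reduce to the classical facts: $\mathfrak H_n^{\pm 1/2}=H^{\pm 1/2}(\de M,S|_{\de M})$ with the Sobolev norms defined from the boundary Dirac operator, and $P_{\ge}$, $P_C^+$ are pseudodifferential operators of order $0$ with the same principal symbol, so their difference is order $-1$ and hence compact on the relevant Sobolev spaces. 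The point is that the crossed-product algebra only enters through $\rho_\de$, which acts by bounded operators commuting with $P_C$ modulo compacts by the same symbol argument (the action of $C^\infty_c(\de M\times G)$ by $G$-translations composed with a $\Psi$DO of order $0$ is again a $\Psi$DO of order $0$ with scalar principal symbol, since $G$ acts isometrically and the action is of product type, so it commutes with the principal symbol of $P_C$). Thus Proposition~\ref{prop:boundarycomparisonnew} applies verbatim once one checks that its proof only used that $\rho_\de(\A/\J)$ consists of order-zero operators with the correct symbolic behaviour, which is the content of the pseudodifferential description of the $G$-action near the boundary.

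I expect the main obstacle to be precisely this last point: making rigorous the claim that the integrated representation of $C^\infty_c(\de M\times G)$ on $L^2(\de M,S|_{\de M})$ interacts with $P_{\ge}$ and $P_C^+$ well enough that $P_{\ge}[\rho_\de(a),\cdot]P_C^+$ and $P_{\ge}-P_C^+$ are compact after inserting the representation. For $G$ compact this should follow from averaging: $\rho_\de(a)$ is an integral over $G$ of a multiplication operator composed with a unitary $G$-translation, each translate of $P_C$ differs from $P_C$ by a smoothing operator because the action is isometric and the Calderon projector's symbol is $G$-invariant, so one can commute $\rho_\de(a)$ past $P_C$ up to compacts. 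The technical care needed is to justify that the resulting operator-valued integral over the compact group $G$ of compact operators, with continuous integrand in operator norm, is again compact — which is standard — and to handle the order-zero pseudodifferential nature of the translation composed with a $\Psi$DO. Once this is in place the identification of the right-hand side of \eqref{boundarycomp} with the extension $\tau$ of Proposition~\ref{prop:boundarycomparisonnew} is formal, and the isomorphism $K^{*+1}(C(\de M)+C(\de M)\rtimes G)\cong K^{*+1}$ of the target group is just the statement that this is the $C^*$-algebra $A/J$ for the boundary data, with $\A/\J$ the stated dense subalgebra.
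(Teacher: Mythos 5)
Your proposal is correct and takes essentially the same route as the paper's proof: Assumption \ref{sixthass} is read off from the local boundary computation of $[\Dsla,n]$ in Equation \eqref{commwithdira}, and the hypotheses of Proposition \ref{prop:boundarycomparisonnew} are verified by principal-symbol arguments for $P_\geq$ and $P_C$ which extend to the crossed product because the principal symbols are $G$-equivariant, whence \eqref{boundarycomp}. One small slip worth noting: $C(\ol{M})\rtimes G$ is not itself unital for infinite compact $G$; the unitality and non-degeneracy needed to invoke Proposition \ref{prop:boundarycomparisonnew} come instead from the unit of $C(\ol{M})\subseteq A$, which acts as the identity on $L^2(M,S)$.
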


\begin{proof}
Recall the computation in Equation \eqref{commwithdira} showing that 
$[\Dsla,n]=n\frac{\de n}{\de u}-2n\Dsla_{n}$ 
as an operator on $C^\infty_c(\ol{M},S)$. 
All assumptions but those in Proposition \ref{prop:boundarycomparisonnew} follow from this fact. 
To prove the assumptions of Proposition \ref{prop:boundarycomparisonnew}, 
we note that all of these statements, as in the classical case, follows from a computation at the level of 
principal symbols and these extend to a group action as the principal symbols 
are equivariant. The identity \eqref{boundarycomp} follows from 
Proposition \ref{prop:boundarycomparisonnew}.
\end{proof}

\begin{eg}
\label{eq:irrat-disc}
Let us consider a concrete example. 
We take a $\theta\in \mathbb{R}\setminus \mathbb{Q}$. 
The associated irrational rotation $R_\theta:S^1\to S^1$ is defined as multiplication 
by $\mathrm{e}^{i\theta}$ in the model 
$S^1\subseteq \C$. The diffeomorphism $R_\theta$ defines a free and
minimal action and $C(\mathbb{T}^2_\theta):=C(S^1)\rtimes _{R_\theta}\Z$ 
is a simple $C^*$-algebra isomorphic to the non-commutative 
torus (see next subsection \ref{sub:theta}). 
We extend $R_\theta$ to an isometry of the disc $\ol{D}:=\{z\in \C: |z|\leq 1\}$. 
We define $I:=C_0(D)\rtimes_{R_\theta}\Z$ and 
$A:=C(\ol{D})\rtimes_{R_\theta} \Z$, so $A/I=C(\mathbb{T}^2_\theta)$. 

Consider the Dirac operator $\Dsla$ on the disc 
$D$ with its euclidean metric. We identify the spinor bundle on 
$D$ with $D\times \C^2$. The operator $\Dsla$ commutes with 
the isometry $R_\theta$. Let $\D_{\rm min}$ 
denote the minimal extension of $\Dsla$. We can from 
Proposition \ref{prop:computewithcrosse} deduce that 
$(C^\infty_c(D)\rtimes^{\rm alg}_{R_\theta} \Z\lhd C^\infty(\ol{D})\rtimes^{\rm alg}_{R_\theta} \Z, L^2(D,\C^2),\D_{\rm min})$ 
is a relative spectral triple for $C_0(D)\rtimes_{R_\theta}\Z\lhd C(\ol{D})\rtimes_{R_\theta} \Z$. The Dirac operator 
$\Dsla_{S^1}=i\frac{\rm d}{{\rm d}x}$ commutes with $R_\theta$, so we obtain a spectral triple 
$(C^\infty(S^1)\rtimes^{\rm alg}_{R_\theta} \Z, L^2(S^1),\Dsla_{S^1})$ for $A/I=C(\mathbb{T}^2_\theta)$.
Proposition \ref{prop:computewithcrosse} shows that 
$$\partial[(C^\infty_c(D)\rtimes^{\rm alg}_{R_\theta} \Z\lhd C^\infty(\ol{D})\rtimes^{\rm alg}_{R_\theta} \Z, L^2(D,\C^2),\D_{\rm min})]=[(C^\infty(S^1)\rtimes^{\rm alg}_{R_\theta} \Z, L^2(S^1),\Dsla_{S^1})],\quad\mbox{in}\;K^1(C(\mathbb{T}^2_\theta)).$$
The class of the right hand side is non-zero because it restricts to the fundamental class of $S^1$ along 
the inclusion $C(S^1)\to C(S^1)\rtimes_{R_\theta}\Z=C(\mathbb{T}^2_\theta)$.
We remark at this point that the inclusion $I\hookrightarrow A$ is null-homotopic, so the exact triangle in $KK$ defined from 
the semi-split exact sequence $0\to I\to A\to C(\mathbb{T}^2_\theta)\to 0$ is isomorphic to the Pimsner-Voiculescu 
triangle defined from the crossed product realisation $C(S^1)\rtimes_{R_\theta}\Z=C(\mathbb{T}^2_\theta)$. 
\end{eg}

\subsection{$\theta$-deformations of relative spectral triples}
\label{sub:theta}

Relative spectral triples and Clifford normals behave well under $\theta$-deformations. 
We fix a twist $\theta\in \textnormal{Hom}(\Z^d\wedge \Z^d,U(1))$. Such a $\theta$ 
is defined from an anti-symmetric matrix $(\theta_{jk})_{j,k=1}^d\in M_d(\R)$. Associated with 
$\theta$, there is a deformation of $C(\mathbb{T}^d)$. We let $C(\mathbb{T}^d)_\theta$ denote 
the $C^*$-algebra generated by $d$ unitaries $U_1,U_2,\ldots, U_d$ subject to the relations 
$$U_jU_k=\mathrm{e}^{i\theta_{jk}}U_kU_j.$$
For $\pmb{k}=(k_1,k_2,\ldots, k_d)\in \Z^d$ we write $U^{\pmb{k}}:=U^{k_1}U^{k_2}\cdots U^{k_d}$.
Let $\mathcal{S}(\Z^d)$ denote the Schwarz space consisting of rapidly decaying sequences.
There is a continuous injection $\mathcal{S}(\Z^d)\to C(\mathbb{T}^d)_\theta$ mapping a 
sequence $(a_{\pmb{k}})_{\pmb{k}\in \Z^d}$ to the element 
$\sum_{\pmb{k}\in \Z^d}a_{\pmb{k}}U^{\pmb{k}}$. We denote the image by 
$C^\infty(\mathbb{T}^d)_\theta$. This is a nuclear Fr\'echet algebra in the topology induced 
from $\mathcal{S}(\Z^d)$. 

If $\mathcal{A}$ is a Fr\'echet algebra with a continuous action of $\mathbb{T}^d$, we define its 
$\theta$-deformation as the invariant subalgebra 
$\A_\theta:=(\A\otimes^{\rm alg} C^\infty(\mathbb{T}^d)_\theta)^{\mathbb{T}^d}$.
Since $\mathbb{T}^d$ is an abelian group, $\A_\theta$ also carries an action of $\mathbb{T}^d$. 
If $A$ is a $\mathbb{T}^d-C^*$-algebra, we define 
$A_\theta:=(A\otimes_{\textnormal{min}} C(\mathbb{T}^d)_\theta)^{\mathbb{T}^d}$.
If $\mathcal{A}\subseteq A$ is a $\mathbb{T}^d$-invariant dense Fr\'echet subalgebra of a $C^*$-algebra $A$, 
$\mathcal{A}_\theta\subseteq A_\theta$ is a  $\mathbb{T}^d$-invariant dense Fr\'echet subalgebra.
The tracial state $\tau_\theta:C(\mathbb{T}^d)_\theta\to \C$, 
$\tau_\theta(\sum_{\pmb{k}\in \Z^d}a_{\pmb{k}}U^{\pmb{k}}):=a_0$ gives 
rise to a GNS-representation $L^2(\mathbb{T}^d)_\theta$ of $C(\mathbb{T}^d)_\theta$. 
For a $\mathbb{T}^d$-representation $\h$ we define its $\theta$-deformation 
$\h_\theta:=(\h\otimes L^2(\mathbb{T}^d)_\theta)^{\mathbb{T}^d}$. 
If a $\mathbb{T}^d-C^*$-algebra $A$ acts equivariantly on $\h$, $A_\theta$ acts equivariantly in $\h_\theta$.
A $\mathbb{T}^d$-equivariant closed operator $\D$ on $\h$, induces a closed operator $\D_\theta$ on $\h_\theta$ 
via restriction of $\D\otimes1_{L^2(\mathbb{T}^d)_\theta}$. It is easily proven that $(\D_\theta)^*=(\D^*)_\theta$ 
using the unitary 
$$U_\theta:\h\to \h_\theta,\quad 
U_\theta:\xi=\sum_{\pmb{k}\in \Z^d}\xi_{\pmb{k}}\mapsto\sum_{\pmb{k}\in \Z^d}\xi_{\pmb{k}}\otimes U^{-\pmb{k}},$$ 
where $\xi=\sum_{\pmb{k}\in \Z^d}\xi_{\pmb{k}}$ denotes the decomposition 
into the homogeneous components for the $\mathbb{T}^d$-action. 
The following proposition follows from the construction.

\begin{prop}
Assume that $(\J\lhd\A,\h,\D)$ is a $\mathbb{T}^d$-equivariant relative spectral triple. 
Then $(\J_\theta\lhd\A_\theta,\h_\theta,\D_\theta)$ is a $\mathbb{T}^d$-equivariant relative spectral triple.
If $n$ is a $\mathbb{T}^d$-invariant Clifford normal for $(\J\lhd\A,\h,\D)$, $n_\theta:=U_\theta nU_\theta^*$ is a Clifford normal for 
$(\J_\theta\lhd\A_\theta,\h_\theta,\D_\theta)$ and Assumption \ref{sixthass} and the hypotheses of Proposition \ref{prop:boundarycomparisonnew} are equivalent for $n$ and $n_\theta$.
\end{prop}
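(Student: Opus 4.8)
The plan is to exploit the $\mathbb{T}^d$-equivariant unitary $U_\theta:\h\to\h_\theta$ together with the functoriality of all the constructions under this unitary, so that essentially nothing new happens: every object attached to $(\J_\theta\lhd\A_\theta,\h_\theta,\D_\theta,n_\theta)$ is the image under $U_\theta$ of the corresponding object for the $\mathbb{T}^d$-invariant part of $(\J\lhd\A,\h,\D,n)$. First I would verify that $(\J_\theta\lhd\A_\theta,\h_\theta,\D_\theta)$ is a relative spectral triple: since $U_\theta$ conjugates $\D\otimes 1$ restricted to the invariant subspace onto $\D_\theta$ and conjugates the diagonal action of $\A$ (restricted to invariants) onto the action of $\A_\theta$, Conditions 1.--3. of Definition \ref{defn:alternate} transport verbatim, using $(\D_\theta)^*=(\D^*)_\theta$ as noted in the excerpt; Condition 4. (or 4') transports because $U_\theta$ is a unitary intertwining $\ker(\D^*)$ with $\ker(\D_\theta^*)$ and $B$-compactness is preserved under this conjugation. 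Here I would lean on the fact that $\mathbb{T}^d$-equivariance of $\D$ means the invariant subspace $\h^{\mathbb{T}^d}\otimes L^2(\mathbb{T}^d)_\theta$ is reducing and $U_\theta$ identifies the whole picture with the $\theta=0$ picture restricted to equivariant elements.

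Next I would check that $n_\theta:=U_\theta n U_\theta^*$ is a Clifford normal. Because $n$ is $\mathbb{T}^d$-invariant, $n\otimes 1$ preserves the invariant subspace, and $U_\theta$ conjugates it onto $n_\theta$; all seven conditions of Definition \ref{defn:normal} are stated in terms of $\Dom(\D^*)$, $\Dom(n)=\Dom(\D^*)\cap n\Dom(\D^*)$, the commutator $[\D^*,n]$, Clifford relations $n^2+1$, $n^*=-n$, and positivity/non-degeneracy of the form $\omega_{\D^*}$ — every one of these is preserved by a unitary that simultaneously intertwines $\D^*$, $n$, and the $\A$-action. So $\Dom(n_\theta)=U_\theta\Dom(n)$, $[\D_\theta^*,n_\theta]=U_\theta[\D^*,n]U_\theta^*$, $\omega_{\D_\theta^*}([U_\theta\xi],[U_\theta\eta])=\omega_{\D^*}([\xi],[\eta])$, and so forth. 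Consequently the derived objects match up: $\mathfrak{H}_{n_\theta}^{1/2}=U_\theta(\mathfrak{H}_n^{1/2})$ with the same inner product $\langle\cdot,\cdot\rangle_n$, hence $\mathfrak{H}_{n_\theta}=U_\theta\mathfrak{H}_n$ (well, the $\theta$-deformed/invariant versions thereof), $(n_\theta)_\de=U_\theta n_\de U_\theta^*$, the spaces $\h^2_{n_\theta}$ and $\h^2_{n_\theta,0}$ are the $U_\theta$-images of $\h^2_n$, $\h^2_{n,0}$, the boundary operator is $\D_{n_\theta}=U_\theta\D_n U_\theta^*$, the representation $\rho_\de$ on the boundary is conjugate, and the Calderón data $\mathcal{K}$, $P_C$, as well as $\wt{\D}$, $\D_\de$, $P_\geq$, all transport by conjugation with $U_\theta$.

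Given this dictionary, the equivalence of Assumption \ref{sixthass} for $n$ and $n_\theta$ is immediate: each clause a.--d. (boundedness of $\rho_\de([a])$ on $\mathfrak{H}_n$ with the $\Lip$ estimate, the Clifford condition $(n^2+1)\h^2_n\subseteq\Dom(\D^2)$, essential self-adjointness of $\D_n$, and $\A/\J$-compactness of $a(1+\D_n^2)^{-1/2}$) is a unitarily invariant statement, and the $\Lip$-norm of $a$ equals that of the corresponding element of $\A_\theta$ because the commutator $[\D_\theta,a]$ is the conjugate of $[\D,a]$. Likewise the two hypotheses of Proposition \ref{prop:boundarycomparisonnew} — equivalence of the $\mathfrak{H}_n^{-1/2}$-norm with $\|(1+\D_\de^2)^{-1/4}\cdot\|$, and compactness of $P_\geq-P_C^+$ on $(\mathfrak{H}_n^{-1/2})^+$ — involve only norm-equivalences and compactness of operators that are conjugate under the unitary $U_\theta$ (extended to the boundary Sobolev spaces), so they hold for $n$ if and only if they hold for $n_\theta$.

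The main obstacle, and the only place requiring genuine care rather than bookkeeping, is keeping track of the interplay between the $\theta$-deformation operation $(\cdot)_\theta=(\,\cdot\otimes L^2(\mathbb{T}^d)_\theta)^{\mathbb{T}^d}$ and the passage to the $\mathbb{T}^d$-invariant part of each unbounded-operator construction — in particular making sure that all the domain identities (e.g.\ that $\Dom(\D_\theta)$ really is $U_\theta$ applied to the invariant elements of $\Dom(\D)$, and that cores are sent to cores under $U_\theta$) hold, since the constructions of $\mathfrak{H}_n$, $\D_n$ and $P_C$ are quite domain-sensitive; but since $U_\theta$ is an honest unitary intertwining $\D$, $\D^*$, $n$, $\gamma$ and the algebra action all at once, these checks reduce to the observation that a unitary conjugation preserves closed operators, their adjoints, their graph norms, their cores, positivity of forms, and compactness of operators, which I would state as a short lemma and then apply clause by clause. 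I do not expect any essentially new analytic input beyond what is already in Sections \ref{sect:normal} and \ref{sect:tripleonbdry}.
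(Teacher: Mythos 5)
Your overall route -- everything transports through the unitary $U_\theta$ -- is exactly the (essentially unwritten) argument of the paper, which only records $(\D_\theta)^*=(\D^*)_\theta$ via $U_\theta$ and then states that the proposition ``follows from the construction''. But the step that does all the work in your write-up is wrong as stated. You claim that $U_\theta$ conjugates the action of $\A$ (``restricted to invariants'') onto the action of $\A_\theta$, so that Conditions 1.--3. of Definition \ref{defn:alternate}, the conditions of Definition \ref{defn:normal}, Assumption \ref{sixthass} and the hypotheses of Proposition \ref{prop:boundarycomparisonnew} ``transport verbatim''. If that were true, $(\A_\theta,\h_\theta,\D_\theta)$ would be unitarily equivalent to (part of) the undeformed triple, which already fails for the noncommutative torus: $U_\theta$ intertwines $\D$, $\D^*$, $n$ and the grading exactly, but it does \emph{not} intertwine the algebra actions. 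What is true is that for $a\in\A$ homogeneous of degree $\pmb{m}$, the corresponding element $a\otimes U^{-\pmb{m}}\in\A_\theta$ satisfies $U_\theta^*\,(a\otimes U^{-\pmb{m}})\,U_\theta=\rho(a)\,u_{z(\pmb{m})}$, where $u_{z(\pmb{m})}$ is the unitary implementing a point $z(\pmb{m})\in\mathbb{T}^d$ of the given action (it multiplies the degree-$\pmb{k}$ component of $\h$ by a phase bilinear in $(\pmb{m},\pmb{k})$ determined by $\theta$). So the pulled-back action is a cocycle-twisted action of $\A$, not $\rho$ itself, and ``unitary conjugation preserves everything'' does not apply to it. The argument is repaired by the observation missing from your proposal: the $u_{z(\pmb{m})}$ commute with $\D$, $\D^*$, $\gamma$ and (by the assumed $\mathbb{T}^d$-invariance) with $n$, and they preserve $\Dom(\D)$, $\Dom(\D^*)$, $\Dom(n)$, $\h^2_n$, $\ker(\D^*)$ and the form $\omega_{\D^*}$, since all of these are $\mathbb{T}^d$-invariant; with that, each condition for $(\J_\theta\lhd\A_\theta,\h_\theta,\D_\theta,n_\theta)$ follows from, and implies, the corresponding condition for $(\J\lhd\A,\h,\D,n)$.

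A related error in your dictionary: $\h_\theta=(\h\otimes L^2(\mathbb{T}^d)_\theta)^{\mathbb{T}^d}$ is not $\h^{\mathbb{T}^d}\otimes L^2(\mathbb{T}^d)_\theta$, and $U_\theta$ is a unitary defined on \emph{all} of $\h$ (sending the degree-$\pmb{k}$ component $\xi_{\pmb{k}}$ to $\xi_{\pmb{k}}\otimes U^{-\pmb{k}}$). Consequently $\Dom(\D_\theta)=U_\theta\Dom(\D)$ and $\Dom(n_\theta)=U_\theta\Dom(n)$, not ``$U_\theta$ applied to the invariant elements'' of these domains, and the boundary objects $\mathfrak{H}_{n_\theta}$, $\D_{n_\theta}$, $P_C$, $P_\geq$ for the deformed triple are $U_\theta$-images of the full boundary objects, not of their $\mathbb{T}^d$-invariant parts. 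Taken literally, your phrasing would replace the deformed triple and its boundary data by a copy of the torus-invariant part, which is not what is constructed. Once these two points are corrected, the remaining bookkeeping is as routine as you describe and coincides with the paper's intent.
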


\subsection{Cuntz-Pimsner algebras over manifolds with boundaries}
\label{cpmfdbry}


%
We start with the continuous functions $A=C(\ol{M})$ on a manifold with boundary 
$\ol{M}$ and the module of sections $E=\Gamma(\ol{M},V)$ of 
a complex hermitian vector bundle $V\to M$. 
We equip $E$ with the usual symmetric bimodule structure. One could
instead implement the left multiplication by functions by first composing with
an isometric diffeomorphism, as in \cite{GMR}. This would produce 
a blend between Example \ref{eq:irrat-disc}
and the situation described below. As this is notationally
more intricate, we will not discuss it here.

We form the Cuntz-Pimsner algebra $O_E$ of $E$ viewed as a 
$C^*$-correspondence over $A=C(\ol{M})$. This example is described in detail
in \cite{GMR} for closed manifolds.
The Cuntz-Pimsner algebra $O_E$ 
takes the form $O_E=\Gamma(\ol{M},O_V)$ 
where $O_V\to \ol{M}$ is a locally 
trivial bundle of Cuntz-algebras $O_N$ and 
$N$ is the rank of $V$. 
It is proved in \cite{GMR}*{Theorem 1} that there is an unbounded Kasparov module
$(O_E,\Xi_{\ol{M}},\D_{CP})$  
whose class in $KK_1(O_E,A)$ represents the Pimsner extension, \cite{RRS}, 
$$
0\to \K_A(F_E)\to \mathcal{T}_E\to O_E\to 0.
$$
Here $\K_A(F_E)$ is the $A$-compact endomorphisms on the Fock module of $E$.
Moreover, there are hermitian 
vector bundles $\Xi_{m,k}^V\to \ol{M}$ such that 
$\Xi_{\ol{M}}$ 
decomposes as an orthogonal direct sum 
$$
\Xi_{\ol{M}}=\bigoplus_{m,k\in \Z}\Xi_{m,k}
:=\bigoplus_{m,k\in \Z}\Gamma(\ol{M},\Xi_{m,k}^V).
$$ 
The modules $\Xi_{m,k}$ satisfy 
$\Xi_{m,k}=0$ if $k<0$ or $m+k<0$. Let $p_{m,k}$ 
denote the projections onto $\Xi_{m,k}$. 
We define a function $\psi:\Z\times \mathbb{N}\to \Z$ 
by $\psi(m,k):=m$ if $k=0$ and 
$\psi(m,k):=-|m|-k$ if $k>0$. The 
self-adjoint operator $\D_{CP}$ is the closure of 
$\sum_{m,k}\psi(m,k)p_{m,k}$. 

We use the notation 
$s_\xi\in O_E$ for the operator 
associated with a section 
$\xi\in \Gamma(\ol{M},E)$. If $S\to \ol{M}$ is a Clifford module, and 
$\xi\in \Gamma(\ol{M},E\otimes T^*M)$ 
we let $c(s_\xi)\in O_E\otimes_{C(\ol{M})} \Gamma(\ol{M},\End(S))$ 
denote the associated operator. 
The following construction follows by using the local trivializations 
of $O_E$ and $\Xi_{\ol{M}}$.

\begin{prop}
\label{connprop}
Let $S$ be a Clifford bundle on $\ol{M}$.
A choice of frame for $V$ defines Gra\ss mann connections 
$\nabla_{m,k}:\Gamma^\infty(\ol{M},\Xi_{m,k}^V)\to 
\Gamma^\infty(\ol{M},\Xi_{m,k}^V\otimes \End(S))$ 
such that 
$$
\nabla_\Xi:=\bigoplus_{m,k} \nabla_{m,k}:
\bigoplus ^{\textnormal{alg}}_{m,k}\Gamma^\infty(\ol{M},\Xi_{m,k}^V)\to 
\bigoplus ^{\textnormal{alg}}_{m,k}\Gamma^\infty(\ol{M},\Xi_{m,k}^V\otimes \End(S))
\subseteq\Xi_{\ol{M}}\otimes_{C(\ol{M})} \Gamma(\ol{M},\End(S)),
$$
satisfies 
$\nabla_\Xi(s_\xi\eta)-s_\xi \nabla_\xi(\eta)=c(s_{\mathrm{d}\xi})\cdot \eta\otimes 1_S$ 
for $\xi\in \Gamma^\infty(\ol{M},V)$ 
and $\eta\in \bigoplus ^{\textnormal{alg}}_{m,k}\Gamma^\infty(\ol{M},\Xi_{m,k}^V)$.
\end{prop}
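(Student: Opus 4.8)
The statement is essentially a local-trivialization argument combined with a standard Grassmann connection construction, so the plan is to reduce to the case where $V\to\ol M$ is trivial and then glue. First I would observe that $\Xi_{m,k}^V$ is built functorially from $V$ (it is a sub-bundle of a tensor power $V^{\otimes m}\otimes \ol{V}^{\otimes k}$ or similar, cut out by the projection $p_{m,k}$ that implements the Fock-module grading of $\Xi_{\ol M}$), so a choice of (local) frame for $V$ induces a (local) frame for each $\Xi_{m,k}^V$, and a global frame for $V$ — which exists after passing to a trivializing cover and which the statement explicitly hypothesizes — induces a global frame-type presentation of each $\Xi_{m,k}^V$ as a sub-bundle of a trivial bundle. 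Embedding $\Xi_{m,k}^V\hookrightarrow \ol M\times \C^{N_{m,k}}$ via this frame, I would define $\nabla_{m,k}$ to be the Grassmann (projected trivial) connection: $\nabla_{m,k}\sigma := p_{m,k}(\mathrm d\sigma)$, where $\mathrm d$ is the de Rham differential acting componentwise on $\C^{N_{m,k}}$-valued sections and the result is coupled to $\End(S)$ via Clifford multiplication $c$ of the one-form part. This is manifestly a connection (the Leibniz rule $\nabla_{m,k}(f\sigma)=f\nabla_{m,k}\sigma + c(\mathrm d f)\sigma$ holds because $\mathrm d$ satisfies it and $p_{m,k}$ is $C^\infty(\ol M)$-linear), and it is defined on smooth sections, so assembling $\nabla_\Xi=\bigoplus_{m,k}\nabla_{m,k}$ on the algebraic direct sum is automatic.

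The content of the identity $\nabla_\Xi(s_\xi\eta)-s_\xi\nabla_\xi(\eta)=c(s_{\mathrm d\xi})\cdot\eta\otimes 1_S$ is then a compatibility between the left-creation operators $s_\xi$ (which shift the $(m,k)$-grading) and the Grassmann connections on the different graded pieces. The key step is to write $s_\xi$, in the chosen frame, as (left) multiplication by the coordinate functions $\xi=(\xi^a)$ against the frame vectors of $V$, intertwining $\Xi_{m,k}^V$ with $\Xi_{m+1,k}^V$ (or the appropriate shift). Then one computes, for $\eta$ a section of $\Xi_{m,k}^V$,
\begin{align*}
\nabla_\Xi(s_\xi\eta) &= p_{\bullet}\,\mathrm d(s_\xi\eta) = p_\bullet\big((\mathrm d\xi)\cdot\eta + \xi\cdot\mathrm d\eta\big)\\
&= c(s_{\mathrm d\xi})\cdot\eta\otimes 1_S + s_\xi\,\nabla_\xi\eta,
\end{align*}
where in the last line I use that the frame of $V$ is parallel for the trivial connection (so $\mathrm d$ of a frame section is zero in this trivialization, meaning the only derivative term is $\mathrm d\xi$), that $p_\bullet$ commutes with left multiplication by $s_\xi$ in the appropriate sense (this is where one uses that $s_\xi$ is an $O_E$-module map compatible with the projections), and that $c$ by construction packages the one-form $\mathrm d\xi$ against $\End(S)$. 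The notation $c(s_{\mathrm d\xi})$ is precisely the operator in $O_E\otimes_{C(\ol M)}\Gamma(\ol M,\End(S))$ associated to $\mathrm d\xi\in\Gamma(\ol M,E\otimes T^*M)$ introduced just before the proposition, so the right-hand side is identified on the nose.

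Finally, the local definitions must be checked to glue to a global connection: a change of frame for $V$ over an overlap changes the trivial connection by a $\mathfrak{gl}_N$-valued one-form, which when propagated through the functorial construction $V\mapsto\Xi_{m,k}^V$ changes $\nabla_{m,k}$ by the corresponding induced $\End(\Xi_{m,k}^V)$-valued one-form; since the defining identity is frame-independent (both sides transform the same way), the locally defined operators patch, using a partition of unity subordinate to the trivializing cover if one wants an explicit global formula. The main obstacle I anticipate is purely bookkeeping: tracking how the left-creation operators $s_\xi$ act across the bigrading $(m,k)$ and verifying that the projections $p_{m,k}$ are $C(\ol M)$-linear and compatible with $s_\xi$ in exactly the way needed for the Leibniz-type computation — but all of this is local and formal once the Fock-module structure of $\Xi_{\ol M}$ described above Proposition \ref{connprop} is in hand, so there is no analytic difficulty (the manifold having boundary plays no role here, since everything is at the level of smooth sections of finite-rank bundles).
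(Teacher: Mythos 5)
Your proposal is correct and follows essentially the route the paper intends: the paper offers no detailed argument beyond the remark that the construction "follows by using the local trivializations of $O_E$ and $\Xi_{\ol{M}}$", and your plan — frames for $V$ inducing frames for the sub-bundles $\Xi^V_{m,k}$, the projected-trivial (Gra\ss mann) connection coupled to $\End(S)$ via Clifford multiplication, and the local Leibniz computation showing $[\nabla_\Xi,s_\xi]=c(s_{\mathrm{d}\xi})$ — is exactly that argument made explicit. The only point to keep in view is the bookkeeping you already flag: the compatibility of the projections $p_{m,k}$ with the creation operators $s_\xi$ across the $(m,k)$-grading, which is supplied by the structure of $\Xi_{\ol M}$ from \cite{GMR}.
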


Assume that $\Dsla_M$ is a Dirac operator on 
a Clifford bundle $S\to \ol{M}$ and that $\nabla_\Xi$ 
is a connection as in Proposition \ref{connprop}. 
Following \cite{GMR}, we define an operator $\D_E$ on a Hilbert space $\h_E$. 
For $S$ graded and $\Dsla_M$ odd, 
we define  $\h_E:=\Xi_{\ol{M}}\otimes_{C(\ol{M})}L^2(M,S)$ 
and the operator $\D_E$ as 
the closure of 
\begin{equation}
\label{deconstr}
\Dsla_E:=\D_{CP}\otimes \gamma+1\otimes_{\nabla_\Xi}\Dsla_M,
\end{equation}
defined on $\bigoplus ^{\textnormal{alg}}_{m,k}\Gamma^\infty_c(M^\circ,\Xi_{m,k}^V\otimes S)$. 
Here $\gamma$ denotes the grading operator on $S$. 
If $S$ is ungraded (e.g. if $M$ is odd-dimensional), we define 
the graded Hilbert space 
$\h_E:=\Xi_{\ol{M}}\otimes_{C(\ol{M})}(L^2(M,S)\oplus L^2(M,S))$ 
and the odd operator $\D_E$ as the closure of 
the differential expression
\begin{equation}
\label{deconstr2}
\D_E:=\begin{pmatrix}
0& \D_{CP}\otimes 1-i\otimes_{\nabla_\Xi}\Dsla_M\\
\D_{CP}\otimes 1+i\otimes_{\nabla_\Xi}\Dsla_M& 0 \end{pmatrix},
\end{equation}
defined on 
$\bigoplus ^{\textnormal{alg}}_{m,k}
\Gamma^\infty_c(M^\circ,\Xi_{m,k}^V\otimes (S\oplus S))$. 
In the following we will assume that $S$ is graded, by replacing $S$ by $S\oplus S$
if necessary.
 
We define $\mathcal{O}_E\subseteq O_E$ 
as the dense $*$-subalgebra generated 
by those $s_\xi$ with $\xi\in \Gamma^1(\ol{M},V)$. We also define the ideal 
$\J:=C^1_0(M^\circ)\mathcal{O}_E\lhd \mathcal{O}_E$. 
The closure $J$ of $\J$ coincides with 
$C_0(M^\circ)O_E$. The ideal $J$ fits into the 
$C(\overline{M})$-linear short exact sequence 
of $C(\overline{M})$-$C^*$-algebras
\begin{equation}
\label{sesofcp}
0\to C_0(M^\circ)O_E\to O_E\to O_{E_\partial}\to 0,
\end{equation}
where $O_{E_\partial}$ is constructed from the 
$C(\partial M)$-bimodule $E_\de:=\Gamma(\partial M,V|_{\de M})$. 
We define the dense $*$-subalgebra 
$\mathcal{O}_\de\subseteq O_{E_\partial}$ as the $*$-subalgebra 
generated by those $s_\eta$ with $\eta\in \Gamma^1(\de M,V|_{\de M})$. 
We can define the graded Hilbert space
$\mathfrak{H}_{E} 
:=\Xi_{M_\de}\otimes_{C(\de M)}L^2(\de M,S|_{\de M})$ 
and an operator $\D_{E,\de M}$ constructed 
as in Equation \eqref{deconstr2} from 
$E_\de$ and the boundary operator $\Dsla_{\de M}$. We also write 
$\mathfrak{H}^+_{E}
:=\Xi_{M_\de}\otimes_{C(\de M)}L^2(\de M,S^+|_{\de M})$.

\begin{thm}
\label{theoremforcpbdry}
We consider the Cuntz-Pimsner algebra 
$O_E$ associated with a hermitian vector bundle $V\to \ol{M}$ 
over a compact $d$-dimensional manifold with boundary. 
In the notations of the two previous paragraphs, 
$(\J\lhd\mathcal{O}_E,\h_E,\D_E)$ 
is a relative spectral triple of parity $d+1$ 
and Clifford multiplication by any 
unit normal to $\partial M$ defines a Clifford normal $n$ for 
$(\J\lhd\mathcal{O}_E,\h_E,\D_E)$. 

Moreover, $(\J\lhd\mathcal{O}_E,\h_E,\D_E,n)$ 
satisfies Assumption \ref{sixthass} with $(\D_E)_n=\D_{E,\de M}$, 
$\mathfrak{H}_{n}\cong\mathfrak{H}_{E}$.
\end{thm}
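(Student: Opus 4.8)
The plan is to reduce everything to the closed-manifold case treated in \cite{GMR}, localised near $\partial M$ via the local trivialisations of $O_V$ and $\Xi_{\overline M}$, together with the known behaviour of the minimal Dirac extension $\D_{\textnormal{min}}$ on a manifold with boundary (Example \ref{classicalexampleonmfd}, Subsubsection \ref{ass6andmfds}). First I would check that $(\J\lhd\mathcal{O}_E,\h_E,\D_E)$ is a relative spectral triple of parity $d+1$: Conditions 1.\ and 2.\ of Definition \ref{defn:alternate} are local near the boundary and follow from Proposition \ref{connprop} together with the fact that $\Dsla_{\textnormal{min},\,\mathcal{E}}$-style twisted minimal Dirac operators again have the half-closed property (as recorded in Subsubsection \ref{technicallytrashtalking}, invoking \cite{DGM}*{Subsection 1.5}); the diagonal operator $\D_{CP}\otimes\gamma$ contributes nothing to domain questions since it acts fibrewise. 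The parity is $d+1$ because $\D_{CP}$ contributes the extra Clifford generator, exactly as in \cite{GMR}. Condition 3.\ (local compactness of $f(1+\D_E^*\D_E)^{-1/2}$ for $f\in C_0(\overline M)$) follows from the Rellich lemma applied fibrewise over $\overline M$ and the fact that $\D_{CP}$ has discrete spectrum with finite-dimensional eigenspaces tensored with $L^2(M,S)$; since $\overline M$ is compact, $\mathcal{O}_E$ is unital and represented non-degenerately, so Condition 4.\ follows from Condition 3.\ via Remark \ref{rem:unitalandrwongcompact}.

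Next I would verify that Clifford multiplication by a unit normal $n_{\partial M}$, extended smoothly as in Example \ref{classicalexampleonmfd}, defines a Clifford normal. Set $n:=1_{\Xi_{\overline M}}\otimes n_{\partial M}$ (fibrewise over $\overline M$). All the purely algebraic Conditions 2), 4), 5) of Definition \ref{defn:normal} are immediate from the corresponding facts on $\overline M$, since $\D_{CP}\otimes\gamma$ commutes with $1\otimes n_{\partial M}$ up to a term handled by the anticommutation $\{\gamma,n_{\partial M}\}$-identities already used in Example \ref{classicalexampleonmfd}. Condition 1) — that $\Dom(n)=\Dom(\D_E^*)\cap n\Dom(\D_E^*)$ is a core for $\D_E^*$ and equals the relevant Sobolev-type space — and Condition 3) — symmetry of $[\D_E^*,n]$ — both reduce, after passing to the local trivialisation, to Equation \eqref{commwithdira} and \cite{BaerBall}*{Theorem 6.7} applied to each summand $\Xi_{m,k}^V\otimes S$; the twisting by a (finite-rank near any fibre, in the $m,k$-truncation) bundle does not disturb these statements. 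Conditions 6) and 7) are the positivity and non-degeneracy conditions; here I would invoke that $\omega_{\D_E^*}$ decomposes as a direct sum over the $\Xi_{m,k}^V\otimes S$-components of the classical boundary pairing $\int_{\partial M}(\cdot\mid n_{\partial M}\cdot)$, which is positive and non-degenerate fibrewise, so the sum is too.

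For the last sentence — Assumption \ref{sixthass}, $(\D_E)_n=\D_{E,\partial M}$, and $\mathfrak{H}_n\cong\mathfrak{H}_E$ — I would argue as follows. The identification $\mathfrak{H}_n\cong\mathfrak{H}_E=\Xi_{M_\partial}\otimes_{C(\partial M)}L^2(\partial M,S|_{\partial M})$ comes from the fact that restriction to $\partial M$ intertwines $\Xi_{\overline M}$ with $\Xi_{M_\partial}$ (the short exact sequence \eqref{sesofcp} is $C(\overline M)$-linear), combined with the classical identification $\mathfrak{H}_{n_{\partial M}}\cong L^2(\partial M,S|_{\partial M})$ from the Remark after Example \ref{classicalexampleonmfd}. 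To see $(\D_E)_n=\D_{E,\partial M}$, compute $\frac12 n[\D_E^*,n]$ using $\D_E=\D_{CP}\otimes\gamma + 1\otimes_{\nabla_\Xi}\Dsla_M$: the $\D_{CP}\otimes\gamma$ term commutes with $n$ so drops out of the commutator, and $\frac12 n[\,(1\otimes_{\nabla_\Xi}\Dsla_M)^*,n]$ restricted to the boundary gives $1\otimes_{\nabla_\Xi}\Dsla_{\partial M}$ plus the $\D_{CP}$ part that reappears because on the boundary the connection $\nabla_\Xi$ couples the two — this is exactly the matrix identity of Equation \eqref{deconstr2} for $E_\partial$, i.e.\ $\D_{E,\partial M}$. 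This is a matrix computation parallel to \cite{GMR} and I would state it as such rather than grind it. Finally, parts a.--d.\ of Assumption \ref{sixthass}: the symmetry of $\{\D_E^*,[\D_E^*,n]\}$ follows from Proposition \ref{prop:equivalences} once one knows, fibrewise, that $\{\D_{\textnormal{min},\mathcal{E}}^*,[\D_{\textnormal{min},\mathcal{E}}^*,n_{\partial M}]\}$ is symmetric, which is the classical statement ``$\{\D^*,[\D^*,n]\}=-2n\Dsla_{\partial M}^2$ near the boundary'' applied to each twisted summand; parts a.\ and b.\ follow from the corresponding bounded-commutator and Sobolev statements for $\D_{E,\partial M}$ on the compact closed manifold $\partial M$ (again \cite{GMR} plus Proposition \ref{connprop}); parts c.\ and d.\ (essential self-adjointness and compact resolvent of $\D_{E,\partial M}$) hold because $\D_{E,\partial M}$ is, up to the bounded twisting connection, the operator built in \cite{GMR} over the \emph{closed} manifold $\partial M$, which is a genuine spectral triple.

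The main obstacle I anticipate is Condition 3) of Definition \ref{defn:normal} together with the core statement in Condition 1): although morally these are ``the same as on $\overline M$ fibrewise,'' the fibres $\Xi_{m,k}^V\otimes S$ form an infinite orthogonal sum and one must check that the commutator $[\D_E^*,n]$ — which now includes a contribution from $\nabla_\Xi$ differentiating the transition functions of the $\Xi_{m,k}^V$ — remains symmetric and that the Sobolev-space identifications are uniform enough across $(m,k)$ to pass to the closure. The cleanest route is to work in a collar $(0,1)\times\partial M$ where all the bundles $O_V$, $\Xi_{\overline M}$ are pulled back from $\partial M$, so that $n$, $\nabla_\Xi$ and the collar form of $\Dsla_M$ are all constant in the normal direction, reducing the commutator identity to Equation \eqref{commwithdira} tensored with the (normal-direction-independent) operator $\D_{CP}\otimes\gamma + 1\otimes_{\nabla_{\Xi,\partial}}$; away from the collar $n$ is a smooth bounded bundle endomorphism and everything is bounded. $\qed$
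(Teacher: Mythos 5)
Your overall strategy --- reduce to the closed-manifold Cuntz--Pimsner spectral triple of \cite{GMR} and to the classical boundary analysis of Example \ref{classicalexampleonmfd}, working summand-by-summand over the decomposition $\h_E=\bigoplus_{m,k}L^2(M,S\otimes\Xi^V_{m,k})$ --- is the same as the paper's, and your treatment of the Clifford normal, of $\mathfrak{H}_n\cong\mathfrak{H}_E$, of $(\D_E)_n=\D_{E,\de M}$ and of Assumption \ref{sixthass} is at essentially the same level of detail as the paper's own (which also just reduces these to the classical case on each summand). Where you diverge is the mechanism for Conditions 1.--3.\ of Definition \ref{defn:alternate}: the paper neither localises in a collar nor invokes per-summand half-closedness of twisted minimal Dirac operators. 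Instead it doubles $M$ to the closed manifold $2M$, uses that $(\mathcal{O}_{2E},\h_{2E},\D_{2E})$ is a spectral triple by \cite{GMR}, characterises $\Dom(\D_E)$ as the graph-closure of $\bigoplus_{m,k}\Gamma^1_c(M^\circ,S\otimes\Xi^V_{m,k})$ inside $\Dom(\D_{2E})$, and lifts $\xi\in\Gamma^1(M,V)$ to $\tilde\xi$ over $2M$ to get both invariance of $\Dom(\D_E)$ and $[\D_E,s_\xi]=[\D_{2E},s_{\tilde\xi}]$ in one stroke; compactness of the resolvent and Condition 2.\ are then proved from explicit weighted direct-sum descriptions of $\Dom(\D_E)$ and $\Dom(\D_E^*)$. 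The doubling buys precisely what your collar localisation would have to reconstruct by hand: simultaneous control of the minimal domain and of \emph{all} commutators, including the ones that move between the $(m,k)$-summands.

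The genuine weak point is your claim that ``the diagonal operator $\D_{CP}\otimes\gamma$ contributes nothing to domain questions since it acts fibrewise.'' It contributes in two essential ways. First, $[\D_{CP}\otimes\gamma,s_\xi]\neq 0$: the generators $s_\xi$ shift the $(m,k)$-grading, and boundedness of this commutator is exactly what the choice of $\psi$ in \cite{GMR} is designed to achieve; your reduction of Condition 1.\ to Proposition \ref{connprop} plus \cite{DGM}-type half-closedness of each $\D_{m,k}$ only accounts for the $1\otimes_{\nabla_\Xi}\Dsla_M$ part. Second, the unbounded weights $\psi(m,k)$ enter the graph norms, so $\Dom(\D_E)$ and $\Dom(\D_E^*)$ are weighted $\ell^2$-sums and Condition 2.\ ($j\Dom(\D_E^*)\subseteq\Dom(\D_E)$) does \emph{not} follow from the per-summand inclusions $j\Dom(\D_{m,k}^*)\subseteq\Dom(\D_{m,k})$ alone: one needs $\sum_{m,k}\bigl(\|\psi(m,k)jf_{m,k}\|^2+\|\D_{m,k}jf_{m,k}\|^2\bigr)\lesssim \|j\|_{\Lip}^2\bigl(\|f\|^2+\|\D_E^*f\|^2\bigr)$ with a constant uniform in $(m,k)$, which the paper obtains by first reducing to $j\in C^\infty_c(M^\circ)$ (using that $\mathcal{O}_E$ preserves $\Dom(\D_E)$) and then using that the Gra\ss mann connections make $[\D_{m,k},j]$ equal to Clifford multiplication by $\mathrm{d}j$ independently of $(m,k)$. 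You flag the analogous uniformity problem only for Conditions 1) and 3) of the Clifford normal, but it already arises --- and is where the real work lies --- in showing $(\J\lhd\mathcal{O}_E,\h_E,\D_E)$ is a relative spectral triple. Relatedly, the eigenspaces of $\D_{CP}$ are finitely generated projective $C(\ol{M})$-modules rather than finite-dimensional spaces; what actually gives compactness of the resolvent is that each summand $(1+\psi(m,k)^2+\D_{m,k}^*\D_{m,k})^{-1}$ is compact (it factors through $H^2\cap H^1_0$) with norm at most $(1+\psi(m,k)^2)^{-1}\to 0$. With these points repaired your outline becomes a correct, if slightly more laborious, alternative to the paper's doubling argument.
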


\begin{proof}
We first verify that $(\J\lhd\mathcal{O}_E,\h_E,\D_E)$ 
is a relative spectral triple. We can double the manifold 
$M$ obtaining a closed manifold $2M$ 
and a vector bundle $2V\to 2M$. Set $2E:=\Gamma(2M,2V)$. 
The restriction induces a surjection $\Gamma^1(2M,2V)\to \Gamma^1(M,V)$. 
Let $\mathcal{O}_{2E}\subseteq O_{2E}$ 
denote the $*$-subalgebra generated by $\Gamma^1(2M,2V)$ 
and $\D_{2E}$ the operator constructed 
as in Equation \eqref{deconstr2} on 
$\h_{2E}:=\Xi_{2E}\otimes_{C(2M)}L^2(2M,2S)$. The collection 
$(\mathcal{O}_{2E},\h_{2E},\D_{2E})$ is a spectral triple by \cite{GMR}. 

In particular, any element of $\mathcal{O}_{2E}$ acts 
continuously on $\Dom(\D_{2E})$ in its graph topology. 
We have that $\Dom(\D_E)$ is the closure of 
$\bigoplus_{m,k}\Gamma^1_c(M^\circ, S\otimes \Xi^V_{m,k})$ 
in the graph topology of $\Dom(\D_{2E})$. Any 
$\xi\in \Gamma^1(M,V)$ lifts to a $\tilde{\xi}\in \Gamma^1(2M,2V)$ 
and if $\sigma\in \Dom(\D_E)$ is a limit of 
$(\sigma_j)_{j\in \mathbb{N}}\in 
\bigoplus_{m,k}\Gamma^1_c(M^\circ, S\otimes \Xi^V_{m,k})$ 
in the graph topology, then $s_\xi \sigma$ is the limit of 
$(s_{\tilde{\xi}}\sigma_j)_{j\in \mathbb{N}}
=(s_\xi \sigma_j)_{j\in \mathbb{N}}\in 
\bigoplus_{m,k}\Gamma^1_c(M^\circ, S\otimes \Xi^V_{m,k})$. 
It follows that $\{s_\xi: \xi\in \Gamma^1(M,V)\}$ 
preserves $\Dom(\D_E)$ and 
$[\D_E,s_\xi]= [\D_{2E},s_{\tilde{\xi}}]$ thereon. 
We conclude that $\mathcal{O}_E$ preserves $\Dom(\D_E)$ and has bounded commutators with $\D_E$.


We decompose $\h_E=\bigoplus_{m,k} \h_{m,k}$, 
where $\h_{m,k}:=L^2(M,S\otimes \Xi^V_{m,k})$, and 
$\D_E=\overline{\bigoplus_{m,k} (\psi(m,k)\gamma+\D_{m,k})}$, 
where $\D_{m,k}$ is the minimal closure of 
$1\otimes_{\nabla_{m,k}}\Dsla_M$ and $\gamma$ 
denotes the grading on $S$. Therefore, 
$$
(1+\D_E^*\D_E)^{-1}=\bigoplus_{m,k} (1+\psi(m,k)^2+\D_{m,k}^*\D_{m,k})^{-1}.
$$
Each summand 
$
(1+\psi(m,k)^2+\D_{n,k}^*\D_{m,k})^{-1}:L^2(M,S\otimes \Xi_{m,k}^V)
\to L^2(M,S\otimes \Xi_{m,k}^V)
$ 
is compact since it factors through 
$H^2(M,S\otimes \Xi_{m,k}^V)\cap H^1_0(M,S\otimes \Xi_{m,k}^V)$. Moreover, 
$$
\| (1+\psi(m,k)^2+\D_{m,k}^*\D_{m,k})^{-1}\|_{\B(L^2(M,S\otimes \Xi_{m,k}^V))}
\leq 
(1+\psi(m,k)^2)^{-1}\to 0, \quad\mbox{as $m,k\to \infty$}.
$$
It follows that $(1+\D_E^*\D_E)^{-1}$ is compact.
We can write 
$$
\Dom(\D_E):=\{(f_{m,k})_{m,k}\in \bigoplus_{m,k} H^1_0(M,S\otimes \Xi_{m,k}^V): \; 
\sum_{m,k}\|\psi(m,k)f_{m,k}\|_{\h_{m,k}}^2+\|\D_{m,k}f_{m,k}\|_{\h_{m,k}}^2<\infty\}.
$$
Similarly, we can write 
$$
\Dom(\D_E^*):=\{(f_{m,k})_{m,k}\in 
\bigoplus_{m,k} \Dom\left((\D_{m,k}+\psi(m,k)\gamma_S)^*\right): 
\; \sum_{m,k}\|(\psi(m,k)\gamma_S+\D_{m,k})^*f_{m,k}\|_{\h_{m,k}}^2\!\!<\infty\}.
$$
To prove that $j\Dom(\D_E^*)\subseteq \Dom(\D_E)$ for all $j\in \J$, it suffices to 
prove $j\Dom(\D_E^*)\subseteq \Dom(\D_E)$ for all $j\in C^\infty_c(M)$. Fix $j\in C^\infty_c(M)$. 
For $f=(f_{m,k})_{m,k}\in \Dom(\D_E^*)$ we have that 
$(jf_{m,k})_{m,k}\in \bigoplus_{m,k} H^1_0(M,S\otimes \Xi_{m,k}^V)$. 
Therefore, 
\begin{align*}
\sum_{m,k}\|\psi(m,k)jf_{m,k}\|_{\h_{m,k}}^2+&\|\D_{m,k}jf_{m,k}\|_{\h_{m,k}}^2=
\sum_{m,k}\|(\psi(m,k)\gamma_S+\D_{m,k})^*jf_{m,k}\|_{\h_{m,k}}^2\leq\\
&\leq \|j\|_{\Lip}^2
\left(\|f\|_\h^2+\sum_{m,k}\|(\psi(m,k)\gamma_S+\D_{m,k})^*f_{m,k}\|_{\h_{m,k}}^2\right)
<\infty.
\end{align*}
We conclude that $j\Dom(\D_E^*)\subseteq \Dom(\D_E)$. 
In summary, $(\J\lhd\mathcal{O}_E,\h_E,\D_E)$ 
is a relative spectral triple. 
The symplectic form from Lemma \ref{nondegeprop} takes the form 
$$
\omega_{\D_E^*}(\xi,\eta)=
\sum_{m,k}\int_{\de M}
(\xi_{m,k}|n\eta_{m,k})_{S\otimes \Xi_{m,k}^V}\,\vol_{\de M}, 
\quad\xi,\eta\in \Dom(\D^*_E)/\Dom(\D_E).
$$
The fact that any 
unit normal  to the boundary induces a Clifford normal $n$
in the sense of Definition \ref{defn:normal} 
follows as in the case of a manifold with boundary 
in Example \ref{classicalexampleonmfd}.
The inner product $\Ideal{\cdot,\cdot}_{n}$ 
associated with the Clifford normal $n$ on 
$$
\bigoplus_{m,k} \Gamma^\infty(\de M, S|_{\de M}\otimes  \Xi^V_{m,k}|_{\de M})
\subseteq \mathfrak{H}^{1/2}_n
$$ 
coincides with the inner product on 
$\mathfrak{H}_{E}$. By the density of 
$\bigoplus_{m,k} \Gamma^\infty(\de M, S|_{\de M}\otimes \Xi^V_{m,k}|_{\de M})$ 
inside $\mathfrak{H}_{n}$ 
and $\mathfrak{H}_{E}$, we conclude that
$\mathfrak{H}_{n}=\mathfrak{H}_{E}$. The computation that 
$(\D_E)_n=\D_{E,\de M}$ follows from the classical case 
on each summand $\Gamma^\infty(\de M, S|_{\de M}\otimes \Xi^V_{m,k}|_{\de M})$.
\end{proof}

\begin{question}
Assume that $d$ is odd. 
Is it true that the geometric and $K$-homological boundaries 
coincide in our example, i.e. is it true that 
\begin{equation}
\label{computingboundary}
\de[(\J\lhd\mathcal{O}_E,\h_E,\D_E)]
=[(\mathcal{O}_\de\otimes \Cl_1,\mathfrak{H}_{E},\D_{E,\de M})]
\in K^{d}(O_{E_\partial}\otimes \Cl_1)?
\end{equation}
We reduce our graded boundary spectral triple to an odd spectral triple
as in Lemma \ref{lem:oddbdrytriple}, so that the ungraded Dirac operator takes the form
$\D_{E,\de}
=\D_{CP}\otimes (-in)+1\otimes_{\nabla_{\Xi|_{\de M}}}\Dsla_{\de M}$. 
It is clear that condition 1.\ in Proposition \ref{prop:boundarycomparisonnew} holds. 
Thus, the identity \eqref{computingboundary} 
would follow once $P_C^+-P_{\geq}$ is compact on 
$(\mathfrak{H}^{-1/2}_{n})^+$.
We write 
$\D_{\de M, m,k}=1\otimes_{\nabla_{m,k}|_{\de M}}\Dsla_{\de M}$.
The operators $P_C^+$ and $P_{\geq}$ both decompose over 
$m$ and $k$ and to check 
condition 2.\ in Proposition \ref{prop:boundarycomparisonnew} 
one needs to show that 
as $m,k\to \infty$, 
$$
\left\|(1+\D_{E,\de}^2)^{-1/4}(p_{m,k}\otimes 1)\left(P^+_{C,m,k}-\chi_{[0,\infty)}\left( \psi(m,k)(-in)+\D_{\de M,m,k}\right)\right)(1+\D_{E,\de}^2)^{1/4}\right\|_{L^2\to L^2}\!\!\to 0,
$$
where $P^+_{C,m,k}$ is the Calderon projector for 
$$
\begin{pmatrix} 0 & \psi(m,k)-i\D_{m,k}\\ \psi(m,k)+i\D_{m,k} & 0\end{pmatrix}.
$$
\end{question}

\appendix

\section{Relative $KK$-theory}
\label{subsec:Khom}

We begin by summarising the basic definitions of relative $K$-homology and $KK$-theory, 
generalising the presentation in \cites{BaumDouglas,HigsonRoe} to the bivariant setting.

\begin{defn}
\label{relativefrehdolm}
Let $A$ and $B$ be $\Z/2$-graded $C^*$-algebras, with $A$ separable and $B$ $\sigma$-unital, and let $J\triangleleft A$ denote a graded $G$-invariant closed $*$-ideal.
\begin{itemize}
\item A $G$-equivariant \textbf{even relative Kasparov module} 
$(\rho, X_B,F)$ for $(J\lhd A,B)$ consists of a $\Z/2$-graded $G$-equivariant
representation $\rho:A\rightarrow \End^*_B(X_B)$ on a countably 
generated $\Z/2$-graded $G$-equivariant $B$-Hilbert $C^*$-module $X_B$, 
and an odd operator $F\in \End^*_B(X_B)$ such that 
$\rho(a)(gFg^{-1}-F)$, $[F,\rho(a)]_\pm$, $\rho(j)(F-F^*)$ and $\rho(j)(1-F^2)$ 
are $B$-compact for all $a\in A$, $j\in J$ and $g\in G$. 
\item Assume that $A$ and $B$ are trivially $\Z/2$-graded. A $G$-equivariant 
\textbf{odd relative Kasparov module} for $(J\lhd A,B)$, 
is a triple $(\rho, X_B,F)$ consisting of a
$G$-equivariant representation $\rho:A\rightarrow \End^*_B(X_B)$ on a countably  
generated $G$-equivariant $B$-Hilbert $C^*$-module $X_B$, and an operator 
$F\in \End^*_B(X_B)$ such that 
$gFg^{-1}-F$, $[F,\rho(a)]$, $\rho(j)(F-F^*)$ and $\rho(j)(1-F^2)$ 
are $B$-compact for all $a\in A$, $j\in J$ and $g\in G$. 
\item A relative Kasparov module is \textbf{degenerate} if 
$$
gFg^{-1}-F=[F,\rho(a)]_\pm=\rho(j)(F-F^*)=\rho(j)(1-F^2)=0,\quad\text{ for all }a\in A,\,j\in J, \,g\in G.
$$
\item Whenever $F:[0,1]\to \End^*_B(X_B)$ is a norm-continuous map such that 
$(\rho, X_B,F(t))$ is a 
$G$-equivariant even/odd relative Kasparov module for all $t\in [0,1]$, we say that 
$(\rho, X_B,F(0))$ and  $(\rho, X_B,F(1))$ 
are operator homotopic.
\end{itemize}
We will often omit the notation $\rho$, in which case we write $(X_B,F)$ for a relative Kasparov module.
\end{defn}

Following \cite{BaumDouglas}, if $B=\C$ we also use the terminology relative Fredholm module instead of 
relative Kasparov module. The condition that $\rho(a)(gFg^{-1}-F)$ is $B$-compact for all $a\in A$ and 
$g\in G$ guarantees that a locally compact (for the $A$-action) perturbation of $F$ is $G$-equivariant.

We note that a $G$-equivariant even/odd \textbf{Kasparov module} for $(A,B)$ is nothing but
a $G$-equivariant even/odd relative Kasparov module for $(A\lhd A,B)$.
If $(\rho, X_B,F)$ is a $G$-equivariant relative Kasparov module for $(J\triangleleft A,B)$, it 
is also a $G$-equivariant Kasparov module for $(J,B)$. The converse need not hold.

\begin{rem}
\label{functorrembounded}
Relative Kasparov modules depend contravariantly on $J\triangleleft A$.
That is, if $\varphi:A_1\to A_2$ is an equivariant $*$-homomorphism, $J_2\triangleleft A_2$ a
graded closed $G$-invariant $*$-ideal containing $\varphi(J_1)$ and $(\rho, X_B,F)$ is a 
$G$-equivariant relative Kasparov module for $(J_2\triangleleft A_2,B)$ then 
$\varphi^*(\rho, X_B,F):=(\rho\circ \varphi,X_B,F)$
is a $G$-equivariant relative Kasparov module for $(J_1\triangleleft A_1,B)$. 
In particular, if $I\triangleleft A$ is an ideal with $I\triangleleft J$, any $G$-equivariant 
relative Kasparov module for $(J\triangleleft A,B)$ 
is also a $G$-equivariant relative Kasparov module for $(I\triangleleft A,B)$.
\end{rem}

\begin{defn}
We define the \textbf{relative $KK$-group} 
$KK^0_G(J\lhd A,B)$ to be equivalence classes of $G$-equivariant even 
relative Kasparov modules for $(J\triangleleft A,B)$ under the equivalence relation 
generated by unitary equivalence, 
operator homotopy and the addition of degenerate 
Kasparov modules \cite{KasparovKK,KasparovNovikov}. We set 
$$KK^1_G(J\lhd A,B):=KK^0_G(J\wh{\otimes}\Cl_1\lhd A\wh{\otimes}\Cl_1,B).$$ 
\end{defn}

Here $\Cl_1$ denotes the complex Clifford algebra on one generator, 
and all tensor products are graded. If $G$ is the trivial group, we write $KK^*(J\triangleleft A,B)$ 
instead of $KK^*_G(J\triangleleft A,B)$. The relative $K$-homology groups are defined by
$$
K^*_G(J\triangleleft A):=KK^*_G(J\triangleleft A,\C).
$$
The relative $KK$-groups are indeed groups as a rotation trick shows that 
$-[(\rho, X_B,F)]=[(\wt{\rho},X_B^{\textnormal{op}},-F)]$ 
where $X_B^{\textnormal{op}}:=X_B$ with the opposite grading and
$\wt{\rho}(a^++a^-)=\rho(a^+)-\rho(a^-)$ where $a^+$ is even and $a^-$ is odd.
We note that for trivially graded $A$ and $B$, 
we can also define $KK^1_G(J\lhd A,B)$ to be the set of equivalence classes of 
$G$-equivariant odd relative Kasparov modules 
under the equivalence relation generated by unitary equivalence, 
operator homotopy and the addition of degenerate Kasparov modules \cite{KasparovKK,KasparovNovikov}.

Restricting a $G$-equivariant relative Kasparov module for $J\triangleleft A$ 
along the inclusion $J\triangleleft A$ gives a $G$-equivariant Kasparov module for $J$. 
Hence there is a natural map $KK^*_G(J\lhd A,B)\rightarrow KK^*_G(J,B)$. 
\begin{prop}[Excision]
If $A\to A/J$ is semisplit, e.g. if $A/J$ is nuclear, 
the natural map $KK^*_G(J\lhd A,B)\rightarrow KK^*_G(J,B)$
is an isomorphism.
\end{prop}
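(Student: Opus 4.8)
The plan is to prove the excision isomorphism $KK^*_G(J\lhd A,B)\to KK^*_G(J,B)$ by establishing that it is both surjective and injective, modelling the argument on the classical $K$-homology excision statement in \cite{HigsonRoe}*{Section 5.4} and the equivariant generalities of \cite{thomsenequi}. Since the $\Cl_1$-shift reduces the odd case to the even case, it suffices to treat $KK^0$. The semisplitness hypothesis on $A\to A/J$ provides a completely positive contractive equivariant lift $s:A/J\to A$, which is the crucial input: it produces an approximate identity for $J$ that is quasicentral in $A$, and more importantly it lets us promote a representation of $J$ to a representation of $A$ that is ``close to'' $\rho$ on $J$.

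For surjectivity, I would start with a $G$-equivariant even Kasparov module $(\rho_J,X_B,F)$ for $(J,B)$. The problem is to extend the representation $\rho_J:J\to\End^*_B(X_B)$ to a representation of $A$ without destroying the relative compactness conditions. The standard device: since $\rho_J$ is (after stabilising by a degenerate module) essential, it extends uniquely to a strictly continuous representation $\wt\rho$ of the multiplier algebra $M(J)$ on $X_B$, and then one restricts along $A\to M(J)$. One must then check that $[\wt\rho(a),F]_\pm$ is $B$-compact for all $a\in A$: this follows from quasicentrality of an approximate unit $(u_\lambda)\subset J$ together with $\rho_J(j)(1-F^2),\rho_J(j)(F-F^*),[\rho_J(j),F]_\pm$ being $B$-compact and a standard estimate writing $a\approx u_\lambda a u_\lambda$ modulo terms that commute with $F$ up to compacts. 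This shows the map is surjective.

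For injectivity one argues that if a relative Kasparov module for $(J\lhd A,B)$ becomes operator-homotopic to a degenerate module after restriction to $J$, then it was already so as a relative module. Here the homotopy taking place ``over $J$'' has to be upgraded to one over $A$, and the same multiplier-algebra extension trick applied to the homotopy (which is a Kasparov module for $J$ over $B\otimes C[0,1]$, hence its representation extends to $M(J)$ and restricts to $A$) does the job, provided one checks the relative compactness conditions are preserved along the homotopy — again via quasicentral approximate units. One also needs the elementary fact, noted already in the excerpt (Remark \ref{functorrembounded} and the surrounding discussion), that restriction along $J\lhd A$ is the natural map in question, and that adding degenerate relative modules is compatible with restriction.

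The main obstacle is the extension-of-representations step and the verification that relative compactness survives it: one must be careful that $\wt\rho(a)$ for $a\in A$, although no longer an element whose product with $(1-F^2)$ etc.\ is assumed compact a priori, does satisfy $[\wt\rho(a),F]_\pm\in\End^0_B(X_B)$. This is where quasicentrality of the approximate identity of $J$ inside $A$ — which is exactly what the completely positive splitting $s:A/J\to A$ guarantees via Arveson/Kasparov's technical theorem in the $G$-equivariant setting — is essential, and it is the only place the semisplitness hypothesis is genuinely used. Everything else is bookkeeping with the equivalence relation defining $KK$.
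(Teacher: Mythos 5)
Your sketch is the Higson--Roe route transplanted verbatim to Hilbert modules, whereas the paper does not reprove excision at all: it observes that the Baum--Douglas argument \cite{BaumDouglas}*{Theorem 14.24} generalizes to the bivariant setting, and it explicitly cautions that the proof of \cite{HigsonRoe}*{Theorem 5.4.5} for $B=\C$ ``is more direct, but not always applicable''. The steps that make that proof work over $\C$ are precisely the ones that break for general $B$. First, you cannot reduce to an essential representation of $J$: over a Hilbert module the submodule $\overline{\rho_J(J)X_B}$ need not be orthogonally complemented, and adding a degenerate module does not make a non-essential representation essential; without essentiality there is no strictly continuous extension of $\rho_J$ to $M(J)$, so the restriction along $A\to M(J)$ cannot even be formed as stated.

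Second, and more seriously, the compactness of $[F,\wt\rho(a)]_\pm$ for $a\in A$ is not bookkeeping. For $j\in J$ you know $[F,\rho_J(j)]_\pm\in\End^0_B(X_B)$, and $\wt\rho(a)$ is only a strict limit of the elements $\rho_J(u_\lambda a)$; the corresponding commutators converge strictly, and a strict limit of $B$-compacts need not be $B$-compact. Your estimate ``$a\approx u_\lambda a u_\lambda$ modulo terms that commute with $F$ up to compacts'' assumes exactly the statement to be proved, so the argument is circular; in general the given $F$ simply does not have compact commutators with $A$, and every known proof first replaces $F$ by a locally compact perturbation $F'$ (a compression or averaging along a quasicentral approximate unit, controlled in the bivariant setting by Kasparov's technical theorem) before commutators with all of $A$ become compact. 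Relatedly, you have misplaced the semisplit hypothesis: quasicentral approximate units exist for any ideal in any $C^*$-algebra by Arveson, with no splitting required; the completely positive splitting of $A\to A/J$ is what feeds into the Stinespring/technical-theorem step used to build the modified cycle (this is how it is used in \cite{BaumDouglas} and, elsewhere in this paper, in Higson's description of the boundary map), and the injectivity half needs that same machinery applied to homotopies rather than ``the same trick''.
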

The approach of Baum-Douglas to the excision theorem 
\cite{BaumDouglas}*{Theorem 14.24} generalizes to the bivariant setting. 
The proof in \cite{HigsonRoe}*{Theorem 5.4.5} in the case $B=\C$ is 
more direct, but not always applicable. 

For $A$ trivially graded and $A\to A/J$ semisplit, we can use excision to write the six-term exact 
sequence associated with the short exact sequence $0\to J\to A\to A/J\to 0$ as
$$
\xymatrix{KK^0_G(A/J,B)\ar[r]^-{\pi^*}&KK^0_G(A,B)\ar[r]^-{\iota^*}&
KK^0_G(J\lhd A,B)\ar[d]^-\de\\KK_G^1(J\lhd A,B)\ar[u]^-\de&
KK_G^1(A,B)\ar[l]^-{\iota^*}&KK^1_G(A/J,B)\ar[l]^-{\pi^*}}.
$$
The advantage of using the relative $KK$-theory 
$KK^*_G(J\lhd A,B)$ instead of $KK_*(J,B)$ is that the 
boundary map is more ``computable''. 
In \cite{Higson}, Higson writes down Fredholm 
modules representing $\de[(\rho,\h,F)]$ for a relative 
Fredholm module $(\rho,\h,F)$ (see also \cite{HigsonRoe}*{Proposition 8.5.6}). 
Let  $\wt{A}$ denote the unitisation of $A$ in the case 
that $A$ is not unital, and $\wt{A}=A$ when $A$ is unital. 
Higson's expression for the boundary map is 
not entirely constructive as it assumes knowledge of the 
completely positive splitting (see \cite{HigsonRoe}*{Definition 5.3.6}) 
$\sigma:\wt{A/J}\rightarrow\wt{A}$ and that it is already in dilation form 
(see \cite{HigsonRoe}*{Definition 8.5.5}). The precise Stinespring dilation
that splits $\wt{A}\to \wt{A/J}$ can be very difficult to construct in examples.

In the case that $A$ and $B$ are trivially $\Z/2$-graded, the even-to-odd boundary map 
$\de:KK^0_G(J\lhd A,B)\rightarrow KK^1_G(A/J,B)$ 
can also be described using extensions. The following can be 
found in \cite{HigsonRoe}*{p. 39 ff}, \cite{KasparovKK}*{\S7}. The equivariant theory of extensions 
is well presented in \cite{thomsenequi}, where the case of compact groups and 
proper actions is considered in \cite{thomsenequi}*{Section 9}.
In \cite{KasparovKK}, Kasparov defines the extension groups 
in greater generality than we do here.  
When discussing extensions, we restrict to trivially $\Z/2$-graded $C^*$-algebras.

Let $\K_G$ denote the $G$-$C^*$-algebra of compact operators on $L^2(G)\otimes \h$ for 
a separable infinite-dimensional Hilbert space $\h$. 
Recall that an extension of a trivially $\Z/2$-graded $C^*$-algebra $A$ by an equivariantly stable $C^*$-algebra 
$B\otimes \K_G$ is a $G$-equivariant short exact sequence
$$
0\to B\otimes \K_G\to E\to A\to 0.
$$
Since $B\otimes \K_G$ is an invariant ideal in $E$, there is an equivariant $*$-homomorphism 
$i_E:E\to \mathcal{M}(B\otimes \K_G)$.
The Busby invariant $\beta:A\to \mathcal{Q}(B\otimes \K_G):=\mathcal{M}(B\otimes \K_G)/B\otimes \K_G$ 
of an extension is an equivariant $*$-homomorphism given by choosing a linear splitting of $t:A\to E$ 
and composing with $i_E$ and the quotient mapping 
$\pi:\mathcal{M}(B\otimes \K_G)\to  \mathcal{Q}(B\otimes \K_G)$.
An extension is determined by its Busby invariant up to isomorphism. To be precise,
an equivariant $*$-homomorphism $\beta:A\to \mathcal{Q}(B\otimes \K_G)$ 
defines an equivariant extension $E_\beta$ by
$$
E_\beta:=\{(a,x)\in A\oplus \mathcal{M}(B\otimes \K_G): \beta(a)=\pi(x)\}.
$$
When $\beta$ comes from an extension $E$, the two extensions $E$ and $E_\beta$ are
stably isomorphic, and isomorphic when $\beta$ is injective.

The isomorphism classes of equivariant extensions form an 
abelian semigroup $E_G(A,B)$ under direct sum of 
Busby invariants after choosing an equivariant $*$-monomorphism $\K_G\oplus \K_G\to \K_G$.
We say that an extension with Busby invariant $\beta:A\rightarrow \mathcal{Q}(B\otimes \K_G)$ 
is \textbf{split} if there is an equivariant $*$-homomorphism 
$\wt{\beta}:A\rightarrow \mathcal{M}(B\otimes \K_G)$ 
that lifts $\beta$, i.e. $\beta=\pi\circ \wt{\beta}$. We say that an extension $E$ is semisplit if 
$E\to A$ admits a completely positive splitting. Averaging over the compact group $G$ guarantees that 
semisplit extensions admit equivariant completely positive splittings. The set of isomorphism classes of 
split extensions forms a sub-semigroup $D_G(A,B)\subseteq E_G(A,B)$, and so does the set of 
isomorphism classes of semisplit extensions.

\begin{defn}
The \textbf{extension group} $\Ext_G(A,B)$ is the semigroup quotient $E_G(A,B)/D_G(A,B)$. 
We define $\Ext^{-1}_G(A,B)$ as the sub-semigroup generated by semisplit extensions.
\end{defn}

For simplicity, if $G$ is the trivial group, we drop $G$ from the notations. 
For $A$ and $B$ trivially $\Z/2$-graded, 
there is a mapping $KK^1_G(A,B)\to \Ext_G(A,B)$ defined at the level of cycles as follows. 
Given an equivariant odd Kasparov module $(\rho,X_B,F)$ for $(A,B)$, we consider the mapping
$$
\beta_F:A\to \End^*_B(X_B)/\K_B(X_B), \quad \beta_F(a):=\pi\left(\frac{F+1}{2}\rho(a)\right),
$$
where $\pi:\End^*_B(X_B)\to \End^*_B(X_B)/\K_B(X_B)$ denotes the quotient map.
The conditions, for $a\in A$, $a(g^{-1}Fg-F)$, $[F,\rho(a)]$, $\rho(a)(F-F^*)$, $\rho(a)(F^2-1)\in \K_B(X_B)$ 
ensure the $\beta_F$ is an equivariant $\ast$-homomorphism. After stabilization, we can assume that 
$X_B=B\otimes \h\otimes L^2(G)$ and then the Calkin algebra is
$\End^*_B(X_B)/\K_B(X_B)=\mathcal{Q}(B\otimes \K_G)$.
The map $(\rho,X_B,F)\mapsto\beta_F$ induces a well-defined mapping 
$KK_G^1(A,B)\rightarrow\Ext_G(A,B)$. In fact
 this mapping is an isomorphism 
onto $\Ext^{-1}_G(A,B)$, \cite{thomsenequi}*{Theorem 9.2}. 

When $A$ is nuclear and $G$ is the trivial group, $\Ext^{-1}(A,B)=\Ext(A,B)$, and $\Ext(A,B)$ is a group. 
Unfortunately, the inverse mapping $\Ext^{-1}(A,B)\rightarrow KK_1(A,B)$ is not defined at the 
level of cycles and involves a choice of completely positive splitting. 

One advantage of the extension picture of odd 
$K$-homology is that the boundary mapping 
$KK_G^0(J\lhd A,B)\rightarrow KK_G^1(A,B)$ becomes easy to 
describe, and we do this now. Let $(\rho, X_B,F)$ be an equivariant even relative 
Kasparov module for $(J\lhd A,B)$, where $A$ and $B$ are
trivially $\Z/2$-graded, $A$ is separable, $B$ is $\sigma$-unital, and 
$A\to A/J$ is semisplit. 
Assume that $F$ is equivariant, self-adjoint and that $F^2$ is a projection.
Let $X_B=X_B^+\oplus X_B^-$ 
be the $\Z/2$-grading of $X_B$, and let 
$F_{\pm}:X_B^\pm\rightarrow X_B^\mp$ be the 
even-to-odd part and the odd-to-even part, respectively, of $F$. 
Our assumptions on $F$ 
guarantee that $F^+$ and $(F^+)^*$ are equivariant
partial isometries. We define $\wt{V}^+:=F^+$ and $\wt{V}^-:=(F^+)^*$. 
The modules $\ker(\wt{V}_\pm)$ are $G$-invariant complemented submodules of 
$X_B$, and we let $P_{\ker( \wt{V}_\pm)}$ 
be  the orthogonal projection onto $\ker( \wt{V}_\pm)$. 
Define extensions $\alpha_\pm$, of $A/J$ by
\begin{align*}
&\alpha_\pm:A/J\rightarrow \mathcal{Q}_B(\ker (\wt{V}_\pm)),\quad \alpha^\pm(a)
=\pi(P_{\ker( \wt{V}_\pm)}\wt{a}P_{\ker( \wt{V}_\pm)}),
\end{align*}
where $\wt{a}\in A$ is any lift of $a\in A/J$,  and 
$\pi:\End^*_B(\ker(\wt{V}_\pm))\rightarrow \mathcal{Q}_B(\ker( \wt{V}_\pm))$ is the quotient map. 
Recall our assumption that $A\to A/J$ is semisplit. 
By composing the equivariant completely positive splitting 
$A/J\to A$ with the equivariant completely positive mapping 
$A\to \End^*_B(\ker(\wt{V}_\pm))$, 
$b\mapsto P_{\ker( \wt{V}_\pm)}bP_{\ker( \wt{V}_\pm)}$, we obtain equivariant
completely positive mappings $\tau_\pm:A/J\to \End^*_B(\ker(\wt{V}_\pm))$ such that 
$\alpha_\pm=\pi\circ \tau_\pm$. Therefore $[\alpha_+],[\alpha_-]\in \Ext^{-1}_G(A/J,B)$. 
The following proposition follows immediately from \cite{cumero}*{Proposition 12.15}.

\begin{prop}
\label{eq:bdrymap}
Assume that $0\to J\to A\to A/J\to 0$ is a $G$-equivariant semisplit short exact sequence of trivially 
$\Z/2$-graded $C^*$-algebras. Let $x\in KK^0_G(J\triangleleft A,B)$ 
and assume that $(\rho,X_B,F)$ is 
a representative of $x$ by an equivariant relative 
Kasparov module where $F$ is equivariant, self-adjoint and $F^2$ 
is a projection. The image of $x$ under the boundary mapping 
$\partial: KK^0_G(J\triangleleft A,B)\to KK^1_G(A/J,B)$ is under the 
isomorphism $\Ext^{-1}_G(A/J,B)\cong KK^1_G(A/J,B)$ given 
by 
$$
\partial x=[\alpha_+]-[\alpha_-].
$$
\end{prop}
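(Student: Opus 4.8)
The plan is to deduce the statement by fleshing out the citation to \cite{cumero}*{Proposition 12.15}, i.e.\ the general description of the boundary map of a semisplit extension as a compression of a completely positive lift of the quotient (compare Higson's formula \cite{Higson} and \cite{HigsonRoe}*{Proposition 8.5.6}). The structural point that makes the reduction immediate is that a \emph{relative} Kasparov module for $(J\lhd A,B)$ already comes equipped with a genuine $*$-homomorphism $\rho\colon A\to\End^*_B(X_B)$ restricting to the given $J$-representation; hence in the cited result one may take the completely positive lift of the semisplitting $A/J\to A$ to be $\rho$ itself, and no Stinespring dilation needs to be produced.

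First I would record what the hypotheses on $F$ give. Because $F=F^*$ and $F^2$ is a projection, $1-F^2=P_{\ker F}$ is a projection; with respect to the grading $X_B=X_B^+\oplus X_B^-$ it splits as $P_{\ker(\wt V_+)}\oplus P_{\ker(\wt V_-)}$, where $\wt V_+=F^+$ and $\wt V_-=(F^+)^*$ are partial isometries (the even and odd parts of $F^2$ being exactly their source projections), so $\ker(\wt V_\pm)$ are $G$-invariant complemented submodules. Next I would observe that the relative-module conditions already make $\alpha_\pm$ well defined: $[F,\rho(a)]_\pm\in\End^0_B(X_B)$ for all $a\in A$ forces $P_{\ker(\wt V_\pm)}$ to commute with $\rho(A)$ modulo $\End^0_B$, while $\rho(j)(1-F^2)\in\End^0_B(X_B)$ for $j\in J$ forces $P_{\ker(\wt V_\pm)}\rho(j)\in\End^0_B$, so the compressions $a\mapsto\pi\bigl(P_{\ker(\wt V_\pm)}\wt aP_{\ker(\wt V_\pm)}\bigr)$ descend to $*$-homomorphisms $\alpha_\pm\colon A/J\to\mathcal{Q}_B(\ker(\wt V_\pm))$. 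Then I would invoke excision: under $KK^0_G(J\lhd A,B)\cong KK^0_G(J,B)$ the class $x$ is carried to that of $(\rho|_J,X_B,F)$, so $\partial x$ coincides with the ordinary connecting map applied to this $J$-class. Feeding this cycle, together with the lift $\rho$ and the symmetry $F$, into \cite{cumero}*{Proposition 12.15} identifies $\partial x\in\Ext^{-1}_G(A/J,B)\cong KK^1_G(A/J,B)$ with $[\alpha_+]-[\alpha_-]$; and the equivariant completely positive maps $\tau_\pm$ obtained by compressing the equivariant semisplitting (as in the paragraph preceding the statement) certify that $[\alpha_\pm]\in\Ext^{-1}_G(A/J,B)$.

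The main obstacle I anticipate is bookkeeping rather than anything conceptual, namely reconciling the normalisation conventions of \cite{cumero} with those used here. This has two parts: (i) keeping track of which of the two compressions is attached to the even-to-odd partial isometry $\wt V_+$ and which to the odd-to-even one $\wt V_-$, together with the sign that puts $\alpha_-$ in with a minus — this sign is the grading reversal seen through the suspension identification $KK^1_G(A/J,B)=KK^0_G(A/J\wh{\otimes}\Cl_1,B)$, equivalently through the rotation trick $-[(\rho,X_B,F)]=[(\wt{\rho},X_B^{\textnormal{op}},-F)]$ recalled earlier; and (ii) carrying the $G$-action through every step. Since \cite{cumero} is written non-equivariantly, I would either appeal to the equivariant extension theory of \cite{thomsenequi} (which applies since $G$ is compact, so semisplit extensions admit equivariant completely positive splittings by averaging) or re-run its short argument with the action carried along. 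Neither is difficult, and specialising to $B=\C$ with $F$ the bounded transform (or phase) of a Dirac operator recovers Proposition \ref{representingyo}.
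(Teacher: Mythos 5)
Your proposal is correct and follows essentially the same route as the paper, which proves this proposition simply by invoking \cite{cumero}*{Proposition 12.15} after setting up the compressions $\alpha_\pm$ and using the equivariant completely positive splitting (via \cite{thomsenequi}) to certify $[\alpha_\pm]\in\Ext^{-1}_G(A/J,B)$. Your additional checks (well-definedness of $\alpha_\pm$ from the relative Kasparov conditions, excision, and the sign/grading bookkeeping) are exactly the details the paper leaves implicit in that citation.
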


The case $B=\C$ was considered in \cite{BDT}*{p. 784} and \cite{HigsonRoe}*{Remark 8.5.7}.

The bounded transform of a relative unbounded Kasparov module will not generally square to a projection. 
The next couple of results show that we can nevertheless obtain such a module from 
a relative unbounded Kasparov module under an additional assumption, and hence write 
down the boundary of the relative unbounded Kasparov module in terms of extensions. 
These results give an alternative approach to that using the phase in Proposition \ref{prop:phasetriple}.

\begin{lemma}
\label{lem:doublinguparelativefredholmmodule}
Let $(\rho,X_B,F)$ be an even relative Kasparov module for $(J\lhd A,B)$. Let $T:X_B^+\rightarrow X_B^-$ 
be the even-to-odd part of $F$, and suppose that
\begin{itemize}
\item[1)]
$\|T\|\leq 1$,
\item[2)]
$\ker(T)$ and $\ker(T^*)$ are complemented submodules of $X_B^+$ and $X_B^-$ respectively, and 
$\rho(a)^+(1-T^*T)^{1/2}(1-P_{\ker(T)})$ and $\rho(a)^-(1-TT^*)^{1/2}(1-P_{\ker(T^*)})$ are $B$-compact 
for all $a\in A$, where $\rho(a)^j:X_B^j\rightarrow X_B$ is the decomposition of $\rho(a)$ with respect to 
the $\Z/2$-grading of $X_B$.
\end{itemize}
Let $\wt{F}=\left(\begin{smallmatrix}0&T^*\\T&0\end{smallmatrix}\right)\in \End_B^*(X_B)$, and let 
$$H=
\begin{pmatrix}
\wt{F}&(1-\wt{F}^2)^{1/2}(1-P_{\ker(\wt{F})})\\
(1-\wt{F}^2)^{1/2}(1-P_{\ker(\wt{F})})&-\wt{F}
\end{pmatrix}.$$
Then $(\rho\oplus0,(X\oplus X^{\text{op}})_B,H)$ is an even relative Kasparov module for 
$(J\lhd A,B)$, representing the same class as $(\rho,X_B,F)$.
\end{lemma}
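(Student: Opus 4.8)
The proof has two parts: first, that $(\rho\oplus 0,(X\oplus X^{\text{op}})_B,H)$ is an even relative Kasparov module; second, that it represents the same class as $(\rho,X_B,F)$. For the first part, I would check each condition of Definition \ref{relativefrehdolm} directly. Note that $H$ is odd with respect to the grading on $X_B\oplus X_B^{\text{op}}$ (the off-diagonal blocks swap parity because $X_B^{\text{op}}$ has the opposite grading), and $H=H^*$ automatically since $\wt F=\wt F^*$ and $(1-\wt F^2)^{1/2}(1-P_{\ker\wt F})$ is self-adjoint. A short computation using $\wt F(1-P_{\ker\wt F})=\wt F$, $P_{\ker\wt F}$ commutes with $\wt F$ and with $(1-\wt F^2)^{1/2}$, gives $H^2=\wt F^2+(1-\wt F^2)(1-P_{\ker\wt F})=1-P_{\ker\wt F}$, so $\rho(j)\oplus 0$ times $(1-H^2)$ equals $(\rho(j)P_{\ker\wt F})\oplus 0$, which I must show is $B$-compact. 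Since $P_{\ker\wt F}=P_{\ker T}\oplus P_{\ker T^*}$, this amounts to showing $\rho(j)^+P_{\ker T}$ and $\rho(j)^-P_{\ker T^*}$ are $B$-compact for $j\in J$; this follows from hypothesis 2) applied to $j\in J$ together with the fact that on $\ker T$, $(1-T^*T)^{1/2}$ is the identity, so $\rho(j)^+P_{\ker T}=\rho(j)^+(1-T^*T)^{1/2}(1-P_{\ker T})^{\perp}$-type manipulations — more carefully, $\rho(j)^+(1-T^*T)^{1/2}$ restricted to $\ker T$ equals $\rho(j)^+|_{\ker T}$, and one splits $\rho(j)^+=\rho(j)^+(1-P_{\ker T})+\rho(j)^+P_{\ker T}$; using 2) for the first summand and the relative Fredholm condition $\rho(j)(1-F^2)$ compact (note $1-T^*T$ is a compressed version) one concludes. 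The conditions $[H,\rho(a)\oplus 0]_\pm$ and $\rho(a)(gHg^{-1}-H)$ $B$-compact reduce to the corresponding statements for $\wt F$ and for $(1-\wt F^2)^{1/2}(1-P_{\ker\wt F})$; the $\wt F$ part follows from $[F,\rho(a)]_\pm$ compact and $\rho(a)(F-F^*)$ compact (so $\wt F$ and $F$ differ by something $A$-locally compact and one passes commutators through), and the functional-calculus part follows since $(1-\wt F^2)^{1/2}$ is a norm-limit of polynomials in $\wt F^2$ so has $A$-locally compact commutators, and $P_{\ker\wt F}$ has $A$-locally compact commutators because $\rho(a)^\pm(1-P_{\ker T})$-type terms control it via hypothesis 2) and a standard argument (the projection onto the kernel of $T$ has locally compact commutators with $A$ when $\rho(a)(1-T^*T)(1-P_{\ker T})$ is locally compact and $[T,\rho(a)]_\pm$ is compact, by approximating $P_{\ker T}$ with $f_n(T^*T)$ for suitable $f_n$). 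Finally $\rho(j)(H-H^*)=0$.

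**Same class.** For the homotopy, I would use the standard "adding a degenerate module and rotating" argument. The module $(0,X_B^{\text{op}},-\wt F)$ has image of $\rho$ equal to zero, so it is degenerate (all the required expressions vanish because the representation is zero), hence $(\rho,X_B,F)$ and $(\rho,X_B,F)\oplus(0,X_B^{\text{op}},-\wt F)$ have the same class. Now observe that $(\rho,X_B,F)$ is operator-homotopic to $(\rho,X_B,\wt F)$ via $t\mapsto t\wt F+(1-t)F = F + t(\wt F - F)$: writing $F=\left(\begin{smallmatrix}F_{11}&T^*\\T&F_{22}\end{smallmatrix}\right)$ with $F_{11},F_{22}$ the diagonal blocks (odd, since $F$ is odd on $X_B$ — wait, $F$ is odd so the diagonal blocks are off-diagonal... let me instead say) — the point is that $\wt F - F$ has $A$-locally compact product with $\rho$, being built from $F-F^*$ and the skew-adjoint part, so each member of the path is a relative Kasparov module and the path is norm-continuous. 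Thus $(\rho,X_B,F)\oplus(0,X_B^{\text{op}},-\wt F)$ has the same class as $(\rho\oplus 0,(X\oplus X^{\text{op}})_B,\wt F\oplus(-\wt F))$. Finally, $\wt F\oplus(-\wt F)$ and $H$ are operator-homotopic on $X_B\oplus X_B^{\text{op}}$ via the rotation $H_\theta := \left(\begin{smallmatrix}\wt F & \sin\theta\,(1-\wt F^2)^{1/2}(1-P_{\ker\wt F})\\ \sin\theta\,(1-\wt F^2)^{1/2}(1-P_{\ker\wt F}) & -\wt F\end{smallmatrix}\right)$ for $\theta\in[0,\pi/2]$ — at $\theta=0$ we get $\wt F\oplus(-\wt F)$ and at $\theta=\pi/2$ we get $H$; each $H_\theta$ is an even relative Kasparov module by the same checks as in the first part (the $B$-compactness of $\rho(j)(1-H_\theta^2)=\rho(j)(1-\sin^2\theta)(1-\wt F^2)(1-P_{\ker\wt F}) \oplus \dots$ plus $\rho(j)P_{\ker\wt F}$-terms, all handled as above), and the path is norm-continuous. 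Chaining these homotopies and degenerate additions gives $[(\rho\oplus 0,(X\oplus X^{\text{op}})_B,H)]=[(\rho,X_B,F)]$.

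**Main obstacle.** The delicate point is the handling of the kernel projection $P_{\ker\wt F}=P_{\ker T}\oplus P_{\ker T^*}$: one must show it has $A$-locally compact commutators and that $\rho(j)P_{\ker\wt F}$ is $B$-compact, using only the hypothesis that $\rho(a)^+(1-T^*T)^{1/2}(1-P_{\ker T})$ and $\rho(a)^-(1-TT^*)^{1/2}(1-P_{\ker T^*})$ are $B$-compact together with the relative Fredholm conditions. The standard trick is to approximate $1-P_{\ker T}$ in the strong-$*$ topology by $g_n(T^*T)$ where $g_n$ is a continuous function with $g_n(0)=0$, $g_n\to 1$ on $(0,\|T\|^2]$, and to note $\rho(a)^+(1-g_n(T^*T))\to \rho(a)^+P_{\ker T}$ strongly while $\rho(a)^+ g_n(T^*T)$ differs from a $B$-compact operator by something controlled by $\rho(a)^+(1-T^*T)^{1/2}(1-P_{\ker T})$ via a further functional-calculus comparison; one must be careful because $B$-compacts are not closed under strong limits, so the argument needs $\rho(j)$ with $j\in J$ (where one genuinely gets norm-convergence from $\rho(j)(1-F^2)$ compact) rather than $\rho(a)$ with general $a\in A$. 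This is exactly the kind of subtlety the paper flags elsewhere (Remark \ref{rem:j-compact}), so I expect the write-up here to invoke or mirror the argument used in the proof of Theorem \ref{thm:alternate} in the appendix.
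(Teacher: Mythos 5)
Your ``Same class'' part is essentially the paper's entire proof: the paper homotopes $F$ to $\wt{F}$ along $t\mapsto tF+(1-t)\wt{F}$, adds the degenerate module $(0,X_B^{\text{op}},-\wt{F})$, and then notes that $(\rho\oplus 0)(a)\left(H-\left(\begin{smallmatrix}\wt{F}&0\\0&-\wt{F}\end{smallmatrix}\right)\right)$ is the strictly upper-triangular matrix with entry $\rho(a)(1-\wt{F}^2)^{1/2}(1-P_{\ker(\wt{F})})$, which is $B$-compact precisely by hypothesis 2); hence $H$ is a locally compact self-adjoint perturbation of $\wt{F}\oplus(-\wt{F})$ and the straight-line homotopy (your $H_\theta$ is the same path reparametrised) finishes the argument. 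Consequently your first part --- the direct verification that $(\rho\oplus0,(X\oplus X^{\text{op}})_B,H)$ is a relative Kasparov module, including $H^2=1-P_{\ker(\wt{F})}$, the compactness of $\rho(j)P_{\ker(\wt{F})}$, and locally compact commutators of $P_{\ker(\wt{F})}$ with $\rho(A)$ --- is redundant: it all follows from the perturbation argument, and one never needs $[P_{\ker(\wt{F})},\rho(a)]$ by itself, because the representation on the second summand is zero, so only one-sided products with the off-diagonal entry occur, and these are hypothesis 2) and its adjoint.

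Two of your justifications should be repaired if you retain them. First, $\rho(a)(\wt{F}-F)$ is not locally compact for general $a\in A$; the relative conditions only give $\rho(j)(F-F^*)$ compact for $j\in J$, so your reason for the homotopy $t\mapsto tF+(1-t)\wt{F}$ is off. The correct argument is that $[\wt{F},\rho(a)]_\pm$ is compact because its corners are, up to adjoints, corners of $[F,\rho(a)]_\pm$, while the $J$-conditions for the interpolants follow from $\rho(j)(F-F^*)$ and $\rho(j)(1-F^2)$ being compact. Second, the ``main obstacle'' you describe --- approximating $P_{\ker T}$ by $g_n(T^*T)$ in the strong topology --- is a red herring: hypothesis 2) is formulated exactly so that no approximation is needed (the paper uses it verbatim in the displayed perturbation identity), and the compactness of $\rho(j)P_{\ker T}$ for $j\in J$, if you want it, follows in one line from the compactness of $\rho(j)^+(1-T^*T)$ (a consequence of $\rho(j)(1-F^2)$ and $\rho(j)(F-F^*)$ compact) multiplied on the right by $P_{\ker T}$.
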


\begin{proof}
The idea is similar to that of \cite{Blackadar}*{\S 17.6}. 
We note first that $[0,1]\ni t\mapsto tF+(1-t)\wt{F}$ 
is an operator homotopy of relative Kasparov modules, 
and thus $(\rho,X_B,F)$ and $(\rho,X_B,\wt{F})$ are 
relative Kasparov modules representing the same class. 
We then observe that the relative Fredholm module 
$(0,X_B^{\text{op}},-\wt{F})$ is degenerate and so 
$\left(\rho\oplus0,(X\oplus X^{\text{op}})_B,
\left(\begin{smallmatrix}\wt{F}&0\\0&-\wt{F}\end{smallmatrix}\right)\right)$ 
is a relative Kasparov module representing the 
same class as $(\rho,X_B,\wt{F})$. Since $\|\wt{F}\|\leq1$, 
the operator $H$ is well-defined, and we see that
\begin{align*}
(\rho\oplus0)(a)\left(H-\begin{pmatrix}\wt{F}&0\\0&-\wt{F}\end{pmatrix}\right)
&=\begin{pmatrix}0&\rho(a)(1-\wt{F}^2)^{1/2}(1-P_{\ker(\wt{F})})\\0&0\end{pmatrix},
\end{align*}
which is $B$-compact by 2), and thus $H$ is a ``relatively compact'' 
perturbation of $\left(\begin{smallmatrix}\wt{F}&0\\0&-\wt{F}\end{smallmatrix}\right)$, 
and so the straight line homotopy between $H$ and 
$\left(\begin{smallmatrix}\wt{F}&0\\0&-\wt{F}\end{smallmatrix}\right)$ 
is a homotopy of relative Kasparov modules.
\end{proof}
\begin{prop}\label{prop:extbdrymap}
Let $(\rho,X_B,F)$ be as in Lemma \ref{lem:doublinguparelativefredholmmodule}, 
and assume in addition that $A$ and $B$ are trivially $\Z/2$-graded, 
that $A\rightarrow A/J$ is semisplit, and that $F$ is equivariant. 
Let $T:X_B^+\rightarrow X_B^-$ be the even-to-odd part of $F$, 
and define extensions $\beta_j$, $j=0,1$ of $A/J$ by
\begin{align*}
&\beta_0:A/J\rightarrow\mathcal{Q}_B(\ker(T)),\quad \beta_0(a)=\pi(P_{\ker( T)}\wt{a}P_{\ker( T)}),\\
&\beta_1:A/J\rightarrow\mathcal{Q}_B(\ker( T^*)),\quad\beta_1(a)=\pi(P_{\ker(T^*)}\wt{a}P_{\ker (T^*)}),
\end{align*}
where $\wt{a}\in A$ is any lift of $A/J$ and $\pi:\End^*_B(\ker(T))\rightarrow\mathcal{Q}_B(\ker(T)):=
\End_B^*(\ker(T))/\K_B(\ker(T))$ is the quotient map. Then
\begin{align*}
\de[(\rho,X_B, F)]=[\beta_0]-[\beta_1]\in\Ext_G^{-1}(A/J)\cong KK^1_B(A/J,B).
\end{align*}
\end{prop}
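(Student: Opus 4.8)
The plan is to reduce the statement to Proposition \ref{eq:bdrymap} by replacing $(\rho,X_B,F)$ with an operator-homotopic relative Kasparov module whose operator is self-adjoint and squares to a projection, and then to identify the resulting kernel projections with $P_{\ker(T)}$ and $P_{\ker(T^*)}$. First I would invoke Lemma \ref{lem:doublinguparelativefredholmmodule}: the hypotheses 1) and 2) here are precisely those in that lemma, so $(\rho\oplus 0,(X\oplus X^{\textnormal{op}})_B,H)$ is an even relative Kasparov module representing the same class as $(\rho,X_B,F)$, where $H$ is the self-adjoint operator built there. Because $F$ is equivariant and $\wt F$ is obtained from $T=F^+$, the operator $\wt F$ and hence $H$ are equivariant. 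One checks directly that $H$ is self-adjoint and that $H^2$ is a projection: with respect to the decomposition $X_B\oplus X_B^{\textnormal{op}}$ and writing $Q=1-P_{\ker(\wt F)}$, a short matrix computation gives $H^2=\bigl(\begin{smallmatrix}\wt F^2+(1-\wt F^2)Q^2 & 0\\ 0 & \wt F^2+(1-\wt F^2)Q^2\end{smallmatrix}\bigr)$, and since $Q$ commutes with $\wt F$ and $\wt F^2 Q=\wt F^2$, this equals $Q\oplus Q$, an (equivariant) projection. Thus the hypotheses of Proposition \ref{eq:bdrymap} are met by $(\rho\oplus 0,(X\oplus X^{\textnormal{op}})_B,H)$.

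Next I would compute the two kernel modules $\ker(\wt V_{\pm})$ appearing in Proposition \ref{eq:bdrymap} for this particular cycle. The $\Z/2$-grading of $X_B\oplus X_B^{\textnormal{op}}$ has even part $X_B^+\oplus X_B^-$ and odd part $X_B^-\oplus X_B^+$ (the opposite grading on the second summand), and $H$ interchanges these. Writing out the even-to-odd part $H^+$ of $H$ in block form and restricting to the relevant subspaces, the partial isometry $\wt V^+=H^+$ has initial space whose kernel is computed from the block structure; the point is that the "new" directions coming from the $(1-\wt F^2)^{1/2}(1-P_{\ker(\wt F)})$ entries contribute nothing to the kernel because $(1-\wt F^2)^{1/2}(1-P_{\ker\wt F})$ is injective on $\ker(\wt F)^\perp$ while $\wt F$ itself is injective there. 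A direct bookkeeping then gives $\ker(\wt V^+)\cong\ker(T)$ and $\ker(\wt V^-)\cong\ker(T^*)$ as $G$-invariant complemented submodules, with the induced $A/J$-compressions being exactly $\beta_0$ and $\beta_1$ up to the stabilisation built into the definition of $\Ext$. Applying Proposition \ref{eq:bdrymap} then yields $\de[(\rho,X_B,F)]=[\beta_0]-[\beta_1]$ in $\Ext_G^{-1}(A/J,B)\cong KK^1_G(A/J,B)$.

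The main obstacle I anticipate is the careful identification of $\ker(\wt V^{\pm})$ with $\ker(T),\ker(T^{*})$, keeping track of the opposite grading on $X_B^{\textnormal{op}}$ and of the fact that $H$ is defined with the extra off-diagonal block $(1-\wt F^2)^{1/2}(1-P_{\ker\wt F})$. One must verify that on the kernel of $H^2=Q\oplus Q$ the element $\wt V^{+}$ genuinely restricts to the partial isometry $T$ (and $\wt V^{-}$ to $T^{*}$) rather than some stably-equivalent perturbation, and that the complementedness of $\ker(T),\ker(T^{*})$ hypothesised in 2) is exactly what is needed to run Proposition \ref{eq:bdrymap}. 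A secondary, more routine point is checking equivariance of all the projections and splittings, which follows by averaging over $G$ as in the discussion preceding Proposition \ref{eq:bdrymap}. Everything else is formal.
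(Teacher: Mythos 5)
Your proposal is correct and follows essentially the same route as the paper: pass to the doubled module $(\rho\oplus 0,(X\oplus X^{\mathrm{op}})_B,H)$ of Lemma \ref{lem:doublinguparelativefredholmmodule}, observe that $H=H^*$ with $H^2=(1-P_{\ker(\wt F)})\oplus(1-P_{\ker(\wt F)})$ a projection, apply Proposition \ref{eq:bdrymap}, and identify the resulting extensions with $\beta_0$ and $\beta_1$. The only caveat is that $\ker(H^+)$ is $\ker(T)$ together with a copy of $\ker(T^*)$ sitting in the second summand, where the representation is $0$ so that piece only contributes a split extension (and your injectivity claim for $(1-\wt F^2)^{1/2}(1-P_{\ker(\wt F)})$ on $\ker(\wt F)^\perp$ is not needed and can fail, e.g.\ where $\wt F^2=1$; the clean argument is $\ker H=\ker H^2$) --- but these are exactly the bookkeeping points the paper's own proof also treats briefly.
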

\begin{proof}
The relative Kasparov module $(\rho,X_B,F)$ is equivalent to $(\rho\oplus0,(X\oplus X^{\text{op}})_B,H)$, 
where $H$ is as in Lemma \ref{lem:doublinguparelativefredholmmodule}. Since $H=H^*$ and 
$H^2=\left(\begin{smallmatrix}1-P_{\ker(\wt{F})}&0\\0&1-P_{\ker(\wt{F})}\end{smallmatrix}\right)$ 
is a projection, we can apply Equation \eqref{eq:bdrymap}. Since $\ker(H)=\ker(\wt{F})\oplus\ker(\wt{F})$,  
$\wt{F}^+=T$, $\wt{F}^-=T^*$, and the representation of $A$ is $\rho\oplus0$, the extensions 
$\alpha_0:A/J\rightarrow\mathcal{Q}_B(\ker(H^+))$ and $\alpha_1:A/J\rightarrow\mathcal{Q}_B(\ker(H^-))$ 
are equivalent to $\beta_0$ and $\beta_1$ respectively.
\end{proof}

Although the boundary map has a description 
in terms of extensions, getting from an extension back to a 
Kasparov module is not straightforward. Ideally we would 
want not just a Kasparov module representing the 
extension class, but an unbounded Kasparov module, e.g. a spectral triple. 
Spectral triples, and their generalizations unbounded 
Kasparov modules, carry geometric information as well as $K$-homological information,
which can often facilitate the computation of Kasparov products. 
One would like to be able to compute the boundary map in the unbounded setting,
an often difficult case of the product.

Since we know that the boundary map is (in principle) computable 
from relative Fredholm modules, the first step of computing the 
boundary map in the unbounded setting is to find 
unbounded representatives of relative Fredholm modules, 
just as spectral triples are unbounded representatives of 
Fredholm modules. These unbounded representatives are relative spectral triples.

\section{The proof of Lemma \ref{lem:adjointbound} and Theorem \ref{thm:alternate}}
\label{sec:technicalproof}

Recall that $A$ and $B$ are $\Z/2$-graded $G$-$C^*$-algebras, 
with $A$ separable and $B$ $\sigma$-unital, 
that $J$ is a $G$-invariant graded ideal in $A$, and that $(\J\lhd\A,X_B,\D)$ is 
a relative unbounded Kasparov module for $(J\lhd A,B)$.

\begin{proof}[Proof of Lemma \ref{lem:adjointbound}]
We first address some domain issues. Since 
$(1+\lambda+\D^*\D)(1+\lambda+\D^*\D)^{-1}=1$,
it follows that
$
(1+\lambda+\D^*\D)^{-1}:\Dom(\D)\rightarrow\{\zeta\in\Dom(\D^*\D):(1+\lambda+\D^*\D)\zeta\in\Dom(\D)\}.
$
Let 
$
\mu\in\{\zeta\in\Dom(\D^*\D):(1+\lambda+\D^*\D)\zeta\in\Dom(\D)\}$,
and let $\eta=(1+\lambda+\D^*\D)\mu$. Since $\Dom(\D^*\D)\subset\Dom(\D)$, $(1+\lambda)\mu\in\Dom(\D)$, and hence
$
\D^*\D\mu=\eta-(1+\lambda)\mu\in\Dom(\D).
$
That is,
\begin{align}
\label{eqn:domainissues}
\D^*\D(1+\lambda+\D^*\D)^{-1}\cdot\Dom(\D)\subset\Dom(\D).
\end{align}
Hence
\begin{align*}
&j\D(1+\lambda+\D^*\D)^{-1}-(1+\lambda+\D_e^*\D_e)^{-1}j\D\\
&=\Big(j\D-(1+\lambda+\D_e^*\D_e)^{-1}j\D(1+\lambda+\D^*\D)\Big)(1+\lambda+\D^*\D)^{-1}\\
&=\Big(j\D-(1+\lambda)(1+\lambda+\D_e^*\D_e)^{-1}j\D-(1+\lambda+\D_e^*\D_e)^{-1}j\D\D^*\D\Big)
(1+\lambda+\D^*\D)^{-1}\\
&\qquad \text{(this is well-defined by Equation \eqref{eqn:domainissues})}\\
&=\Big(\D_e^*\D_e(1+\lambda+\D_e^*\D_e)^{-1}j\D
-(1+\lambda+\D_e^*\D_e)^{-1}j\D\D^*\D\Big)(1+\lambda+\D^*\D)^{-1},
\end{align*}
since $1-(1+\lambda)(1+\lambda+x)^{-1}=x(1+\lambda+x)^{-1}$. By Lemma \ref{lem:domains},
$$
\D_e(1+\lambda+\D_e^*\D_e)^{-1}|_{\Dom(\D)}=(1+\lambda+\D_e\D_e^*)^{-1}\D^*|_{\Dom(\D)}
$$
since $\D_e\subset\D^*$. Since $j\cdot\Dom(\D^*)\subset\Dom(\D)$, 
\begin{align*}
&j\D(1+\lambda+\D^*\D)^{-1}-(1+\lambda+\D_e^*\D_e)^{-1}j\D=\\
&\big(\D_e^*(1+\lambda+\D_e\D_e^*)^{-1}\D^*j\D
-(1+\lambda+\D_e^*\D_e)^{-1}j\D\D^*\D\big)(1+\lambda+\D^*\D)^{-1}.
\end{align*}
Similarly, Lemma \ref{lem:domains} and the facts that $\D\subset\D_e^*$ and $j\cdot\Dom(\D)\subset\Dom(\D)$ imply that 
$$
\D_e^*(1+\lambda+\D_e\D_e^*)^{-1}j|_{\Dom(\D)}=(1+\lambda+\D_e^*\D_e)^{-1}\D j|_{\Dom(\D)}.
$$
Hence
\begin{align*}
&j\D(1+\lambda+\D^*\D)^{-1}-(1+\lambda+\D_e^*\D_e)^{-1}j\D\\
&=\big(\D_e^*(1+\lambda+\D_e\D_e^*)^{-1}\D^*j\D
-(-1)^{\deg j}\D_e^*(1+\lambda+\D_e\D_e^*)^{-1}j\D^*\D\\
&\qquad\quad+(-1)^{\deg j}(1+\lambda+\D_e^*\D_e)^{-1}\D j\D^*\D
-(1+\lambda+\D_e^*\D_e)^{-1}j\D\D^*\D\big)
(1+\lambda+\D^*\D)^{-1}\\
&=\D_e^*(1+\lambda+\D_e\D_e^*)^{-1}[\D^*,j]_\pm\D(1+\lambda+\D^*\D)^{-1}\\
&\qquad\quad+(-1)^{\deg j}(1+\lambda+\D_e^*\D_e)^{-1}[\D,j]_\pm\D^*\D(1+\lambda+\D^*\D)^{-1}.
\end{align*}
Thus
\begin{align*}
\ol{j\D(1+\lambda+\D^*\D)^{-1}-(1+\lambda+\D_e^*\D_e)^{-1}j\D}&
=\D^*_e(1+\lambda+\D_e\D_e^*)^{-1}\ol{[\D,j]_\pm}\D(1+\lambda+\D^*\D)^{-1}\\
&\hspace{-14pt}+(-1)^{\deg j}(1+\lambda+\D_e^*\D_e)^{-1}\ol{[\D,j]_\pm}\D^*\D(1+\lambda+\D^*\D)^{-1},
\end{align*}
where we used Lemma \ref{lem:commutator} 2) for $\ol{[\D^*,j]_\pm}=\ol{[\D,j]_\pm}$, and so
the estimates in parts 3) and 4) of Lemma \ref{lem:domains} yield
\begin{align*}
&\|\ol{j\D(1+\lambda+\D^*\D)^{-1}-(1+\lambda+\D_e^*\D_e)^{-1}j\D}\|\leq\frac{2\|\ol{[\D,j]_\pm}\|}{1+\lambda}.&&\qedhere
\end{align*}
\end{proof}

We break the proof of Theorem \ref{thm:alternate} into several lemmas. 
Recall that $F$ is the bounded transform $F=\D(1+\D^*\D)^{-1/2}$.
The main tool used to prove that the bounded transform of a relative unbounded Kasparov module is a 
relative Kasparov module is the integral formula for fractional powers, \cite{Pedersen}*{p. 8},
which we use in the form \cite{CP1}
\begin{align}\label{eq:iffp}
(1+\D^*\D)^{-1/2}=\frac{1}{\pi}\int_0^\infty\lambda^{-1/2}(1+\lambda+\D^*\D)^{-1}\,d\lambda.
\end{align}
We would like to be able to take terms such as $\D[(1+\D^*\D)^{-1/2},a]_\pm$ and use \eqref{eq:iffp} to write
$$
\D[(1+\D^*\D)^{-1/2},a]_\pm
=\frac{1}{\pi}\int_0^\infty\lambda^{-1/2}\D[(1+\lambda+\D^*\D)^{-1},a]\,d\lambda.
$$
For this expression to be well-defined requires that the 
integral converges in operator norm, and this is why we need the norm estimates in parts 3) and 4)
of Lemma \ref{lem:domains}.

\begin{lemma}
\label{lem:compactcommutator}
 Let $(\J\lhd\A,X_B,\D)$ be an
equivariant relative unbounded Kasparov module for $(J\lhd A,B)$, and let 
$F=\D(1+\D^*\D)^{-1/2}$ be the bounded transform 
of $\D$. The graded commutator $[F,a]_\pm$ is $B$-compact for all $a\in A$.
\end{lemma}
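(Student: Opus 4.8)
The plan is to reduce the general statement to the case of $a\in\A$ by a density argument, and then to handle $a\in\A$ via the integral formula for fractional powers \eqref{eq:iffp} together with the commutator identity of Lemma \ref{lem:commutator} 3).

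First I would note that the map $A\to\End^*_B(X_B)$, $a\mapsto[F,a]_\pm$, is bounded (since $\|F\|\le 1$ by Lemma \ref{lem:domains} 3)), and that the $B$-compact operators $\End^0_B(X_B)$ form a closed subspace. Hence it suffices to prove $[F,a]_\pm\in\End^0_B(X_B)$ for $a$ in the dense subalgebra $\A$. Fix $a\in\A$ of homogeneous degree. Writing $F=\D(1+\D^*\D)^{-1/2}$ and using $[F,a]_\pm = [\D,a]_\pm(1+\D^*\D)^{-1/2} + \D[(1+\D^*\D)^{-1/2},a]_\pm$, the first summand is $B$-compact by Condition 1 (the commutator $[\D,a]_\pm$ is adjointable) and Condition 3 ($a(1+\D^*\D)^{-1/2}$ and hence $(1+\D^*\D)^{-1/2}$ composed appropriately is $B$-compact; more precisely $[\D,a]_\pm(1+\D^*\D)^{-1/2}$ is a product of an adjointable operator with something $B$-compact after inserting $a$ or using the resolvent identities). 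The real work is the second summand.

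For the second summand, I would insert \eqref{eq:iffp} to write, at least formally,
$$
\D\big[(1+\D^*\D)^{-1/2},a\big]_\pm=\frac{1}{\pi}\int_0^\infty \lambda^{-1/2}\,\D\big[(1+\lambda+\D^*\D)^{-1},a\big]\,d\lambda,
$$
and then substitute the expression from Lemma \ref{lem:commutator} 3). This produces two families of terms, each of the shape $\D(1+\lambda+\D^*\D)^{-1}\D^*(1+\lambda+\D\D^*)^{-1}[\D,a]_\pm(1+\lambda+\D^*\D)^{-1}$ and $\D(1+\lambda+\D^*\D)^{-1}(1+\lambda+\D^*\D)^{-1}[\D^*,a]_\pm\D(1+\lambda+\D^*\D)^{-1}$ (up to signs). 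Using the norm bounds in Lemma \ref{lem:domains} 3)–4), each integrand is bounded in operator norm by a constant times $\lambda^{-1/2}(1+\lambda)^{-1}$ (or better), so the integral converges absolutely in operator norm; this justifies the manipulation and shows the second summand is a norm-convergent integral of operators. To extract $B$-compactness, I would factor an $a$-dependent resolvent into each integrand — using that $(1+\lambda+\D^*\D)^{-1}a$ or $a(1+\lambda+\D^*\D)^{-1}$ is $B$-compact by the last statement of Lemma \ref{lem:domains} applied to Condition 3 — so that each integrand lies in $\End^0_B(X_B)$, and conclude by closedness of $\End^0_B(X_B)$ under the norm-convergent integral.

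The main obstacle I anticipate is the bookkeeping needed to produce a $B$-compact factor in \emph{every} term: the naive commutator identity gives terms where the $a$-dependence sits in $[\D,a]_\pm$ or $[\D^*,a]_\pm$ rather than in a resolvent, and these commutators are merely adjointable, not compact. Overcoming this requires commuting an $a$ (or a resolvent-times-$a$) past $\D$ and the resolvents using Conditions 1 and 2 and the domain-stability facts in Lemma \ref{lem:commutator}, at the cost of additional lower-order terms that must themselves be shown to be $B$-compact and integrable — a finite but delicate induction on the number of commutators. It is precisely this estimate-and-factorization step that the paper defers to Appendix \ref{sec:technicalproof}, and I expect my proof to follow the same route, mirroring \cite{BaajJulg} and \cite{Hilsum}*{\S3} but keeping careful track of the non-self-adjointness via the $\D$ versus $\D^*$ distinction in Lemma \ref{lem:commutator}.
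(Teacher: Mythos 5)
There is a genuine gap, and it sits exactly where you locate the ``main obstacle'': your plan never uses Condition 4.\ of Definition \ref{defn:alternate}, but the lemma is false with Conditions 1.--3.\ alone (see Remark \ref{rem:alternatecondition} and the reference to \cite{FMR} there), so no amount of commuting $a$ past resolvents can close the argument. Concretely, after splitting $[F,a]_\pm=[\D,a]_\pm(1+\D^*\D)^{-1/2}+\D[(1+\D^*\D)^{-1/2},a]$ and inserting Lemma \ref{lem:commutator} 3), the only compactness input you invoke is Condition 3, i.e.\ compactness of $a(1+\lambda+\D^*\D)^{-1/2}$ (and its adjoint). That controls terms where $a$ sits next to a resolvent of $\D^*\D$, but the ``left-handed'' half of the commutator forces you, after taking adjoints, through the bounded transform of $\D^*$, and there the unavoidable term has the shape $\D^*(1+\lambda+\D\D^*)^{-1}a\,\D^*\D(1+\lambda+\D^*\D)^{-1}b$: its compactness is precisely the statement that $(1-P_{\ker(\D^*)})(1+\D\D^*)^{-1/2}a$ is $B$-compact, i.e.\ Condition 4. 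For a symmetric, non-self-adjoint $\D$ one cannot trade $(1+\lambda+\D\D^*)^{-1}$ adjacent to $a$ for $(1+\lambda+\D^*\D)^{-1}$ adjacent to $a$ by further commutators: $a(1+\D\D^*)^{-1/2}$ is in general compact only for $a\in J$ (Remark \ref{rem:j-compact}), so your proposed ``induction on the number of commutators'' keeps producing terms of exactly the problematic type. (Your reliance on Conditions 1 and 2 does not help here either, since Condition 2 concerns only $j\in\J$, while the lemma is about all of $A$.)

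A second, related defect is the density reduction. You reduce to single elements $a\in\A$, but the paper reduces to products $[F,ab]_\pm$ with $a,b\in\A$ (these span a dense subset of $A$ by Cohen factorisation/approximate units), and this is not cosmetic: the extra factor $b$ on the right is what makes even the ``easy'' terms compact. In particular your claim that $[\D,a]_\pm(1+\D^*\D)^{-1/2}$ is $B$-compact ``after inserting $a$ or using the resolvent identities'' is unjustified: $(1+\D^*\D)^{-1/2}$ is not compact, $[\D,a]_\pm$ is merely adjointable and is not an element of $\rho(A)$, and in the Condition 4 setting there is no sequence $\phi_k$ as in Condition 4$'$ to approximate it; the paper's proof gets compactness of this term only in the form $[\D,a]_\pm(1+\lambda+\D^*\D)^{-1}b$, with $(1+\lambda+\D^*\D)^{-1}b$ compact by Condition 3 and Lemma \ref{lem:domains}. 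So to repair the proposal you should (i) prove the statement for $[F,ab]_\pm$, $a,b\in\A$, keeping a right factor $b$ throughout, and (ii) handle the summand $a[F,b]_\pm$ by taking adjoints, writing $F^*=\D^*(1+\D\D^*)^{-1/2}$ via \eqref{eq:iffp}, and invoking Condition 4 to make $\D^*(1+\lambda+\D\D^*)^{-1}a$ compact --- which is the route taken in the paper's proof (Lemma \ref{lem:compactcommutator}; the variant using Condition 4$'$ is the separate Lemma \ref{lem:compactcommutatoralt}).
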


\begin{proof}
By density it is enough to show that $[F,ab]_\pm$ is $B$-compact 
for all $a,b\in\A$. 
For $a,b\in\A$ of homogeneous degree, we write
$$[F,ab]_\pm=[F,a]_\pm b+(-1)^{\deg a}a[F,b]_\pm.$$
The first term on the right can be expressed as
$$[F,a]_\pm b=[\D,a]_\pm(1+\D^*\D)^{-1/2}b+\D[(1+\D^*\D)^{-1/2},a] b.$$
The first term is $B$-compact, and using the integral formula for fractional powers 
together with Lemma \ref{lem:commutator} and the estimates in 
parts 3) and 4) of Lemma \ref{lem:domains}, the second term
\begin{align*}
\D[(1+\D^*\D)^{-1/2},a]b&=-\frac{1}{\pi}\int_0^\infty\lambda^{-1/2}\big(\D\D^*(1+\lambda+\D\D^*)^{-1}[\D,a]_\pm(1+\lambda+\D^*\D)^{-1}b\\
&\qquad\qquad+(-1)^{\deg a}\D(1+\lambda+\D^*\D)^{-1}[\D^*,a]_\pm\D(1+\lambda+\D^*\D)^{-1}b\big)\,d\lambda
\end{align*}
is a norm convergent integral, which is $B$-compact. 
Thus $[F,ab]_\pm$ is $B$-compact if and only if 
$a[F,b]_\pm$ is $B$-compact. Taking adjoints and 
using the integral formula for fractional powers, we see that
\begin{align*}
[F^*,b^*]_\pm a^*&=\frac{1}{\pi}\int_0^\infty\lambda^{-1/2}[\D^*(1+\lambda+\D\D^*)^{-1},b^*]_\pm a^*\,d\lambda\\
&=\frac{1}{\pi}\int_0^\infty\lambda^{-1/2}\big(-\D^*\D(1+\lambda+\D^*\D)^{-1}[\D^*,b^*]_\pm(1+\lambda+\D\D^*)^{-1}a^*\\
&\qquad\qquad-(-1)^{\deg b}\D^*(1+\lambda+\D\D^*)^{-1}[\D,b^*]_\pm\D^*(1+\lambda+\D\D^*)^{-1}a^*\\
&\qquad\qquad+[\D^*,b^*]_\pm(1+\lambda+\D\D^*)^{-1}a^*\big)\,d\lambda
\end{align*}
For $\xi\in\Dom(\D)$, and $a,b\in\A$ of homogeneous degree, and using Lemma \ref{lem:domains} (2),
\begin{align*}
&[\D^*(1+\lambda+\D\D^*)^{-1},a]_\pm b\xi=[(1+\lambda+\D^*\D)^{-1}\D,a]_\pm b\xi\\
&=\big((1+\lambda+\D^*\D)^{-1}\D a(1+\lambda+\D^*\D)\\
&\qquad-(-1)^{\deg a}a(1+\lambda+\D^*\D)^{-1}\D(1+\lambda+\D^*\D)\big)(1+\lambda+\D^*\D)^{-1}b\xi\\
&=\big(\D a-\D^*\D(1+\lambda+\D^*\D)^{-1}\D a+(1+\lambda+\D^*\D)^{-1}\D a\D^*\D\\
&\qquad-(-1)^{\deg a}a\D+(-1)^{\deg a}a\D^*\D(1+\lambda+\D^*\D)^{-1}\D\\
&\qquad-(-1)^{\deg a}a(1+\lambda+\D^*\D)^{-1}\D\D^*\D\big)(1+\lambda+\D^*\D)^{-1}b\xi\\
&\text{(since $(1+\lambda)(1+\lambda+x)^{-1}=1-x(1+\lambda+x)^{-1}$)}\\
&=\big([\D,a]_\pm-\D^*\D(1+\lambda+\D^*\D)^{-1}[\D,a]_\pm-(-1)^{\deg a}\D^*\D(1+\lambda+\D^*\D)^{-1}a\D\\
&\qquad+(1+\lambda+\D^*\D)^{-1}\D a\D^*\D+(-1)^{\deg a}a\D^*\D(1+\lambda+\D^*\D)^{-1}\D\\
&\qquad-(-1)^{\deg a}a(1+\lambda+\D^*\D)^{-1}\D\D^*\D\big)(1+\lambda+\D^*\D)^{-1}b\xi.
\end{align*}
Using $\D^*(1+\lambda+\D\D^*)^{-1/2}|_{\Dom(\D)}=(1+\lambda+\D^*\D)^{-1/2}\D$, 
we can rearrange the right-hand side to find a formula which is valid on all of $X_B$, 
which must also agree with the left-hand side on this larger domain since all the terms are bounded. 
Thus we conclude that
\begin{align*}
&[\D^*(1+\lambda+\D\D^*)^{-1},a]_\pm b\\
&=\big([\D,a]_\pm-\D^*\D(1+\lambda+\D^*\D)^{-1}[\D,a]_\pm-(-1)^{\deg a}\D^*\D(1+\lambda+\D^*\D)^{-1}a\D\\
&\qquad+\D^*(1+\lambda+\D\D^*)^{-1}a\D^*\D+(-1)^{\deg a}a\D^*\D(1+\lambda+\D^*\D)^{-1}\D\\
&\qquad-(-1)^{\deg a}a(1+\lambda+\D^*\D)^{-1/2}\D^*(1+\lambda+\D\D^*)^{-1/2}\D^*\D\big)(1+\lambda+\D^*\D)^{-1}b.
\end{align*}
The $B$-compactness of every term on the right-hand side except the fourth 
follows from the $B$-compactness of $(1+\lambda+\D^*\D)^{-1/2}b$ and 
$a(1+\lambda+\D^*\D)^{-1/2}$. So the left-hand side is $B$-compact if and only if
\begin{align}\label{eq:isthiscompact}
\D^*(1+\lambda+\D\D^*)^{-1}a\D^*\D(1+\lambda+\D^*\D)^{-1}b
\end{align}
is $B$-compact. \eqref{eq:isthiscompact} is $B$-compact if $\D^*(1+\lambda+\D\D^*)^{-1}a$ is $B$-compact, 
which in turn is $B$-compact if
$(1-P_{\ker(\D^*)})(1+\lambda+\D\D^*)^{-1/2}a$ is $B$-compact. 
Hence if $(1-P_{\ker(\D^*)})(1+\lambda+\D\D^*)^{-1/2}a$ is $B$-compact for all $a\in\A$, then the integrand of
$$[F^*,a]_\pm b=\frac{1}{\pi}\int_0^\infty\lambda^{-1/2}[\D^*(1+\lambda+\D\D^*)^{-1},a]_\pm b\,d\lambda$$
is $B$-compact for all $a,b\in\A$, and thus $[F,ab]_\pm$ is $B$-compact for all $a,b\in\A$. 
The $B$-compactness of $(1-P_{\ker(\D^*)})(1+\lambda+\D\D^*)^{-1/2}a$ follows from the $B$-compactness
of $(1-P_{\ker(\D^*)})(1+\D\D^*)^{-1/2}a$ and the first resolvent formula, and so the lemma is proven.
\end{proof}

\begin{lemma}
\label{lem:compactcommutatoralt}
 Let $(\J\lhd\A,X_B,\D)$ be a triple satisfying all conditions to be a relative spectral triple for $(J\lhd A,B)$, 
except possibly Condition 4., and suppose this triple satisfies Condition 4'. of Remark \ref{rem:alternatecondition}. Let 
$F=\D(1+\D^*\D)^{-1/2}$ be the bounded transform of $\D$. 
Then the graded commutator $[F,a]_\pm$ is $B$-compact for all $a\in A$.
\end{lemma}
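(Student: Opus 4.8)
\textbf{Proof proposal for Lemma \ref{lem:compactcommutatoralt}.}

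The plan is to mimic the proof of Lemma \ref{lem:compactcommutator}, keeping track of exactly where Condition 4. of Definition \ref{defn:alternate} was used and substituting Condition 4'. of Remark \ref{rem:alternatecondition}. First I would recall from the proof of Lemma \ref{lem:compactcommutator} that by density it suffices to treat $[F,ab]_\pm$ for $a,b\in\A$ of homogeneous degree, and that writing $[F,ab]_\pm=[F,a]_\pm b+(-1)^{\deg a}a[F,b]_\pm$ and then $[F,a]_\pm b=[\D,a]_\pm(1+\D^*\D)^{-1/2}b+\D[(1+\D^*\D)^{-1/2},a]b$ reduces everything, via the integral formula for fractional powers \eqref{eq:iffp}, Lemma \ref{lem:commutator} and the norm estimates in Lemma \ref{lem:domains} parts 3) and 4), to showing that the single term in \eqref{eq:isthiscompact},
$$
\D^*(1+\lambda+\D\D^*)^{-1}a\D^*\D(1+\lambda+\D^*\D)^{-1}b,
$$
is $B$-compact for all $a,b\in\A$. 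Every manipulation up to this point only uses Conditions 1.--3., so it carries over verbatim to the present hypotheses.

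The one place where Condition 4. entered was in concluding that $(1-P_{\ker(\D^*)})(1+\lambda+\D\D^*)^{-1/2}a$ is $B$-compact, which forced \eqref{eq:isthiscompact} to be $B$-compact. Here that route is unavailable, so instead I would use Condition 4'.: there is a sequence $(\phi_k)\subset\rho(\A)$ with $\phi_k[\D,\rho(c)]_\pm\to[\D,\rho(c)]_\pm$ and $[\D,\rho(c)]_\pm\phi_k\to[\D,\rho(c)]_\pm$ in operator norm for all $c\in\A$. The key observation is that the term $\D^*(1+\lambda+\D\D^*)^{-1}a$ can be re-expressed, using $\D^*(1+\lambda+\D\D^*)^{-1}|_{\Dom(\D)}=(1+\lambda+\D^*\D)^{-1}\D$ (Lemma \ref{lem:domains} 2)) together with the commutator identity, so that modulo manifestly $B$-compact terms it looks like $(1+\lambda+\D^*\D)^{-1}[\D,a]_\pm$ plus terms containing a factor $a(1+\lambda+\D^*\D)^{-1/2}$ or $(1+\lambda+\D^*\D)^{-1/2}b$; I would insert $\phi_k$ between $a$ (resp. $[\D,a]_\pm$) and the remaining operators, note that $\phi_k\D^*\D(1+\lambda+\D^*\D)^{-1}b$ or $(1+\lambda+\D^*\D)^{-1}\phi_k b$ is $B$-compact since $\phi_k\in\rho(\A)$ and $\A$ has $B$-compact resolvent (Condition 3. and Lemma \ref{lem:domains}), and then pass to the limit $k\to\infty$ using the operator-norm convergence in Condition 4'. to conclude $B$-compactness of \eqref{eq:isthiscompact}. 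Finally, tracking the $\lambda$-dependence through the estimates of Lemma \ref{lem:domains} 3)--4), the convergence is uniform enough in $\lambda$ that the integral $\frac1\pi\int_0^\infty\lambda^{-1/2}[\D^*(1+\lambda+\D\D^*)^{-1},a]_\pm b\,d\lambda$ remains a norm-convergent integral of $B$-compact operators, hence $B$-compact, giving $[F,ab]_\pm\in\End^0_B(X_B)$.

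The main obstacle I anticipate is exactly the bookkeeping in the second paragraph: rearranging $\D^*(1+\lambda+\D\D^*)^{-1}a\D^*\D(1+\lambda+\D^*\D)^{-1}b$ into a sum whose summands each visibly carry either a two-sided $\A$-factor sandwiching a resolvent (so that Condition 3. applies directly) or a commutator $[\D,a]_\pm$ that can be approximated by $\phi_k[\D,a]_\pm$, while simultaneously controlling the growth in $\lambda$ so that the fractional-power integral still converges and one may interchange limit and integral. This is the same delicate domain-and-estimate juggling that appears in Lemma \ref{lem:compactcommutator} and in \cite{CP1}, \cite{FMR}, and it is where I would expect to spend the bulk of the proof; I would likely defer it to the Appendix as the authors do for the companion lemma.
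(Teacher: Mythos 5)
Your outline is workable but it is not the route the paper takes, and it is substantially more roundabout. The paper's proof under Condition 4'.\ never reduces to products $ab$, never takes adjoints, and never meets the term \eqref{eq:isthiscompact}: it shows directly that $[F,a]_\pm$ is $B$-compact for each $a\in\A$ by writing $[F,a]_\pm=[\D,a]_\pm(1+\D^*\D)^{-1/2}+\D[(1+\D^*\D)^{-1/2},a]$, inserting the sequence $(\phi_k)$ of Condition 4'.\ next to the commutators — e.g.\ $[\D,a]_\pm(1+\D^*\D)^{-1/2}=\lim_k\ol{[\D,a]_\pm}\,\phi_k(1+\D^*\D)^{-1/2}$, and likewise $\D\D^*(1+\lambda+\D\D^*)^{-1}\ol{[\D,a]_\pm}\,\phi_k(1+\lambda+\D^*\D)^{-1}$ and $\D(1+\lambda+\D^*\D)^{-1}\phi_k\ol{[\D,a]_\pm}\,\D(1+\lambda+\D^*\D)^{-1}$ inside the integrand of \eqref{eq:iffp} — so that everything is a norm limit of operators with a visibly compact factor $\phi_k(1+\lambda+\D^*\D)^{-1/2}$ or $(1+\lambda+\D^*\D)^{-1/2}\phi_k$, and the $\lambda$-estimates of Lemma \ref{lem:domains} make the integral norm convergent. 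In other words, the whole point of 4'.\ is that the auxiliary element $b$ (and hence the adjoint computation leading to \eqref{eq:isthiscompact} and to Condition 4.) becomes unnecessary; your plan re-imports all of the Lemma \ref{lem:compactcommutator} machinery only to patch its one 4.-dependent step with the same $\phi_k$-insertion that, applied at the outset, finishes the proof in a few lines.

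The patch itself is also the weakest part of your write-up: the claim that $\D^*(1+\lambda+\D\D^*)^{-1}a\,\D^*\D(1+\lambda+\D^*\D)^{-1}b$ can be rearranged "modulo manifestly $B$-compact terms" into expressions carrying $(1+\lambda+\D^*\D)^{-1}[\D,a]_\pm$ or two-sided $\A$-factors is asserted, not performed, and it is exactly where the difficulties sit. The intertwining identity you quote holds in the form $\D^*(1+\lambda+\D\D^*)^{-1}=(1+\lambda+\D^*\D)^{-1}\D^*$ only on $\Dom(\D^*)$, so the rearrangement must be done on a core (e.g.\ on vectors of the form $b\xi$ with $\xi\in\Dom(\D)$, using \eqref{eqn:domainissues}) and then extended by boundedness, as in the proof of Lemma \ref{lem:compactcommutator}; and the terms that emerge, such as $\D(1+\lambda+\D^*\D)^{-1}[\D^*,a]_\pm\D(1+\lambda+\D^*\D)^{-1}$ or $\D a(1+\lambda+\D^*\D)^{-1}$, are not "manifestly" compact — they require a further application of Lemma \ref{lem:commutator} 3) and then Condition 4'.\ again to place a $\phi_k$ beside the commutator. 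This can all be carried through (and the $\lambda$-decay survives, since the commutator pieces decay like $(1+\lambda)^{-1}$), so your proof could be completed, but only by running, inside the patch, the very argument that constitutes the paper's direct proof. I would recommend discarding the detour through $[F,ab]_\pm$ and \eqref{eq:isthiscompact} and writing the direct argument instead.
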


\begin{proof}
Since $\A$ is dense in $A$ it is enough to show that $[F,a]_\pm$ 
is $B$-compact for all $a\in\A$. Let $a\in\A$ be of homogeneous degree. We can write
\begin{align}
\label{eq:splitup}
[F,a]_\pm=[\D,a]_\pm(1+\D^*\D)^{-1/2}+\D[(1+\D^*\D)^{-1/2},a].
\end{align}
Let $(\phi_k)_{k=1}^\infty\subset\A$ 
be a sequence such that $\phi_k[\D,a]_\pm$ and $[\D,a]_\pm\phi_k$
converge to $[\D,a]_\pm$ in operator norm for all $a\in\A$.  Then
$$
[\D,a]_\pm(1+\D^*\D)^{-1/2}=\lim_{k\rightarrow\infty}\ol{[\D,a]_\pm}\phi_k(1+\D^*\D)^{-1/2}
$$
and so the first term of \eqref{eq:splitup} is $B$-compact. 
By Lemma \ref{lem:commutator} 3) and the integral 
formula for fractional powers \eqref{eq:iffp}, the second term of \eqref{eq:splitup} is
\begin{align*}
\D[(1+\D^*\D)^{-1/2},a]
&=-\frac{1}{\pi}\int_0^\infty\lambda^{-1/2}
\Big(\D\D^*(1+\lambda+\D\D^*)^{-1}[\D,a]_\pm(1+\lambda+\D^*\D)^{-1}\\
&+(-1)^{\deg a}\D(1+\lambda+\D^*\D)^{-1}[\D^*,a]_\pm\D(1+\lambda+\D^*\D)^{-1}\Big)\,d\lambda.
\end{align*}
which is an operator norm convergent integral by Lemma \ref{lem:commutator} 2) and the
estimates in parts 3) and 4) of Lemma \ref{lem:domains}.
The integrand is $B$-compact, 
since we have the operator norm limits of $B$-compact operators
\begin{align*}
&\D\D^*(1+\lambda+\D\D^*)^{-1}[\D,a]_\pm(1+\lambda+\D^*\D)^{-1}
\!=\!\lim_{k\rightarrow\infty}\!\D\D^*(1+\lambda+\D\D^*)^{-1}\ol{[\D,a]_\pm}\phi_k(1+\lambda+\D^*\D)^{-1},
\end{align*}
and
\begin{align*}
&\D(1+\lambda+\D^*\D)^{-1}[\D^*,a]_\pm\D(1+\lambda+\D^*\D)^{-1}
=\lim_{k\rightarrow\infty}\D(1+\lambda+\D^*\D)^{-1}\phi_k\ol{[\D,a]_\pm}\D(1+\lambda+\D^*\D)^{-1}.
\end{align*}
Since the integral converges in operator norm, the integral is 
$B$-compact as well. Hence $[F,a]_\pm$ is $B$-compact for all $a\in\A$.
\end{proof}

\begin{lemma}
\label{lem:theotherstuffiscompacttoo}
Let $(\J\lhd\A,X_B,\D)$ be an
equivariant relative unbounded Kasparov module for $(J\lhd A,B)$.
The operators $j(F-F^*)$ and $j(1-F^2)$ are $B$-compact
 for all $j\in J$. This also holds without Condition 4. of Definition \ref{defn:alternate}.
\end{lemma}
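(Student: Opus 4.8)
The statement to prove is Lemma \ref{lem:theotherstuffiscompacttoo}: for a relative unbounded Kasparov module $(\J\lhd\A,X_B,\D)$ the operators $j(F-F^*)$ and $j(1-F^2)$ are $B$-compact for all $j\in J$, where $F=\D(1+\D^*\D)^{-1/2}$. The plan is to first reduce to $j\in\J$ (or even $j\in\J^2$) by density, since $B$-compactness is preserved under norm limits and $\J$ is dense in $J$; in fact it suffices to handle $j=j_1j_2$ with $j_i\in\J$, and then appeal to the argument in Remark \ref{rem:j-compact} that $B$-compactness for $j\in\J^2$ propagates to all $j\in J$.

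For $j(1-F^2)$: using part 2) of Lemma \ref{lem:domains}, $F^*F=\D^*(1+\D\D^*)^{-1}\D$ on $\Dom(\D)$ and after rearranging $1-F^*F=(1+\D^*\D)^{-1}$. So the cleaner target is $j(1-F^*F)=j(1+\D^*\D)^{-1}$, which is manifestly $B$-compact since $j(1+\D^*\D)^{-1/2}$ is $B$-compact (Condition 3., with $j\in\J\subseteq\A$) and $(1+\D^*\D)^{-1/2}$ is bounded. To get $1-F^2$ rather than $1-F^*F$, I would write $1-F^2=(1-F^*F)+(F^*-F)F=(1-F^*F)+(F^*-F)F$, so $j(1-F^2)=j(1-F^*F)+j(F^*-F)F$, and the second term is $B$-compact once we know $j(F-F^*)$ is. Hence the whole lemma reduces to showing $j(F-F^*)$ is $B$-compact.

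For $j(F-F^*)$: here $F-F^*=\D(1+\D^*\D)^{-1/2}-\D^*(1+\D\D^*)^{-1/2}$. The natural tool is the integral formula for fractional powers \eqref{eq:iffp}, writing
\[
j(F-F^*)=\frac{1}{\pi}\int_0^\infty\lambda^{-1/2}\Bigl(j\D(1+\lambda+\D^*\D)^{-1}-j\D^*(1+\lambda+\D\D^*)^{-1}\Bigr)\,d\lambda,
\]
and then applying Lemma \ref{lem:adjointbound} with the extension $\D_e=\D^*$ — or rather, one uses that $j\D^*(1+\lambda+\D\D^*)^{-1}=j\D^*(1+\lambda+\D\D^*)^{-1}$ and compares $j\D(1+\lambda+\D^*\D)^{-1}$ with a two-sided resolvent-times-$j\D$ expression. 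More directly: on $\Dom(\D)$, using $j\cdot\Dom(\D^*)\subseteq\Dom(\D)$ (Condition 2.) and the identities in Lemma \ref{lem:domains} part 2), one has $j\D^*(1+\lambda+\D\D^*)^{-1}=j(1+\lambda+\D^*\D)^{-1}\D^*$ where defined, so that the integrand becomes $j\bigl(\D-\D^*\bigr)(1+\lambda+\D^*\D)^{-1}$ up to commutator terms of the form $[\D,j]_\pm$ times resolvents, which are $B$-compact by Condition 3. and bounded by Lemma \ref{lem:domains} 3)–4). The quantity $\D-\D^*$ is not an operator, so the correct bookkeeping is via Lemma \ref{lem:adjointbound}: it gives a norm bound $2\|\ol{[\D,j]_\pm}\|/(1+\lambda)$ on the difference $\ol{j\D(1+\lambda+\D^*\D)^{-1}-(1+\lambda+\D^*\D)^{-1}j\D}$, and an entirely analogous computation with $\D^*$ in place of one copy of $\D$ handles the second half; combining, the integrand is a norm-convergent (the $\lambda^{-1/2}(1+\lambda)^{-1}$ factor is integrable on $(0,\infty)$), $B$-compact-valued integrand, so the integral is $B$-compact.

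\textbf{Main obstacle.} The delicate point is the domain bookkeeping in the integrand for $j(F-F^*)$: the naive manipulation $j(\D-\D^*)(1+\lambda+\D^*\D)^{-1}$ is meaningless, so one must phrase everything in terms of the well-defined closed operators $j\D(1+\lambda+\D^*\D)^{-1}$, $j\D^*(1+\lambda+\D\D^*)^{-1}$ and their difference, invoking Lemma \ref{lem:adjointbound} (with $\D_e=\D^*$, which is a closed regular extension of $\D$, using Remark \ref{stabforclosed} that $\ker(\D^*)$ complemented when $B=\C$ or the stated hypotheses) to control the commutator error terms uniformly in $\lambda$ with a $1/(1+\lambda)$ decay. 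Once that estimate is in hand, $B$-compactness of each piece follows from Condition 3. together with boundedness of the relevant resolvent factors from Lemma \ref{lem:domains}, and dominated convergence of the operator-norm integral transfers compactness to $j(F-F^*)$. Note this argument never uses Condition 4., as the lemma asserts.
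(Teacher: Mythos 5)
Your reductions are fine: $1-F^*F=(1+\D^*\D)^{-1}$, so $j(1-F^2)=j(1-F^*F)+j(F^*-F)F$ reduces everything to $j(F-F^*)$, and the density argument passing from $\J$ to $J$ is standard; note also that the paper itself does not prove this lemma but refers to Hilsum's Theorem 3.2 (the advertised mechanism being Lemma \ref{lem:adjointbound} plus the integral formula), so the real question is whether your sketch closes the argument. It does not, for one central reason: you never justify why the integrand $j\D(1+\lambda+\D^*\D)^{-1}-j\D^*(1+\lambda+\D\D^*)^{-1}$ is $B$-compact. The reason you offer --- that the error terms are ``commutator terms of the form $[\D,j]_\pm$ times resolvents, which are $B$-compact by Condition 3'' --- is false: Condition 3 gives compactness of $a(1+\D^*\D)^{-1/2}$ for $a\in\A$, whereas $\ol{[\D,j]_\pm}$ is merely a bounded operator, not an element of the represented algebra, and the explicit error terms produced by Lemma \ref{lem:adjointbound} (e.g. $\D(1+\lambda+\D^*\D)^{-1}\ol{[\D,j]_\pm}\D(1+\lambda+\D^*\D)^{-1}$) are not compact in general. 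Lemma \ref{lem:adjointbound} only controls norms, so as written your argument shows at best that $j(F-F^*)$ is bounded. The compactness of the integrand is true, but for a different reason which is exactly where $j\in J$ (as opposed to $a\in A$) enters: using the intertwinings $\D(1+\lambda+\D^*\D)^{-1}=\ol{(1+\lambda+\D\D^*)^{-1}\D}$ and $\D^*(1+\lambda+\D\D^*)^{-1}=\ol{(1+\lambda+\D^*\D)^{-1}\D^*}$ one writes $j\D(1+\lambda+\D^*\D)^{-1}=j(1+\lambda+\D\D^*)^{-1/2}\cdot\ol{(1+\lambda+\D\D^*)^{-1/2}\D}$ and $j\D^*(1+\lambda+\D\D^*)^{-1}=j(1+\lambda+\D^*\D)^{-1/2}\cdot\ol{(1+\lambda+\D^*\D)^{-1/2}\D^*}$; the second factor is a contraction in each case, and the first is compact by Condition 3, respectively by Remark \ref{rem:j-compact} (Condition 2, first for $j\in\J^2$, then by density). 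This step is absent from your proposal, and without it there is no compactness at all.

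There is a secondary gap in the norm estimate. You need the full integrand to decay like $(1+\lambda)^{-1}$ so that the $\lambda^{-1/2}$-integral converges in norm, and you propose Lemma \ref{lem:adjointbound} for the first half plus ``an entirely analogous computation with $\D^*$ in place of one copy of $\D$'' for the second. That analogue does not come from a verbatim substitution: the hypotheses of Lemma \ref{lem:adjointbound} are not symmetric in $\D$ and $\D^*$ (it uses $\D\subseteq\D_e\subseteq\D^*$ and $j\Dom(\D^*)\subseteq\Dom(\D)$, and the rightmost resolvent being that of $\D^*\D$), and moreover your two halves are compared against different middle terms, so nothing telescopes as stated. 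A workable repair is to apply Lemma \ref{lem:adjointbound} to $j^*$ with both choices $\D_e=\D$ and $\D_e=\D^*$ and take adjoints, which yields $\|\D^*(1+\lambda+\D\D^*)^{-1}j-\D j(1+\lambda+\D\D^*)^{-1}\|$ and $\|\D^*(1+\lambda+\D\D^*)^{-1}j-\D j(1+\lambda+\D^*\D)^{-1}\|$ of order $(1+\lambda)^{-1}$, hence $\|\D j[(1+\lambda+\D^*\D)^{-1}-(1+\lambda+\D\D^*)^{-1}]\|\lesssim(1+\lambda)^{-1}$; combined with the elementary rewriting $j\D^{(*)}R=\pm\D jR\mp\ol{[\D,j]_\pm}R$ on the range of the resolvents this gives the required decay of the integrand. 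With both repairs (compactness via Remark \ref{rem:j-compact} and the decay via the adjoint trick) your overall architecture --- integral formula, integrand compact and $O((1+\lambda)^{-1})$, norm-convergent integral of compacts --- does yield the lemma without Condition 4, but as submitted the key compactness claim is unsupported and the stated justification is incorrect.
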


For a proof see \cite{Hilsum}*{Theorem 3.2}. This result does not require either  Condition 4. or 4'.

Theorem \ref{thm:alternate} now follows, also if Condition 4. of Definition \ref{defn:alternate} is replaced by 
Condition 4'. of Remark \ref{rem:alternatecondition}, by noting that the equivariance of $\D$ implies 
the equivariance of the bounded transform $F$, and combining Lemmas \ref{lem:compactcommutator}, 
\ref{lem:compactcommutatoralt} and \ref{lem:theotherstuffiscompacttoo}.

\begin{bibdiv}
\begin{biblist}
\bib{alrp}{article}{title={The signature package on Witt spaces},author={P. Albin},author={E. Leichtnam},author={R. Mazzeo},author={P. Piazza},journal={Ann. Sci. \'Ec. Norm. Sup\'er.},volume={45, no. 2},date={2012}}
\bib{alrp2}{article}{title={Hodge theory on Cheeger spaces},author={P. Albin},author={E. Leichtnam},author={R. Mazzeo},author={P. Piazza},journal={to appear in Crelle's Journal},note={DOI 10.1515/crelle-2015-0095}}
\bib{BaajJulg}{article}{title={Th\'{e}orie bivariante de Kasparov et op\'{e}rateurs non born\'{e}es dans les $C^*$-modules hilbertiens},author={S. Baaj},author={P. Julg},journal={C. R. Acad. Sci. Paris},volume={296},date={1983},pages={p. 875--878}}
\bib{BaerBall}{article}{title={Boundary value problems for elliptic differential operators of first order},
author={C. B\"ar}, author={W. Ballmann}, journal={Surv.  Differ. Geom.}, volume={XVII}, date={2012}, pages={p.  1-78}}
\bib{battistiseiler}{article}{title={Boundary value problems with Atiyah-Patodi-Singer type conditions and spectral triples},author={U. Battisti},author={J. Seiler},note={arXiv:1503.02897}}
\bib{BDT}{article}{title={Cycles and relative cycles in analytic $K$-homology},author={P. Baum},author={R. G. Douglas},author={M. E. Taylor},journal={J. Differential Geom.},volume={30},date={1989},pages={p. 761--804}}
\bib{BaumDouglas}{article}{title={Relative K Homology and $C^*$-Algebras},author={P. Baum},author={R. G. Douglas},journal={$K$-theory},volume={5},date={1991},pages={p. 1--46}}
\bib{Blackadar}{book}{title={$K$-Theory for Operator Algebras. 2nd ed.},author={Blackadar, B.},date={1998},publisher={Cambridge University Press},address={Cambridge}}
\bib{BlackadarCuntz}{article}{title={Differential Banach algebra norms and smooth subalgebras of $C\sp *$-algebras. },author={Blackadar, B.},author={Cuntz, J.}, journal={ J. Operator Theory},volume={26},date={1991},pages={p. 255--282}}
\bib{BBW}{book}{title={Elliptic Boundary Problems for Dirac Operators},author={Boo\ss-Bavnbek, B.},author={Wojciechowski, K. P.},date={1993},publisher={Birkh\"{a}user},address={Boston}}
\bib{CP1}{article}{title={Unbounded Fredholm modules and spectral flow},author={Carey, A. L.},author={Philips, J.},journal={Canad. J. Math.},date={1998},volume={50},pages={p. 673--718}}
\bib{cheegerjdg}{article}{title={Spectral geometry of singular Riemannian spaces},author={J. Cheeger}, journal={J. Differential Geom.},volume={18, no.4},date={1984},pages={p. 575--657}}
\bib{ciprianiguidoscarlatti}{article}{title={A remark on trace properties of $K$-cycles},author={Cipriani, F.},author={Guido, D.},author={Scarlatti, S.},journal={J. Operator Theory},date={1996},volume={35 no. 1},pages={p. 179--189}}
\bib{Connes}{book}{title={Noncommutative Geometry},author={Connes, A.},date={1994},publisher={Academic Press},address={London and San Diego}}
\bib{ConnesSkandalis}{article}{title={The longitudinal index theorem for foliations},author={Connes, A.},author={Skandalis, G.},journal={Publ. RIMS},date={1984},volume={20},pages={p. 1139--1183}}
\bib{cumero}{book}{title={Topological and Bivariant $K$-theory},author={J. Cuntz}, author={R. Meyer}, author={J.M. Rosenberg},,date={2007},publisher={Birkh\"auser}}
\bib{RobinZk}{article}{title={Geometric $K$-homology with coefficients II: The analytic theory and isomorphism},author={R. J. Deeley},journal={J. $K$-theory},volume={12, no. 2}, pages={p. 235-256},year={2013}}
\bib{DG3}{article}{title={Realizing the analytic surgery group of Higson and Roe geometrically, Part III: Higher invariants},author={R. J. Deeley},author={M. Goffeng},note={arXiv: 1412.1768, to appear in Math. Ann.}}
\bib{DGM}{article}{title={The bordism group of unbounded $KK$-cycles},author={Deeley, R. J.},author={Goffeng, M.},author={Mesland, B.},note={Preprint, arXiv:1503.07398, to appear in Journal of Topology and Analysis}}
\bib{ForsythThesis}{thesis}{title={Boundaries and equivariant products in unbounded Kasparov theory}, author={I. Forsyth}, month={3},year={2016},school={Mathematical Sciences Institute, College of Physical and Mathematical Sciences, The Australian National University}}
\bib{FMR}{article}{title={Dense domains, symmetric operators and spectral triples},author={Forsyth, I.},author={Mesland, B.},author={Rennie, A.},journal={New York J. Math.},date={2014},volume={20},pages={p. 1001--1020}}
\bib{MelroseFreedZk}{article}{title={A mod $k$ index theorem},author={D. S. Freed}, author={R. B. Melrose}, journal={Invent. Math}, volume={107}, year={1990},pages={p. 283-299}}
\bib{GLesch}{article}{title={On the spectral flow for Dirac operators with
local boundary conditions}, author={A. Gorokhovsky}, author={M. Lesch}, journal={Int. Math. Res. Not.}, year={2015}, volume={
no. 17}, pages={p. 8036--8051}}
\bib{GMR}{article}{title={Shift tail equivalence and an unbounded representative of the Cuntz-Pimsner extension},author={M. Goffeng}, author={B. Mesland}, author={A. Rennie}, note={preprint, arXiv:1512.03455}}
\bib{hewettmoiola}{article}{title={On the maximal Sobolev regularity of distributions supported by subsets of Euclidean space},author={D. P. Hewett}, author={A. Moiola}, note={preprint, arXiv:1507.02698}}
\bib{HigsonZk}{article}{title={An approach to $\Z/k$-index theory}, author={Higson, N.},journal={Internat. J. Math.}, volume={1},year={1990},pages={p. 283--299}}
\bib{Higson}{article}{title={$C^*$-algebra extension theory and duality},author={Higson, N.},journal={J. Funct. Anal.},date={1995},volume={129},pages={p. 349--363}}
\bib{HigsonRoe}{book}{title={Analytic K-Homology},author={Higson, N.},author={Roe, J.},date={2000},publisher={Oxford University Press},address={Oxford}}
\bib{Hilsum}{article}{title={Bordism invariance in $KK$-theory},author={M. Hilsum},journal={Math. Scand.},volume={107},date={2010},pages={p. 73--89}}
\bib{hilsumskand}{article}{title={Invariance par homotopie de la signature \`a coefficients dans un fibr\'e presque plat}, author={M. Hilsum},author={G. Skandalis}, journal={J. Reine Angew. Math.}, volume={423 (1992)}, pages={p. 73--99}}
\bib{levyiochum}{article}{title={Spectral triples and manifolds with boundary},author={B. Iochum},author={C.  Levy},journal={J. Funct. Anal.},date={2011}, volume={260},pages={p. 117--134}}
\bib{KaadLesch2}{article}{title={A local global principle for regular operators in Hilbert $C\sp *$-modules},author={Kaad, J.},author={Lesch, M.},journal={J. Funct. Anal.},volume={262, no. 10},date={2012},pages={p. 4540--4569}}
\bib{kaadrecent}{article}{title={The unbounded Kasparov product by a differentiable module}, author={J. Kaad}, note={preprint, arXiv:1509.09063}}
\bib{KasparovKK}{article}{title={The operator $K$-functor and extensions of $C^*$-algebras},author={Kasparov, G. G.},journal={Math. USSR Izv.},date={1981},volume={16},pages={p. 513--572}}
\bib{KasparovNovikov}{article}{title={Equivariant $KK$-theory and the Novikov conjecture},author={Kasparov, G. G.},journal={Invent. Math.},date={1988},volume={91},pages={p. 147--201}}
\bib{Lance}{book}{title={Hilbert $C^*$-Modules},author={Lance, E. C.},date={1995},publisher={Cambridge University Press},address={Cambridge}}
\bib{leschhab}{article}{title={Operators of Fuchs type, conical singularities, and asymptotic methods},author={Lesch, M.}, date={1997},publisher={Teubner-Texte zur Mathematik}, address={Stuttgart}, note={ Habilitation thesis. http://arxiv.org/abs/dg-ga/9607005}}
\bib{lescurethesis}{article}{title={Triplets spectraux pour les pseudovari\'et\'es \`a une singularit\'e isol\'ee},author={J. M. Lescure},year={1998},note={PhD-thesis, available at: http://math.univ-bpclermont.fr/$\sim$lescure/these.pdf}}
\bib{lotthighereta}{article}{author={J. Lott}, title={Higher $\eta$-invariants}, journal={$K$-Theory}, volume={6},number={3}, year={1992}, page={191--233}}
\bib{Pedersen}{book}{title={$C^*$-Algebras and Their Automorphism Groups},author={Pedersen, G. K.},date={1979},publisher={Academic Press},address={New York}}
\bib{PSrhoInd}{article}{title={ Rho-classes, index theory and Stolz' positive scalar curvature sequence},author={P. Piazza}, author={T. Schick}, journal={J. Topol.}, volume={7, no. 4},year={2014}, pages={p. 965--1004}} 
\bib{ReedSimon1}{book}{title={Methods of Modern Mathematical Physics. I. Functional Analysis.},author={Reed, M.},author={Simon, B.},date={1980},publisher={Academic Press},address={New York}}
\bib{RRS}{article}{title={The extension class and $KMS$ states for Cuntz--Pimsner algebras of some bi-Hilbertian bimodules}, author={A. Rennie}, author={D. Robertson}, author={A. Sims}, note={preprint, arXiv:1501:05363, to appear in Journal of Topology and Analysis}}
\bib{rosengroupo}{article}{title={Groupoid $C^*$-algebras and index theory on manifolds with singularities}, author={J. Rosenberg}, journal={Geom. Dedicata}, volume={100}, pages={p. 5-84}, year={2003}}
\bib{thomsenequi}{article}{title={Equivariant $KK$-theory and $C\sp *$-extensions},author={K. Thomsen},journal={$K$-Theory},volume={19},date={2000}, pages={p. 219--249}}
\bib{wahlsurgery}{article}{title={Higher $\rho$-invariants and the surgery structure set}, author={C. Wahl}, journal={J. Topol.}, volume={6 (2013), no. 1}, pages={p. 154--192}}
\bib{xieyu}{article}{title={Positive scalar curvature, higher $\rho$-invariants and localization algebras},author={Z. Xie}, author={G. Yu}, journal={Adv. Math.}, volume={262}, year={2014}, pages={p. 823--866}} 
\end{biblist}
\end{bibdiv}

\end{document}